\tikzset{
  commutative diagrams/.cd,
  arrow style=tikz
%  ,diagrams={>=latex}
  }
\DeclareMathAlphabet{\mathcalligra}{T1}{calligra}{m}{n}
\DeclareMathAlphabet{\mathpzc}{OT1}{pzc}{m}{it}
\newtheorem{theorem}{Theorem}[section]
\newtheorem{corollary}[theorem]{Corollary}
\newtheorem{lemma}[theorem]{Lemma}
\newtheorem{proposition}[theorem]{Proposition}
\theoremstyle{definition}
\newtheorem{definition}[theorem]{Definition}
\newtheorem{convention}[theorem]{Convention}
\newtheorem{remark}[theorem]{Remark}
\newtheorem{exercise}[theorem]{Exercise}
\newtheorem{example}[theorem]{Example}
\theoremstyle{remark}
\newenvironment{dedication}
  {\clearpage           % we want a new page
   \thispagestyle{empty}% no header and footer
   \vspace*{\stretch{1}}% some space at the top 
   \itshape             % the text is in italics
   \raggedleft          % flush to the right margin
  }
  {\par % end the paragraph
   \vspace{\stretch{3}} % space at bottom is three times that at the top
   \clearpage           % finish off the page
  }
\newcommand{\comp}[1]{#1^{\rm C}}  % complement of a set 
\newcommand{\N}{{\mathbb{N}}}
\newcommand{\R}{{\mathbb{R}}}
\renewcommand{\SS}{{\mathbb{S}}}
\newcommand{\Z}{{\mathbb{Z}}}
\newcommand{\Aa}{{\mathcal{A}}}   % connections
\newcommand{\Bb}{{\mathcal{B}}}
\newcommand{\Cc}{{\mathcal{C}}}   % configuration space
\newcommand{\Ee}{{\mathcal{E}}}
\newcommand{\Ff}{{\mathcal{F}}}
\newcommand{\Ll}{{\mathcal{L}}}   % Lagrangian planes
\newcommand{\Nn}{{\mathcal{N}}}
\newcommand{\Ss}{{\mathcal{S}}}
\newcommand{\Tt}{{\mathcal{T}}}
\newcommand{\Uu}{{\mathcal{U}}}
\newcommand{\Vv}{{\mathcal{V}}}
\newcommand{\coker}{{\rm coker\, }}  % cokernel
\newcommand{\im}{{\rm im\, }}             % image
\newcommand{\SPAN}{{\rm span\, }}         % span
\newcommand{\id}{{\rm id}}                % identity
\newcommand{\Id}{{\rm Id}}
\newcommand{\codim}{{\rm codim\, }}       % codimension
\newcommand{\INDEX}{\mathop{\mathrm{index}}}     % (Fredholm) index
\newcommand{\Map}{{\rm Map}}          % Maps
\newcommand{\Fix}{{\rm Fix}}          % Fixed points
\newcommand{\Hess}{\mathrm{Hess}}          % Hessian
\newcommand{\CUP}{\mathop{\cup}}           % cup in formulas: adjusted spaces
\newcommand{\CAP}{\mathop{\cap}}           % cap in formulas: adjusted spaces
\renewcommand{\L}{{\rm L}}
\newcommand{\norm}{{\rm norm}}
\newcommand{\eps}{{\varepsilon}}
\newcommand{\MM}{{\mathfrak M}}
\newcommand{\mm}{{\mathfrak m}}
\newcommand{\SSfrak}{{\mathfrak S}}
\newcommand{\INNER}[2]{\left\langle #1, #2\right\rangle}
\newcommand{\SC}{{\mathrm{sc}}}                  % sc
\newcommand{\SCz}{{\mathrm{sc}^0}}            % sc^0
\newcommand{\SCo}{{\mathrm{sc}^1}}            % sc^1
\newcommand{\SCk}{{\mathrm{sc}^k}}            % sc^k
\newcommand{\SSC}{{\mathrm{ssc}}}               % ssc
\newcommand{\Llm}[1]{\Ll^{#1}}   % *multi*-linear maps
\newcommand{\Llsc}{\mathcal{L}_\mathrm{sc}}          % sc-operators
\newcommand{\Llscp}{\mathcal{L}_\mathrm{sc}^+}   % sc^+-operators
\newcommand{\Llpo}{\mathcal{L}_\mathrm{p}}           % point-open topology
\newcommand{\Llco}{\mathcal{L}_\mathrm{c}}           % compact-open topology
\newcommand{\Llbo}{\mathcal{L}_\mathrm{b}}           % bounded-open topology
\newcommand{\Cco}{C_\mathrm{c}}          % compact-open topology
\newcommand{\Ttco}{\mathcal{T}_\mathrm{c}}         % compact-open topology
\newcommand{\Ttbo}{\mathcal{T}_\mathrm{b}}         % bounded-open topology
\newcommand{\LlSSfrak}{\mathcal{L}_\mathfrak S}    % Sfrak-topology
\newcommand{\mbf}[1]{\text{\boldmath $#1$}}  % mbf=mathboldface
\newcommand{\bs}{{\mbf{s}}}
\def\NABLA#1{{\mathop{\nabla\kern-.5ex\lower1ex\hbox{$#1$}}}}
\def\Nabla#1{\nabla\kern-.5ex{}_{#1}}
\def\Tabla#1{\Tilde\nabla\kern-.5ex{}_{#1}}
\def\abs#1{\mathopen|#1\mathclose|}   
\def\Abs#1{\left|#1\right|}            
\def\norm#1{\mathopen\|#1\mathclose\|}
\def\Norm#1{\left\|#1\right\|}
\renewcommand{\Tilde}{\widetilde}
\newcommand{\p}{{\partial}}
\newcommand{\INTO}{\hookrightarrow}              % embedding
\newcommand{\TO}{\longrightarrow}
\renewcommand{\1}{{{\mathchoice {\rm 1\mskip-4mu l} {\rm 1\mskip-4mu l}
{\rm 1\mskip-4.5mu l} {\rm 1\mskip-5mu l}}}}
\newcommand{\Index}[1]{#1\index{#1}}
\newlength\eqshift
\renewcommand\theequation{\thesection.\arabic{equation}}
\let\savetheequation\theequation
\renewcommand*\env@matrix[1][\arraystretch]{%
  \edef\arraystretch{#1}%
  \hskip -\arraycolsep
  \let\@ifnextchar\new@ifnextchar
  \array{*\c@MaxMatrixCols c}}
\let\save@mathaccent\mathaccent
\newcommand*\if@single[3]{%
  \setbox0\hbox{${\mathaccent"0362{#1}}^H$}%
  \setbox2\hbox{${\mathaccent"0362{\kern0pt#1}}^H$}%
  \ifdim\ht0=\ht2 #3\else #2\fi
  }
\newcommand*\rel@kern[1]{\kern#1\dimexpr\macc@kerna}
\newcommand*\widebar[1]{\@ifnextchar^{{\wide@bar{#1}{0}}}{\wide@bar{#1}{1}}}
\newcommand*\wide@bar[2]{\if@single{#1}{\wide@bar@{#1}{#2}{1}}{\wide@bar@{#1}{#2}{2}}}
\newcommand*\wide@bar@[3]{%
  \begingroup
  \def\mathaccent##1##2{%
%Enable nesting of accents:
    \let\mathaccent\save@mathaccent
%If there's more than a single symbol, use the first character instead (see below):
    \if#32 \let\macc@nucleus\first@char \fi
%Determine the italic correction:
    \setbox\z@\hbox{$\macc@style{\macc@nucleus}_{}$}%
    \setbox\tw@\hbox{$\macc@style{\macc@nucleus}{}_{}$}%
    \dimen@\wd\tw@
    \advance\dimen@-\wd\z@
%Now \dimen@ is the italic correction of the symbol.
    \divide\dimen@ 3
    \@tempdima\wd\tw@
    \advance\@tempdima-\scriptspace
%Now \@tempdima is the width of the symbol.
    \divide\@tempdima 10
    \advance\dimen@-\@tempdima
%Now \dimen@ = (italic correction / 3) - (Breite / 10)
    \ifdim\dimen@>\z@ \dimen@0pt\fi
%The bar will be shortened in the case \dimen@<0 !
    \rel@kern{0.6}\kern-\dimen@
    \if#31
      \overline{\rel@kern{-0.6}\kern\dimen@\macc@nucleus\rel@kern{0.4}\kern\dimen@}%
      \advance\dimen@0.4\dimexpr\macc@kerna
%Place the combined final kern (-\dimen@) if it is >0 or if a superscript follows:
      \let\final@kern#2%
      \ifdim\dimen@<\z@ \let\final@kern1\fi
      \if\final@kern1 \kern-\dimen@\fi
    \else
      \overline{\rel@kern{-0.6}\kern\dimen@#1}%
    \fi
  }%
  \macc@depth\@ne
  \let\math@bgroup\@empty \let\math@egroup\macc@set@skewchar
  \mathsurround\z@ \frozen@everymath{\mathgroup\macc@group\relax}%
  \macc@set@skewchar\relax
  \let\mathaccentV\macc@nested@a
%The following initialises \macc@kerna and calls \mathaccent:
  \if#31
    \macc@nested@a\relax111{#1}%
  \else
%If the argument consists of more than one symbol, and if the first token is
%a letter, use that letter for the computations:
    \def\gobble@till@marker##1\endmarker{}%
    \futurelet\first@char\gobble@till@marker#1\endmarker
    \ifcat\noexpand\first@char A\else
      \def\first@char{}%
    \fi
    \macc@nested@a\relax111{\first@char}%
  \fi
  \endgroup
}
\long\def\symbolfootnote[#1]#2{\begingroup%
\def\thefootnote{\fnsymbol{footnote}}\footnote[#1]{#2}\endgroup}
\begin{document}

\bibliographystyle{plainnat}   % recommended by Paulo Ney de Souza (CBM-32)
\sloppy
\author{Joa Weber \\ UNICAMP }
\title{Scale Calculus and $\mathrm{M}$-Polyfolds \\
         {\large An Introduction}
       }
%\subtitle{-- Monograph --}  %%% Springer book style only
\date{\today}

%%%%%%%%%%%%%%%%%%%%%%%%%%%%%%%
%\begin{titlepage}
%\maketitle %(to set the title page and copyright page; see note)
                 %\include files (e.g., preface, introduction)
\thispagestyle{empty}
\includepdf[pages={1}]{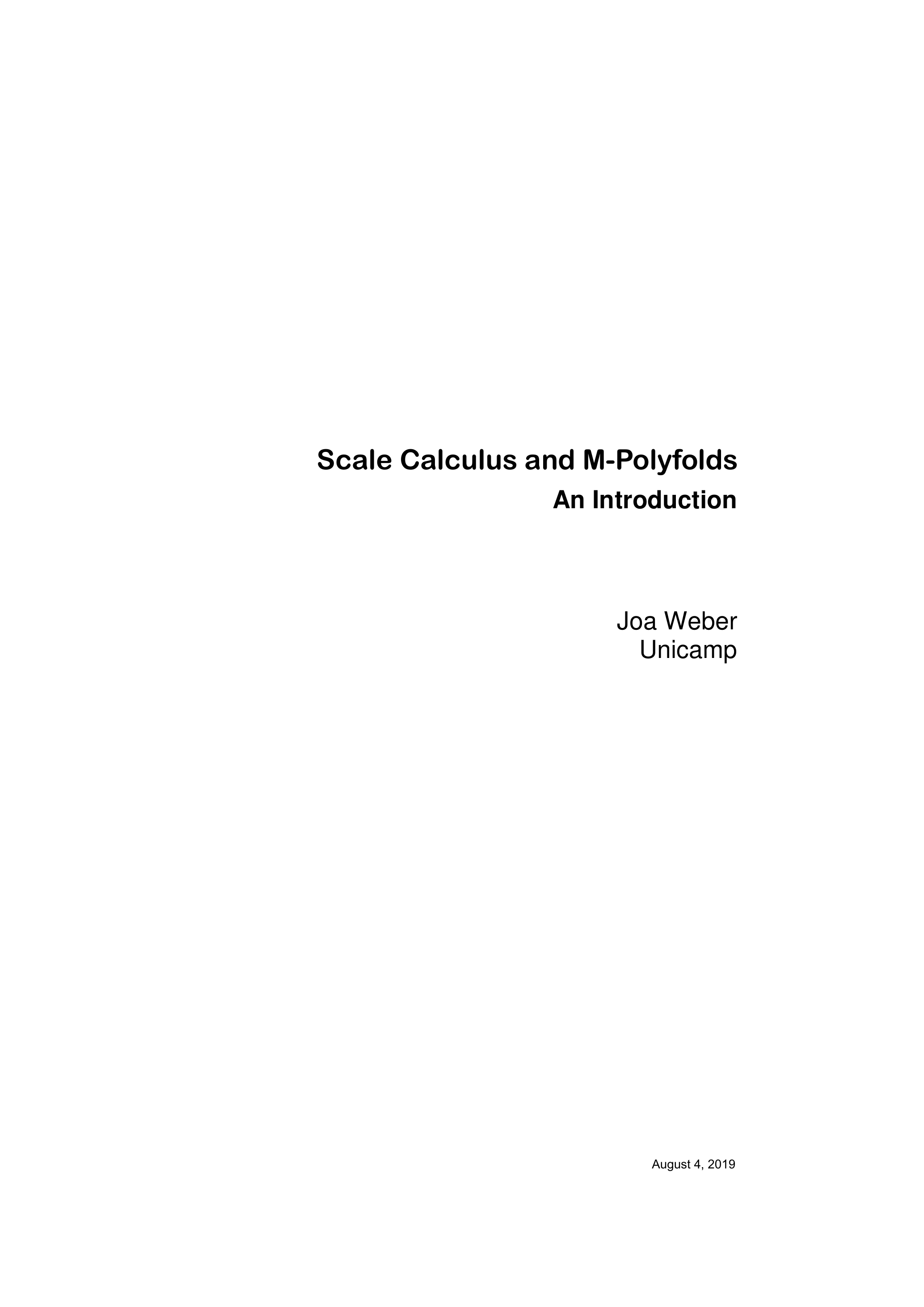}
\thispagestyle{empty}

%\end{titlepage}
%%%%%%%%%%%%%%%%%%%%%%%%%%%%%%%

%%%%%%%%%%%%%%%%%%%%%%%%%%%%%%%%%%
%%%%%%%%%% FRONTMATTER %%%%%%%%%%%%%
%%%%%%%%%%%%%%%%%%%%%%%%%%%%%%%%%%
%\frontmatter
%\include{dedic}
%\include{foreword}
%\include{preface}
%\include{acknow}
%
%\tableofcontents
%
%\include{acronym}

\frontmatter %• title page and copyright page information

%%%%%%%%%%%%%%%%%%%%%%% dedic.tex %%%%%%%%%%%%%%%%%%%%%%%%%%%%%%%%%
%
% sample dedication
%
% Use this file as a template for your own input.
%
%%%%%%%%%%%%%%%%%%%%%%%% Springer %%%%%%%%%%%%%%%%%%%%%%%%%%

\begin{dedication}
Dies ist das Geheimnis der  Liebe, \\
{da\ss} sie solche verbinde, \\
deren  jedes f\"ur sich sein k\"onnte \\
und doch nichts ist und sein kann 
ohne das andere\\
\mbox{}\\ \footnotesize\rm
Friedrich Wilhelm Joseph von Schelling\\
1775--1854

%To the cycles of life  %\hspace{3cm}\mbox{ }
%\\\mbox{ }
%\\
%unimaginable in variety \\ stunning surprises

%Use the template \emph{dedic.tex} together with the Springer document class SVMono for monograph-type books or SVMult for contributed volumes to style a quotation or a dedication\index{dedication} at the very beginning of your book in the Springer layout
\end{dedication}

\newpage
\thispagestyle{empty}

%%%%%%%%%%%%%%%%%%%%%%preface.tex%%%%%%%%%%%%%%%
% sample preface
%
% Use this file as a template for your own input.
%
%%%%%%%%%%%%%%%%%%%%%%%% Springer %%%%%%%%%%%%%%

%\preface   %%% Springer SVMono ONLY!!!
\chapter{Preface}
%% Please write your preface here
%Use the template \emph{preface.tex} together with the Springer document class SVMono (monograph-type books) or SVMult (edited books) to style your preface in the Springer layout.

%A preface\index{preface} is a book's preliminary statement, usually written by the \textit{author or editor} of a work, which states its origin, scope, purpose, plan, and intended audience, and which sometimes includes afterthoughts and acknowledgments of assistance. 

% When written by a person other than the author, it is called a foreword. The preface or foreword is distinct from the introduction, which deals with the subject of the work.

% Customarily \textit{acknowledgments} are included as last part of the preface.

This text originates from lecture notes written
during the graduate course
``MM805 T\'{o}picos de An\'{a}lise I''
held from March through June 2018 at UNICAMP.
The manuscript has then been slightly modified in order to serve
as 
\href{https://impa.br/publicacoes/coloquios/}{accompanying text}
for an advanced mini-course during the
\textit{$32^{\rm nd}$ Col\'{o}quio Brasileiro de Matem\'{a}tica},
\href{https://impa.br/eventos-do-impa/eventos-2019/32o-coloquio-brasileiro-de-matematica/}{CBM-32},
IMPA, Rio de Janeiro, in August 2019.

%%%%%%%%%%%%%%%%%%%%%%%%%%%%%%%
%%%%%%%%%%%%%%%%%%%%%%%%%%%%%%%
\subsection*{Scope}
Our aim is to give an introduction to the new calculus, called \emph{scale calculus},
and the generalized manifolds, called \emph{M-polyfolds},
that were introduced 
by~\citet*{Hofer:2007a,Hofer:2009a,Hofer:2009b,Hofer:2010b} in their
construction of a generalized differential geometry
in infinite dimensions, called \emph{polyfold theory}.
In this respect we recall and survey in the appendix the incarnations
of the usual (Fr\'{e}chet) calculus in various contexts - from
topological vector spaces (TVS) to complete normed vector spaces,
that is Banach spaces.

Recently the construction of abstract polyfold theory 
has been concluded and made available in the form of a
book by~\citet{Hofer:2017a}.
The door is now open, not only to reformulate and reprove
past moduli space problems using the new language and tools,
but to approach open or new problems.

%%%%%%%%%%%%%%%%%%%%%%%%%%%%%%%
%%%%%%%%%%%%%%%%%%%%%%%%%%%%%%%
\subsection*{Content}
There are two parts plus an appendix. Part one introduces scale calculus
starting with the linear theory (scale Banach spaces, scale linear
maps, in particular, scale Fredholm operators -- these are related to scale shifts),
then we define scale continuity and scale differentiability.
The latter is compared to usual (Fr\'{e}chet) differentiability,
then the chain rule is established for scale calculus.
Part one concludes with boundary and, more surprisingly, corner
recognition in scale calculus and with the construction
of scale manifolds.

Part two is concerned with the construction of M-polyfolds
in analogy to manifolds, just locally modeled not only on
Banach space (Banach manifolds), neither only on scale Banach space
(scale manifolds), but on a generalization of retracts called scale
retracts. This choice of local model spaces is motivated by Cartan's
last theorem which we therefore review first.
Part two concludes with the construction of the scale version of
vector bundles, called strong bundles over M-polyfolds, whose local
models are strong trivial-bundle retracts.
To accommodate Fredholm sections one introduces
a double scale structure from which one then extracts
two individual scales.

The appendix recalls and reviews relevant background and results
in topology and analysis, particularly standard calculus.

%%%%%%%%%%%%%%%%%%%%%%%%%%%%%%%
%%%%%%%%%%%%%%%%%%%%%%%%%%%%%%%
\subsection*{Audience}
The intended audience are graduate students.
Recommended background is basic knowledge of functional analysis
including the definition of Sobolev spaces such as $W^{k,p}(\R,\R^n)$.

%%%%%%%%%%%%%%%%%%%%%%%%%%%%%%%
%%%%%%%%%%%%%%%%%%%%%%%%%%%%%%%
\subsection*{Acknowledgements}
It is a pleasure to thank Brazilian tax payers for the
excellent research and teaching opportunities at UNICAMP
and for generous financial support
%``O presente trabalho foi realizado com apoio do CNPq, Conselho
%Nacional de Desenvolvimento Cient\'{\i}fico e Tecnol\'{o}gico - Brasil,
%e da 
through 
Funda\c{c}\~{a}o de Amparo
\`{a} Pesquisa do Estado de S\~{a}o Paulo (FAPESP),
processo $\mathrm{n}^{\rm o}$ 2017/19725-6.
I am indebted to the selection committee of  
\href{https://impa.br/eventos-do-impa/eventos-2019/32o-coloquio-brasileiro-de-matematica/}{CBM-32}
for giving me once more the opportunity
to teach an advanced mini-course at
these bi-annual meetings.

I'd like to thank
Daniel Ferreira Machado,
Darwin Gregorio Villar Salinas, and
Jos\'{e} Lucas Pereira Luiz
for their interest and a very pleasant atmosphere
in the graduate course
``MM805 T\'{o}picos de An\'{a}lise I''
held in the first semester of 2018 at UNICAMP.

Last not least, I am very grateful to Kai Cieliebak 
for handing me out his fine Lecture Notes
\citet{Cieliebak:2018a} while they were still in progress
and to Helmut Hofer for a useful comment.

\vspace{\baselineskip}
\begin{flushright}\noindent
Campinas, \hfill \mbox{ }  {\it Joa Weber} \\
%July 27, 2019  
\today \hfill \mbox{ }  %{\it Weber} \\
\end{flushright}

\tableofcontents 
%\cleardoublepage
%\phantomsection

%%%%%%%%%%%%%%%%%%%%%%%%%%%%%%%%%%
%%%%%%% main matter %%%%%%%%%%%%%%%%%%
%%%%%%%%%%%%%%%%%%%%%%%%%%%%%%%%%%
%\mainmatter
%\include{part}
%\include{chapter}
%\include{appendix}

\mainmatter %\include files (e.g., main chapters, appendices)

% introduction
%%%%%%%%%%%%%%%%%%%%%%%%%%%%%%%%%%
%%%%%%%%%%%%%%%%%%%%%%%%%%%%%%%%%%
%%%%%%%%%%%%%%%%%%%%%%%%%%%%%%%%%%
%% CHAPTER %%%%%%%%%%%%%%%%%%%%%%%%%
%%%%%%%%%%%%%%%%%%%%%%%%%%%%%%%%%%
%%%%%%%%%%%%%%%%%%%%%%%%%%%%%%%%%%
%%%%%%%%%%%%%%%%%%%%%%%%%%%%%%%%%%
\chapter{Introduction}\label{sec:intro}
%\chaptermark{Introduction}
%\addcontentsline{toc}{chapter}{Introduction}

%%%%%%%%%%%%%%%%%%%%%%%%%%%%%%%%%%
%%%%%%%%%%%%%%%%%%%%%%%%%%%%%%%%%%
%%% SECTION %%%%%%%%%%%%%%%%%%%%%%%%
%%%%%%%%%%%%%%%%%%%%%%%%%%%%%%%%%%
%%%%%%%%%%%%%%%%%%%%%%%%%%%%%%%%%%
%\section*{Introduction}\label{sec:intro}
%\chaptermark*{Introduction}
%\sectionmark{Introduction}
%\addcontentsline{toc}{section}{Introduction-section}
%\addcontentsline{toc}{chapter}{Introduction}
%\thispagestyle{empty}

The central problem in areas of global analysis
such as Morse, Floer, or Gromov--Witten theory is to study
spaces of solutions to nonlinear ordinary or partial
differential equations~$\Ff(v)=0$.
The\index{moduli spaces}
so-called \emph{moduli spaces}
\[
     \MM:=\{\Ff=0\}
     ,\qquad
     \mm:=\MM/G
\]
consist\index{parametrized!solutions}\index{solutions!parametrized}
in case of~$\MM$ of 
\emph{parametrized solutions}~$v\colon \Sigma\to S$ taking values in
a manifold or -- after localization --
in a vector space~$S$, often divided out by a group~$G$ that acts
on~$\MM$ by reparametrizing the domain
manifold~$\Sigma$. The elements 
of~$\mm$\index{unparametrized!solutions}\index{solutions!unparametrized}
are then called \emph{unparametrized solutions}.
In case of Morse and Floer homology an element~$\tau$
of the group~$G=(\R,+)$ acts on the domain~$\Sigma=\R$
by \emph{time-shift}\index{time shift}
\[
     (\tau_* v)(t):=v(t+\tau)
\]
for $t\in\R$. 
The shift map $\Psi:\R\times\Map(\R,S)\to\Map(\R,S)$
is defined by $(\tau,v)\mapsto \tau_*v$.
The peculiar different behavior in $\tau$ and in $v$
of this simple map, namely linearity, hence smoothness, in~$v$,
whereas differentiation with respect to $t$ causes $v$
to loose a derivative, eventually led to the discovery
of a new notion of smoothness in infinite dimensions
-- \emph{scale smoothness} due\index{scale calculus!history} 
to~\citet*{Hofer:2007a,Hofer:2017a}.
%Hofer, Wysocki, and Zehnder~\citeyearpar{Hofer:2007a,Hofer:2017a}.
%Hofer, Wysocki, and Zehnder.
Scale smoothness is connected to interpolation theory~\citet{Triebel:1978a}.
It was the crucial insight of Hofer, Wysocki, and Zehnder
that requiring compactness of the scale embeddings causes that
scale smoothness satisfies the chain rule
and therefore is suitable to patch together pieces
of scale Banach spaces to obtain scale manifolds, 
or more generally M-polyfolds -- new spaces in infinite dimensions.

%%% SUBSUBSECTION %%%%%%%%%%%%%%%%%%%%%%%%
\subsubsection*{\boldmath From holomorphic curves to polyfold theory.}

In 1985 \citet{gromov:1985a}
generalized holomorphic curves from complex
analysis to symplectic geometry and thereby
discovered that there is a symplectic topology.
Right after Gromov's seminal ideas
\citet{Floer:1986a,floer:1988c,floer:1989a}
``morsified'' holomorphic curves. He used
a perturbed holomorphic curve equation to construct
a semi-infinite dimensional Morse homology,
called \emph{Floer homology},\index{Floer homology}
which meanwhile has a huge
range of applications, from Hamiltonian and contact dynamics
through symplectic topology to topological field theories; cf. the 
survey~\citet{Abbondandolo:2018f}.
Floer's construction also motivated further developments
like the discovery of Fukaya 
$A_\infty$-categories
\citep{Fukaya:1993a,Fukaya:2009a} and
Symplectic Field Theory \citep{Eliashberg:2000a}.

All these applications face difficult transversality and
compactness issues largely caused by the fact that
one does not find oneself working in a single Banach manifold, but rather 
in a union of such and one has to deal with each strata
individually and even do analysis across neighboring ones.
To deal with these problems \citet{Fukaya:1999a}
discovered the notion of Kuranishi structures
based on finite dimensional approximation.
\\
In contrast Hofer, Wysocki, and Zehnder
stay in infinite dimension and generalize calculus.
Traditionally moduli spaces were studied by cumbersome ad-hoc methods
all of whose steps had to be carried out, although rather analogous,
for each moduli problem from scratch, usually filling hundreds of
pages.
%\textsc{Major problems with traditional approach.}
Even in one specific setup,
the differential operator $\Ff$ might act on maps $\gamma$ whose
domains and targets vary, in general. Consequently $\Ff$ cannot be
defined on some single Banach manifold $\Bb$ of maps
with values in some single Banach bundle $\Ee$ over $\Bb$.
Therefore the occurring singular limits, e.g. broken trajectories or bubbling
off phenomena, cause difficult compactness/gluing and transversality
problems for $\Ff$ when defined on many individual Banach manifolds
$\Bb_\beta$ that are at most strata of a common ambient space $\Bb$.
While in traditional approaches the ambient spaces $\Bb$ itself are usually
inaccessible to calculus, in a series of papers 
\citet*{Hofer:2007a,Hofer:2009a,Hofer:2009b,Hofer:2017a}
construct ambient spaces in the form of generalized manifolds,
called {\bf M-polyfolds},\footnote{
  The ``M'' is a reminder that M-polyfolds are
  constructed in analogy to Banach \underline{m}anifolds,
  just replace the local model Banach space by some ($\SC$-retract
  of a) scale Banach space.
  The more general polyfolds are useful in problems
  having local symmetries.
  }
which are accessible to a customized generalized calculus called
{\bf scale} or {\bf sc-calculus}.\index{polyfolds}
Now \textbf{polyfolds} generalize M-polyfolds
like orbifolds generalize manifolds.

Roughly speaking, \textbf{polyfold theory} is a mixture of a generalized differential
geometry, a generalized non-linear analysis, and some category theory.

%%% SUBSUBSECTION %%%%%%%%%%%%%%%%%%%%%%%%
\subsubsection*{Shift map motivates scale calculus.}
The discovery of scale calculus was triggered by the
properties of the shift map. That map shows up already for one of the
simplest non-trivial scenarios, namely, the downward gradient equation
$\Ff(\gamma):=\dot \gamma+(\nabla f)\circ \gamma=0$
for paths $\gamma\colon\R\to M$ and
associated to a given Morse function $f\colon M\to\R$ on a closed Riemannian
manifold. Given critical points $x\not= y$ of $f$,
the moduli space $\MM_{xy}$ consists of all solutions
$\gamma\colon \R\to M$ of $\Ff(\gamma)=0$ which asymptotically
connect $x$ to $y$, i.e. $\lim_{t\to-/+\infty}\gamma(t)=x/y$. Time
shift by $\tau\in\R$ produces again a solution
\[
     (\tau_*\gamma)(t):=\gamma(t+\tau).
\]
Having the same image in $M$ one
calls $\gamma$ and $\tau_*\gamma$ equivalent and denotes the space of
equivalence classes by $\mm_{xy}:=\MM_{xy}/\R$.
While the quotient of a manifold by a free and smooth
action inherits a manifold
structure,\index{scale calculus!motivated by shift map}
unfortunately, the time shift action is not smooth at all.

To illustrate non-smoothness let us simplify the scenario
in that we consider the time shift action of $\R$
on the compact\footnote{
  Compactness of the domain $\SS^1$ is crucial that inclusion $C^{k+1}\INTO
  C^k$ is compact.
  }
domain $\SS^1=\R/\Z$ of $v\in C^k=C^k(\SS^1,\R)$
where $k\in\N_0$. The derivative of the \textbf{\Index{shift map}}
\begin{equation}\label{eq:shift-map-0}
     \Psi\colon \R\times C^{k+1}\to C^{k+1},\quad
     (\tau,v)\mapsto \tau_*v
\end{equation}
taken at $(\tau,v)\in \R\times C^{k+1}$ does not respect the target
space $C^{k+1}$. Indeed
\[
     d\Psi_{(\tau,v)} (T,V)=\left(\tau_* \dot v\right) T+ \tau_*V\in C^k ,
     \qquad (T,V)\in\R\times C^{k+1}
\]
takes values only in $C^k$, because $\dot v:=\frac{d}{dt} v$ does.
But then there is no reason to ask
the second summand $\tau_*V$ to be better than $C^k$
and for this the assumption $V\in C^k$ suffices.
While $\Psi(\tau,v)$ behaves terribly in $\tau$ it is
extremely tame in $v$, namely linear.

If one accepts different differentiability classes of domain and
target spaces, the shift map has the following still respectable properties
for $k\in\N_0$.
\begin{itemize}
\item[(a)]
  The shift map $\Psi\colon \R\times C^k\to C^k$ is continuous.
\item[(b)]
  The shift map as a map $\Psi\colon \R\times C^{k+1}\to C^k$ is pointwise
  differentiable in the usual sense with (Fr\'{e}chet) derivative
  $d\Psi_{(\tau,v)}\in\Ll(\R\times C^{k+1}, C^k)$.
\item[(c)]
  At $(\tau,v)\in \R\times C^{k+1}$ the derivative $d\Psi_{(\tau,v)}$
  extends uniquely ($C^{k+1}$ is dense in $C^k$) 
  from $\R\times C^{k+1}$ to a continuous linear map $\R\times C^k\to C^k$,
  denoted by $D\Psi_{(\tau,v)}\in\Ll(\R\times C^k, C^k)$ and called
  the \textbf{scale derivative}.
\item[(d)]
  The extension $D\Psi\colon  \R\times C^{k+1}\to\Llco(\R\times C^k,C^k)$
  is continuous in the compact-open topology,\footnote{
     But it is not continuous in the norm topology on $\Ll(\R\times C^k,C^k)$;
     see Remark~\ref{rem:noncont-norm-top}.
     }
  equivalently, it is continuous as a map
  \[
     D\Psi\colon (\R\times C^{k+1})\times (\R\times C^k)\to C^k,\quad
     (\tau,v,T,V)\mapsto D\Psi_{(\tau,v)}(T,V).
  \]
\end{itemize}
Properties (a--d) suggest that instead of considering $\Psi$ as a map
between one domain and one target, both of the same regularity
(the same level), one should use the whole nested sequence (scale) of Banach spaces
and consider $\Psi$ as a map $(\R\times C^k)_{k\in\N_0}\to (C^k)_{k\in\N_0}$
between scales.

The proof of (a--d) hinges on (i) compactness of the linear operator
$C^{k+1}\INTO C^k$ given by inclusion and (ii)
on density of the intersection $E_\infty:=\bigcap_{k=1}^\infty C^k$
in each of the Banach spaces (levels) $E_k:=C^k$.
A nested sequence of Banach spaces $E=(E_k)$
satisfying (i) and (ii) is called a \textbf{Banach scale} or an
\textbf{\boldmath$\SC$-Banach space} and $E_k$ is called
\textbf{level \boldmath$k$} of the scale.

Now one turns properties (a--d) into a definition
calling maps between $\SC$-Banach spaces
satisfying them \textbf{continuously \boldmath$\SC$-differentiable}
or of \textbf{class \boldmath$\SC^1$};
cf. Remark~\ref{rmk:sc-smoothness}
and Definition~\ref{def:sc-differentiability}.
The new class $\SC^1$ generalizes the usual class $C^1$
in the following sense: Suppose that $f\colon E\to F$ is a map between
Banach scales whose restriction to any domain level $E_m$
actually takes values in the corresponding level $F_m$ of the target
and all the so-called \textbf{level maps}
$f_m:=f|_{E_m}\colon E_m\to F_m$ are of class $C^1$. Then $f$ is of
class $\SC^1$; see Lemma~\ref{le:nec_suff_cond_for_sc_by_Ck}.

%%% SUBSUBSECTION %%%%%%%%%%%%%%%%%%%%%%%%
\subsubsection*{Sc-manifolds are modeled on scale Banach spaces.}

In complete analogy to manifolds
a scale or $\SC$-manifold is a paracompact Hausdorff space $X$
just locally modeled on a scale Banach space $E$, as opposed to an
ordinary Banach space, and requiring the transition maps
to be $\SC$-diffeomorphisms.
In finite dimension $\SC$-calculus
specializes to standard calculus and $\SC$-manifolds
are manifolds.

%%% SUBSUBSECTION %%%%%%%%%%%%%%%%%%%%%%%%
\subsubsection*{M-polyfolds are modeled on sc-retracts.}
Motivated by Cartan's last theorem~\citeyearpar{Cartan:1986a} M-polyfolds
are described locally by retracts in scale Banach spaces, replacing
the open sets of Banach spaces in the familiar local description of manifolds.
As a consequence M-polyfolds may have locally varying dimensions;
see Figure~\ref{fig:fig-sc-retract-schippe}.
Enlarging the class of smooth maps one risks loosing
vital analysis tools such as the implicit function theorem --
which indeed is not available for sc-smooth maps;
see~\citet{Filippenko:2018b}.
However, for moduli space
problems one only needs to work in the subclass of sc-Fredholm maps
on which an implicit function theorem is available.

%%% SUBSUBSECTION %%%%%%%%%%%%%%%%%%%%%%%%
\subsubsection*{Outlook.}

Given abstract polyfold theory~\citep{Hofer:2017a},
it is now up to the scientific community
to work out and provide modules, or black boxes,
also called LEGO pieces,
that uniformly cover large classes of applications,
say in Morse and Floer theory.
A shift map LEGO has been provided by \citet{Frauenfelder:2018a}.

%%% SUBSUBSECTION %%%%%%%%%%%%%%%%%%%%%%%%
\subsubsection*{Appendix on topology and analysis.}

In the appendix we review the incarnations
of the usual (Fr\'{e}chet) calculus in various contexts - from
topological vector spaces (TVS) to Banach spaces.
For self-consistency of the text we recall many results of standard
calculus in topology and analysis which are used in the main body.

%%% SUBSUBSECTION %%%%%%%%%%%%%%%%%%%%%%%%
\subsubsection*{Notes to the Reader.}

Each of the two chapters begins with a detailed summary and survey
of its contents. Read both of these two chapter summaries first 
to get an idea of what about is this text.

In the end the present lecture notes
only grew to two chapters plus an appendix providing
some background of calculus -- from topology to functional analysis.
In class we also treated, though briefly,
scale Fredholm theory and, as an application, the shift map
LEGO \citep{Frauenfelder:2018a} for Morse and Floer path spaces.
In a planned extension we shall add these topics in the form of two
additional chapters.

Unless mentioned differently, we (closely) follow~\citet*{Hofer:2017a}.
Two other great sources are~\citet*{Fabert:2016a} and~\citet{Cieliebak:2018a}.

%%%%%%%%%%%%%%%%%%%%%%%%%%%%%%%%%%
%%% SUBSECTION %%%%%%%%%%%%%%%%%%%%%
%%%%%%%%%%%%%%%%%%%%%%%%%%%%%%%%%%
%\subsection*{abc subsection*}

%%%%%%%%%%%%%%%%%%%%%%%%%
%%%%%%%%% REFERENCES %%%%%%
%%%%%%%%%%%%%%%%%%%%%%%%
%%%\bibliographysty le{plain}
         %   erzeugt:     [1] Joa Weber
%%%\bibliographystyle{abbrv}
         %  erzeugt:      [1] J. Weber and 
%\bibliographystyleintro{alpha}
         %  article:    [Web05]  J. Weber
         %  book:      [Web05]  Joa Weber
         % more authors: [HZ87]
%%%%%%%%%%%%%%%%%%%%%%%%%
% Using hyperref, one should say:
%\cleardoublepage
%\phantomsection
%\addcontentsline{toc}{section}{References}
%\bibliographyintro{$HOME/Dropbox/0-Libraries+app-data/Bibdesk-BibFiles/library_math}{}
%$

\cleardoublepage
\phantomsection
      % INTRODUCTION

%%%%%%%%%%%%%%%%%%%%%%%%%%%%%%%%%%
%%%%%%%%%%%%%%%%%%%%%%%%%%%%%%%%%%
%%%%%%%%%%%%%%%%%%%%%%%%%%%%%%%%%%
%% CHAPTER %%%%%%%%%%%%%%%%%%%%%%%%%
%%%%%%%%%%%%%%%%%%%%%%%%%%%%%%%%%%
%%%%%%%%%%%%%%%%%%%%%%%%%%%%%%%%%%
%%%%%%%%%%%%%%%%%%%%%%%%%%%%%%%%%%
\chapter{Scale calculus}\label{sec:scale-calculus}

The\index{sc abbreviates!scale}
ubiquitous ``sc'' a-priori abbreviates \emph{scale},
but in the context of scale linear operators and maps it stands for
\emph{\underline{s}cale \underline{c}ontinuous}.
The\index{sc abbreviates!\underline{s}cale \underline{c}ontinuous}
latter is denoted in the context of general, possibly non-linear,
maps by $\SCz$ or by $\SCk$ for $k$ times
scale continuously differentiable maps.
In a linear context \emph{subspace} means linear subspace.

%%%%%%%%%%%%%%%%%%%%%%%%%%%%%%%%%%%%%%%%%%%%%%%
\vspace{0.1cm} %\noindent
Section~\ref{sec:scale-structures} ``Scale structures''
introduces the notion of a Banach scale
which is a nested sequence of sets $E=E_0\supset E_1\supset\dots$
called levels -- each one being actually a Banach space --
and subject to two more axioms.
A subset $A\subset E$ of the top level
generates, we also say induces, naturally a new nested sequence
$A^{\cap E}$ by intersecting $A$ with each level $E_m$.
The new levels $A_m:=A\CAP E_m$ form the
nested sequence $A^{\cap E}=\left( A=A_0\supset A_1\supset\dots\right)$.
Of course, not every nested sequence is of the form $A^{\cap E}$.

%\newline
The three axioms for a \textbf{Banach scale} $E$, also called a scale Banach
space or an \textbf{\boldmath$\SC$-Banach space},
are the following: Each level $E_m$ is a Banach space under its own norm
$\abs{\cdot}_m$, all inclusions $E_m\INTO E_{m-1}$ are \emph{compact} linear
operators, and the intersection $E_\infty:=\bigcap_m E_m$ of all
levels is \emph{dense} in every level Banach space $E_m$.
The points of $E_m$ are called points of regularity $m$
and those of $E_\infty$ smooth points.
A \textbf{Banach subscale} of $E$ is a Banach scale $B$ whose levels
are Banach subspaces of the corresponding levels of $E$. 
Is every Banach subscale $B$ generated by its top level $B_0$, i.e. is
$B=(B_0)^{\cap E}$? You bet.
However, not every closed subspace $A$ of a scale
Banach space $E$ generates a Banach subscale. In general, there is no
reason that $A^{\cap E}$ satisfies the density axiom, consider
e.g. cases of trivial intersection $A\CAP E_\infty=\{0\}$.
Those closed subspaces that do generate a Banach subscale
are of crucial significance, they are called
\textbf{\boldmath$\SC$-subspaces}. 

Because Fredholm theory is a fundamental tool in the analysis of
solution spaces of differential equations, $\SC$-subspaces $K$
of finite dimension will be key players, as well as
$\SC$-subspaces $Y$ of finite codimension.
\emph{Finite} dimensional $\SC$-subspaces $K$ of an $\SC$-Banach space $E$
are characterized as follows. For finite dimensional
subspaces $K$ of $E$ it holds:
  \[
     \text{$K\subset E_\infty$}
   \quad\Leftrightarrow\quad
     \textsf{$K$ is an sc-subspace (generates a Banach subscale) of
       $E$.}
  \]
Although simple to prove, this equivalence is far reaching.
In particular, since due to finite dimension the generated
Banach subscale is constant (all levels $K_m=K$ are necessarily equal).

%%%%%%%%%%%%%%%%%%%%%%%%%%%%%%%%%%%%%%%%%%%%%%%
\vspace{0.1cm} %\noindent
Section~\ref{sec:examples} ``Examples''
presents a number of examples of
Banach scales, e.g. Sobolev scales and weighted Sobolev scales,
that arise frequently in the study of solution spaces of
differential equations on manifolds. The desire to simplify
and, most importantly, to unify the many cumbersome steps
of the classical treatment of analyzing solution spaces
actually was the motivation to invent scale calculus;
see e.g. the introductions to~\citet{Hofer:2005a}
and~\citet{Hofer:2006a}.

%%%%%%%%%%%%%%%%%%%%%%%%%%%%%%%%%%%%%%%%%%%%%%%
\vspace{0.1cm} %\noindent
Section~\ref{sec:scale-linear-theory} ``Scale linear theory''
carries over fundamental notions of linear operators
to Banach scales. For example a \textbf{scale linear} operator
is a linear operator $T\colon E\to F$ between $\SC$-Banach spaces which
preserves levels, that is $T(E_m)\subset F_m$ $\forall m$.
For such $T$ the restriction to level $E_m$ takes values in $F_m$.
The restriction as a map $T_m:=T|_{E_m}\colon E_m\to F_m$
is called a \textbf{level operator}.\index{level!operator}
Now one can carry over (some) standard notions and properties of linear
operators, say continuity, compactness, projections, and so on,
by requiring each level operator to have that property.
For instance, a \underline{s}cale \underline{c}ontinuous
operator, called \textbf{\boldmath$\SC$-operator}, is a scale linear
operator $T\colon E\to F$ such that all level operators are continuous,
that is $T_m\in\Ll(E_m,F_m)$ $\forall m$.

However, as soon as it comes to $\SC$-Fredholm operators, not level
preservation $T(E_m)\subset F_m$, but level -- better regularity --
\emph{improvement} $S(E_m)\subset F_{m+1}$ $\forall m$ becomes a key property.
The latter are called \textbf{\boldmath$\SC^+$-operators}.
They have the property that all their level operators are compact.
\newline
Similarly, as mentioned earlier for Fredholm operators in the usual sense,
finite dimensional and finite codimensional $\SC$-subspaces
will enter the definition of $\SC$-Fredholm operators.
Thus one needs the following two notions:

  Firstly, the notion of \textbf{\boldmath$\SC$-splitting}
of $E=F\oplus G$ into an $\SC$-direct sum
of $\SC$-subspaces $F$ and $G$ called
\textbf{\boldmath$\SC$-complements} of one another.
Just as for Banach spaces any finite dimensional $\SC$-subspace
admits an $\SC$-complement.

  Secondly, the notion of \textbf{\boldmath$\SC$-quotient} $E/A$.
This allows to establish for finite \emph{co}dimensional $\SC$-subspaces
existence of an $\SC$-complement 
(Proposition~\ref{prop:fin-codiml-sc-subspaces})
and characterize them as follows (Lemma~\ref{le:Crit-fin-codiml-subspace-SC}).
For finite co\-dimen\-sional subspaces $A$ of $E$ it holds:
  \[
     \text{$A$ closed in $E$}
   \quad\Leftrightarrow\quad
     \textsf{$A$ is an sc-subspace of $E$.}
  \]
It seems that so far the literature missed 
to spell out these two facts explicitly.

An \textbf{\boldmath$\SC$-Fredholm operator} is an $\SC$-operator
$T\colon E\to F$ such that there are $\SC$-splittings
$E=K\oplus X$ with levels $E_m=K\oplus X_m$ and
$F=Y\oplus C$ with levels $F_m=Y_m\oplus C$
where $K=\ker T$ is the kernel and $Y=\im T$
is the image of $T$ and both $K$ and $C$ are of finite dimension.
Looks fine already?
Well, there is one condition missing yet.\footnote{
  While $T$ as a map $X\to Y:=\im T$
  is an isomorphism, this is not yet
  guaranteed for the level operators as maps
  $T_m\colon X_m=X\CAP E_m\to (\im T)\CAP F_m= Y_m$.
  Their images $T(X_m)\subset Y_m$ a-priori are only
  subspaces. To get isomorphisms one needs to exclude
  elements of higher levels $X\setminus X_m$
  getting mapped under $T$ to level $F_m$,
  in symbols $T(X\setminus X_m)\CAP F_m=\emptyset$.
  }
The operator $T$ as a map $T\colon X\to Y$ must be
an \textbf{\boldmath{$\SC$-isomorphism}}
(a bijective $\SC$-operator whose inverse
is level preserving).
This enforces \emph{level regularity of $T$}
in the sense that $Te\in F_m$ implies $e\in E_m$.
And it assures that the levels $Y\CAP F_m$ generated by the
$\SC$-subspace $Y=\im T$ coincide with the images
$T_m(X_m)$ of the level operators.
\newline
It is then a consequence that the level operators
$T_m\colon E_m\to F_m$ are all Fredholm
with the same kernel $K$ and the same Fredholm index.
Vice versa, if the level operators of an $\SC$-operator $T\colon E\to F$
are Fredholm and $T$ is level regular in the above sense,
then $T$ is $\SC$-Fredholm.
\newline
The classical stability result that Fredholm property and index
are preserved under addition of a compact linear operator carries over
this way: The $\SC$-Fredholm property is preserved under addition of
$\SC^+$-operators.

%%%%%%%%%%%%%%%%%%%%%%%%%%%%%%%%%%%%%%%%%%%%%%%
\vspace{0.1cm} %\noindent
Section~\ref{sec:scale-differentiability} ``Scale differentiability''
is\index{revolution!the -- happens here}
where the revolution happens. 
Free difference quotients! Away with Fr\'{e}chet mainstream suppression! 
\citet*{Hofer:2017a} just did it, at least in infinite dimensions..

\begin{remark}\label{rmk:sc-smoothness}
Let $U\subset E$ and $V\subset F$ be open
subsets of $\SC$-Banach spaces. An open subset of $E_m$ is given by
$U_m:=U\cap E_m$. A \textbf{scale continuous}\,\footnote{
  Also called {\textbf{of class \boldmath$\SC^0$}}
  which by definition means level preserving and continuity of all restrictions
  as maps $f_m:=f|_{U_m}\colon U_m\to V_m$, called \textbf{level maps}.
  }
map $f\colon U\to V$ is called \textbf{continuously scale differentiable}
or {\textbf{of class \boldmath$\SC^1$}} if
\begin{itemize}
\item
  the upmost so-called \textbf{diagonal map} (of height one), namely $f$
  as a map $f\colon U_1\to V_0$ is pointwise differentiable and
\item
  its derivative $df(x)\in\Ll(E_1,F_0)$ admits a continuous
  linear extension 
  \begin{equation*}\label{eq:sc-deriv-extends-hh}
  \begin{tikzcd} %[row sep=small] %[column sep=tiny]
     E_0\arrow[rr, dashed, "Df(x)"]
     &&
       F_0
     \\
     E_1
     \arrow[u, hook, "I_1"]
     \arrow[rru, "{df(x)\,\in\,\Ll(E_1,F_0),\;x\in U_{1}}"']
     &&
  \end{tikzcd} 
  \end{equation*}
  from the dense subset $E_1$ to $E_0$ itself, called the
  \textbf{\boldmath$\SC$-derivative of $f$ at $x\in U_1$}
  and denoted by $Df(x)$. Furthermore, it is required that
\item
  the \textbf{tangent map} $Tf\colon TU\to TV$ defined by
  \[
     Tf(x,\xi):=(f(x),Df(x)\xi)
  \]
  is of class $\SC^0$.
  Here the \textbf{tangent bundle of \boldmath$U$} is the open
  subset\footnote{
    To get the shifted scale $U^k$ forget the first $k$ levels:
    Its $m^{\rm th}$ level is $(U^k)_m:=U_{m+k}$.
    }
  \[
     TU:=U^1\oplus E^0
  \]
  of the Banach scale $E^1\oplus E^0$.
\end{itemize}
\end{remark}

The third axiom, the one requiring level preservation and continuity
of the level maps associated to the tangent map $Tf$,
has a lot of consequences caused by the shift in the definition
of the tangent bundle $TU:=U^1\oplus E^0$.
For instance $Df(x)\colon E_0\to F_0$ restricts at points
of better regularity, say $x\in U_{m+1}\subset U_1$, to (continuous)
\textbf{level operators} $D_\ell f(x):=Df(x)|_{E_\ell}\colon E_\ell\to F_\ell$ for all
levels between $0$ and down to level $m$.
\begin{quote}
     \textsf{In general, the scale derivative only admits level operators $D_\ell f(x)$
     for all levels $\ell$ down to the level right above the $x$-level!}
\end{quote}
The $\SC$-derivative $U_{m+1}\ni x\mapsto Df(x)$
viewed (horizontally) between equal levels $E_m\to F_m$
enjoys only \emph{continuity with respect to the compact open
topology}\footnote{
  If $x_\nu\to x$ in $U_{m+1}$, then for each fixed $\zeta\in E_m$
  one has $Df(x_\nu)\zeta\to Df(x)\zeta$ in $F_m$.
  }
whereas viewed as a diagonal map $Df$ is \emph{continuous 
with respect to the operator norm topology}, i.e. $C^0$ as a map
\[
     U_{m+1}\to\Ll(E_{m+1},F_m)
\]
where the target carries the operator
norm. But for these domains
$Df=df$ pointwise, so $\SC^1$ implies that all
diagonal maps of height one $f\colon U_{m+1}\to V_m$
are of class $C^1$ in the usual sense and this brings us to

Section~\ref{sec:props-sc-differentiability} ``Differentiability --
Scale vs Fr\'{e}chet''.
Here we will see that higher scale differentiability
$f\in\SC^{\color{cyan}k}(U,V)$ implies that all ($\forall m$) diagonal
maps $f\colon U_{m+\ell}\to V_m$ of height
$\ell\in\{0,\dots, {\color{cyan} k}\}$
are of class $C^\ell$ in the usual Fr\'{e}chet 
sense.\footnote{
  Note: Level maps $f_m:U_m\to V_m$ ($\ell=0$)
  of an $\SC^k$-map are only guaranteed to be continuous ($C^0$)
  no matter what is the value of $k$.
  }
Vice versa, for a map $f\colon U\to V$ there is the following
\textbf{criterion to be of class \boldmath$\SC^{k{\color{red}+1}}$}:
For each $\ell\in\{0,\dots,k\}$ restriction produces height $\ell$ diagonal
maps $f\colon U_{m+\ell}\to V_m$ $\forall m$ that are of class $C^{\ell {\color{red}+1}}$.

Section~\ref{sec:chain-rule} ``Chain rule''
proves this building block of calculus.
It allows to construct scale \emph{manifolds} by patching together
local pieces of $\SC$-Banach spaces.
If $f\colon U\to V$ and $g\colon V\to W$ are both of class $\SC^1$, then the
composition $g\circ f$ is, too!
The exclamation mark is due to the fact that
applying an $\SC$-derivative one looses one
level (of regularity), so one might expect
to loose two levels when composing two
$\SC^1$ maps. One doesn't!
This relies on the compactness axiom for the inclusions
$E_k\INTO E_{k+1}$ in a Banach scale.

Section~\ref{sec:boundary-recognition} ``Boundary recognition''
introduces the \textbf{degeneracy index}
$d_C(x)$ of a point $x$ in what is called a partial
quadrant $C$ in a Banach scale $E$.
It takes the value $0$ on \emph{interior points} $x$,
value $1$ on \emph{boundary points} in the usual sense,
and points with $d_C(x)\ge 2$ are \emph{corner points}.
We state without proof invariance of $d_C(x)$ under
\textbf{\boldmath{$\SC^1$-diffeomorphisms}}, that is
$\SC^1$-maps with $\SC^1$-inverses.
It is remarkable that $\SC$-smooth diffeomorphisms
recognize boundary points and corners.
In contrast, homeomorphisms also recognize boundaries,
but not corners.

Section~\ref{sec:sc-manifolds} ``$\SC$-manifolds ''
defines an \textbf{\boldmath$\SC$-manifold}
as a paracompact Hausdorff space $X$
endowed with an equivalence class of $\SC$-smooth atlases.
A continuous map $f:X\to Y$ between $\SC$-manifolds
is called $\SC$-smooth if so are all representatives
with respect to $\SC$-charts of $X$ and $Y$.
An $\SC$-chart of $X$ takes values in an $\SC$-Banach space $E$
and so, due to compatibility of $\SC$-charts through
$\SC$-diffeomorphisms,
the level structure of $E$ is inherited by the $\SC$-manifold $X$.
An important class of $\SC$-manifolds consists of loop spaces
$X:=W^{1,2}(\SS^1,M)$ for finite dimensional manifolds $M$.
These are even \textbf{\boldmath strong $\SC$-manifolds},
or \textbf{\boldmath$\SSC^\infty$-manifolds}, in the sense that
already level maps are smooth,
as opposed to only the diagonal maps as is required for $\SC^\infty$.
Given an $\SC$-manifold $X$, its \textbf{tangent bundle} is
a map of the form 
$
     p\colon TX\to X^1
$ 
that projects on the shifted $\SC$-manifold (forget level $0$ of $X$).

\smallskip
%%%%%%%%%%%%%%%%%%%%%%%%%%%%%%%%%%%%%%%%%%%%%%%
After this survey of Chapter~\ref{sec:scale-calculus} you could, upon
first reading, skip the remainder of Chapter~\ref{sec:scale-calculus} and
proceed with the introduction to Chapter~\ref{sec:local-models}.

%\newpage
%%%%%%%%%%%%%%%%%%%%%%%%%%%%%%%%%%
%%%%%%%%%%%%%%%%%%%%%%%%%%%%%%%%%%
%%% SECTION %%%%%%%%%%%%%%%%%%%%%%%%
%%%%%%%%%%%%%%%%%%%%%%%%%%%%%%%%%%
%%%%%%%%%%%%%%%%%%%%%%%%%%%%%%%%%%
\section{Scale structures}\label{sec:scale-structures}

%%%%%%%%%%%%%%%%%%%%%%%%%%%%%%%%%%
%%% SUBSECTION %%%%%%%%%%%%%%%%%%%%%
%%%%%%%%%%%%%%%%%%%%%%%%%%%%%%%%%%
\subsubsection*{Scales of sets}

\begin{definition}[Scales]
A \textbf{\Index{scale} of sets} or a\index{scale!structure}
\textbf{scale structure on a set \boldmath$A$}
is a nested sequence of subsets
\[
     A=A_0\supset A_1\supset A_2\supset \dots 
\]
The subset $A_m$ is called the\index{level!of a scale}
\textbf{level \boldmath$m$ of the scale}
and its elements\index{points!of regularity $m$}
\textbf{points of regularity \boldmath$m$}.
The elements of the intersection
\[
     A_\infty:=\bigcap_{m\in\N_0} A_m
\]
are called the\index{smooth!points}
\textbf{smooth points} of the scale.
Given a level $A_m$, the enclosing levels $A_0,\dots,A_{m-1}\supset A_m$
are called \textbf{\Index{superlevels}}, the enclosed levels
$A_m\supset A_{m+1},A_{m+2},\dots$ \textbf{\Index{sublevels}},
\textbf{of \boldmath$A_m$}.
\end{definition}

\begin{definition}[Subscale]
A \textbf{\Index{subscale}} of a scale of sets $A$
is a scale of sets $B$ whose levels are subsets of the
corresponding levels of $A$, that is
\begin{equation*}
\begin{tikzcd} [row sep=small, column sep=tiny]
\text{\footnotesize scale}
  & A
  &=& A_0
  &\supset& A_1
  &\supset& A_2
%  &\supset& A_3
  &\supset& \dots
\\
\text{\footnotesize subscale}
  & B
  &=& B_0
     \arrow[draw=none]{u}[sloped,auto=false]{\subset}
  &\supset& B_1
     \arrow[draw=none]{u}[sloped,auto=false]{\subset}
%     \arrow[draw=none]{u}[sloped,auto=false]{\subseteq}
  &\supset& B_2
     \arrow[draw=none]{u}[sloped,auto=false]{\subset}
%  &\supset& B_3
  &\supset& \dots
\end{tikzcd} 
\end{equation*}
\end{definition}

\begin{definition}[Constant scale]
The \Index{constant scale} structure on a set $A$
is the one whose levels $A_m:=A$ are all given by $A$ itself.
\end{definition}

\begin{definition}[Induced scale $B^{\cap A}\subset A$]
A scale structure on a set $A$ induces a scale structure on any subset
$B\subset A$, called the\index{induced!scale}
\textbf{\boldmath induced scale}
or\index{$B^{\cap A}$ subscale of $A$ generated by $B\subset A$}
\textbf{\boldmath subscale generated by $B$},
denoted\index{subscale!generated by $B$}
by $B^{\cap A}$. By definition the $m^{\rm th}$ level
\begin{equation}\label{eq:induced-scale}
     (B^{\cap A})_m:=B_m:=B\CAP A_m,\qquad m\in\N_0
\end{equation}
is the \textbf{\boldmath\Index{part of $B$ in level $A_m$}}.
Observe that $B_\infty:=\CAP_{m\ge 0} B_m=B\CAP A_\infty$.
\end{definition}

Note that for an induced scale emptiness $B_\infty=\emptyset$
is possible, even if $A_\infty\not=\emptyset$.

\begin{example}
[Not every subscale is an induced scale]
\label{ex:subscale-not=-subsetscale}
\begin{equation*}
\begin{tikzcd} [row sep=small, column sep=tiny]
\text{\footnotesize subscale $B$ of $A$}
  & \{1,2\}
  &\supset& \{1\}
  &\supset& \emptyset
  &\supset& \emptyset
  & \dots
\\
\text{\footnotesize scale $A$}
  & \{0,1,2\}
     \arrow[draw=none]{u}[sloped,auto=false]{\supset}
  &\supset& \{1,2\}
     \arrow[draw=none]{u}[sloped,auto=false]{\supset}
%     \arrow[draw=none]{u}[sloped,auto=false]{\subseteq}
  &\supset& \{2\}
     \arrow[draw=none]{u}[sloped,auto=false]{\supset}
  &\supset& \emptyset
  & \dots
\\
\text{\footnotesize induced scale $\{1,2\}^{\cap A}$}
  & \{1,2\}
     \arrow[draw=none]{u}[sloped,auto=false]{\subset}
  &=& \{1,2\}
     \arrow[draw=none]{u}[sloped,auto=false]{\subset}
%     \arrow[draw=none]{u}[sloped,auto=false]{\subseteq}
  &\supset& \{2\}
     \arrow[draw=none]{u}[sloped,auto=false]{\subset}
  &\supset& \emptyset
  & \dots
\end{tikzcd} 
\end{equation*}
\end{example}

\begin{definition}[Shifted scale $A^k$]
\label{def:shifted-scale}
Forget the first $k$ levels of a scale $A$
and use $A_k$ as the new level zero to obtain
the\index{$A^k$ shifted scale with levels $(A^k)_m:=A_{k+m}$}
\textbf{\Index{shifted scale}} $A^k$ with levels
\[
     (A^k)_m:=A_{k+m}, \qquad m\in\N_0.
\]
We sometimes abbreviate $A^k_m:=(A^k)_m$.
\end{definition}

%%%%%%%%%%%%%%%%%%%%%%%%%%%%%%%%%%
%%% SUBSECTION %%%%%%%%%%%%%%%%%%%%%
%%%%%%%%%%%%%%%%%%%%%%%%%%%%%%%%%%
\subsection*{Banach scales (\boldmath$\SC$-Banach spaces)}

\begin{definition}[Scale Banach space]\label{def:sc-B-sp}
A \textbf{scale structure} or an \textbf{\boldmath\Index{sc-structure}}
on a Banach space $E$, is a nested sequence of linear spaces
\begin{equation*}
\begin{tikzcd} [column sep=tiny]   %[row sep=small, column sep=tiny]
E=E_0
%  \arrow[r, hookleftarrow, " I_1"]
  & \supset & E_1
%  \arrow[r, hookleftarrow, " I_2"]
  & \supset & E_2
%  \arrow[r, hookleftarrow]
  & \supset & \dots
%  \arrow[r, hookleftarrow, " I_m"]
%  & \supset E_m
%  \arrow[r, hookleftarrow]
%  & \supset \dots
\end{tikzcd} 
\end{equation*}
called \Index{level}s such that the following axioms are satisfied.
\begin{labeling}{\texttt{(Banach levels)}}
\item[\Index{\texttt{(Banach levels)}}]
  Each level $E_m$ is a Banach space (coming with a
  norm\index{$(\abs{\cdot}$@$\abs{\cdot}_m$ norm in Banach scale level $E_m$}
  $\abs{\cdot}_m:=\abs{\cdot}_{E_m}$).
\item[\Index{\texttt{(compactness)}}]
  The inclusions $E_{m}\stackrel{I_m}{\INTO} E_{m-1}$
  are\index{operator!compact linear --}
  compact linear operators for all $m$.
\item[\Index{\texttt{(density)}}]
  The set of smooth points $E_\infty:=\bigcap_{m\in\N_0} E_m$ is dense
  in each level $E_m$.
\end{labeling}
An \textbf{\boldmath\Index{sc-Banach space}},
also called a \textbf{scale Banach space}\index{scale!Banach space}
or\index{scale!structure!on a Banach space}
a \textbf{\Index{Banach scale}},
is a Banach space $E$ endowed with a scale structure.
\end{definition}

\begin{exercise}[\Index{sc-direct sum}]
The Banach space direct sum $E\oplus F$ of two $\SC$-Banach spaces $E$
and\index{$F\oplus G$ direct sum Banach scale}
$F$ is a Banach scale with respect to the natural levels
\begin{equation}\label{eq:DS-levels}
     (E\oplus F)_m:=E_m\oplus F_m,\qquad m\in\N_0.
\end{equation}
\end{exercise}

\begin{exercise}[Finite dimensional Banach scales are constant]
\label{exc:constant-B-scale}
A finite dimensional Banach space $E$
has the unique $\SC$-structure
$E_0=E_1=\dots$.
\end{exercise}

\begin{exercise}[Infinite dimensional Banach scales $E$]
\label{exc:dense-levels}
Note that any inclusion operator $E_{m+\ell}\INTO E_m$
is compact, hence continuous.
Show that
\begin{itemize}
\item[(i)]
every level $E_m$ is a dense subset of each of its superlevel Banach spaces;

\item[(ii)]
no level $E_m$ is a closed subset of any of its superlevel Banach spaces.
Equivalently, every level $E_m$ has a non-empty set complement 
in each of its superlevel Banach spaces, in symbols
\[
     E_{m-\ell}\setminus E_{m}\not=\emptyset 
\]
whenever $m\in\N$ und $\ell\in\{1,\dots, m\}$.
\end{itemize}
\end{exercise}

\begin{definition}
A Banach scale is called \textbf{reflexive}
(resp. \textbf{separable})
if\index{Banach scale!reflexive --}
every\index{reflexive!Banach scale}
level\index{separable!Banach scale}
is\index{Banach scale! separable --}
a reflexive (resp. separable) Banach space.
\end{definition}

\begin{lemma}[Induced nested sequences]
\label{le:open-subset-scale}
Any subset of an $\SC$-Banach space $E$
induces via level-wise intersection a scale of sets;
see~(\ref{eq:induced-scale}).
\begin{labeling}{\rm (closed)}
\item[\rm (closed)]
  A closed subset $A\subset E$ meets any level $E_m$
  in a closed set $A_m=A\CAP E_m$.
  If $A\subset E$ is a closed sub\underline{space},
  then the inclusion $i_m\colon A_m\INTO A_{m-1}$ is a compact
  linear operator between Banach spaces.
\item[\rm (open)]
  If $U\subset E$ is an open subset, then $U_m=U\CAP E_m$ is open
  in $E_m$ and the set $U_\infty=U\CAP E_\infty$ of smooth points
  is dense in every $U_m$.
\end{labeling}  
\end{lemma}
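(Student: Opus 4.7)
The plan is to reduce every assertion to two basic facts: (i) each inclusion $E_m\INTO E_{m-1}$ is a compact (in particular continuous) linear operator, so by composition the inclusion $E_m\INTO E_0=E$ is continuous; and (ii) the set $E_\infty$ of smooth points is dense in every level $E_m$.

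For the closed case, I would first observe that $A_m=A\CAP E_m$ is the preimage of the closed set $A\subset E$ under the continuous inclusion $E_m\INTO E$, hence is closed in the Banach space $E_m$. If moreover $A$ is a linear subspace, then so is $A_m$, and a closed linear subspace of a Banach space is itself a Banach space under the inherited norm $\abs{\cdot}_m$. The inclusion $i_m\colon A_m\INTO A_{m-1}$ is the co-restriction of the compact inclusion $E_m\INTO E_{m-1}$ to $A_m$ and $A_{m-1}$; since restriction of a compact operator to a subspace of the domain remains compact (it maps bounded sequences to sequences with convergent subsequences in $E_{m-1}$, whose limits lie in the closed set $A_{m-1}$), $i_m$ is compact.

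For the open case, openness of $U_m$ in $E_m$ is again immediate from continuity of $E_m\INTO E$, since $U_m$ is the preimage of the open set $U$. For density of $U_\infty$ in $U_m$, I would pick $x\in U_m$ and use the density axiom to produce a sequence $(x_k)\subset E_\infty$ with $x_k\to x$ in $E_m$. Because $U_m$ is open in $E_m$, eventually $x_k\in U_m\CAP E_\infty=U_\infty$, proving that $x$ lies in the $E_m$-closure of $U_\infty$.

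There is no real obstacle to this argument; the only mildly delicate point is the assertion that the restriction of the compact inclusion $E_m\INTO E_{m-1}$ to $A_m$ indeed lands in $A_{m-1}$ (which uses $A\subset E$ being a subspace, so $A_m\subset A\subset A_0\supset A_{m-1}$ and more precisely $A_m\subset A\CAP E_{m-1}=A_{m-1}$) and that compactness survives restriction, which is a routine application of the sequential characterization of compact operators together with closedness of $A_{m-1}$ in $E_{m-1}$.
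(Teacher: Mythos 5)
Your proof is correct and follows essentially the same route as the paper: closedness/openness via preimages under the continuous inclusions, compactness of $i_m$ by restricting the compact inclusion $E_m\INTO E_{m-1}$ and using closedness of $A_{m-1}$, and density of $U_\infty$ in $U_m$ by approximating a point of $U_m$ from $E_\infty$ and invoking openness of $U_m$. If anything, your density argument is stated more cleanly than the paper's (which starts from a point of $U_\infty$ rather than of $U_m$), but the underlying idea is identical.
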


\begin{proof}
The intersection $A\CAP E_m=( I_1\circ\dots\circ I_m)^{-1}(A)$ is the
pre-image under a continuous map; analogous for $U$.
\texttt{(Compactness):}
Pick a bounded subset $B$ of $A_m$. Then $B$ is a subset
of all four spaces in the diagram
\begin{equation*}
\begin{tikzcd} %[row sep=small] %[column sep=tiny]
A_{m-1}
\arrow[rr, hook, "{\iota_{m-1}}"]   % , "C^0"']
  &&
  E_{m-1}
\\
A_m
\arrow[rr, hook, "{\iota_m}"]   % , "C^0"']
\arrow[u, hook, dashed, "i_m"]
  &&
  E_m
  \arrow[u, hook, "I_m"']
%    JOA     \arrow[rr, hookleftarrow, dashed, "i_m"]
\end{tikzcd} 
\end{equation*}
The closure of $B$ in $E_{m-1}$ is compact since $ I_m\circ \iota_m$
is a compact linear operator. But $A_{m-1}$ is a closed
subspace of $E_{m-1}$ which contains $B$. Thus the closure
of $B$ is contained in $A_{m-1}$ as well.
\texttt{(Density):}
Pick $p\in U_\infty=\bigcap_{k\in\N_0} (U\CAP E_k)\subset (U_m\CAP E_\infty)$.
By density of $E_\infty$ in $E_m$ there is a sequence $E_m\ni p_\nu\to
p$ in $E_m$.
But $p\in U_m$ and $U_m\subset E_m$ is open.
\end{proof}

%%%%%%%%%%%%%%%%%%%%%%%%%%%%%%%%%%
%%% SUBSECTION %%%%%%%%%%%%%%%%%%%%%
%%%%%%%%%%%%%%%%%%%%%%%%%%%%%%%%%%
\subsection*{\boldmath$\mathrm{Sc}$-subspaces I}

As a closed linear subspace of a Banach space
is a Banach space itself under the restricted norm,
it is natural to call it a Banach subspace.
In view of this the following definition seems natural
in the setting of Banach scales.

\begin{definition}[Banach subscale]
\label{def:Banach-subscale}
A\index{scale!Banach sub--}\index{Banach!subscale}
\textbf{Banach subscale}\index{subscale!Banach --}
of a Banach scale $E$ is a Banach scale $B$
whose levels $B_m$ are Banach subspaces of the
corresponding levels $E_m$ of $E$.
\end{definition}

On the other hand, we just saw in Lemma~\ref{le:open-subset-scale}
that a closed linear subspace $A$ in an $\SC$-Banach space $E$
generates a nested sequence $A^{\cap E}=(A\CAP E_m\subset E_m)_{m=0}^\infty$
of Banach subspaces. So it is natural to ask
\begin{itemize}
\item[1)]
  Does the intersection sequence $A\CAP E_m$ always form a Banach scale?\\
  Answer: \emph{No}. (Even if $\dim A<\infty$; see
  Lemma~\ref{le:fin-diml-sc-subspaces}.)
\item[2)]
  Is a Banach subscale $B\subset E$ generated by its top level $B$?
  In symbols, is every level $B_m$ given by intersection $B\CAP E_m$?\\
  Answer: \emph{Yes}. (See Lemma~\ref{le:sc-subspaces-and-Banach-subscales}.)
\end{itemize}

\begin{definition}[Scale subspaces]
\label{def:subspace-scale}
An \textbf{\boldmath$\SC$-subspace} of an $\SC$-Banach space $E$
is\index{sc-subspace}
a closed subspace $A$ of $E$ whose intersections with the levels of
$E$ form the levels of a Banach subscale of $E$.\footnote{
    The axioms \texttt{(Banach levels)} and \texttt{(compactness)}
    are automatically satisfied for \emph{any closed} subspace $B$ of $E$; see
    Lemma~\ref{le:open-subset-scale}.
    The problematic axiom is \texttt{(density)}.
    }
Speaking of an $\SC$-subspace $A$ of $E$ implicitly carries the
information that $A$ is the Banach subscale of $E$ whose levels are
given by
\[
     A_m:=A\CAP E_m .
\]
Alternatively $A^{\cap E}$ denotes the Banach scale generated by an
$\SC$-subspace $A$.\index{$A^{\cap E}$ Banach scale generated $A$}
\end{definition}

\begin{lemma}\label{le:sc-subspaces-and-Banach-subscales}
a) The top level $B_0$ of a Banach subscale $B$
of a Banach scale $E$ is, firstly, an $\SC$-subspace of $E$
and, secondly, generates $B$ ($={B_0}^{\cap E}$).
b) Every $\SC$-subspace of $E$ arises this way.
\end{lemma}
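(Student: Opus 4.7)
The plan has two parts, matching (a) and (b). Part (b) should come out by unwinding definitions, while (a) reduces to establishing the identity
\[
   B_m = B_0 \cap E_m \quad \text{for all } m \geq 0,
\]
which says that the Banach subscale $B$ coincides with $(B_0)^{\cap E}$; once this is in hand, $B_0$ will simultaneously be a closed subspace of $E_0$ whose intersection scale is a Banach subscale (hence an $\SC$-subspace by Definition~\ref{def:subspace-scale}) and will generate $B$.

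For (b), I would simply observe that an $\SC$-subspace $A$ of $E$ is by definition a closed subspace of $E_0$ such that $A^{\cap E}$ is a Banach subscale; its top level $A \cap E_0$ equals $A$, so $A$ does arise as the top level of a Banach subscale. For (a), once the identity above is in place everything becomes automatic: $B_0$ is closed in $E_0$ as a Banach subspace, and $(B_0)^{\cap E} = B$ is a Banach subscale by hypothesis, so $B_0$ qualifies as an $\SC$-subspace of $E$ that generates $B$.

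The heart of the plan is the identity itself. The inclusion $B_m \subset B_0 \cap E_m$ is immediate from the nestedness $B_m \subset B_0$ together with the Banach subscale condition $B_m \subset E_m$. For the reverse inclusion, I would pick $x \in B_0 \cap E_m$ and aim to exhibit $x$ as an $|\cdot|_m$-limit of elements of $B_\infty \subset B_m$; since $B_m$ is closed in $E_m$ as a Banach subspace, this would force $x \in B_m$. The main obstacle is that the density axiom for $B$ only yields approximations of $x \in B_0$ by $B_\infty$ in the weak top-level norm $|\cdot|_0$, whereas I need convergence in the much finer norm $|\cdot|_m$. The plan to bridge this is to bootstrap: combine the $|\cdot|_0$-density of $B_\infty$ in $B_0$ with the $|\cdot|_m$-density of $E_\infty$ in $E_m$ and the compact embeddings $E_{k+1} \INTO E_k$, extracting via a diagonal argument a sequence in $B_\infty$ that converges to $x$ in the level-$m$ norm.
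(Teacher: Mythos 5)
Your reduction of part a) to the identity $B_m=B_0\cap E_m$, your handling of part b), and the easy inclusion $B_m\subset B_0\cap E_m$ all match the paper. The gap is the reverse inclusion, which is the entire content of the lemma, and the bridge you propose for it cannot be built. Compactness of the inclusions $E_{k+1}\INTO E_k$ converts sequences \emph{bounded} in a finer norm into sequences \emph{convergent} in a coarser one; it transports information down the scale and offers no mechanism for upgrading the $\abs{\cdot}_0$-approximation of $x\in B_0\cap E_m$ by elements of $B_\infty$ into an $\abs{\cdot}_m$-approximation. Since the density axiom gives you no a priori $\abs{\cdot}_m$-bound on any approximating sequence in $B_\infty$, there is nothing to diagonalize. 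In fact the step your bootstrap is meant to establish can fail under the stated hypotheses: take $E_m=W^{m,2}(\SS^1)$, set $B_0:=E_0=L^2(\SS^1)$ and $B_m:=\{f\in W^{m,2}(\SS^1)\mid f(0)=0\}$ for $m\ge 1$. These levels are nested closed subspaces of the $E_m$ (point evaluation is continuous on $W^{1,2}(\SS^1)$), the inclusions are compact, and $B_\infty=\{f\in C^\infty(\SS^1)\mid f(0)=0\}$ is dense in every $B_m$, including in $B_0=L^2$; yet any $x\in W^{1,2}(\SS^1)$ with $x(0)\ne 0$ lies in $B_0\cap E_1$ but is not a $W^{1,2}$-limit of elements of $B_\infty$ (point evaluation passes to $W^{1,2}$-limits). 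So no diagonal extraction will close this step.

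For comparison, the paper disposes of the same step by writing $B_0=\widebar{B_m}^{\,0}$ (the $\abs{\cdot}_0$-closure, using that $B_\infty\subset B_m$ is $\abs{\cdot}_0$-dense in $B_0$) and then asserting $E_m\cap \widebar{B_m}^{\,0}=E_m\cap B_m$ on the grounds that $B_m$ is closed in $E_m$. Note that this closedness refers to the $\abs{\cdot}_m$-topology while the closure being intersected is taken in $\abs{\cdot}_0$, so the paper's one-line justification is facing exactly the difficulty you isolated, and the example above indicates that the difficulty is substantive rather than merely technical. Your instinct that the passage from level-$0$ to level-$m$ approximation is the crux is correct; what is missing is any valid argument for it, and the specific mechanism you propose points in the wrong direction along the scale.
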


\begin{proof}
a) By \texttt{(density)} of the set $B_m$ in the Banach space $B_0$,
the closure $\widebar{B_m}^{\,0}$ with respect to the $B_0$ norm is
the whole space $B_0$. Hence
\[
     E_m\CAP B_0=E_m\CAP \widebar{B_m}^{\,0}=E_m\CAP B_m=B_m
\]
where identity two, also three, holds since $B_m$ itself is a closed
subspace of the Banach space $E_m$ by axiom \texttt{(Banach levels)}.
b) By definition an $\SC$-subspace generates a Banach subscale.
\end{proof}

\begin{lemma}[Finite dimensional $\SC$-subspaces]
\label{le:fin-diml-sc-subspaces}
Given a scale Banach space $E$ and a finite dimensional linear
subspace $B\subset E$. Then
  \[
     \text{$B$ is an $\SC$-subspace of $E$}
     \qquad\Leftrightarrow\qquad
     \text{$B\subset E_\infty$.}
  \]
The $\SC$-subspace $B$ generates the constant Banach scale with levels
$B_m=B$.
\end{lemma}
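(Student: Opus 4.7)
The plan is to prove both implications by exploiting the finite dimensionality of $B$, in particular the elementary facts that every linear subspace of a finite dimensional normed space is closed and that all norms on a finite dimensional space are equivalent.

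For the direction ($\Leftarrow$), assume $B\subset E_\infty$. Then $B\subset E_m$ for every $m$, hence $B\CAP E_m=B$, so the candidate levels for the induced scale $B^{\cap E}$ are all equal to $B$. I would then verify the three axioms of Definition~\ref{def:sc-B-sp} for this constant sequence. \texttt{(Banach levels):} each $B_m=B$, being finite dimensional, is closed in $E_m$ and is a Banach space under the restricted norm $\abs{\cdot}_m$; these norms are mutually equivalent on $B$ because $B$ has finite dimension. \texttt{(compactness):} the inclusion $B_{m}\hookrightarrow B_{m-1}$ is the identity on $B$, a bounded linear operator between Banach space structures on a finite dimensional space, hence compact. \texttt{(density):} $B_\infty=B$ is trivially dense in each $B_m=B$. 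Thus $B$ is an $\SC$-subspace and it generates the constant Banach scale.

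For the direction ($\Rightarrow$), assume $B$ is an $\SC$-subspace of $E$. By the axiom \texttt{(density)} applied to the Banach subscale $B^{\cap E}$ at level $0$, the set $B_\infty=B\CAP E_\infty$ is dense in $B_0=B$ with respect to the $E_0$-norm. But $B_\infty$ is a linear subspace of the finite dimensional space $B$, hence automatically closed in $B$. A closed and dense subset of a topological space equals the whole space, so $B_\infty=B$, which is exactly the inclusion $B\subset E_\infty$. The statement about the generated scale being constant then follows from the argument of the previous paragraph (or directly from $B\subset E_\infty$ giving $B_m=B\CAP E_m=B$ for every $m$).

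There is essentially no hard step here: the whole content of the lemma is the observation that the density axiom forces a finite dimensional $\SC$-subspace to sit inside $E_\infty$, since density inside a finite dimensional space is the same as equality. The only point worth being careful about is that one should invoke density for the norm of $E_0$ (where $B_\infty$ is a priori merely a subset of the finite dimensional space $B$) rather than for the norms of higher levels, which are defined only on the smaller spaces $B_m$.
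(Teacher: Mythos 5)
Your proof is correct and follows essentially the same route as the paper: for ($\Rightarrow$) you use the \texttt{(density)} axiom at level $0$ together with the fact that a dense linear subspace of a finite dimensional normed space is all of it, and for ($\Leftarrow$) you verify directly that the constant sequence $B_m=B$ satisfies the three axioms, which is exactly the content of Exercise~\ref{exc:constant-B-scale} that the paper cites at this point. The extra care you take (closedness of finite dimensional subspaces, equivalence of norms for the compactness of the identity inclusion, and invoking density specifically in the $E_0$-norm) is all sound.
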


\begin{proof}
'$\Rightarrow$' The finite dimensional linear subspace
$B\CAP E_\infty=\bigcap_m(B\CAP E_m)=B_\infty$ of $B$
is dense by the \texttt{(density)} axiom for the subspace scale
generated by $B$. Thus by finite dimension
it is even equal to $B=B\CAP E_\infty\subset E_\infty$.
'$\Leftarrow$'
By assumption $B\subset E_\infty\subset E_m$, thus
$B_m:=B\CAP E_m=B$. So $B$ generates the constant
scale which by Exercise~\ref{exc:constant-B-scale}
is a Banach scale since $\dim B<\infty$.
\end{proof}

\begin{example}[Closed but not $\SC$]
Let $E=L^2([0,1])$ with the, even reflexive, 
Banach scale structure $E_m:=W^{m,2}([0,1])$.
Then the characteristic function $\chi=\chi_{[0,1/2]}$
generates a 1-dimensional, thus closed, subspace $A$ of $E$.
Since $\chi$ lies in $L^2$, but not in $W^{m,2}$ for $m\ge 1$,
the levels $A_m:=A\cap E_m=\{0\}$ are trivial for $m\ge 1$,
hence $A_\infty=\{0\}$ is not dense in $E_0=L^2$.
\end{example}

\begin{exercise}
Infinite dimensional $\SC$-subspaces cannot lie inside~$E_\infty$.

\vspace{.1cm}\noindent %\newline
[Hint: Given an $\SC$-subspace $A\subset E$,
show $A\subset E_\infty$ $\Rightarrow$ $A_\infty=A$, so $A_1=A$.
But $A_1\subset E_1\INTO E$ embeds compactly in $E$, whereas $A$ is
closed in $E$.]
\end{exercise}

The finding that for finite dimensional linear subspaces
``being located in the set of smooth points'' is equivalent
to ``generating a (constant) Banach subscale'' is 
extremely useful. For instance, this enters the proofs of
\begin{itemize}
\item %[-]
  Prop.~\ref{prop:fin-diml-sc-complemented}:
  Finite dimensional $\SC$-subspaces are $\SC$-complemented;
\item %[-]
  Prop.~\ref{prop:fin-codiml-sc-subspaces}:
  Finite codimension $\SC$-subspaces are $\SC$-complemented;
\item %[-]
  Le.~\ref{le:Crit-fin-codiml-subspace-SC}:
  Characterization of finite codimensional $\SC$-subspaces.
\end{itemize}
This list shows that certain classes of scale subspaces
have properties analogous to the corresponding
class of Banach subspaces.
\\
Suppose $A$ and $B$ are $\SC$-subspaces
of an $\SC$-Banach space $E$.
How about the sum $A+B$ and the intersection $A\CAP B$?
\\
Is it possible, in general, to endow
the sum $A+B$ and the intersection $A\CAP B$
with the structure of Banach scales?
So it is natural to ask the following.
\begin{itemize}
\item[3)]
  Is the sum $A+B$ of $\SC$-subspaces always an $\SC$-subspace?\\
  Answer: \emph{No.} The sum of two closed subspaces,
  even in Hilbert space, is not even
  closed in general;\footnote{
    The Hilbert space $l^2$ of square summarizable real sequences
    contains the closed subspaces $A:=\{a\in l^2\mid a_{2n}=0\;\forall n\}$
    and $B:=\{b\in l^2\mid b_{2n}=b_{2n-1}/2n\,\forall n\}$.
    The sum $A+B$ cannot be closed, because it is dense in $l^2$
    (since it contains all sequences of compact support)
    and $A+B$ is not all of $l^2$: Write $(1/n)_n\in l^2$ in the
    form $a+b$ with $a\in A$ and $b\in B$. Then
    $1/2n=a_{2n}+b_{2n}=b_{2n}=b_{2n-1}/2n$.
    So $b_{2n-1}=1$ for all $n$, hence $b\notin l^2$.
    }
  cf.~\citet{Schochetman:2001a}.\\
  Answer: \emph{Yes}, if $A$ and $B$ are finite dimensional.\\
  Answer: \emph{Yes}, if $A$ or $B$ is of finite codimension;
  see Exercise~\ref{exc:sc-subspaces-sum-intersection}.

\item[4)]
  Is the intersection $A\CAP B$ of $\SC$-subspaces an $\SC$-subspace?
  \\
  Answer: 
  \emph{Yes}, if $A$ or $B$ is finite dimensional.\\
  Answer: \emph{Yes}, if $A$ and $B$ are of finite codimension;
  see Exercise~\ref{exc:sc-subspaces-sum-intersection}.
  \\
  (General case: In each level $E_m$ the intersection $(A\CAP B)\CAP
  E_m=A_m\CAP B_m$ is closed. How about density of $(A\CAP B)_\infty$
  in $A\CAP B$?)
\end{itemize}

%%%%%%%%%%%%%%%%%%%%%%%%%%%%%%%%%%
%%%%%%%%%%%%%%%%%%%%%%%%%%%%%%%%%%
%%% SECTION %%%%%%%%%%%%%%%%%%%%%%%%
%%%%%%%%%%%%%%%%%%%%%%%%%%%%%%%%%%
%%%%%%%%%%%%%%%%%%%%%%%%%%%%%%%%%%
\section{Examples}\label{sec:examples}

Throughout $\SS^1$ denotes the unit circle
in\index{$\SS^1$ unit circle}
$\R^2$ or, likewise, the quotient space $\R/\Z$.
It is convenient to think of functions $f\colon \SS^1\to \R$
as $1$-periodic functions on the real line,
that is $f\colon \R\to\R$ such that $f(t+1)=f(t)$ for every $t$.

By definition a counter-example is an example with negative sign.

\begin{example}[Not a Banach scale]
\label{ex:C0_bd}\index{counter-examples:!not a Banach scale}
The\index{$C^k_{\rm bd}(\R)$ is not a Banach scale}
vector space $C^k_{\rm bd}(\R)$ of $k$ times continuously
differentiable functions $f\colon \R\to\R$ which,
together with their derivatives up to order $k$,
are bounded is a Banach space with respect to the
$C^k$ norm.
However, the scale whose \texttt{(Banach levels)}
are $E_m:=C^m_{\rm bd}(\R)$ satisfies \texttt{(density)}
since $E_\infty$ is equal to $C^\infty_{\rm bd}(\R)$, but it does not
satisfy \texttt{(compactness)}. A counter-example
is\index{counter-examples:! bump running to infinity}
provided by a \textbf{bump running to infinity}:
Pick a bump, that is a compactly supported function $\chi\ge 0$ on $\R$,
and set $\chi_\nu:=\chi(\cdot-\nu)$.
Then the set $C:=\{\chi_\nu|\nu\in\N\}$ is bounded in $E_1$, indeed
$\norm{\chi_\nu}_{C^1}=\norm{\chi}_{C^1}=:c_\chi<\infty$,
but there is no convergent subsequence with respect to the $C^0$ norm,
i.e. in $E_0$.
\end{example}

So non-compactness of the domain
obstructs the \texttt{(compactness)} axiom.
There are two ways to fix this.
The obvious one is to use a compact domain; below we illustrate
this by choosing the simplest one $\SS^1$.
Another way is to impose a decay condition
when approaching infinity. This works well
for domains which are a product of a compact manifold with $\R$.
Concerning targets, replacing $\R$ by $\R^n$ makes
no difference in the arguments.

\begin{exercise}[The non-reflexive Banach scale $C^k(\SS^1)$]
\label{ex:C0_S1}\index{$C^k(\SS^1)$ is a Banach scale}
Show that the Banach space $C^k(\SS^1)$
endowed with the scale structure whose levels are
the Banach spaces $E_m:=C^{k+m}(\SS^1)$
is a separable non-reflexive Banach scale.

\vspace{.1cm}\noindent
[Hint: Concerning \texttt{(compactness)}
use the Arzel\`{a}--Ascoli Theorem~\ref{thm:AA}.
For separability see e.g. discussion in~\citet[App.\ A]{Weber:2017b}.]
\end{exercise}

\begin{example}[Sobolev scales -- compact domain]
\label{ex:Wkp_S1}\index{$W^{k,p}(\SS^1)$ is a Banach scale}
Fix an integer $k\in\N_0$ and a real $p\in[1,\infty)$.
The Sobolev space $W^{k,p}(\SS^1,\R^n)$
endowed with the scale structure whose levels are
the Banach spaces $E_m:=W^{k+m,p}(\SS^1,\R^n)$ is a Banach
scale. These Sobolev scales are separable ($1\le p<\infty$) and
reflexive ($1<p<\infty$) by Theorem~\ref{thm:Lp-spaces}.

\vspace{.1cm}\noindent
[Hints: Sobolev embedding theorems and $E_\infty=C^\infty(\SS^1,\R^n)$.]
\end{example}

\begin{exercise}[Weighted Sobolev scales -- non-compact domain $\R$]
\label{exc:Wkp_R}\index{$W^{k,p}_{\delta_k} (\R)$ is a Banach scale}
Fix a monotone cutoff function $\beta\in C^\infty(\R,[-1,1])$
with $\beta(s)-1$ for $s\le-1$ and $\beta(s)=1$ for $s\ge 1$,
as illustrated by Figure~\ref{fig:fig-Morse-exp-weight}.
\begin{figure}%[h]
  \centering
  \includegraphics%[width=0.9\textwidth]
                             [height=4cm]
                             {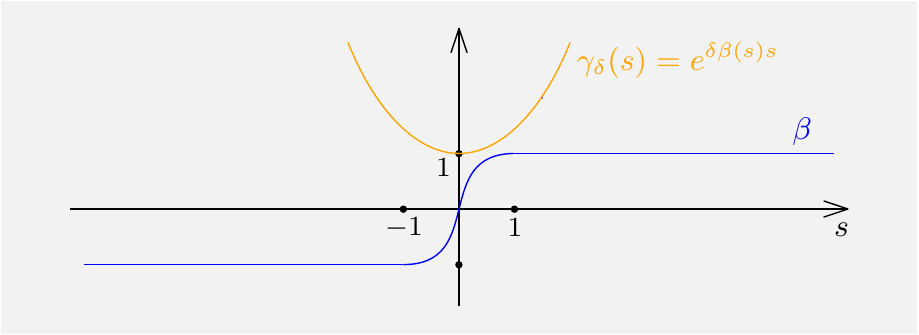}
  \caption{Exponential weight function $\gamma_\delta$ and
                 monotone cutoff function $\beta$}
  \label{fig:fig-Morse-exp-weight}
\end{figure}
Given a constant $\delta\ge0$,
define an exponential weight function by
\[
     \gamma_\delta(s):=e^{\delta s\beta(s)}.
\]
Let $k\in\N_0$ and pick a constant $p\in(1,\infty)$.
Check that the set defined by
\begin{equation}\label{eq:W-kp-delta}
     W^{k,p}_\delta(\R,\R^n)
     :=\{f\in W^{k,p}(\R,\R^n)\mid
     \gamma_\delta f\in W^{k,p}(\R,\R^n)\}
\end{equation}
is a real vector space on which 
\[
     \Norm{f}_{W^{k,p}_\delta}
     :=\Norm{\gamma_\delta f}_{W^{k,p}}.
\]
defines a complete norm.
Consider a strictly increasing sequence
\begin{equation}\label{eq:weights}
     0=\delta_0<\delta_1<\dots
\end{equation}
of reals. Prove that the levels defined by
\begin{equation*}
     E_m:=W^{m,p}_{\delta_m}(\R,\R^n) ,\quad m\in\N_0
\end{equation*}
form a Banach scale structure on the Banach space $L^p(\R,\R^n)$.
\end{exercise}

\begin{exercise}[Strictly increasing is necessary]
Show that if two weights $\delta_{m-1}=\delta_m$ are equal
in~(\ref{eq:weights}), then the \texttt{(compactness)} axiom fails.
\end{exercise}

\begin{exercise}[Reflexivity and separability]
Show that the weighted Sobolev space $W^{m,p}_{\delta}(\R,\R^n)$
is a closed subspace of $W^{m,p}(\R,\R^n)$.
Conclude that the weighted Sobolev scales in the previous example are
separable ($1\le p<\infty$) and reflexive ($1<p<\infty$).
\end{exercise}

\begin{example}[Completion scale -- H\"older spaces are not Banach scales]
\label{ex:Hoelder}\index{counter-examples:!not a Banach scale}
Fix a constant $\mu\in(0,1)$. The sequence of H\"older spaces
$E_m:=C^{m,\mu}(\SS^1)$ for $m\in\N_0$ satisfies the
\texttt{(compactness)} axiom by the Arzel\`{a}--Ascoli
Theorem~\ref{thm:AA}, but the set of smooth points
$E_\infty=C^\infty(\SS^1)$ is not dense in any level $E_m$.
However, taking the closure of $E_\infty$ in each level
produces a Banach scale $\bar E_m:=\overline{E_\infty}^{k,\mu}$ called
the \textbf{\Index{completion scale}}.\index{Banach scale!completion --}
This works for every nested sequence of Banach spaces
that satisfy \texttt{(compactness)} as shown by
\citet[Le.\ 4.11]{Fabert:2016a};
they also solve Exercise~\ref{exc:Wkp_R}.
\end{example}

\begin{exercise}
For which $p\in[1,\infty]$, if any,
is $L^p(\SS^1)$ endowed with the levels $E_m:=L^{p+m}(\SS^1)$ a
Banach scale?
\end{exercise}

\begin{definition}[Weighted Hilbert space valued Sobolev spaces]
\label{def:weighted-spaces}
Let $k\in\N_0$, $p\in(1,\infty)$, and $\delta\ge 0$.
Suppose $H$ is a separable Hilbert space
and define the space $W^{k,p}_\delta(\R,H)$
by~(\ref{eq:W-kp-delta}) with $\R^n$ replaced by $H$.
This\index{Sobolev spaces!Hilbert space valued --}
is again a Banach space; see \citet[Appendix]{Frauenfelder:2018a}.
\end{definition}

\begin{example}[Path spaces for Floer homology]
\label{ex:HS-valued-Sobolevscales}
A a monotone unbounded function $f\colon\N\to(0,\infty)$
is called a\index{growth function!of Floer PDE}
\textbf{growth function}.
Common types of Floer homologies provide such $f$,
order refers to spatial order:

\vspace{.5cm}
\begin{tabular}{llll}
%\begin{tabular}{llr}
\toprule
%\multicolumn{2}{c}{Item} \\
%\cmidrule(r){1-2}
     Floer homology
  & Order
  & Mapping space
  & Growth type
\\
%%%%%%%%%%%%%%%%%%%%%%%%%%%%
\midrule
     Periodic
  & $1^{\rm st}$ %order
  & loop space
  & $f(\nu)=\nu^2$
\\
%%%
     Lagrangian
  & $1^{\rm st}$ %order
  & path space
  & $f(\nu)=\nu^2$
\\
%%%
     Hyperk\"ahler
  & $1^{\rm st}$ %order
  & $\Map(M^3,\R^{2n})$
  & $f(\nu)=\nu^{2/3}$
\\
%%%
     Heat flow
  & $2^{\rm nd}$ %order
  & loop space
  & $f(\nu)=\nu^4$
\\
%%%%%%%%%%%%%%%%%%%%%%%%%%%%
\bottomrule
\end{tabular}
\vspace{.5cm}

\noindent
Here Periodic and Lagrangian Floer homology
refer, respectively, to the elliptic PDEs studied by
\citet{floer:1988c,floer:1989a} on the cylinder $\R\times\SS^1$ and by
\citet{floer:1988a} imposing Lagrangian
boundary conditions along the strip $\R\times[0,1]$.
Hyperk\"ahler and Heat flow Floer homology refer to the theories
established by~\citet*{hohloch:2009a}, respectively, by
\citet{weber:2013a,weber:2013b,weber:2014c}.
The heat flow is described by a parabolic PDE
that relates to Floer's elliptic PDE; see \citet*{salamon:2006a}.

Given a constant $p\in(1,\infty)$, let $\delta_m$ for $m\in\N_0$
be a sequence as in~(\ref{eq:weights}). Given a growth function $f$,
let $H_m=\ell^2_{f^m}$ be the fractal Hilbert scale
on $H=\ell^2$ introduced by \citet[Ex.\ 3.8]{Frauenfelder:2018a}.
Then the Banach space $E_m$ is defined as intersection
of $m+1$ Banach spaces, namely
\begin{equation*}%\label{eq:E_k-Floer}
     E_m:=\bigcap_{i=0}^m W^{i,p}_{\delta_m}(\R,H_{m-i}) ,\quad m\in\N_0.
\end{equation*}
The norm on $E_m$ is the maximum of the $m+1$ individual norms. 
This is a complete norm. This endows $E=L^p(\R,H)$ with the
structure of a Banach scale; see \citet[Thm. 8.6]{Frauenfelder:2018a}.
\end{example}

%%%%%%%%%%%%%%%%%%%%%%%%%%%%%%%%%%
%%%%%%%%%%%%%%%%%%%%%%%%%%%%%%%%%%
%%% SECTION %%%%%%%%%%%%%%%%%%%%%%%%
%%%%%%%%%%%%%%%%%%%%%%%%%%%%%%%%%%
%%%%%%%%%%%%%%%%%%%%%%%%%%%%%%%%%%
\section{Scale linear theory}\label{sec:scale-linear-theory}

%%%%%%%%%%%%%%%%%%%%%%%%%%%%%%%%%%
%%% SUBSECTION %%%%%%%%%%%%%%%%%%%%%
%%%%%%%%%%%%%%%%%%%%%%%%%%%%%%%%%%
\subsection{Scale linear operators}\label{sec:scale-linear-operators}

\begin{definition}[Scale linear operators $T$ and their level
  operators $T_m$]\mbox{ }
\begin{itemize}
\item[(i)]
  A \textbf{scale linear operator}\index{scale!linear operator}
  is a linear operator $T\colon E\to F$ between Banach scales
  which\index{operator!sc-linear}\index{level!preserving}
  is \textbf{level preserving}, that is $T(E_m)\subset
  F_m$ for every $m\in\N_0$.
\item[(ii)]
  The restriction of a scale linear operator $T\colon E\TO F$ to a level
  of $E$ takes values in the corresponding level of $F$. Hence
  $T$ viewed as a map between corresponding levels
  is\index{$T_m\colon E_m\to F_m$ level operator}
  a linear operator 
  \[
     T_m:=T|_{E_m}\colon E_m\to F_m ,\qquad  m\in\N_0
  \]
  between Banach spaces, called the\index{level!operator}
  \textbf{\boldmath$m^{\rm th}$ level operator}.
\end{itemize}
\end{definition}

If a scale linear operator $T\colon E\to F$ is, in addition, a bijective map,
then each level operator
$
     T\colon E_m\rightarrowtail F_m
$
is injective -- but not necessarily surjective.
It will be surjective if the inverse linear map $T^{-1}\colon F\to E$
is level preserving: In this case each level operator
$(T^{-1})_m\colon F_m\rightarrowtail E_m$ is injective.
This proves

\begin{lemma}
Suppose a scale linear operator $T\colon E\to F$
is bijective and its inverse is level preserving.
Then every level operator
\begin{equation*}
\begin{tikzcd} %[row sep=small] %[column sep=tiny]
T_m:=T|_{E_m}\colon E_m
\arrow[r, tail, two heads]
  & F_m
\end{tikzcd} 
\end{equation*}
is a bijective linear map between Banach spaces.
\end{lemma}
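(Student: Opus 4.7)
The statement is essentially a direct consequence of combining two pieces of structure: bijectivity of $T$ at the top level, and the level preservation of $T^{-1}$. My plan is to verify injectivity and surjectivity of $T_m$ separately, then invoke the axiom \texttt{(Banach levels)} from Definition~\ref{def:sc-B-sp} to conclude that each side is a Banach space.

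For injectivity, I would simply note that $T_m = T|_{E_m}$ is the restriction of the bijective, hence injective, map $T\colon E\to F$ to the linear subspace $E_m\subset E_0$; a restriction of an injection is an injection.

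For surjectivity, I would pick an arbitrary $y \in F_m$ and use the hypothesis that $T^{-1}\colon F\to E$ is level preserving, which yields $x:=T^{-1}(y) \in E_m$. Then $T_m(x) = T(x) = y$, so $y$ lies in the image of $T_m$. Since $E_m$ and $F_m$ are Banach spaces by the \texttt{(Banach levels)} axiom, $T_m\colon E_m\to F_m$ is a bijective linear map between Banach spaces, as claimed.

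I expect no real obstacle here; the argument is already sketched almost verbatim in the paragraph preceding the lemma statement (where the author essentially observes that injectivity comes from bijectivity of $T$ and surjectivity comes from the fact that $T^{-1}$ maps $F_m$ into $E_m$). The only minor care is to avoid conflating $(T^{-1})_m$ with $(T_m)^{-1}$: the hypothesis gives the former as a well-defined level-preserving map, and the computation $T_m \circ (T^{-1})_m = \mathrm{id}_{F_m}$ then yields surjectivity of $T_m$ (and incidentally shows $(T_m)^{-1} = (T^{-1})_m$ as set maps, though continuity of the inverse at each level would require an additional argument such as the open mapping theorem, which the lemma does not claim).
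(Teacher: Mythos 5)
Your proof is correct and follows essentially the same route as the paper: injectivity of $T_m$ from injectivity of $T$, and surjectivity from level preservation of $T^{-1}$, which is exactly the argument sketched in the paragraph preceding the lemma. Your closing remark distinguishing $(T^{-1})_m$ from $(T_m)^{-1}$ and noting that continuity of the inverse is not claimed is a sensible precaution, consistent with the paper deferring that point to the subsequent exercise on $\SC$-isomorphisms.
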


%%% SUBSUBSECTION %%%%%%%%%%%%%%%%%%%
%%%%%%%%%%%%%%%%%%%%%%%%%%%%%%%%%%
\subsubsection*{Scale continuous operators}

\begin{definition}[$\SC$-operators]
A scale linear operator $T\colon E\to F$ is called
\textbf{scale continuous} or\index{scale!continuous}
\textbf{scale bounded} or of\index{scale!bounded}
\textbf{\boldmath class $\SCz$}, if each level operator
$
     T_m\in\Ll(E_m,F_m)
$
is a continuous linear operator between Banach spaces.
  \begin{equation*}\label{eq:sc-operators}
  \begin{tikzcd} [row sep=small] %[column sep=tiny]
     E=E_0\arrow[rrrr, "{T_0:=T}", "{\text{\tiny continuous}}"']
     &&&&
       F_0=F
     \\
     \vdots
     \arrow[u, hook]
     &&&& 
       \vdots
       \arrow[u, hook]
     \\
     E_m\arrow[rrrr, "{T_m:=T|_{E_m}}", "{\text{\tiny continuous}}"']
     \arrow[u, hook]
     &&&&
       F_m
       \arrow[u, hook]
     \\
     E_{m+1}
     \arrow[u, hook, "{\text{\tiny\; compact}}"', "I_{m+1}"]
     \arrow[rrrr, "{T_{m+1}:=T|_{E_{m+1}}}", "{\text{\tiny continuous}}"']
     &&&&
     F_{m+1}
     \arrow[u, hook, "J_{m+1}"', "{\text{\tiny compact\;\,}}"]
     \\
     \vdots
     \arrow[u, hook]
     &&&& 
       \vdots
       \arrow[u, hook]
       \\
     \cap_m E_m=:E_\infty \arrow[rrrr, "{T_\infty:=T|_{E_\infty}}"]
     \arrow[u, "{\text{\tiny\; dense}}"']
     &&&&
       F_\infty:=\cap_m F_m
       \arrow[u, "{\text{\tiny dense\;\,}}"]
  \end{tikzcd} 
  \end{equation*}
Such $T$ is called an \textbf{\boldmath\Index{sc-operator}}
between Banach scales.
In the realm of scale linear operators $\SC$ does not abbreviate
\emph{scale}, but \emph{\underline{s}cale \underline{c}ontinuous}. Let
$
     \Llsc(E,F)
$
be\index{$\Llsc(E,F)$ $\SC$-operators}
the \textbf{set of \boldmath$\SC$-operators} between the Banach
scales $E$ and $F$.
\end{definition}

\begin{exercise}
Check that $\Llsc(E,F)$ is a linear space.
\end{exercise}

\begin{exercise}
Given $\Llsc(E,F)$, consider the sequence $\left(\Ll_m\right)_{m\in\Nn_0}$
of Banach spaces $\Ll_m:=\Ll(E_m,F_m)$ under the operator norm. 
Characterize the case
in which one has inclusions $\Ll_{m+1}\subset\Ll_m$ as
a) sets and 
b) continuous maps between Banach spaces.
In b) characterize the case in which 
c) the set $\Ll_\infty:=\CAP_m\Ll_m$ is dense in each level $\Ll_m$ and
d) every inclusion operator $\Ll_{m+1}\INTO\Ll_m$ is compact.
\end{exercise}

\begin{definition}[$\SC$-projections]
An \textbf{\Index{sc-projection}} is
a scale continuous operator $P$
whose level operators $P_m$ are all \textbf{\Index{projection}s},
i.e. $P_m\circ P_m=P_m$.
Equivalently, the $\SC$-projections are those $\SC$-operators $P$
with $P^2=P$.
\end{definition}

\begin{lemma}[Image and kernel of $\SC$-projections are $\SC$-subspaces]
\label{le:sc-projections-ker-im}
The image, hence the kernel, of any $P=P^2\in\Llsc(E)$
are $\SC$-subspaces.\index{sc-projection!induced sc-splitting}
\end{lemma}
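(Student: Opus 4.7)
The plan is to exploit that $P = P^2 \in \Llsc(E)$ forces both $\ker P$ and $\im P$ to be closed subspaces of $E$ on which the density axiom is easy to verify by the projection trick, and then appeal to Lemma~\ref{le:open-subset-scale} for the remaining axioms.

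First I would record the two preliminary facts. Since $P$ is sc-continuous, the level operator $P_m \in \Ll(E_m, F_m)$ is continuous, so $\ker P \cap E_m = \ker P_m$ is closed in $E_m$. Similarly, $Q := I - P$ is an sc-projection ($Q^2 = I - 2P + P^2 = Q$), so $\im P = \ker Q$ is closed in $E$, and at each level $\im P \cap E_m = \ker Q_m$ is closed in $E_m$. Moreover one checks easily that $\im P \cap E_m = P(E_m) = \im P_m$: if $x \in \im P \cap E_m$, then $x = Px$ (since $P^2 = P$), and $x \in E_m$ gives $x = P_m x$. Hence both $\ker P$ and $\im P$ are closed subspaces of $E$, and Lemma~\ref{le:open-subset-scale} supplies the \texttt{(Banach levels)} and \texttt{(compactness)} axioms for the induced nested sequences.

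The only thing left is the \texttt{(density)} axiom. For $\ker P$, fix $x \in \ker P \cap E_m$ and use density of $E_\infty$ in $E_m$ to choose $E_\infty \ni x_\nu \to x$ in $E_m$. Level-preservation of $P$ gives $P x_\nu \in E_\infty$, so
\[
     y_\nu := x_\nu - P x_\nu \in E_\infty, \qquad P y_\nu = P x_\nu - P^2 x_\nu = 0,
\]
hence $y_\nu \in (\ker P) \CAP E_\infty = (\ker P)_\infty$. By continuity of $P_m$, $P x_\nu \to P x = 0$ in $E_m$, so $y_\nu \to x$ in $E_m$. For $\im P$, fix $x \in \im P \cap E_m$, so $x = P x$; pick $E_\infty \ni x_\nu \to x$ in $E_m$ and set $z_\nu := P x_\nu \in E_\infty$. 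Then $z_\nu \in \im P$ and $z_\nu \to P x = x$ in $E_m$, giving density.

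I do not expect any obstacle: the compactness and Banach-level axioms are free from Lemma~\ref{le:open-subset-scale}, while the density axiom is settled by the standard ``subtract $P$'' (resp.\ ``apply $P$'') trick, which works because $P$ is simultaneously level-preserving (so $E_\infty$ is $P$-invariant) and continuous on each level (so limits are preserved).
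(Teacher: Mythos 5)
Your proof is correct and follows essentially the same route as the paper's: closedness plus Lemma~\ref{le:open-subset-scale} handle \texttt{(Banach levels)} and \texttt{(compactness)}, and \texttt{(density)} is obtained by pushing an $E_\infty$-approximating sequence through $P$ (resp.\ $Q=\1-P$), using level preservation to stay in $E_\infty$ and level continuity to pass to the limit. The only cosmetic difference is that the paper treats the image and deduces the kernel via $Q$, whereas you run the same trick twice.
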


\begin{proof}
As $Q:=\1-P$ is an $\SC$-projection whose image is the kernel of $P$,
it suffices to show that the images $R_m:=\im P_m=\Fix\, P_m$ form
a Banach subscale of $E$. The inclusion $R_{m+1}\subset R_m$ holds
by $E_{m+1}\subset E_m$. And $R_m=\Fix\, P_m$ is a closed (linear) subspace of
$E_m$ by continuity and linearity of~$P_m$.
\texttt{(Compactness)}
of the inclusion $I_m\colon E_m\INTO E_{m-1}$,
together with $R_m\subset E_m$ being closed,
tells that each inclusion $i_m\colon R_m\INTO R_{m-1}$
takes bounded sets into pre-compact ones.
\\
It remains to check \texttt{(density)}
of $R_\infty=\CAP_\ell R_\ell$ in $R_m$.
To see this pick $r_m\in R_m\subset E_m$
and, by density of $E_\infty$ in $E_m$, pick some in $E_m$ convergent
sequence $E_\infty\ni e_\nu\to r_m$.
Since $R_m=\Fix\, P_m$ and by continuity of $P_m$ we get
\[
     R_m\ni r_m=P_mr_m=\lim_{\nu\to\infty} P_me_\nu.
\]
For each $e_\nu$ it holds that
\[
     P_me_\nu=P e_\nu \in E_\ell\CAP R_\ell =R_\ell\quad\forall \ell.
\]
Here the first equality holds since $e_\nu\in
E_\infty\subset E_m$ and $P_m$ is the restriction of $P$ to $E_m$,
so $P_me_\nu=P e_\nu$. But $P$ preserves levels and $e_\nu$
lies in every $E_\ell$, so $Pe_\nu$ lies in every $E_\ell$
and $P e_\nu=P_\ell e_\nu\in \im P_\ell=R_\ell$.
Thus $P e_\nu\in R_\infty$.
\end{proof}

\begin{definition}[$\SC$-isomorphisms]
A (linear) \textbf{\Index{sc-isomorphism}} is
a bijective $\SC$-operator whose inverse\footnote{
  Inverses of bijective $\SC$-operators are not
  automatically level preserving: Consider the
  identity
  operator from the forgetful Banach scale
  $E_0\supset E_2\supset E_3\dots$
  to $E_0\supset E_1\supset E_2\dots$
  }
is level preserving.
\end{definition}

\begin{exercise}
For an $\SC$-isomorphism
$T\colon E\to F$ all level operators
\[
     T_m\in\Ll(E_m,F_m),\qquad
     (T^{-1})_m\in\Ll(F_m,E_m)
\]
are continuous bijections with continuous inverses.

\vspace{.1cm}\noindent %\newline
[Hint: Bounded inverse theorem, equivalently, open mapping theorem.]
\end{exercise}

%%%%%%%%%%%%%%%%%%%%%%%%%%%%%%%%%%
%%% SUBSUBSECTION %%%%%%%%%%%%%%%%%%%
%%%%%%%%%%%%%%%%%%%%%%%%%%%%%%%%%%
\subsubsection*{Scale compact operators include \boldmath$\SC^+$-operators}

\begin{definition}[Scale compact operators]
An \textbf{\Index{sc-compact operator}}
is a scale linear operator $S\colon E\to F$ whose
level operators $S_m\colon E_m\to F_m$
are all compact (hence bounded) linear operators between Banach spaces.
\end{definition}

Scale compact operators are $\SC$-operators,
i.e. elements of $\Llsc$.

\begin{definition}[$\SC^+$-operators]
Suppose $E$ and $F$ are scale Banach spaces.
Recall that $F^1$ denotes the Banach scale that arises
from $F$ by forgetting the $1^{\rm st}$ level $F_0$ and taking
$F_1$ as the new level $0$.
The elements $S\in\Llsc(E,F^1)$ are called\index{sc$^+$-operator}
\textbf{\boldmath$\SC^+$-operators}
and we use the notation\index{$\Llscp$ $\SC^+$-operators}
\[
     \Llscp(E,F):=\Llsc(E,F^1).
\]
\end{definition}

\begin{remark}[$\SC^+$-operators are scale compact]
\label{rem:sc-plus}
The \texttt{(compactness)} axiom not only shows 
$
     \Llscp(E,F)\subset\Llsc(E,F)
$,
but also that any $\SC^+$-operator $S\colon E\to F$
is $\SC$-compact: This follows from the commutative diagram
\begin{equation*}
\begin{tikzcd} %[row sep=small] %[column sep=tiny]
E_m
\arrow[rrrr, dashed, "S"]
\arrow[drrrr, "bounded", "{S\colon E_m\to(F^1)_m=F_{m+1}}"']
  &&&&
  F_m
\\
  &&&&
  F_{m+1}
  \arrow[u, hook, "\text{compact}"']
\end{tikzcd} 
\end{equation*}
since the composition of a bounded and a compact linear operator is compact.
\end{remark}

\begin{remark}[Are scale compact operators always $\SC^+$-operators?] \

\noindent
No: Let $E$ be an infinite dimensional Banach scale.
The inclusion $\iota\colon E^1\to E$
has compact level operators $\iota_m\colon E_{m+1}\to E_m$
and it is an $\SC^+$-operator, indeed
$\iota\in\Llsc(E^1,E^1)=:\Llscp(E^1,E)$.
Now forget level one in $E^1$ and in $E$, 
denote the resulting Banach scales by $E^1_{\times 1}$ and
$E_{\times 1}$, respectively.
All level operators of the
inclusion $\iota\colon E^1_{\times 1} \to E_{\times 1}$
are still compact, but $\iota$ does not even map
level zero $(E^1_{\times 1})_0=E_1$ to level one 
$(E_{\times 1})_1=E_2$, let alone be continuous.
\end{remark}

%%%%%%%%%%%%%%%%%%%%%%%%%%%%%%%%%%
%%% SUBSECTION %%%%%%%%%%%%%%%%%%%%%
%%%%%%%%%%%%%%%%%%%%%%%%%%%%%%%%%%
\subsection*{\boldmath$\mathrm{Sc}$-subspaces II}

%%%%%%%%%%%%%%%%%%%%%%%%%%%%%%%%%%
%%% SUBSECTION %%%%%%%%%%%%%%%%%%%%%
%%%%%%%%%%%%%%%%%%%%%%%%%%%%%%%%%%
\subsubsection*{Direct sum and \boldmath$\SC$-complements of $\SC$-subspaces}

\begin{definition}\label{def:sc-direct-sum}
An $\SC$-subspace $F$ of a Banach scale $E$
is called \textbf{\boldmath\Index{sc-complement}ed} if there
is an $\SC$-subspace $G\subset E$
such that every Banach space direct sum of corresponding levels
\[
     F_m\oplus G_m=E_m
\]
is equal to the ambient level $E_m$. Such $G$ is called an
\textbf{\boldmath\Index{sc-complement}} of $F$. 
So the Banach space $F\oplus G$ carries
the natural Banach scale structure~(\ref{eq:DS-levels}). Such a pair
$(F,G)$ or such direct sum $F\oplus G$
is called an \textbf{\boldmath\Index{sc-splitting}} of $E$.
\end{definition}

\begin{exercise}
Let $G$ be an $\SC$-complement of $F\subset E$. Check that the Banach
space $F\oplus G$ together with the natural level structure~(\ref{eq:DS-levels})
indeed satisfies the axioms of a Banach scale.
\end{exercise}

\begin{exercise}[Sc-projections $\SC$-split]
\label{exc:sc-projections-split}
There is\index{sc-projection!induced sc-splitting}
an $\SC$-splitting 
\[
     E=\ker P\oplus \im P
\]
associated to any $\SC$-projection, i.e. any idempotent
$P=P^2\in\Llsc(E)$.

\vspace{.1cm}\noindent %\newline
[Hint: $P_m=(P_m)^2\in\Ll(E_m)$ means
$E_m=\ker P_m\oplus \Fix\,P_m$.
Lemma~\ref{le:sc-projections-ker-im}.]
\end{exercise}

\begin{proposition}\label{prop:fin-diml-sc-complemented}
Finite dimensional $\SC$-subspaces are
$\SC$-complemented.
\end{proposition}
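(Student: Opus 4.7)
The plan is to build an $\SC$-projection onto the finite dimensional $\SC$-subspace $F\subset E$ and then apply Exercise~\ref{exc:sc-projections-split} to extract an $\SC$-complement as its kernel. The key enabling fact is Lemma~\ref{le:fin-diml-sc-subspaces}: since $F$ is a finite dimensional $\SC$-subspace, every element of $F$ is a smooth point, i.e. $F\subset E_\infty$, and $F$ carries the constant Banach scale with levels $F_m=F$.

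First, I would pick a basis $e_1,\dots,e_n$ of $F$; by the above, each $e_i\in E_\infty$, so in particular $e_i\in E_m$ for every $m$. Applying the Hahn--Banach theorem on the top level Banach space $E_0$ to the (continuous) dual basis on $F\subset E_0$, I obtain continuous linear functionals $\varphi_1,\dots,\varphi_n\in (E_0)^*$ satisfying $\varphi_i(e_j)=\delta_{ij}$. Since the inclusions $E_m\hookrightarrow E_0$ are continuous, the restrictions $\varphi_i|_{E_m}\in (E_m)^*$ are continuous on every level.

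Next, I would define
\[
     P\colon E\to E,\qquad Px:=\sum_{i=1}^n \varphi_i(x)\,e_i .
\]
Since each $e_i\in E_\infty\subset E_m$ and each $\varphi_i|_{E_m}$ is continuous, the map $P_m:=P|_{E_m}$ lands in $F\subset E_m$ and is a continuous linear operator on $E_m$ for every $m$. Hence $P$ is level preserving and of class $\SC^0$. The relations $\varphi_i(e_j)=\delta_{ij}$ give $Pe_j=e_j$, so $P^2=P$, and $\im P=F$. Thus $P$ is an $\SC$-projection in the sense of the text.

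Finally, Exercise~\ref{exc:sc-projections-split} yields the $\SC$-splitting $E=\ker P\oplus \im P$, whose summands are $\SC$-subspaces by Lemma~\ref{le:sc-projections-ker-im}. Since $\im P=F$, the subspace $G:=\ker P$ is the desired $\SC$-complement of $F$. No genuine obstacle appears here: the only subtlety is checking that $P$ actually preserves all levels, which is handled automatically because the image vectors $e_i$ are smooth points, so the coefficients $\varphi_i(x)\in\R$ can be read off at whichever level $x$ happens to live in without any loss of regularity.
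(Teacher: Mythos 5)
Your proposal is correct and follows essentially the same route as the paper: both construct the projection $P(x)=\sum_i\varphi_i(x)e_i$ from a Hahn--Banach extension of the dual basis, using $F\subset E_\infty$ (Lemma~\ref{le:fin-diml-sc-subspaces}) to guarantee level preservation. The only cosmetic difference is that you delegate the final splitting and the density check for $\ker P$ to Lemma~\ref{le:sc-projections-ker-im} and Exercise~\ref{exc:sc-projections-split}, whereas the paper verifies these directly; since both results precede the proposition, your shortcut is legitimate.
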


\begin{proof}
We recall the proof given in~\citet[Prop.\,1.1]{Hofer:2017a}.
Suppose $F$ is a finite dimensional $\SC$-subspace
of a Banach scale $E$. Then $F\subset E_\infty$ by
Lemma~\ref{le:fin-diml-sc-subspaces}
and $F$ generates the constant Banach scale with levels
$F_m:=F\CAP E_m=F$ by Exercise~\ref{exc:constant-B-scale}.
Pick a basis $e_1,\dots,e_k$ of $F\subset E_\infty$
and let $e_1^*,\dots,e_k^*\in F^*$ be the dual basis.
By the Hahn--Banach Theorem~\ref{thm:HB} any $e_i^*$ extends to a
continuous linear functional $\lambda_i$ on $E$. 
The linear operator $P\colon E\to E$ defined by
$
     P(x):=\sum_{i=1}^k \lambda_i(x) e_i
$
is continuous and satisfies $P\circ P=P$ by straightforward calculation.
Note that the image of $P$ is $F$, that is $P(E)=F$, and that $F$ is
contained in $E_\infty$, hence in every level $E_m$.
This shows that $P$ is level preserving and admits
level operators $P_m\colon E_m\to E_m$.
By the continuous inclusion $E_m\INTO E$ the restrictions,
still denoted by $\lambda_i$, are continuous linear functionals
$\lambda_i\colon E_m\INTO E\to\R$ on every level $E_m$.
By the same arguments as for $P$ every level
operator $P_m$ is a continuous linear projection $P_m\colon E_m\to E_m$
with image $P(E_m)=F$.
Hence $P\in\Llsc(E)$ is an $\SC$-projection.

\vspace{.1cm}\noindent
\textsc{Goal.}
Given the finite dimensional $\SC$-subspace $F\subset E$
(generating the constant Banach scale $F_m=F$),
find a closed subspace $G\subset E$ such that
\begin{itemize}
\item[a)]
  $G_m:=G\CAP E_m$ are the levels of a Banach
  subscale ($G$ is an $\SC$-subspace);
\item[b)]
  $F_m\oplus G_m=E_m$ for every $m$.
\end{itemize}
\noindent
\textsc{Solution.}
The subspace of $E$ defined by $G:=(\1-P)E$ is closed
since $G=\ker P$ and $P$ is continuous.
a) 
By Lemma~\ref{le:open-subset-scale}
only \texttt{(density)} remains to be checked.
To see that $G_\infty:=\bigcap_m G_m=G\CAP E_\infty$ is dense in
any level $G_m$ pick $g\in G_m\subset E_m$.
By density of $E_\infty$ in $E_m$
choose a sequence $e_\nu\in E_\infty$ that converges in $E_m$ to
$g\in E_m$. The sequence $g_\nu:=(\1-P_m)e_\nu$
lies in $G\CAP E_m=:G_m$ and converges in $G_m$ to $g$: Indeed
${{\color{cyan}e_\nu-g_\nu}=P_me_\nu=P_m(e_\nu-g)}$, since 
$g\in G_\infty\subset G_m=\ker P_m$, 
so together with $\norm{P_m}\le 1$ we get 
\[
     \Abs{g-g_\nu}_m
     \le\Abs{g-e_\nu}_m+\Abs{{\color{cyan}e_\nu-g_\nu}}_m
     \le 2\Abs{g-e_\nu}_m\to 0,\quad
     \text{as $\nu\to\infty$.}
\]
b) For $x\in F\CAP G=\im P\CAP\ker P$ one has $x=Py$ for a $y\in
E$, hence $0=Px=PPy=Py=x$. So the intersection of subspaces
$F_m\CAP G_m=\{0\}$ is trivial, too.
It remains to show the equality $\im P_m +\ker P_m= E_m$, $\forall m$.
'$\subset$' Obvious.
'$\supset$' Pick $e\in E_m$ and set $f:=P_me$ and $g:=e-P_me$.
\end{proof}

\begin{exercise}
Give an example of a finite dimensional subspace $F$
of a Banach scale that is not $\SC$-complemented. %\hfill
%
%\vspace{.1cm}\noindent
[Hint: Pick $f\in C^0(\SS^1)\setminus C^1(\SS^1)$.]
\end{exercise}

%%%%%%%%%%%%%%%%%%%%%%%%%%%%%%%%%%
%%% SUBSECTION %%%%%%%%%%%%%%%%%%%%%
%%%%%%%%%%%%%%%%%%%%%%%%%%%%%%%%%%
\subsection*{Quotient Banach scales}

If you are not familiar with the quotient construction
for Banach spaces, have a look
at the neighborhood of Proposition~\ref{prop:Banach-quotient}
and its proof for definitions and explanations. Understanding that
proof helps to prove

\begin{proposition}[Quotient Banach scales]
\label{prop:quotient-scale}
Let $E$ be a Banach scale and $A$ an $\SC$-subspace.
Then the quotient Banach space $E/A$ with levels
\[
     (E/A)_m:=E_m/A_m:=\{x+A_m\mid x\in E_m\}, \quad m\in\N_0
\]
and inclusions
\begin{equation}\label{eq:quot-incl}
     E_{m+1}/A_{m+1}\INTO E_m/A_m,\quad
     x+A_{m+1}\mapsto x+A_m
\end{equation}
is a Banach scale. 
\end{proposition}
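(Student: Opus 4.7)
The strategy is to verify the three Banach-scale axioms in turn, exploiting that $A$ being an sc-subspace gives both closedness of each $A_m$ in $E_m$ and density of $A_\infty$ in each $A_m$.

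First, for \texttt{(Banach levels)}: because $A$ is an sc-subspace of $E$, each level $A_m = A \cap E_m$ is a closed subspace of the Banach space $E_m$ (by the closed part of Lemma~\ref{le:open-subset-scale} applied to the closed set $A \subset E$, together with the fact that $A_m$ is linear). Hence $E_m/A_m$ is a Banach space under the quotient norm $\norm{x+A_m}_{E_m/A_m} := \inf_{a \in A_m} \abs{x-a}_m$ by the standard Banach-quotient construction (cf.\ Proposition~\ref{prop:Banach-quotient}). I would also check that the inclusion~\eqref{eq:quot-incl} is well-defined, linear, injective, and continuous: well-definedness follows from $A_{m+1}\subset A_m$; continuity from $\norm{x+A_m}_{E_m/A_m} = \inf_{a\in A_m}\abs{x-a}_m \le \inf_{a\in A_{m+1}}\abs{x-a}_m \le c\,\inf_{a\in A_{m+1}}\abs{x-a}_{m+1}$ where $c$ is the norm of the inclusion $E_{m+1}\INTO E_m$; injectivity from the computation that if $x\in E_{m+1}$ and $x\in A_m$, then $x\in A\cap E_{m+1}=A_{m+1}$.

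Next, for \texttt{(compactness)}: given a bounded sequence $(x_\nu+A_{m+1})_\nu$ in $E_{m+1}/A_{m+1}$, by the infimum characterization of the quotient norm I would select $a_\nu\in A_{m+1}$ with $\abs{x_\nu-a_\nu}_{m+1}\le\norm{x_\nu+A_{m+1}}_{E_{m+1}/A_{m+1}}+1/\nu$, so that the lifts $y_\nu:=x_\nu-a_\nu\in E_{m+1}$ form a bounded sequence. By the \texttt{(compactness)} axiom for $E$, the inclusion $E_{m+1}\INTO E_m$ is a compact operator, so a subsequence $y_{\nu_k}$ converges in $E_m$ to some $y\in E_m$. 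Since the quotient map $E_m\to E_m/A_m$ is continuous of norm $\le 1$, the classes $y_{\nu_k}+A_m = x_{\nu_k}+A_m$ converge in $E_m/A_m$ to $y+A_m$.

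Finally, for \texttt{(density)}: identify $(E/A)_\infty = \bigcap_m E_m/A_m$ with the image of $E_\infty$ in $E/A_0$ via the injective inclusions from the first step (so that $e\in E_\infty$ gives a consistent class $e+A_m$ in every level). Given a class $x+A_m\in E_m/A_m$ with representative $x\in E_m$, the density axiom for $E$ supplies $e_\nu\in E_\infty$ with $e_\nu\to x$ in $E_m$; then $\norm{(e_\nu+A_m)-(x+A_m)}_{E_m/A_m}\le\abs{e_\nu-x}_m\to 0$, while each $e_\nu+A_m$ lies in $(E/A)_\infty$ since $e_\nu\in E_k$ for every $k$. The only potential subtlety, which I would address in passing, is making sure the identification of $(E/A)_\infty$ with (the quotient classes of) $E_\infty$ is consistent with the level inclusions~\eqref{eq:quot-incl}, and this is exactly the injectivity established above.
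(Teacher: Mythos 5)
Your proposal is correct and follows essentially the same route as the paper's proof: quotient each level using closedness of $A_m$ (deferring to the Banach-quotient proposition), verify \texttt{(compactness)} by choosing almost-norm-attaining representatives $x_\nu-a_\nu$ and applying compactness of $E_{m+1}\INTO E_m$ followed by continuity of $\pi_m$, and verify \texttt{(density)} by pushing the dense set $E_\infty\subset E_m$ through the continuous surjection $\pi_m$. The extra checks you include (well-definedness, injectivity, and continuity of the inclusions~(\ref{eq:quot-incl}), and the consistency of the identification of $(E/A)_\infty$) are details the paper leaves implicit, and they are handled correctly.
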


By Proposition~\ref{prop:Banach-quotient}
the norm on the coset space $E_m/A_m$ defined by
\[
     \Norm{x+A_m}_m
     :=d(x,A_m)
     :=\inf_{a\in A_m} \Abs{x-a}_m
\]
is complete. It is called the\index{quotient!norm}
\textbf{quotient norm} and measures the distance between the coset
$x+A$ and the zero coset, the subspace $A$ itself.

\begin{proof}
By the $\SC$-subspace assumption on $A$
every level $A_m:=A\CAP E_m$ is a Banach subspace of $E_m$, hence the
quotient spaces $E_m/A_m$ endowed with the norms $\norm{\cdot}_m$
are \texttt{(Banach levels)}.
To prove that the natural inclusions~(\ref{eq:quot-incl})
are compact linear operators pick a sequence $x_\nu+A_{m+1}$ in the
unit ball of $E_{m+1}/A_{m+1}$. (Note that $x_\nu\in E_{m+1}$.)
This means that the distance of each $x_\nu$ to the zero coset
$A_{m+1}$ of $E_{m+1}/A_{m+1}$ is not larger than $1$.
Hence for each $x_\nu$ there is a point $a_\nu$ in the zero coset $A_{m+1}$
at $E_{m+1}$ distance less than $2$, that is $\abs{x_\nu-a_\nu}_{m+1}<2$.
What we did is to choose for the given bounded sequence
of cosets $x_\nu+A_{m+1}=x_\nu-a_\nu+A_{m+1}$ a sequence of new
representatives $x_\nu-a_\nu$ which, most importantly, is bounded in
$E_{m+1}$. By compactness of the inclusion $E_{m+1}\INTO E_m$
there is a subsequence, still denoted by $x_\nu-a_\nu$, which
converges to some element $x\in E_m$. By continuity of the quotient
projection $\pi_m\colon E_m\to E_m/A_m$, $x\mapsto x+A_m$, see
Proposition~\ref{prop:Banach-quotient}, we obtain that
\begin{equation*}
\begin{split}
     \lim_{\nu\to\infty} (x_\nu+A_m)
   &=\lim_{\nu\to\infty} (x_\nu-a_\nu+A_m)\\
   &=\lim_{\nu\to\infty}\pi_m(x_\nu-a_\nu)\\
   &=\pi_m\left(\lim_{\nu\to\infty} x_\nu-a_\nu\right)\\
   &=\pi_m(x)\\
   &=x+A_m.
\end{split}
\end{equation*}
This proves the \texttt{(compactness)} axiom.
The set of smooth points
\[
     (E/A)_\infty:=\bigcap_j E_j/A_j=\{x+A_\infty\mid x\in E_\infty\}
\]
is dense in every level $E_m/A_m$, because
the image of a dense subset $E_\infty\subset E_m$ under the
continuous surjection $\pi_m\colon E_m\to E_m/A_m$ is dense in the
target space by Lemma~\ref{le:cont-surj-dense}.
This proves the \texttt{(density)} axiom
and Proposition~\ref{prop:quotient-scale}.
\end{proof}

It seems that so far the literature misses out
on the analogues for finite \underline{co}dimensional $\SC$-subspaces of
Proposition~\ref{prop:fin-diml-sc-complemented}
(existence of $\SC$-complement for finite dimensional $\SC$-subspaces)
and Lemma~\ref{le:fin-diml-sc-subspaces}
(characterization of finite dimensional $\SC$-subspaces). Let's change this.

\begin{proposition}\label{prop:fin-codiml-sc-subspaces}
Finite codimension 
$\SC$-subspaces are $\SC$-complemented.
\end{proposition}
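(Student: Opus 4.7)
The plan is to build a finite dimensional $\SC$-complement by lifting a basis of the (finite dimensional) quotient scale to smooth vectors, and then invoke Lemma~\ref{le:fin-diml-sc-subspaces} to confirm that the span is automatically an $\SC$-subspace.

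Let $A\subset E$ be an $\SC$-subspace with $k:=\dim(E/A)<\infty$. By Proposition~\ref{prop:quotient-scale} the quotient $E/A$ is itself a Banach scale with levels $(E/A)_m=E_m/A_m$. Its top level has dimension $k$, and since the \texttt{(compactness)} axiom forces each inclusion $E_{m+1}/A_{m+1}\INTO E_m/A_m$ to be a compact linear map between finite dimensional Banach spaces of dimension at most~$k$, Exercise~\ref{exc:constant-B-scale} gives that $E/A$ is the constant Banach scale, so in fact $\dim(E_m/A_m)=k$ for every $m$ and the natural map $E_m/A_m\to E_0/A_0$ is an isomorphism.

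Next I would show that the quotient projection $\pi\colon E\to E/A$ remains surjective when restricted to~$E_\infty$. Indeed, $\pi(E_\infty)$ is a linear subspace of $E/A$; by \texttt{(density)} of $E_\infty$ in $E_0$ and continuity of $\pi_0$, this subspace is dense in the finite dimensional Banach space $E/A$; but a dense linear subspace of a finite dimensional normed space must be the whole space. Hence I can pick $e_1,\dots,e_k\in E_\infty$ whose cosets form a basis of $E/A$. Set $K:=\SPAN(e_1,\dots,e_k)\subset E_\infty$. Being a finite dimensional subspace contained in $E_\infty$, Lemma~\ref{le:fin-diml-sc-subspaces} tells us $K$ is an $\SC$-subspace of $E$ generating the constant Banach scale $K_m=K$.

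It remains to verify that $A_m\oplus K=E_m$ for every $m$, which constitutes the $\SC$-splitting $E=A\oplus K$ in the sense of Definition~\ref{def:sc-direct-sum}. Triviality of $A_m\CAP K$ is immediate: any $x=\sum\lambda_i e_i\in A_m\subset A$ satisfies $\sum\lambda_i[e_i]=0$ in $E/A$, forcing $\lambda_i=0$ by linear independence of the cosets. For the sum, fix $x\in E_m$; since the natural map $E_m/A_m\to E/A$ is an isomorphism, there exist unique $\lambda_i\in\R$ with $[x]=\sum\lambda_i[e_i]$ in $E_m/A_m$, so $k_x:=\sum\lambda_i e_i\in K\subset E_\infty\subset E_m$ satisfies $x-k_x\in A_m$ and hence $x=(x-k_x)+k_x\in A_m+K$.

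The only genuinely delicate step is lifting a basis of $E/A$ into $E_\infty$; everything else is formal once one has the constancy of the quotient scale and Lemma~\ref{le:fin-diml-sc-subspaces}. Once this lifting is in hand, the verification of the splitting on each level proceeds uniformly because the cosets of $e_1,\dots,e_k$ remain a basis of every $E_m/A_m$ simultaneously.
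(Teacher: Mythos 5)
Your proposal is correct and follows essentially the same route as the paper's proof: pass to the quotient Banach scale of Proposition~\ref{prop:quotient-scale}, observe it is constant and finite dimensional, lift a basis to smooth representatives in $E_\infty$, invoke Lemma~\ref{le:fin-diml-sc-subspaces} for the span, and verify the level-wise splitting $A_m\oplus K=E_m$ via the quotient projection. Your density argument for surjectivity of $\pi|_{E_\infty}$ is a harmless variant of the paper's direct choice of a basis of $E_\infty/A_\infty$.
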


Of course, the asserted $\SC$-complement $C$ in 
Proposition~\ref{prop:fin-codiml-sc-subspaces} has as
dimension the mentioned finite codimension. Hence $C$
carries the constant Banach scale structure and consists of
smooth points only; see Lemma~\ref{le:fin-diml-sc-subspaces}.

\begin{proof}
Let $A$ be an $\SC$-subspace of a scale Banach space $E$
of finite \textbf{\Index{co\-dimen\-sion}} $r=\codim A:=\dim E/A$.
By closedness and finite codimension the subspace $A$ of $E$
has a topological complement $C$; cf.~\citet[Prop.\,11.6]{brezis:2011a}.
Since $A$ is the kernel of the quotient projection
$\pi\colon E=A\oplus C\twoheadrightarrow E/A$
defined in~(\ref{eq:quot-proj}) we get $\dim C=\dim E/A=r$
for any topological complement.

Recall that a finite dimensional $\SC$-complement of $A$ is an
$\SC$-subspace $C$, endowed with constant levels $C_m=C$, such that
\[
     A_m\oplus C=E_m,\quad m\in\N_0.
\]
Constructing such $C$ inside the vector space $E_\infty$ of smooth points, see
Lemma~\ref{le:fin-diml-sc-subspaces}, is equivalent to $C$ being an $\SC$-subspace.

To define $C$ observe that
the Banach scale $E$ and the $\SC$-subspace $A$
give rise to the quotient Banach scale in
Proposition~\ref{prop:quotient-scale}.
Because the top level $(E/A)_0=E/A$  is of finite
dimension $r$, all sublevels are finite dimensional
and therefore the quotient Banach scale is actually constant.
Note that
\[
     E_\infty/A_\infty=E_m/A_m
\]
because both sides are of \emph{the same} finite dimension $r$ and
there is the natural inclusion
$E_\infty/A_\infty\to E_m/A_m$, $\varphi+A_\infty\mapsto \varphi+A_m$.
Pick a basis of 
\[
     (E/A)_\infty:=\bigcap_m E_m/A_m=E_\infty/A_\infty
\]
say $\varphi_1+A_\infty,\dots,\varphi_r+A_\infty$.
Observe that each $\varphi_j\in E_\infty$
and define
\[
     C:=\SPAN\{\varphi_1,\dots,\varphi_r\}\subset E_\infty.
\]
We show that $C$ is a topological complement~of~$A_m$.
To prove $A_m\CAP C=\{0\}$, pick $c\in A_m\CAP C$.
The quotient projection $\pi_m\colon E_m\to E_m/A_m$, $e\mapsto e+A_m$,
whose kernel is $A_m$ maps $c$ to the zero coset $0+A_m$.
On the other hand, the only element of $C$ that gets
mapped to the zero coset under $\pi_m$ is $c=0$.
%\newline
We prove that $A_m+C=E_m$: ``$\subset$'' Obvious since
$A_m\subset E_m$ and $C\subset E_\infty\subset E_m$.
``$\supset$'' Pick $e\in E_m$ and express
$\pi_m(e)=e+A_m\in E_m/A_m$ in terms of the basis
$\{\varphi_j+A_m\}_{j=1}^r$,
let $c^1,\dots,c^r\in\R$ be the coefficients.
Set $c:=\sum_{j=1}^r c^j\varphi_j\in C$.
Then $\pi_m(c)=\pi_m(e)$, hence $a:=e-c$
lies in the kernel of $\pi_m$ which is $A_m$.
Hence $e=a+c$ is of the desired form.
\end{proof}

  A finite codimensional closed subspace of an ordinary Banach space $X$
  not only admits a topological complement, but there is even one in each
  dense subspace $X_\infty$ of $X$; see e.g.~\citet[Le.\,2.12]{Hofer:2007a}
  or~\citet[Prop.\,11.6]{brezis:2011a}. This enters the proof of

\begin{lemma}[Finite codimensional $\SC$-subspaces]
\label{le:Crit-fin-codiml-subspace-SC} 
%\label{prop:Crit-fin-codiml-subspace-SC} 
Suppose $E$ is a scale Banach space
and $A$ is a linear subspace, then
  \[
     \text{$A$ is an $\SC$-subspace of $E$}
     \qquad\Leftrightarrow\qquad
     \text{$A$ is closed in $E$}
  \]
whenever $A\subset E$ is of finite codimension $r$.\,\footnote{
  A finite codimension subspace
  in Banach space need not be closed; see e.g.
  \citet[Prop.\,11.5]{brezis:2011a}.
  }
\end{lemma}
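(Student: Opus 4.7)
\medskip

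\noindent\textbf{Proof plan.} The direction ``$\Rightarrow$'' is immediate from the definition of an $\SC$-subspace. For ``$\Leftarrow$'', suppose $A\subset E$ is closed of finite codimension $r$. By Lemma~\ref{le:open-subset-scale} the levels $A_m:=A\CAP E_m$ are closed linear subspaces of the Banach spaces $E_m$, and the inclusions $A_{m+1}\INTO A_m$ are compact. Thus the axioms \texttt{(Banach levels)} and \texttt{(compactness)} hold for the induced scale $A^{\cap E}$, and the only thing left to verify is \texttt{(density)}, namely that $A_\infty=A\CAP E_\infty$ is dense in every $A_m$.

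The first key step is to invoke the result cited just before the lemma: since $A\subset E$ is closed of finite codimension $r$, and $E_\infty\subset E$ is dense, there is a topological complement $C$ of $A$ in $E$ of dimension~$r$ with $C\subset E_\infty$. By Lemma~\ref{le:fin-diml-sc-subspaces} this $C$ is an $\SC$-subspace of $E$ generating the constant Banach scale $C_m=C$. Next I would upgrade the level-zero splitting $E=A\oplus C$ to every level: since $C\subset E_\infty\subset E_m$, the inclusion $A_m+C\subset E_m$ is trivial, and for $e\in E_m$ the unique decomposition $e=a+c$ with $a\in A$, $c\in C$ forces $a=e-c\in E_m\CAP A=A_m$; triviality of $A_m\CAP C$ follows from $A\CAP C=\{0\}$. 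Hence
\[
     E_m=A_m\oplus C,\qquad m\in\N_0.
\]

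The second key step is to construct a continuous projection $P\colon E\to E$ along $C$ onto $A$ that restricts compatibly to every level. Pick a basis $c_1,\dots,c_r$ of $C\subset E_\infty$ with dual basis $c_1^*,\dots,c_r^*\in C^*$, and, as in the proof of Proposition~\ref{prop:fin-diml-sc-complemented}, extend each $c_i^*$ by Hahn--Banach to a continuous linear functional $\lambda_i$ on $E$; set $P(x):=x-\sum_{i=1}^r \lambda_i(x) c_i$. Because $C\subset E_\infty$ and each $\lambda_i$ restricts to a continuous functional on every $E_m$, the operator $P$ is level preserving and each restriction $P_m:=P|_{E_m}\colon E_m\to A_m$ is continuous with $P_m^2=P_m$ and image $A_m$.

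For \texttt{(density)}, pick $a\in A_m$. By density of $E_\infty$ in $E_m$ there is a sequence $e_\nu\in E_\infty$ with $e_\nu\to a$ in $E_m$. Since $P$ is level preserving, $a_\nu:=P(e_\nu)=P_m(e_\nu)$ lies in $A\CAP E_\infty=A_\infty$, and by continuity of $P_m$ and $P_m(a)=a$ we get $a_\nu\to a$ in $E_m$. This establishes \texttt{(density)} and completes the proof. The only place where care is required is step two, in verifying that the single projection $P$ simultaneously restricts to continuous level projections with image exactly $A_m$; this is ensured by $C\subset E_\infty$ together with the continuous inclusions $E_m\INTO E$, exactly mirroring the mechanism in Proposition~\ref{prop:fin-diml-sc-complemented}.
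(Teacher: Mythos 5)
Your overall route is the paper's route: the same topological complement $C\subset E_\infty$ from the result cited just before the lemma, the same upgrade of the splitting to $E_m=A_m\oplus C$ on every level, and the same density argument by pushing a sequence $E_\infty\ni e_\nu\to a$ through the decomposition. However, your Step 2 contains a step that fails as stated. If you take the dual basis $c_1^*,\dots,c_r^*$ of $C$ and extend each $c_i^*$ by Hahn--Banach to $\lambda_i\in E^*$, the operator $Q(x):=\sum_i\lambda_i(x)c_i$ is indeed a continuous projection onto $C$, but its kernel is $\bigcap_i\ker\lambda_i$, which is \emph{some} closed complement of $C$ and in general is \emph{not} $A$: a Hahn--Banach extension of $c_i^*$ carries no information about $A$ and need not vanish on it. (Already in $\R^2$ with $C=\R(1,0)$, $A=\R(0,1)$, the extension $\lambda_1(x,y)=x+y$ gives $P=\1-Q$ with image the antidiagonal.) Consequently your $P=\1-Q$ need not satisfy $\im P=A$, need not satisfy $P(a)=a$ for $a\in A_m$, and the final step ``$a_\nu:=P(e_\nu)\in A_\infty$ and $a_\nu\to P_m(a)=a$'' breaks down. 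This is the opposite situation from Proposition~\ref{prop:fin-diml-sc-complemented}, where one only needs the \emph{image} of the Hahn--Banach-built projection to be the prescribed finite-dimensional space and the kernel may be arbitrary; here you need the \emph{kernel} of $Q$ to be the prescribed space $A$, which Hahn--Banach does not deliver.

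The gap is repairable: since $A$ and $C$ are closed algebraic complements in the Banach space $E$, the algebraic projection $\pi_C\colon E\to C$ along $A$ is automatically continuous (closed graph / open mapping theorem), so you may take $\lambda_i:=c_i^*\circ\pi_C$, which does vanish on $A$; then your formula $P(x)=x-\sum_i\lambda_i(x)c_i$ has image $A$, fixes $A$, is level preserving because $c_i\in E_\infty$, and each $P_m\colon E_m\to E_m$ is continuous directly from the formula. The paper sidesteps the issue entirely by never building a global projection: it decomposes $e_\nu=a_\nu+c_\nu$ via the direct sum $E_m=A_m\oplus C$ and controls $\abs{a_\nu-a}_m+\abs{c_\nu}_m\le 2\mu\abs{e_\nu-a}_m$ using the splitting constant $\mu$ of \citet[Thm.\,2.10]{brezis:2011a}. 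Either repaired version is fine; as written, yours is not.
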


\begin{proof}%[Proof of Lemma~\ref{le:Crit-fin-codiml-subspace-SC}]
An $\SC$-subspace is closed by definition.
To prove the reverse implication, let $A$ be a closed subspace of $E$
of finite codimension, say $r$.
By the result mentioned above
the subspace $A$ of $E$ admits a topological complement $C$ 
contained in the dense subset $E_\infty$.
In the proof of Proposition~\ref{prop:fin-codiml-sc-subspaces}
we saw that topological complements satisfy $\dim C=\codim A=:r$.

We need to show that the levels defined by
$A_m:=A\CAP E_m\subset E_m$ satisfy the three axioms
of a Banach scale. As $A$ is closed in $E$,
by Lemma~\ref{le:open-subset-scale}
only the \texttt{(density)} axiom remains to be shown:
density of $A_\infty$ in each $A_m$.
\\
The inclusion $\underline{C\subset E_\infty}$ means that
$C$ is a constant Banach scale by
Lemma~\ref{le:fin-diml-sc-subspaces}
and Exercise~\ref{exc:constant-B-scale}.
Before proving density we show that
\begin{equation}\label{eq:vbhbhj}
     A_m\oplus C=E_m ,\quad m\in\N_0
\end{equation}
is a direct sum of closed subspaces of the Banach space $E_m$: Firstly,
closedness of $A_m$ we already know and $C$ is closed due to its
finite dimension.
Secondly, trivial intersection $A_m\CAP C=\{0\}$ holds true since
it even holds for the larger space $A\supset A_m$.
Thirdly, we prove $A_m+C=E_m$.
``$\subset$'' Obvious.
``$\supset$'' Any $e\in E_m\subset E=A+C$
is of the form $e=a+c$ for some $a\in A$ and $c\in C$.
But $a=e-c$ is also in $E_m$ (so we are done), because
both $e$ and $c\in \underline{C\subset E_\infty}\subset E_m$ are
and $E_m$ is a vector space.

%\newline
We prove density of $A_\infty$ in $A_m$. Given $a\in A_m\subset E_m$,
by density of $E_\infty$ in $E_m$ there is some in the $E_m$ norm
convergent sequence $E_\infty\ni e_\nu\to a\in E_m$.
On the other hand, by~(\ref{eq:vbhbhj}) there is the direct sum of
Banach spaces $E_m=A_m\oplus C$, so $e_\nu$ is of the form
$e_\nu=a_\nu+c_\nu$ with $(a_\nu,c_\nu)\in A_m\times C$.
Clearly $a_\nu-a+c_\nu=e_\nu-a\to 0$ in $E_m$. But most
importantly $a_\nu=e_\nu-c_\nu\in E_\infty$
since the linear space $E_\infty$ contains $e_\nu$ and
$c_\nu\in \underline{C\subset E_\infty}$.
So
\[
     a_\nu\in \left(A_m\cap E_\infty\right)\subset\left( A\CAP E_\infty\right)
     =A\CAP\,\bigcap_m E_m=\bigcap_m A\CAP E_m
     =\bigcap_m A_m=A_\infty.
\]
Since $A_m$ and $C$ are topological complements
of one another, see~(\ref{eq:vbhbhj}), the norm in $E_m$
splits in the following sense.
By~\citet[Thm.\,2.10]{brezis:2011a}
there is a constant $\mu\ge 0$ such that
for any element $e$ of $E_m$
the norms of its parts in $A_m$ and in $C$
are bounded above by $\mu\abs{e}_m$.
For $e:=e_\nu-a=(a_\nu-a)+c_\nu\in A_m+ C$ we get that
\[
     \Abs{a_\nu-a}_m+\Abs{c_\nu}_m
     \le 2\mu\Abs{e_\nu-a}_m
     \to 0,\quad \text{as $\nu\to\infty$.}
\]
Hence $A_\infty\ni a_\nu\to a\in A_m$ in the $E_m$ norm.
This proves Lemma~\ref{le:Crit-fin-codiml-subspace-SC}.
\end{proof}

\begin{corollary}[Closed finite codimensional subspaces $\SC$-split $E$]
\label{cor:Crit-fin-codiml-subspace-SC} %\mbox{ }
%\newline
Given a scale Banach space $E$ and a 
finite codimension $r$ subspace $A$, then
  \[
     \text{$A$ is closed in $E$}
     \qquad\Leftrightarrow\qquad
     \text{$E=A\oplus C$ $\SC$-splits for some $C\subset E_\infty$.}
  \]
The $\SC$-splitting $E=A\oplus C$ has levels $E_m=(A\CAP E_m)\oplus C$
and $\dim C=r$.
\end{corollary}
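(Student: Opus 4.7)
The plan is to derive the corollary directly from the two results immediately preceding it, namely Proposition~\ref{prop:fin-codiml-sc-subspaces} and Lemma~\ref{le:Crit-fin-codiml-subspace-SC}, together with Lemma~\ref{le:fin-diml-sc-subspaces} which pins down the location of finite dimensional $\SC$-subspaces. The logical skeleton is short: closedness of a finite codimensional subspace is equivalent to being an $\SC$-subspace, and every $\SC$-subspace of finite codimension is $\SC$-complemented.

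For the forward direction, suppose $A\subset E$ is closed of finite codimension $r$. I would apply Lemma~\ref{le:Crit-fin-codiml-subspace-SC} to conclude that $A$ is an $\SC$-subspace of $E$. Then Proposition~\ref{prop:fin-codiml-sc-subspaces} supplies an $\SC$-complement $C$ so that $E=A\oplus C$ $\SC$-splits with levels $E_m=A_m\oplus C_m$, where $A_m=A\CAP E_m$. Since $C$ is a topological complement of $A$ in $E$, one has $\dim C=\dim(E/A)=r$, so $C$ is finite dimensional. By Lemma~\ref{le:fin-diml-sc-subspaces} any finite dimensional $\SC$-subspace is contained in $E_\infty$ and generates the constant Banach scale $C_m=C$; hence $C\subset E_\infty$ and the levels take the stated form $E_m=(A\CAP E_m)\oplus C$.

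For the reverse direction, assume $E=A\oplus C$ is an $\SC$-splitting with $C\subset E_\infty$. By definition of $\SC$-splitting (Definition~\ref{def:sc-direct-sum}), $A$ is an $\SC$-subspace of $E$, hence in particular closed. The dimension count is the same as above: $E/A\cong C$ as vector spaces via the projection along $A$, so $\dim C=\codim A=r$.

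I do not foresee a real obstacle here; the two deep facts (existence of the complement in $E_\infty$ and the closedness $\Leftrightarrow$ $\SC$-subspace equivalence in finite codimension) are exactly what Proposition~\ref{prop:fin-codiml-sc-subspaces} and Lemma~\ref{le:Crit-fin-codiml-subspace-SC} already provide. The only mild care needed is to remember that the $\SC$-complement produced by Proposition~\ref{prop:fin-codiml-sc-subspaces} is constructed inside $E_\infty$ (via Lemma~\ref{le:fin-diml-sc-subspaces}), which is precisely the refinement recorded in the corollary's statement.
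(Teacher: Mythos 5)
Your proposal is correct and follows exactly the paper's own route: the forward direction is Lemma~\ref{le:Crit-fin-codiml-subspace-SC} followed by Proposition~\ref{prop:fin-codiml-sc-subspaces}, and the reverse direction is the observation that an $\SC$-subspace is closed by definition. The extra details you supply (that $\dim C=r$ and $C\subset E_\infty$ via Lemma~\ref{le:fin-diml-sc-subspaces}) are precisely what the paper records in the remark following Proposition~\ref{prop:fin-codiml-sc-subspaces}.
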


\begin{proof}%[Proof of Corollary~\ref{cor:Crit-fin-codiml-subspace-SC}]
``$\Rightarrow$'' Lemma~\ref{le:Crit-fin-codiml-subspace-SC} 
and Proposition~\ref{prop:fin-codiml-sc-subspaces}.
``$\Leftarrow$'' An $\SC$-subspace is closed by definition.
\end{proof}

\begin{exercise}[Intersection and sum of $\SC$-subspaces]
\label{exc:sc-subspaces-sum-intersection}
a) If $A,B\subset E$ are finite dimensional $\SC$-subspaces,
so are $A\CAP B$ and $A+B$.
\\
b) If $A,B\subset E$ are finite codimensional $\SC$-subspaces,
so are $A\CAP B$ and $A+B$.

\vspace{.1cm}\noindent %\newline
[Hints: a) Lemma~\ref{le:fin-diml-sc-subspaces}.
b) By Lemma~\ref{le:Crit-fin-codiml-subspace-SC}
it suffices to show for $A\CAP B$ and for $A+B$
closedness\footnote{
  \citet[Prop.\,11.5]{brezis:2011a}: A subspace
  containing a closed one of finite codimension is closed.
  }
and finite codimension.\footnote{
  $\codim (A\CAP B)\le \codim A +\codim B$ and
  $\codim (A+B)\le\min\{\codim A,\codim B\}$.
  }
\end{exercise}

%\newpage
%%%%%%%%%%%%%%%%%%%%%%%%%%%%%%%%%%
%%% SUBSECTION %%%%%%%%%%%%%%%%%%%%%
%%%%%%%%%%%%%%%%%%%%%%%%%%%%%%%%%%
\subsection{Scale Fredholm operators}\label{sec:scale-Fredholm-operators}

\begin{definition}[$\SC$-Fredholm operators]
\label{def:sc-Fredholm}
An \textbf{\boldmath\Index{sc-Fredholm operator}}
is an $\SC$-operator $T\colon E\to F$ that satisfies the following axioms, namely
\begin{labeling}{\texttt{($\SC$-isomorphism)}}
\item[\texttt{($\SC$-splittings)}]
  there\index{\texttt{(}\unboldmath$\SC$\texttt{-splittings)}}
  are $\SC$-splittings $E=K\oplus X$,
  $F=Y\oplus C$ such that
\item[\Index{\texttt{(Ker)}}]
  $K$ is the kernel of $T$ and of finite dimension,
\item[\Index{\texttt{(Coker)}}]
  $Y$ is the image of $T$ and $C$ is of finite dimension,
\item[\texttt{($\SC$-isomorphism)}]
  the\index{\texttt{(}\unboldmath$\SC$\texttt{-isomorphism)}}
  operator $T$ viewed as a map $T\colon X\to Y$ is an 
  $\SC$-isomorphism.
\end{labeling}
The \textbf{Fredholm index} of $T$\index{Fredholm!index}
is the integer
\[
     \INDEX T:=\dim K-\dim C
     =\dim\ker T-\codim\im T.
\]
\end{definition}

By finite dimension the Banach subscales
generated by $K$ and $C$ are constant.
So trivially one gets the identities $K=K_\infty$ and $C=C_\infty$.
Combined with the equally trivial inclusions $K_\infty\subset E_\infty$
and $C_\infty\subset F_\infty$ they provide the precious
information that $K\subset E_\infty$ and $C\subset F_\infty$ consist
of smooth points.

%\newpage
\begin{proposition}\label{prop:sc-Freds-regularize}
$\SC$-Fredholm operators $T\colon E\to F$ are
\textbf{\Index{regularizing}}:
If\index{level!regularity}\index{regularity!level --}
$T$ maps $e\in E$ to level $m$, then already $e$ was
in level $m$; cf~(\ref{eq:level-reg}).
\end{proposition}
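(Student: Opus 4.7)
The plan is to use the two sc-splittings together with the sc-isomorphism axiom to trace levels through the decomposition. Suppose $e\in E$ with $Te\in F_m$. By the sc-splitting $E=K\oplus X$, write $e=k+x$ uniquely with $k\in K$ and $x\in X$. Since $K$ is finite dimensional and an sc-subspace, Lemma~\ref{le:fin-diml-sc-subspaces} gives $K\subset E_\infty$, so $k$ lies in every level and in particular $k\in E_m$. The regularity question therefore reduces to showing $x\in X_m:=X\cap E_m$.

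Next I would use $Tk=0$ to conclude $Tx=Te\in F_m$. Because $x\in X$, one has $Tx\in Y=\im T$, so $Tx\in Y\cap F_m$. Here the sc-splitting $F=Y\oplus C$ is the key: by definition of sc-splitting the levels split as $F_m=Y_m\oplus C_m$ where $Y_m=Y\cap F_m$, so membership $Tx\in Y\cap F_m$ is precisely $Tx\in Y_m$.

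The \texttt{($\SC$-isomorphism)} axiom now finishes the argument: $T\colon X\to Y$ is an sc-isomorphism, so its inverse $T^{-1}\colon Y\to X$ is level preserving, i.e.\ maps $Y_m$ into $X_m$. Hence $x=T^{-1}(Tx)\in X_m\subset E_m$, and combining with $k\in E_m$ gives $e=k+x\in E_m$, as desired.

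I do not expect a real obstacle here; the proof is essentially bookkeeping, and the only subtle point is recognizing that the real content of the sc-isomorphism axiom (as opposed to just $T|_X\colon X\to Y$ being a bijective sc-operator) is level regularity of $T^{-1}$, which is exactly what makes the levels of the sc-subspace $Y\subset F$ coincide with the images $T(X_m)$ under the level operators. This is the footnote observation in the definition of sc-Fredholm operators, and it is what powers the regularization property.
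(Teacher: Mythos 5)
Your proof is correct and uses the same ingredients as the paper's: the two sc-splittings, the fact that the finite-dimensional sc-subspace $K$ lies in $E_\infty$ (Lemma~\ref{le:fin-diml-sc-subspaces}), and the \texttt{($\SC$-isomorphism)} axiom as the source of level regularity. The only cosmetic difference is that you decompose the domain element $e=k+x$ and push $x$ back up via the level-preserving inverse $T^{-1}\colon Y_m\to X_m$, whereas the paper decomposes the target element $Te\in F_m=T(X_m)\oplus C$ and concludes $e-x\in K\subset E_\infty$; these are the same argument read in opposite directions.
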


\begin{proof}
Let $e\in E$ and $Te\in F_m$. But $F_m=T(X_m)\oplus C$,
so $Te=Tx+c$ for some $x\in X_m\subset E_m\subset E$ and $c\in C$.
As $T(E)\CAP C=Y\CAP C=\{0\}$ and $e-x\in E$,
the identity $T(e-x)=c$ shows that both sides are zero. So
$e-x\in\ker T=K \mathop{{\color{magenta}\mbf{=}}} 
K_\infty\subset E_\infty\subset E_m$.
Therefore $e=(e-x)+x\in E_m$.
\end{proof}

\begin{exercise}[Intersection level $Y_m=Y\CAP F_m$
is image of level operator~$T_m$]\label{exc:sc-Fred-image-scale}
Consider an $\SC$-Fredholm operator 
$T\colon E=K\oplus X\to F=Y\oplus C$
where the $\SC$-subspace $Y:=\im T$
is the image $T(E)=T(X)$.
Recall that an $\SC$-subspace $Y$
generates a Banach subscale %$Y^{\CAP E}$
whose levels are given by intersection $Y\CAP F_m$.
Show that for $\SC$-Fredholm operators each intersection
level is equal to the image of the corresponding level operator
$T_m\colon E_m\to F_m$,~i.e.
\[
     Y_m:=Y\CAP F_m=T(E)\CAP F_m
     \mathop{{\color{magenta}\mbf{=}}} 
     T(E_m)=:\im T_m.
\]
[Hint: ``$\mathop{{\color{magenta}\mbf{\subset}}}$''
Suppose $Te=:y\in F_m$ where $e\in E$.]
\end{exercise}

\begin{exercise}[Isn't the axiom \texttt{($\SC$-isomorphism)} superfluous?]
\label{exc:sc-isom-axiom-superfluous?}
In view of Exercise~\ref{exc:sc-Fred-image-scale}
the fourth axiom in Definition~\ref{def:sc-Fredholm}
seems to be a consequence of the previous three axioms.
Is it?
\end{exercise}

\begin{exercise}
The composition $T\circ T'$ of two $\SC$-Fredholm operators
is an $\SC$-Fredholm operator
and $\INDEX(T\circ T')=\INDEX T + \INDEX T'$.
\end{exercise}

\begin{proposition}[Stability of $\SC$-Fredholm property]
\label{prop:stability-sc-Fredholm}
Consider an $\SC$-Fredholm operator $T\colon E\to F$
and an $\SC^+$-operator $S\colon E\to F$, then their sum
$T+S$ is also an $\SC$-Fredholm operator
of the same Fredholm index.
\end{proposition}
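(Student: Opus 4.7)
The plan is to reduce the statement to the classical stability theorem for Fredholm operators under compact perturbations, applied level by level, and then to verify that the resulting level-wise information assembles into the four axioms of Definition~\ref{def:sc-Fredholm} for $T+S$.

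First I would observe that at every level $m$ the operator $(T+S)_m=T_m+S_m$ is Fredholm of classical index $\INDEX T$. Indeed, the sc-splittings $E=K\oplus X$ and $F=Y\oplus C$ induce $E_m=K\oplus X_m$ and $F_m=Y_m\oplus C$, and the \texttt{($\SC$-isomorphism)} axiom gives that $T_m\colon X_m\to Y_m$ is a Banach space isomorphism, so $T_m$ is Fredholm of index $\dim K-\dim C=\INDEX T$. Since $S$ is an $\SC^+$-operator, Remark~\ref{rem:sc-plus} provides that every $S_m$ is compact, and the classical stability theorem yields that $(T+S)_m$ is Fredholm of the same index $\INDEX T$. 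In particular, $\ker(T+S)_m$ has constant finite dimension $k$ and $\im(T+S)_m$ is closed in $F_m$ of constant finite codimension $c=k-\INDEX T$.

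Next I would set $K':=\ker(T+S)$ and produce the required $\SC$-splittings. The crucial regularity step is a bootstrap: if $e\in K'$, then $Te=-Se$; since $e\in E_0$ and $S\in\Llscp(E,F)$ means $S(E_m)\subset F_{m+1}$, we get $Se\in F_1$, hence $Te\in F_1$, and the regularizing property of the $\SC$-Fredholm operator $T$ (Proposition~\ref{prop:sc-Freds-regularize}) gives $e\in E_1$; iterating yields $e\in E_\infty$. Thus $K'\subset E_\infty$ is a finite dimensional $\SC$-subspace by Lemma~\ref{le:fin-diml-sc-subspaces}, and Proposition~\ref{prop:fin-diml-sc-complemented} supplies an $\SC$-complement $X'$ with $E=K'\oplus X'$. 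For the target, $Y':=\im(T+S)$ is closed in $F=F_0$ of finite codimension $c$, so by Lemma~\ref{le:Crit-fin-codiml-subspace-SC} it is an $\SC$-subspace, and Proposition~\ref{prop:fin-codiml-sc-subspaces} gives an $\SC$-complement $C'\subset F_\infty$ with $\dim C'=c$. This provides the $\SC$-splittings $E=K'\oplus X'$ and $F=Y'\oplus C'$, with $K'$ and $C'$ finite dimensional.

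The main obstacle is the \texttt{($\SC$-isomorphism)} axiom for $(T+S)\colon X'\to Y'$, which amounts to showing that this map is a level-preserving bijection with level-preserving inverse. Injectivity is automatic from $X'\cap K'=\{0\}$, surjectivity is the definition of $Y'$, and level preservation is built into being an $\SC$-operator restricted to an $\SC$-subspace. For the inverse, given $y\in Y'_m=Y'\cap F_m$, write $y=(T+S)x$ with $x\in X'\subset E_0$; the same bootstrap as above applies: $Tx=y-Sx\in F_1$ forces $x\in E_1$ by Proposition~\ref{prop:sc-Freds-regularize}, then $Sx\in F_2$ and $Tx\in F_2$ forces $x\in E_2$, and inductively $x\in E_m$, so $x\in X'_m$. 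Hence $T+S\colon X'\to Y'$ is an $\SC$-isomorphism, and $T+S$ satisfies all the axioms of Definition~\ref{def:sc-Fredholm}. Finally, $\INDEX(T+S)=\dim K'-\dim C'=k-c=\INDEX T$, which closes the argument. The only subtlety I anticipate is ensuring Exercise~\ref{exc:sc-Fred-image-scale}-type identifications $Y'_m=\im(T+S)_m$ are used only after the $\SC$-Fredholm property is established, avoiding circularity; the bootstrap argument sidesteps this by working directly with individual elements.
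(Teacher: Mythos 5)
Your proposal is correct and follows essentially the same route as the paper's proof: level-wise classical Fredholm stability under the compact perturbations $S_m$, the regularizing property of $T$ (Proposition~\ref{prop:sc-Freds-regularize}) combined with the level-raising of $S$ to bootstrap $\ker(T+S)$ into $E_\infty$, and then Lemma~\ref{le:fin-diml-sc-subspaces}, Proposition~\ref{prop:fin-diml-sc-complemented}, Lemma~\ref{le:Crit-fin-codiml-subspace-SC} and Corollary~\ref{cor:Crit-fin-codiml-subspace-SC} to produce the two $\SC$-splittings. You moreover supply, via the same bootstrap applied to $y=(T+S)x$, the level preservation of the inverse of $(T+S)\colon X'\to Y'$, which is exactly the step the paper leaves as an exercise in its \textsc{$\SC$-isomorphism} paragraph.
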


\begin{proof}
The sum $T+S\colon E\to F$ is an $\SC$-operator.
How about $\SC$-splittings?

\vspace{.1cm}\noindent 
\textsc{Domain splitting.}
Both $T$ and $S$ provide level operators $E_m\to F_m$
that are Fredholm and compact, respectively. Hence
the sum level operators $(T+S)_m=T_m+S_m\colon  E_m\to F_m$ are Fredholm
for each level $m$. Note that the kernel $K_m$
of $(T+S)_m$ contains $K_{m+1}$.
To see the reverse inclusion pick $x\in K_m$.
Then $Tx=-Sx \in F_{m\underline{+1}}$ by the $\SC^+$ nature of $S$.
Thus $x\in E_{m+1}$ by the regularity
Proposition~\ref{prop:sc-Freds-regularize}.
Hence $x\in K_{m+1}$. Thus $K_m=K_{m+1}$.
So $K:=K_0=K_m=K_\infty$
is finite dimensional and $K=K_\infty\subset E_\infty$.
Hence $K$ is an $\SC$-subspace
by Lemma~\ref{le:fin-diml-sc-subspaces}
and generates a constant Banach scale.
By Proposition~\ref{prop:fin-diml-sc-complemented}
the kernel scale $K$ admits an $\SC$-complement $X$ in $E$.
Summarizing, we have
\[
     E_m=K\oplus X_m,\qquad
     K=\ker(T_m+S_m)\subset E_\infty 
\]
for every $m\in\N_0$ and where $K$ does not depend on $m$.

\vspace{.1cm}\noindent
\textsc{Target splitting.}
Consider the image $Y:=(T+S)(E)=(T+S)(X)$ of the level zero Fredholm
operator $T+S\colon E\to F$. But the image of a Fredholm operator is
closed and of finite codimension, say $r$.
Hence $Y$ is an $\SC$-subspace of $F$
by Lemma~\ref{le:Crit-fin-codiml-subspace-SC}
and admits an $r$-dimensional $\SC$-complement $C\subset F_\infty$ 
by Corollary~\ref{cor:Crit-fin-codiml-subspace-SC}.
Summarizing, we have
\[
     F_m=Y_m\oplus C,\qquad
     Y_m=\im(T_m+S_m),\quad C\subset F_\infty 
\]
for every $m\in\N_0$ and where $C$ does not depend on $m$.

\vspace{.1cm}\noindent
\textsc{$\SC$-isomorphism.}
It is clear that $T$ as a map $T:X\to Y$ is bijective and
level preserving with continuous level operators $T_m:X_m\to Y_m$,
still injective. But why are these surjective? Exercise. 
Continuity of the inverse of $T_m$ then follows from the bounded inverse theorem.

\vspace{.1cm}\noindent
\textsc{Fredholm-index.}
Adding a compact operator, say $S:E\to F$, to a Fredholm operator, say
$T:E\to F$, does not change the Fredholm index.
This concludes the proof that $T+S$ is an $\SC$-Fredholm operator.
\end{proof}

%\newpage
%%% SUBSUBSECTION %%%%%%%%%%%%%%%%%%%
%%%%%%%%%%%%%%%%%%%%%%%%%%%%%%%%%%
\subsubsection*{Scale Fredholm operators -- 
na\text{\"i}ve approach through level operators}

Intuitively, if not {\bf naively}, an $\SC$-Fredholm operator should be a
level preserving linear operator $T\colon E\to F$ between Banach scales
whose level operators $T_m\colon E_m\to F_m$ are
\textbf{Fredholm operators}:\index{Fredholm!operator}
Each $T_m$ is linear and
continuous, has a finite dimensional kernel $K_m:=\ker T_m$, a closed image
$Y_m:=\im T_m$, and a finite dimensional \textbf{\Index{cokernel}}
$\coker T_m:=F_m/Y_m$. One calls the integer
\[
     \INDEX T_m:=\dim\ker T_m-\dim\coker T_m
\]
the\index{$index$@$\INDEX T:=\dim\ker T-\dim\coker T$ Fredholm index}
\textbf{Fredholm index} of $T_m$.\index{Fredholm!index}
(As we'll find out, one more condition to come.)

$\bullet$ Firstly, note that the kernels already form a nested sequence
$K:=K_0\supset K_1\supset\dots$ of (by continuity of $T_m$)
closed subspaces $K_m\subset E_m$. Note that $K_m=K\CAP E_m$
since $\ker T|_{E_m}=\ker T\CAP E_m$.
For a Banach scale it only misses the \texttt{(density)} axiom saying that
$K_\infty$ is dense in every level $K_m$.
\newline
Before adding a density requirement to the intuitive definition
of an $\SC$-Fredholm operator let us investigate
its consequences and see if a simpler condition
could do the same job.
If density holds, then $K$ is an $\SC$-subspace
and, by finite dimension, generates the constant Banach scale
(see Lemma~\ref{le:fin-diml-sc-subspaces}),
still denoted by $K$ and called the \textbf{\Index{kernel Banach scale}}.
\newline
Therefore  we add to the intuitive definition of $\SC$-Fredholm
the requirement
\begin{equation}\label{eq:kernel-scale-cond}
     \textsf{all level operators $T_m$ have the same kernel $K$}
\end{equation}
in symbols $K:=\ker T=\ker T_m\subset E_m$ $\forall m$.\footnote{
  Constant dimension
  $\dim\ker T_m=\dim\ker T$ suffices by the
  inclusions $K_{m-1}\supset K_m$.
  }
By Proposition~\ref{prop:fin-diml-sc-complemented}
the kernel $\SC$-subspace $K\subset E_\infty$ admits an $\SC$-complement
in $E$, say $X$.

$\bullet$ Secondly, the images $Y_m:=\im T_m=T(X_m)\supset T(X_{m+1})$ form a
nested sequence $Y:=Y_0\supset Y_1\supset\dots$ of closed subspaces
$Y_m\subset F_m$ of finite codimensions~$r_m$.
What is missing that the \textbf{image scale} $\im T$ with levels
$\im T_m$ is a Banach scale is I) \texttt{(density)} again, just as in
case of the kernel scale.
However, this time there is one more thing missing
that was automatic for the kernel scale. Namely,
we would like to have that II) the image scale is in fact generated by its top
level $Y=\im T$, that is we wish that
\[
     (\im T)\CAP F_m=\im T_m\quad \forall m.
\]
Suppose I) and II) hold. Namely, the image scale with levels $Y_m:=\im T_m$ is
a Banach scale and arises by intersection with its top level $Y=\im T$.
In other words, the closed finite codimensional
subspace $Y=\im T$ is an $\SC$-subspace and the generated
Banach subscale has intersection levels $Y\CAP F_m=\im T_m$
which are equal to the images of the level operators.
Let $r=\dim F/Y$ be the codimension of $Y$.
Then $Y$ admits by Proposition~\ref{prop:fin-codiml-sc-subspaces}
an $r$ dimensional $\SC$-complement $C$
which necessarily generates the constant Banach scale $C_m=C$.
Lemma~\ref{le:fin-diml-sc-subspaces}
tells that $C\subset F_\infty$.

By Corollary~\ref{cor:Crit-fin-codiml-subspace-SC}
and Lemma~\ref{le:Crit-fin-codiml-subspace-SC} 
a sufficient condition that $Y=\im T$
is an $\SC$-subspace, thus generating a Banach subscale,
is the following which we add as a requirement
to the intuitive definition of $\SC$-Fredholm:
\begin{equation}\label{eq:image-scale-cond}
     \textsf{existence of a topological complement $C\subset F_\infty$
                 of the image of $T$.}
\end{equation}

$\bullet$ Thirdly, to enforce that the intersection levels of the
Banach scale generated by $Y=\im T$
coincide with the images of the level operators, i.e.
\begin{equation}\label{eq:imag=intersection}
     T(X)\CAP F_m=T(X_m),\quad m\in\N_0
\end{equation}
we add to the intuitive definition of $\SC$-Fredholm the requirement
\begin{equation}\label{eq:level-reg}
     \boxed{
          T(E\setminus E_m)\CAP F_m=\emptyset,\quad m\in\N_0
     }
\end{equation}
of\index{level!regularity}\index{regularity!level --}\index{regularizing}
\textbf{level regularity}. So $e\in E$ and $Te\in F_m$
together imply $e\in E_m$.
The next exercise shows that (\ref{eq:level-reg}) implies all three
conditions~(\ref{eq:kernel-scale-cond}--\ref{eq:imag=intersection}).

\begin{exercise}\label{exc:naive-Fredholm}
To the na\text{\"i}ve notion of  $\SC$-Fredholm
add~(\ref{eq:level-reg}) to prove 
\begin{itemize}
\item[a)]
  Constancy of kernel scale~(\ref{eq:kernel-scale-cond}) holds true.
  (Thus $K$ is an $\SC$-subspace of finite dimension and therefore
  $K$ admits an $\SC$-complement $X$.)
\item[b)] 
  Equality of scales~(\ref{eq:imag=intersection}) holds true.
%~(\ref{eq:kernel-scale-cond}) 
  (That is the image scale $Y$ with levels $T(X_m)$ equals the intersection
  scale with levels $Y\CAP F_m$.)
\item[c)]
  The image scale is a Banach subscale of $F$ generated by its top
  level $Y=\im T$.
  (That is $Y$ is an $\SC$-subspace. So~(\ref{eq:image-scale-cond}) is
  satisfied by Proposition~\ref{prop:fin-codiml-sc-subspaces}
  and $C\subset F_\infty$ by Lemma~\ref{le:fin-diml-sc-subspaces}.)
\item[d)]
  Each level operator as a map $T_m\colon X_m\to Y_m$
  is an isomorphism.
\end{itemize}
[Hints: 
a) trivial.
b) ``$\subset$" easy, ``$\supset$" trivial. 
c) It only remains to show density of $Y_\infty$ in $Y$.
    By a) $X$ generates a Banach subscale, so $X_\infty$ is dense in $X$.
    Show that $Y_\infty=T(X_\infty)$, then apply
    Lemma~\ref{le:cont-surj-dense}.
d) Equality~(\ref{eq:imag=intersection}).]
\end{exercise}

\begin{definition}[$\SC$-Fredholm operator -- via level operators]
\label{def}
An \textbf{\boldmath\Index{sc-Fredholm operator}} 
is a level preserving linear operator $T\colon E\to F$ between Banach scales
all of whose level operators $T_m\colon E_m\to F_m$ are Fredholm
and which satisfies the level regularity condition~(\ref{eq:level-reg}).
\end{definition}

\begin{exercise}\label{def:sc-Fredholm-II}
Show that Definitions~\ref{def:sc-Fredholm}
and~\ref{def:sc-Fredholm-II} are equivalent.
\end{exercise}

%%%%%%%%%%%%%%%%%%%%%%%%%%%%%%%%%%
%%%%%%%%%%%%%%%%%%%%%%%%%%%%%%%%%%
%%% SECTION %%%%%%%%%%%%%%%%%%%%%%%%
%%%%%%%%%%%%%%%%%%%%%%%%%%%%%%%%%%
%%%%%%%%%%%%%%%%%%%%%%%%%%%%%%%%%%
\section{Scale differentiability}\label{sec:scale-differentiability}

Motivated by properties of the shift map,
see our discussion in the introduction around~(\ref{eq:shift-map-0}),
the notion of scale differentiability was introduced 
by~\citet*{Hofer:2007a}; see also~\citet{Hofer:2010b,Hofer:2017a}.

%%% SUBSECTION %%%%%%%%%%%%%%%%%%%%%
%%%%%%%%%%%%%%%%%%%%%%%%%%%%%%%%%%
\subsection*{Scale continuous maps -- class \boldmath$\SC^0$}

An open subset $U$ of an $\SC$-Banach space $E$ induces 
via level-wise intersection a nested
sequence $U^{\cap E}$ of open subsets $U_m=U\cap E_m$ of the corresponding
Banach spaces $E_m$; cf. Lemma~\ref{le:open-subset-scale}.

\begin{definition}\label{def_partial-quad}
A \textbf{\Index{partial quadrant}} in a Banach scale $E$ is a closed
convex subset $C$ of $E$ such that there is an $\SC$-isomorphism
$T\colon E\to\R^n\oplus W$, for some $n$ and some $\SC$-Banach space $W$,
satisfying $T(C)=[0,\infty)^n\oplus W$.
Note that $C$ necessarily contains the origin $0$ of $E$.
\\
An\index{sc-triple}\index{$(U,C,E)$ $\SC$-triple}
\textbf{\boldmath$\SC$-triple} $(U,C,E)$ consists of a Banach scale
$E$, a partial quadrant $C\subset E$, and a relatively open subset
$U\subset E$.
Observe that both $U$ and $C$ inherit
from $E$ nested sequences of subsets whose levels
are the closed subsets $C_m:=C\cap E_m\subset E_m$
and the relatively open subsets $U_m:=U\cap C_m\subset C_m$.
\end{definition}

The notion of partial quadrant is introduced to describe
boundaries and corners. At first reading think of $C=E$, so $U$ is an
open subset of $E$.

\begin{definition}[Scale continuity]
\label{def:sc-continuity}
Let $(U,C,E)$ and $(V,D,F)$ be $\SC$-triples.
A\index{sc-continuous map}
map\index{$sc^0$@$\SC^0$ scale continuous}\index{scale!continuous}
$f\colon U\to V$ is called \textbf{scale continuous} or of
\textbf{\boldmath class $\SC^0$}~if
\begin{itemize}
\item[(i)]
  $f$ is\Index{level!preserving}
  level preserving, that is $f(U_m)\subset V_m$ for
  every $m$, and
\item[(ii)]
  each restriction viewed as a map $f_m:=f|_{U_m}\colon U_m\to V_m$ to
  level $m$ is continuous. The maps $f_m$ are called \textbf{level maps}.
\end{itemize}
\end{definition}

 Let us abbreviate terminology as follows.

\begin{convention}\label{conv:sc-convention}
If we say \textbf{{\boldmath``suppose $f\colon U\to V$ is of class ${\SC}^k$''}}
it\index{$f\colon U\to V$ is of class $\SC^k$}
means that $f$ is an $\SC^k$ map between $\SC$-triples $(U,C,E)$ and
$(V,D,F)$ -- suppose at first reading between $(U,E,E)$ and $(V,F,F)$ ;-)

Given Banach scales $E$ and $F$, an operator $T\colon E\to F$
can have the property of being $\SC$-linear between the
Banach scales $E$ and $F$, i.e. $T\in\Llsc(E,F)$, or it can be
\textbf{continuous and linear in the usual sense} between the Banach spaces $E$ and
$F$, i.e. $T\in\Ll(E,F)$. In the latter case, for extra emphasis, we
often write $E_0$ and $F_0$, instead of $E$ and $F$,
and $T\in\boldmath\Ll(E_0,F_0)$.
\end{convention}

\begin{definition}[Diagonal maps of height $\ell$]
\label{def:sc-minus-maps}
Let $f\colon U\to V$ be an $\SC^0$-map. Pick $\ell\in\N$.
View a level map $f_{m+\ell}$ as a map into the higher level $V_m$
  \begin{equation*}
  \begin{tikzcd}
     &&
       V_m
     \\
     (U^\ell)_m=U_{m+\ell}
     \arrow[rr, "f_{m+\ell}"']
     \arrow[rru, dashed, "f_{m+\ell}^{m}=f|"]
     &&
       V_{m+\ell}
       \arrow[u, hook]
  \end{tikzcd}
  \end{equation*}
to obtain a continuous map $f_{m+\ell}^{m}=f|\colon U_{m+\ell}\to V_m$
given by restriction of $f$ and called a
\textbf{\boldmath diagonal map of height $\ell$}.
For simplicity one usually writes $f\colon U_{m+\ell}\to V_m$
and calls it an \textbf{induced map}.\index{induced!map}
The collection of all diagonal maps of $f$ of height $\ell$
is denoted by 
\[
     f^{-\ell}=f|\colon U^\ell\to V^0
\]
with level maps $(f^{-\ell})_m=f_{m+\ell}^m$. It is of class $\SC^0$,
called the \index{induced!$\SC$-map of height~$\ell$}
\textbf{\boldmath induced $\SC$-map of height~$\ell$}.
If we just say\index{diagonal!map}
\textbf{diagonal map} we mean one of height $1$.
\end{definition}

%%% SUBSECTION %%%%%%%%%%%%%%%%%%%%%
%%%%%%%%%%%%%%%%%%%%%%%%%%%%%%%%%%
\subsection*{Continuously scale differentiable maps -- class \boldmath$\SC^1$}

%%% SUBSUBSECTION %%%%%%%%%%%%%%%%%%%
%%%%%%%%%%%%%%%%%%%%%%%%%%%%%%%%%%
%\subsubsection*{Scale tangent bundles and scale vs diagonal differentials}

To define scale differentiability let us
introduce the notion of tangent
bundle.\index{sc-Banach space!tangent bundle of --}
The \textbf{\Index{tangent bundle}} of a Banach scale
$E$ is defined as the Banach scale
\[
     TE:=E^1\oplus E^0.
\]
If $A\subset E$ is a subset we denote by
$A^k\subset E^k$, as in Definition~\ref{def:shifted-scale},
the shifted scale of subsets whose levels
are given by $(A^k)_m=A_{k+m}$ where $m\in\N_0$.

\begin{definition}\label{def:sc1}
The\index{tangent bundle!of $\SC$-triple}
\textbf{\boldmath tangent bundle of an $\SC$-triple}
$(U,C,E)$ is the $\SC$-triple \Index{$T(U,C,E):=(TU,TC,TE)$}
where\footnote{
  The symbol \Index{$U^1\oplus E^0$} actually denotes the subset
  $U^1\times E^0$ of the $\SC$-Banach space $E^1\oplus E^0$
  and is just meant to remind us that the ambient Banach space
  is a direct sum.
  }
\[
     TU:=U^1\oplus E^0,\qquad
     TC:=C^1\oplus E^0,\qquad
     TE:=E^1\oplus E^0.
\]
Note that the levels, for instance of $TU$, are given by
\[
     (TU)_m=U_{m+1}\oplus E_m.
\]
\end{definition}

\begin{definition}[Scale differentiability]
\label{def:sc-differentiability}
Suppose $f\colon U\to V$ is of class $\SC^0$.
Then $f$ is called \textbf{\Index{continuously scale differentiable}}
or of \textbf{\boldmath class $\SC^1$} if for every point $x$
in the first sublevel $U_1\subset U$ there is a bounded linear operator
\begin{equation}\label{eq:sc-differential}
     Df(x)\in\Ll(E_0,F_0),\quad x\in U_1
\end{equation}
between the top level \emph{Banach spaces},  called
the\index{sc-derivative!of $f$ at $x$}
\textbf{\boldmath$\SC$-derivative of $f$ at $x$}
or\index{sc-derivative!$Df(x)$}
the \textbf{\boldmath$\SC$-linearization}, such
that\index{$Df(x)$ $\SC$-derivative}
the
following three conditions hold.
\begin{labeling}{\texttt{(ptw diff'able)}}
\item[\texttt{(ptw diff'able)}]
  The upmost diagonal map $f\colon U_1\to V_0$
  is\index{\texttt{(ptw diff)}}
  \emph{pointwise}\index{\texttt{(extension)}}
  differentiable\index{\texttt{(}\unboldmath$Tf$ \texttt{is} ${\SC}^0$\texttt{)}}
  in the usual sense, notation
  $df(x)\in\Ll(E_1,F_0)$; see Definition~\ref{def:Frechet-differential}.
\item[\texttt{(extension)}]
  The $\SC$-derivative $Df(x)$ extends $df(x)$
  from $E_1$ to $E_0$, i.e. the diagram
  \begin{equation}\label{eq:sc-deriv-extends}
  \begin{tikzcd} %[row sep=small] %[column sep=tiny]
     E_0\arrow[rr, dashed, "Df(x)"]
     &&
       F_0
     \\
     E_1
     \arrow[u, hook, "I_1"]
     \arrow[rru, "{df(x)\,\in\,\Ll(E_1,F_0),\;x\in U_{1}}"']
     &&
  \end{tikzcd} 
  \end{equation}
  commutes.\footnote{
    So $df(x)\colon E_1\to F_0$ is compact. This implies $f\in C^1(U_1,V_0)$;
    see Lemma~\ref{le:f_1-is-C1} (ii).
    }
  Motivated by the diagram let us call $df(x)$ a\index{derivative!diagonal --}
  \textbf{diagonal derivative}\index{diagonal!derivative}
  if the level index between domain and target drops by 1.
\item[{\texttt{($Tf$ is $\text{sc}^0$)}}]
  The \textbf{\Index{tangent map}} $Tf\colon TU\to TV$ defined by
  \[
     Tf(x,\xi):=\left(f(x),Df(x)\xi\right)
  \]
  for $(x,\xi)\in U^1\oplus E^0=TU$
  is\index{$Tf\colon TU\to TV$ tangent map}
  of class $\SC^0$.
\end{labeling}
\end{definition}

\begin{remark}[A continuity property of $Df$]\label{rem:axiom-Tf}
Suppose $f\in\SC^1(U,V)$.\index{$sc^1(U,V)$@$\SC^1(U,V)$}
By $\SC^0$ there are continuous level maps $f_m=F|\colon U_m\to V_m$,
whereas the axiom \texttt{($Tf$ is $\text{sc}^0$)} requires continuous level maps
\begin{equation}\label{eq:axiom-TF-sc0}
\begin{split}
     (Tf)_m\colon U_{m+1}\oplus E_m
   &\to
     V_{m+1}\oplus F_m
     \\
     (x,\xi)
   &\mapsto
     \left( f(x), Df(x)\xi\right)
\end{split}
\end{equation}
In particular, for each $m\in\N_0$ the second component map
\begin{equation}\label{eq:comp-op-top}
     \Phi\colon U_{m+1}\oplus E_m\to F_m,\quad
     (x,\xi)\mapsto Df(x)\xi
\end{equation}
still denoted by $Df$, is continuous whenever $f\in\SC^1(U,V)$.
It is linear in $\xi$.
\end{remark}

\begin{remark}[Continuity in compact-open, but \emph{not in norm}, topology]
\label{rem:noncont-norm-top}
The compact-open and the norm topologies
are reviewed in great detail in Appendix~\ref{sec:Ana-TVS}.
Continuity of the map $\Phi=Df$ in~(\ref{eq:comp-op-top})
means that
\[
     Df\in C^0\left(U_{m+1},\Llco(E_{{\color{cyan}m}},F_{{\color{cyan}m}})\right)
\]
is continuous whenever the target carries the compact-open topology.
Let's refer to this\index{horizontal continuity in compact-open topology}
as\index{continuity!horizontal -- in compact-open topology}
\textbf{{\color{cyan}horizontal} continuity in the compact-open topology},
because both $E_{{\color{cyan}m}}$ and $F_{{\color{cyan}m}}$ are of
the {\color{cyan} same level $m$}.
It is crucial that the domain has better regularity $m+1$,
see Lemma~\ref{le:sc-deriv-reg-preserving}.
\\
In general, continuity is not true in the norm topology,
that is with respect to $\Ll(E_{{\color{cyan}m}},F_{{\color{cyan}m}})$.
The map which prompted the discovery of scale calculus,
the shift map~(\ref{eq:shift-map-0}), provides a counterexample to
continuity of
\[
     Df:U_{m+1} \to\Ll(E_{{\color{cyan}m}},F_{{\color{cyan}m}})
\]
for details see e.g.~\citet[\S 2]{Frauenfelder:2018a}.

Things improve drastically if instead of $E_m$ one starts at better regularity
$E_{m{\color{brown} +1}}$, see Lemma~\ref{le:sc-deriv-reg-preserving}.
Now the linear map $Df(x): E_{m{\color{brown} +1}}\to F_m$
changes level, we say ``is diagonal'', and
one has $Df=df$ and norm continuity,~that~is
\[
     Df=df\in C^0\left(U_{m+1},\Ll(E_{m{\color{brown} +1}},F_m)\right)
\]
referred to\index{diagonal!continuity in norm}
as\index{continuity!diagonal -- in norm}
\textbf{{\color{brown}diagonal} continuity in the norm topology}.
\end{remark}

\begin{remark}[Uniqueness of extension]\label{rem:uniqueness-BLT}
Since $E_1$ is dense in the Banach space $E_0$
the scale derivative $Df(x)$ is uniquely determined by the
requirement~(\ref{eq:sc-deriv-extends}) to restrict along $E_1$ to $df(x)$.
However, observe that the mere requirement that $f\colon U_1\to F_0$ is
pointwise differentiable does not guarantee that a bounded extension
of $df(x)\in\Ll(E_1,F_0)$ from $E_1$ to $E_0$ exists.
Here the B.L.T. Theorem~\ref{thm:BLT} does not help,
because the completion of $E_1$ is $E_1$ itself..
Existence of such an extension
is part of the definition of $\SCo$.
\end{remark}

\begin{exercise}\label{exc:const-sc1-C1}
Show that for constant Banach scales $E$ and $F$, in other words,
for finite dimensional normed spaces equipped with the
constant scale structure, a map $f\colon U\to V$ is of class $\SC^1$
iff it is of class $C^1$.
\end{exercise}

\begin{exercise}
What changes in Exercise~\ref{exc:const-sc1-C1}
if $E$ or $F$ are constant?
\end{exercise}

%%% SUBSUBSECTION %%%%%%%%%%%%%%%%%%%
%%%%%%%%%%%%%%%%%%%%%%%%%%%%%%%%%%
\subsubsection*{Scale derivative \boldmath$Df(x)$ induces only
some level operators}

\begin{lemma}[Level preservation and continuity properties of $Df(x)$]
\label{le:sc-deriv-reg-preserving}
Let $f\colon U\to V$ be of class $\SC^1$ and $m\in\N_0$. Then the following
is true for every point $x\in U$ of \textbf{\Index{regularity}} $m+1$,
that is $x\in U_{m+1}$.
\begin{itemize}
\item[\rm (a)]
  \emph{Existence of level operators down to one level above $x$:}
  That the $\SC$-derivative $Df(x)\in\Ll(E_0,F_0)$ is level
  preserving is guaranteed only for levels $0,\dots,m$.
\item[\rm (b)]
  \emph{Continuity of these level operators:}
  The\index{{$D_\ell f(x):=Df(x)\mid_{E_\ell}\colon E_\ell\to F_\ell$}}
  induced\index{level!operator!of $\SC$-derivative $Df(x)$}
  \textbf{level operators} are bounded linear operators, in symbols
  \[
     D_\ell f(x):=Df(x)|_{E_\ell}\in\Ll(E_\ell,F_\ell),\qquad
     x\in U_{m+1},\quad \ell=0,\dots, m.
  \]
\item[\rm (c)]
  \emph{{\color{cyan}Horizontal} continuity in compact-open topology:}
  By continuity of the
  map $\Phi\colon U_{m+1}\oplus E_m\to F_m$ in~(\ref{eq:comp-op-top}),
  still denoted by $D_mf$ or even $Df$, it holds that
  $Df\in C^0(U_{m+1},\Llco(E_{{\color{cyan}m}},F_{{\color{cyan}m}}))$.
  By\index{continuity!w.r.t. compact-open topology}
  linearity\index{topology!compact-open --}\index{compact-open topology}
  of $Df(x)$ this simply means that along any convergent sequence
  $x_\nu\to x$ in $U_{k+1}$ the scale derivative applied
  to any individual $\xi\in E_m$ converges, that is
  \begin{equation}\label{eq:cont-comp-open}
     \lim_{\nu\to \infty}
     \Norm{Df(x_\nu)\xi-Df(x)\xi}_{F_{{\color{cyan}m}}}=0,\qquad\xi\in E_{{\color{cyan}m}}.
  \end{equation}
\item[\rm (d)]
 \emph{{\color{brown}Diagonal} continuity in norm topology:}
  The $\SC$-derivative as a map %{\color{cyan}diagonal} map
  \[
     Df\colon U_{m+1}\to\Ll(E_{m{\color{brown}+1}},F_m),\quad
     x\mapsto Df(x)
  \]
  is continuous. Actually $Df=df\colon  U_{m+1}\to\Ll(E_{m{\color{brown}+1}},F_m)$;
  see~(\ref{eq:diag-m-deriv-extends}).
\end{itemize}
\end{lemma}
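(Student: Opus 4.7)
The plan is to read the statement off the axiom \texttt{($Tf$ is $\text{sc}^0$)} for parts (a), (b), (c), and to invoke the \texttt{(compactness)} axiom for the Banach scale together with a contradiction argument for part (d). The overall structure of the proof mirrors the ladder $\ell=0,1,\dots,m$: whatever we learn about $Df(x)$ at level $\ell$ will come from applying the $\ell$-th level map of the tangent map $Tf$, which is admissible exactly when $x$ sits in level $\ell+1$ of $U$.

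For parts (a) and (b), I would fix $x\in U_{m+1}$ and observe that $U_{m+1}\subset U_{\ell+1}$ for every $\ell\in\{0,\dots,m\}$, so the level map
\[
     (Tf)_\ell\colon U_{\ell+1}\oplus E_\ell\to V_{\ell+1}\oplus F_\ell,\qquad
     (x,\xi)\mapsto(f(x),Df(x)\xi)
\]
is defined at every $(x,\xi)$ with $\xi\in E_\ell$. Projecting onto the second factor shows $Df(x)(E_\ell)\subset F_\ell$, which is (a). For (b), at fixed $x$ the map $\xi\mapsto Df(x)\xi$ is linear and -- by continuity of the level map $(Tf)_\ell$ in its second slot -- also continuous from $E_\ell$ to $F_\ell$, so $D_\ell f(x)\in\Ll(E_\ell,F_\ell)$. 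For (c), the joint continuity of $(Tf)_m$ at $(x,\xi)$ with $x\in U_{m+1}$ and $\xi\in E_m$, specialised to sequences $x_\nu\to x$ with $\xi$ held fixed, is exactly the pointwise convergence statement (\ref{eq:cont-comp-open}); by linearity this is equivalent to continuity of $Df\colon U_{m+1}\to\Llco(E_m,F_m)$.

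Part (d) is the main obstacle and is the only step that uses \texttt{(compactness)}. I would first settle the identity $Df(x)\big|_{E_{m+1}}=df(x)\big|_{E_{m+1}}$ when $x\in U_{m+1}$: the defining Fr\'{e}chet limit for $df$ on $E_1\to F_0$ restricts to a Fr\'{e}chet limit for the diagonal map $f\colon U_{m+1}\to V_m$, uniqueness of the Fr\'{e}chet derivative in $\Ll(E_{m+1},F_m)$ then forces the two restrictions to agree. For norm continuity of $Df\colon U_{m+1}\to\Ll(E_{m+1},F_m)$, I would argue by contradiction: if $x_\nu\to x$ in $U_{m+1}$ but the operator norms $\Norm{Df(x_\nu)-Df(x)}_{\Ll(E_{m+1},F_m)}$ stay bounded below by some $\varepsilon>0$, pick $\xi_\nu\in E_{m+1}$ with $\Norm{\xi_\nu}_{m+1}\le 1$ and
\[
     \Norm{Df(x_\nu)\xi_\nu-Df(x)\xi_\nu}_{F_m}\ge \tfrac{\varepsilon}{2}.
\]
By compactness of the embedding $E_{m+1}\INTO E_m$, a subsequence of $\xi_\nu$ converges in $E_m$ to some $\xi$. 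Now apply the joint continuity of $\Phi\colon U_{m+1}\oplus E_m\to F_m$ from~(\ref{eq:comp-op-top}) along the two sequences $(x_\nu,\xi_\nu)\to(x,\xi)$ and $(x,\xi_\nu)\to(x,\xi)$ to obtain $Df(x_\nu)\xi_\nu\to Df(x)\xi$ and $Df(x)\xi_\nu\to Df(x)\xi$ in $F_m$, whose difference must then go to zero -- the desired contradiction.

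The only delicate point is that the contradiction argument in (d) crucially uses \texttt{(compactness)} to produce a limit $\xi$ in $E_m$ starting from a sequence bounded only in $E_{m+1}$; without this one would be stuck because the continuity of $\Phi$ is only available on $U_{m+1}\oplus E_m$, not on $U_{m+1}\oplus E_{m+1}$ with the weaker target topology. All the routine bookkeeping (boundedness constants for $D_\ell f(x)$, verifying the pointwise description is equivalent to continuity into $\Llco$) I would fold into short sentences rather than explicit estimates.
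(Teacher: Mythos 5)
Your treatment of (a), (b), (c) is exactly the paper's: read level preservation and continuity of the level operators off the level maps $(Tf)_\ell$ of the axiom \texttt{($Tf$ is $\text{sc}^0$)}, and get (c) by freezing the second argument of the continuous map $\Phi\colon U_{m+1}\oplus E_m\to F_m$. For (d) you take a genuinely different route. The paper simply cites Proposition~\ref{prop:cont-BS}~c) for $\Phi$ and the compact inclusion $S=I_{m+1}\colon E_{m+1}\INTO E_m$; unpacking that proposition, its part~b) only assumes pointwise (compact-open) convergence $T_\nu\zeta\to T\zeta$ and therefore has to invoke Banach--Steinhaus to obtain a uniform bound on $\Norm{T_\nu}$ before the compactness of $S$ can be exploited. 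You instead run the contradiction argument directly on $Df$, using the \emph{joint} continuity of $\Phi$ on $U_{m+1}\oplus E_m$ along the two sequences $(x_\nu,\xi_\nu)\to(x,\xi)$ and $(x,\xi_\nu)\to(x,\xi)$. Since joint continuity is available here (it is precisely what the third $\SC^1$ axiom provides), your argument is correct and dispenses with the uniform boundedness principle altogether; what it buys is a shorter, more self-contained proof, while the paper's detour through Proposition~\ref{prop:cont-BS} buys a reusable statement under the weaker hypothesis of mere compact-open convergence, which is needed elsewhere (e.g.\ in Step~2 of the proof of Lemma~\ref{le:f_1-is-C1}).

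One caveat on your preliminary step in (d): the identity you want is not ``$Df(x)|_{E_{m+1}}=df(x)$ as pointwise-defined maps'' (that is immediate from the \texttt{(extension)} axiom, since $E_{m+1}\subset E_1$), but rather that the height-one diagonal map $f\colon U_{m+1}\to V_m$ is Fr\'{e}chet differentiable \emph{into $F_m$} with that derivative. Restricting the defining limit of $df(x)\in\Ll(E_1,F_0)$ to vectors in $E_{m+1}$ only gives convergence of the difference quotient in the $F_0$-norm, not the $F_m$-norm, so ``uniqueness of the Fr\'{e}chet derivative'' cannot be applied before existence in $\Ll(E_{m+1},F_m)$ is established. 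This is a real gap in that sentence, but not in the lemma: the paper also defers the ``actually $Df=df$'' claim to Lemma~\ref{le:f_1-is-C1}~(iii--iv), whose Step~2 proves it using parts (c) and (d) of the present lemma. Your proof of the continuity assertion of (d) does not depend on this identity, so the main argument stands.
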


\begin{corollary}\label{cor:sc-deriv_is_sc-op_smooth-pts}
At smooth points $\SC$-derivatives
are $\SC$-operators, that is
\[
     Df(x)\in\Llsc(E,F),\qquad x\in U_\infty.
\]
\end{corollary}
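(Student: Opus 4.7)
The plan is to deduce the corollary directly from parts (a) and (b) of Lemma~\ref{le:sc-deriv-reg-preserving} by exploiting the fact that a smooth point has regularity arbitrarily high. Concretely, $x\in U_\infty$ means $x\in U_{m+1}$ for every $m\in\N_0$, so the lemma is applicable for every choice of $m$.

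First I would fix an arbitrary $\ell\in\N_0$ and apply the lemma with $m:=\ell$. Since $x\in U_{\ell+1}$, part (a) guarantees that $Df(x)\in\Ll(E_0,F_0)$ preserves levels up to and including level $\ell$, in particular $Df(x)(E_\ell)\subset F_\ell$. Part (b) then says the restriction $D_\ell f(x):=Df(x)|_{E_\ell}\colon E_\ell\to F_\ell$ is a bounded linear operator, i.e.\ $D_\ell f(x)\in\Ll(E_\ell,F_\ell)$. Because $\ell$ was arbitrary, this shows that $Df(x)$ is level preserving \emph{for every} level and that every level operator is continuous.

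These are exactly the two properties required for $Df(x)$ to belong to $\Llsc(E,F)$ in the sense of the definition of an $\SC$-operator, so the corollary follows. The main (and only) point to be careful about is that the restriction ``$\ell=0,\ldots,m$'' in Lemma~\ref{le:sc-deriv-reg-preserving}(a)--(b) is tied to the regularity of $x$; this restriction evaporates for smooth points, which is precisely why $U_\infty$ is the right domain on which $Df(x)$ upgrades from a mere element of $\Ll(E_0,F_0)$ to a genuine $\SC$-operator. No additional input (compactness, density, or the continuity statements (c)--(d)) is needed for this particular conclusion.
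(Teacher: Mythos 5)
Your argument is correct and is exactly the derivation the paper intends: the corollary is stated immediately after Lemma~\ref{le:sc-deriv-reg-preserving} with no separate proof, precisely because for $x\in U_\infty$ one has $x\in U_{m+1}$ for all $m$, so parts (a) and (b) give level preservation and continuity of $D_\ell f(x)$ on every level $\ell$, which is the definition of membership in $\Llsc(E,F)$. Your closing observation that the restriction $\ell\le m$ evaporates at smooth points is the right point to emphasize.
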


\begin{proof}[Proof of Lemma~\ref{le:sc-deriv-reg-preserving}]
Let $f\colon U\to V$ be of class $\SC^1$.
Pick $x\in U_{m+1}$ and $\ell\in\{0,\dots,m\}$.
Hence $x\in U_{\ell+1}$ and so $(x,\xi)\in U_{\ell+1}\oplus
E_\ell=(TU)_\ell$ for $\xi\in E_\ell$.
The axiom \texttt{($Tf$ is $\text{sc}^0$)}
means by definition of $\SC^0$ that every level map
$(Tf)_\ell$, see~(\ref{eq:axiom-TF-sc0}), is continuous.
In particular, for fixed $x\in U_{m+1}\subset U_{\ell+1}$ the map
between second components
$E_\ell\to F_\ell$, $\xi\mapsto Df(x)\xi$, is continuous. This proves~(a--b).
Since $\Phi$ in~(\ref{eq:comp-op-top}) is continuous
so is $\Phi(\cdot,\xi)\colon U_{m+1}\to F_m$
for each fixed $\xi\in E_m$. This proves~(c).
Part~(d) holds true by Proposition~\ref {prop:cont-BS}~c)
for the above map $\Phi$ and the compact operator
$S:=I_{m+1}\colon E_{m+1}\INTO E_m$.
\end{proof}

%%% SUBSUBSECTION %%%%%%%%%%%%%%%%%%%
%%%%%%%%%%%%%%%%%%%%%%%%%%%%%%%%%%
\subsubsection*{\boldmath Characterization of $\SC^1$ in terms of the
scale derivative $Df(x)$}

The next lemma and proof are taken from~\citet{Frauenfelder:2018a}.

\begin{lemma}[{Characterization of $\SC^1$ in terms of the
$\SC$-derivative}]
\label{le:charact_sc-1_by_Df} \mbox{ } \\
%Let $(U,C,E)$ and $(V,D,F)$ be $\SC$-triples.
Let $f\colon U\to V$ be $\SC^0$.
Then $f$ is $\SC^1$ iff the following conditions hold:
\begin{labeling}{\rm \texttt{(level operators)}}
\item[\rm \texttt{(ptw diff'able)}]
  (i)  The restriction $f\colon U_1\to F_0$, that is the top diagonal map, is pointwise
  differentiable in the usual sense.\index{\texttt{(ptw diff'able)}}
\item[\rm \texttt{(extension)}]
  (ii)  Its derivative $df(x)\in\Ll(E_1, F_0)$ at any $x\in U_1$
  has\index{\texttt{(extension)}}
  a continuous extension $Df(x)\colon E_0\to F_0$.
\item[\rm \texttt{(level operators)}]
  (iii)   The continuous extension $Df(x)\colon E_0\to F_0$ restricts,
  for\index{\texttt{(level operators)}}
  all levels $m\in\N_0$ and base points $x\in U_{m+1}$, to continuous
  linear operators
  (called \textbf{\boldmath level operators})\index{level!operator}
  \[
     D_m f(x):=Df(x)|_{E_m}\colon E_m\to F_m
  \]
  such that the corresponding maps
  \[
     Df|_{U_{m+1}\oplus E_m}\colon  U_{m+1}\oplus E_m\to F_m
  \]
  are continuous.
\end{labeling}
\end{lemma}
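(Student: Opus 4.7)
The plan is to show that the two formulations of $\SC^1$ simply repackage the same data, with condition (iii) being exactly the unwrapping of the axiom \texttt{($Tf$ is $\SC^0$)} into its first and second component parts.

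For the direction ``$\Rightarrow$'', conditions (i) and (ii) are verbatim the axioms \texttt{(ptw diff'able)} and \texttt{(extension)} from Definition~\ref{def:sc-differentiability}, so nothing needs to be done there. Condition (iii) is then essentially a restatement of Lemma~\ref{le:sc-deriv-reg-preserving}: part (b) of that lemma gives the level operators $D_mf(x)\in\Ll(E_m,F_m)$ for $x\in U_{m+1}$, and part~(c) (equivalently, continuity of the map $\Phi$ in~(\ref{eq:comp-op-top})) supplies joint continuity of $Df|_{U_{m+1}\oplus E_m}\colon U_{m+1}\oplus E_m\to F_m$. So no real work is needed in this direction; one just cites the already-established consequences of $\SC^1$.

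For the direction ``$\Leftarrow$'', assume $f\in\SC^0$ satisfies (i)--(iii). Conditions (i) and (ii) furnish axioms \texttt{(ptw diff'able)} and \texttt{(extension)} directly. It remains only to verify the third axiom, namely that the tangent map
\[
     Tf\colon TU\to TV,\qquad (x,\xi)\mapsto \bigl(f(x),Df(x)\xi\bigr)
\]
is of class $\SC^0$, where $(TU)_m=U_{m+1}\oplus E_m$ and $(TV)_m=V_{m+1}\oplus F_m$. Level preservation of $Tf$ splits into two independent assertions. The first coordinate, $x\mapsto f(x)$, sends $U_{m+1}$ to $V_{m+1}$ since $f$ is $\SC^0$. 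The second coordinate, $(x,\xi)\mapsto Df(x)\xi$, sends $U_{m+1}\oplus E_m$ to $F_m$ precisely because condition~(iii) guarantees that the extension $Df(x)\colon E_0\to F_0$ restricts at base point $x\in U_{m+1}$ to a level operator $D_mf(x)\colon E_m\to F_m$.

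For continuity of each level map $(Tf)_m$, it suffices to check continuity of each component. Continuity of $f\colon U_{m+1}\to V_{m+1}$ is again the $\SC^0$ hypothesis on $f$. Continuity of the map $(x,\xi)\mapsto Df(x)\xi$ as a map $U_{m+1}\oplus E_m\to F_m$ is precisely the second half of condition~(iii). Hence $Tf$ is level preserving and level-wise continuous, so $Tf\in\SC^0(TU,TV)$, and $f\in\SC^1(U,V)$. The proof is essentially bookkeeping; I foresee no obstacle, since each axiom of $\SC^1$ corresponds line-by-line to one of (i), (ii), (iii).
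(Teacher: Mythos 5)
Your proposal is correct and follows essentially the same route as the paper's proof: the forward direction cites Lemma~\ref{le:sc-deriv-reg-preserving} parts (b) and (c), and the reverse direction verifies level preservation and level-wise continuity of $Tf$ componentwise, with (iii) supplying exactly the data for the second component. No gaps.
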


\begin{proof}
'$\Rightarrow$'
Suppose $f$ is $\SC^1$. Then statements (i) and (ii) are obvious
and in statement (iii) the restriction assertion 
holds by Lemma~\ref{le:sc-deriv-reg-preserving} part~(b),
the continuity assertion by part~(c).

'$\Leftarrow$'
Suppose $f$ is $\SC^0$ and satisfies (i--iii).
It remains to show that the tangent map is $\SC^0$, namely,
a) level preserving and b) admitting continuous level maps.
a) To see that $Tf$ maps $(TU)_m$ to $(TV)_m$ for every $m\in\N_0$,
pick $(x,\xi)\in(TU)_m=U_{m+1}\oplus E_m$.
Since $f$ is $\SC^0$ we have that $f(x)\in V_{m+1}$.
By (iii) we have that $Df(x)\xi\in F_m$. Hence
\[
     Tf(x,\xi)
     =\left( f(x), Df(x)\xi \right)
     \in V_{m+1}\oplus F_m=(TV)_m .
\]
b) To see that $TF$ as a map $Tf|_{(TU)_m}\colon (TU)_m\to(TV)_m$
is continuous assume $(x_\nu,\xi_\nu)\in (TU)_m=U_{m+1}\oplus E_m$
is a sequence which converges to $(x,\xi)\in (TU)_m$.
Because $f$ is $\SC^0$, it follows that
\[
     \lim_{\nu\to\infty} f(x_\nu) =f(x).
\]
Continuity of $Df$ provided by (iii) guarantees that
\[
     \lim_{\nu\to\infty} Df(x_\nu)\xi_\nu =Df(x)\xi.
\]
Therefore
\[
    \lim_{\nu\to\infty} Tf(x_\nu,\xi_\nu)
     =\lim_{\nu\to\infty} \left( f(x_\nu), Df(x_\nu)\xi_\nu\right)
     =\left( f(x), Df(x)\xi\right)
     =Tf(x,h).
\]
This proves continuity b) and hence the lemma holds.
\end{proof}

%%% SUBSECTION %%%%%%%%%%%%%%%%%%%%%
%%%%%%%%%%%%%%%%%%%%%%%%%%%%%%%%%%
\subsection*{Higher scale differentiability -- class \boldmath$\SC^k$}

For $k\ge 2$ one defines higher continuous scale differentiability
$\SC^k$ recursively as follows.
In the definition of $\SC^1$ one requires a map $f\colon U\to V$ between
open subsets of Banach scales $E$ and $F$ to be $\SC^0$ and then
defines a tangent map $F:=Tf\colon TU\to TV$, again between open subsets
%$TU\subset TE$ and $TV\subset TF$
of Banach scales $TE$ and $TF$, which among other things is required to
be $\SC^0$, too.
If the map $F$ itself is of class $\SC^1$, that is if among other things
$TF=TTf\colon TTU\to TTV$ is of class $\SC^0$, one says that
$f$ is of class $\SC^2$, and so on.

\begin{definition}[Higher scale differentiability]\label{def:sc^k}
An $\SC^1$-map $f\colon U\to V$ is \textbf{of class \boldmath$\SC^k$}
if\index{$sc^k$@$\SC^k$ scale differentiability}
and only if its tangent map $Tf\colon TU\to TV$ is $\SC^{k-1}$.
It is called \textbf{\Index{sc-smooth}}, or of
\textbf{class \boldmath$\SC^\infty$}, if
it is of class $\SC^k$ for every $k\in\N$.
\end{definition}

An $\SC^k$-map has iterated tangent maps as follows.
Recursively one defines the\index{iterated!tangent bundle}
\textbf{iterated tangent bundle}
as\index{tangent bundle!iterated --}
\[
     T^{k+1}U:=T(T^kU).
\]
Let us consider the example $T^2 U$.
Recall that for an open subset $U\subset E$ of a Banach scale
we set $TU:=U^1\oplus E^0$. Now consider the open
subset $TU$ of the Banach scale $TE:=E^1\oplus E^0$ to obtain that
\begin{equation*}
\begin{split}
     T^2U:=T(TU):
   &=(TU)^1\oplus (TE)^0\\
   &=\left(U^1\oplus E^0\right)^1\oplus \left(E^1\oplus E^0\right)^0\\
   &=U^2\oplus E^1\oplus E^1\oplus E^0.
\end{split}
\end{equation*}
For $f$ of class $\SC^k$ define\index{iterated!tangent map}
its\index{tangent map!iterated --}
\textbf{iterated tangent map} $T^kf\colon T^kU\to T^kV$
recursively as
\[
     T^kf:=T(T^{k-1} f).
\]
For example
\[
     T^2f\colon  U^2\oplus E^1\oplus E^1\oplus E^0
     \to V^2\oplus F^1\oplus F^1\oplus F^0
\]
is (as shown in the proof of Lemma~\ref{le:charact_sc-2_by_Df} below) given by
\begin{equation}\label{eq:T^2f}
\begin{split}
     T^2f(x,\xi,\hat x,\hat\xi)
   &=\left(Tf(x,\xi),D(Tf)|_{(x,\xi)}(\hat x,\hat \xi)\right)\\
   &
     =\Bigl(
     \underbrace{f(x),Df(x)\xi}_{=:Tf(x,\xi)}
     \, ,
     \underbrace{Df(x)\hat x, D^2f(x)(\xi,\hat x)+Df(x)\hat \xi}
     _{=: D(Tf)_{(x,\xi)}(\hat{x},\hat{\xi})}
     \Bigr).
   %=\left(f(x),Df(x)\xi,Df(x)\hat x,D^2f(x)(\xi,\hat x)+Df(x)\hat\xi\right).
\end{split}
\end{equation}
Here $D^2f$ is the $\SC$-Hessian of $f$ which we introduce next.
The following lemma and proof are taken from~\citet{Frauenfelder:2018a}.

\begin{lemma}[Characterization of $\SC^2$ in terms of
the $\SC$-derivative]
\label{le:charact_sc-2_by_Df} \mbox{} \\
%Let $(U,C,E)$ and $(V,D,F)$ be $\SC$-triples.
Let $f\colon U\to V$ be $\SC^1$.
Then $f$ is $\SC^2$ iff the following conditions hold:
\begin{itemize}
%\item[\texttt{(ptw diff'able)}]
\item[\rm (a)]
  The restriction $f\colon U_2\to V_0$, that is the top diagonal map of height two,  
  is pointwise twice differentiable in the usual sense.
  % in the usual sense;
%\item[\texttt{(extension)}]
\item[\rm (b)]
  Its second derivative $d^2f(x)\in\Ll(E_2\oplus E_2,F_0)$ at any $x\in U_2$
  has a continuous extension $D^2f(x)\colon E_1\oplus E_1\to F_0$,
  the \textbf{\boldmath\Index{sc-Hessian}~of~$f$~at~$x$}.
%\item[\texttt{($\Ll^{\text{co}}$-continuity)}]
\item[\rm (c)]
  The continuous extension $D^2f(x)\colon E_1\oplus E_1\to F_0$ restricts,
  for all $m\in\N_0$ and $x\in U_{m+2}$, to continuous bilinear maps
  \[
     D^2_mf(x):=D^2f(x)|_{E_{m+1}\oplus E_{m+1}}\colon E_{m+1}\oplus E_{m+1}\to F_{m}
  \] 
  such that the corresponding maps
  \[
     D^2_mf\colon  U_{m+2}\oplus E_{m+1}\oplus E_{m+1}\to F_{m},\quad
     (x,\xi_1,\xi_2)\mapsto D^2f(x) (\xi_1,\xi_2)
  \]
  are continuous. 
\end{itemize}
\end{lemma}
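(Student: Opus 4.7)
The strategy is to reduce everything to Lemma~\ref{le:charact_sc-1_by_Df} applied to the tangent map $F := Tf: TU \to TV$. By Definition~\ref{def:sc^k}, $f$ is of class $\SC^2$ iff $Tf$ is of class $\SC^1$, and since $f \in \SC^1$ already ensures $Tf \in \SC^0$, we may invoke that lemma directly. Its three conditions (ptw diff'able), (extension), (level operators) for $Tf$ will match one-to-one with (a), (b), (c) via the product structure of $Tf$.

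First, I would identify the pointwise Fréchet derivative of $Tf$ on its top diagonal. On $(TU)_1 = U_2 \oplus E_1$ with target $(TV)_0 = V_1 \oplus F_0$, the map $Tf(x,\xi) = (f(x), Df(x)\xi)$ is a product of two components: $f$ (nonlinear in $x$, independent of $\xi$), and $(x,\xi) \mapsto Df(x)\xi$ (linear in $\xi$, possibly nonlinear in $x$). The usual product and linearity rules for Fréchet derivatives then give
\[
d(Tf)_{(x,\xi)}(\hat x, \hat \xi) = \bigl(df(x)\hat x,\; d_{x}\bigl[Df(\,\cdot\,)\xi\bigr]_{x}\hat x + Df(x)\hat \xi\bigr),
\]
and the middle partial is by definition $d^2 f(x)(\hat x, \xi)$ whenever it exists (as a bilinear form on $E_2 \oplus E_2 \to F_0$, since $x \in U_2$ and $\xi \in E_1$, restricted to $\hat x \in E_2$). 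Hence pointwise differentiability of $Tf$ on $(TU)_1$ is precisely pointwise twice differentiability of the diagonal $f: U_2 \to V_0$, modulo the $\xi$-linear piece which needs no differentiation: this is exactly~(a).

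Next I would translate (extension) for $Tf$. The required extension $D(Tf)_{(x,\xi)} \in \Ll(TE, TF)$ of $d(Tf)_{(x,\xi)} \in \Ll((TE)_1, (TF)_0)$ across the dense inclusion $E_2 \oplus E_1 \hookrightarrow E_1 \oplus E_0$ decomposes component-wise into: an extension of $\hat x \mapsto df(x)\hat x$ from $E_2$ to $E_1$ (free from $f \in \SC^1$, since this is just $Df(x)$ restricted to $E_1$); an extension of $\hat \xi \mapsto Df(x)\hat\xi$ from $E_1$ to $E_0$ (also free from $f \in \SC^1$); and, crucially, a continuous extension of the bilinear object $(\hat x, \xi) \mapsto d^2 f(x)(\hat x, \xi)$ from $E_2 \oplus E_2 \to F_0$ to $E_1 \oplus E_1 \to F_0$. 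The third item is precisely the content of~(b), defining $D^2 f(x)$.

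Finally, (level operators) for $Tf$ at base points of regularity $m+1$, namely $(x,\xi) \in (TU)_{m+1} = U_{m+2} \oplus E_{m+1}$, together with continuity of the level map $D(Tf)|_{(TU)_{m+1} \oplus (TE)_m}$, again decomposes along the two components: the first piece is already known from Lemma~\ref{le:sc-deriv-reg-preserving} applied to $f \in \SC^1$ at the shifted base point; the second piece is exactly the level-operator plus joint-continuity statement for $D^2 f$ at $x \in U_{m+2}$ on $E_{m+1} \oplus E_{m+1} \to F_m$, which is~(c). The reverse implication proceeds along the same dictionary: granted $f \in \SC^1$ and (a), (b), (c), the formula above for $d(Tf)_{(x,\xi)}$ lets us verify the three bullets of Lemma~\ref{le:charact_sc-1_by_Df} for $Tf$, whence $Tf \in \SC^1$ and so $f \in \SC^2$. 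The main obstacle is Step~1: the careful bookkeeping of levels across the product structure of $TU$ and $TV$, and checking that the Fréchet $x$-partial of $(x,\xi) \mapsto Df(x)\xi$ honestly coincides with the naive second Fréchet derivative $d^2 f(x)(\,\cdot\,,\xi)$ used to phrase~(a).
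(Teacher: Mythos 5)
Your overall strategy --- unwind Definition~\ref{def:sc^k} so that $f\in\SC^2$ means $Tf\in\SC^1$, and then apply Lemma~\ref{le:charact_sc-1_by_Df} to $Tf$ --- is exactly the paper's starting point, and your formula for $D(Tf)_{(x,\xi)}$ agrees with the one in the paper. The gap is in the claimed one-to-one matching of the three conditions of Lemma~\ref{le:charact_sc-1_by_Df} for $Tf$ with (a), (b), (c). The first condition, pointwise differentiability of the diagonal map $Tf\colon (TU)_1=U_2\oplus E_1\to (TV)_0$, must hold at every $(x,\xi)$ with $\xi\in E_1$, whereas condition~(a) only provides $d^2f(x)$ as a bilinear map on $E_2\oplus E_2$. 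For $\xi\in E_1\setminus E_2$ the differentiability of $\hat x\mapsto Df(x+\hat x)(\xi+\hat\xi)$ with the announced derivative $D^2f(x)(\xi,\cdot)+Df(x)(\cdot)$ is \emph{not} ``exactly (a) modulo the $\xi$-linear piece'': it is the analytic heart of the proof and consumes (b) and (c) as well. Concretely, the paper establishes it by (i) deriving a uniform local bound $\norm{D^2f(y)(v,w)}_0\le \norm{v}_1\norm{w}_1/\delta^2$ near $x$ from the continuity in (c), (ii) rewriting difference quotients of $Df$ as integrals of $D^2f$, and (iii) approximating $\xi\in E_1$ by some $\xi'\in E_2$ using density, so that the classical Taylor remainder from (a) can be applied to $\xi'$ and the error terms controlled by the uniform bound. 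You name this as ``the main obstacle'' in your Step~1 but do not supply the argument, and without it the equivalence you assert between the first bullet of Lemma~\ref{le:charact_sc-1_by_Df} for $Tf$ and condition~(a) is simply false.

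The forward direction is also not a formal dictionary. Pointwise differentiability of $Tf$ gives, for each \emph{fixed} $\xi$, differentiability of $x\mapsto Df(x)\xi$; condition~(a) demands the stronger statement that $df\colon U_2\to\Ll(E_2,F_0)$ is differentiable as a map into the operator-norm Banach space, i.e.\ a genuine Fr\'echet second derivative $d^2f(x)\in\Ll(E_2\oplus E_2,F_0)$. The paper closes this gap by invoking the nontrivial result of Hofer--Wysocki--Zehnder that an $\SC^2$ map is of class $C^2$ as a map $U_{m+2}\to F_m$ (cited as Prop.~2.3 of their 2010 paper), which is where the compactness axiom enters; your proposal gives no substitute for this input.
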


\begin{proof}%BEGIN
'$\Leftarrow$'
Suppose $f \colon U \to F$ is $\mathrm{sc}^1$ and satisfies the
three conditions~(a-c) of the Lemma. We need to show that $f$ is
$\mathrm{sc}^2$ (meaning by definition that $Tf\in\mathrm{sc}^1$).
Since $f$ is $\mathrm{sc}^1$ we have a well defined
tangent map
$$
     Tf \colon TU=U^1 \oplus E^0 \to TF=F^1 \oplus F^0,\quad
     (x,\xi )\mapsto \left( f(x),Df(x)\xi \right),
$$
of class $\mathrm{sc}^0$. Suppose that
$$
     (x,\xi ) \in (TU)_1=U_2 \oplus E_1.
$$
Hypotheses~(a) and~(b) guarantee that the linear map
$$
     D(Tf)(x,\xi ) \colon (TE)_0=E_1 \oplus E_0 \to (TF)_0=F_1 \oplus F_0
$$
defined for $(\hat{x},\hat{\xi }) \in E_1 \oplus E_0=(TE)_0$ by
$$
     D(Tf)_{(x,\xi )}(\hat{x},\hat{\xi })
     :=\big(Df(x)\hat{x},D^2f(x)(\xi ,\hat{x})+Df(x){\color{cyan}\hat{\xi }}\big).
$$
is well defined and bounded. To see that this map is the
$\SC$-derivative of $Tf$, see~(\ref{eq:sc-differential}), we need to
check the three axioms in the definition of scale differentiability for $Tf$.
Concerning the first two axioms we need to investigate
differentiability of the 'diagonal map', i.e. the restriction of
$Tf\colon (TU)_0\to (TF)_0$ to $(TU)_1$. It suffices to show that
$$ 
     \lim_{\norm{(\hat{x},\hat{\xi })}_{(TE)_1} \to 0}
     \frac{\norm{Tf(x+\hat{x},\xi +\hat{\xi })-Tf(x,\xi )-D(Tf)_{(x,\xi )}(\hat{x},\hat{\xi })}_{(TF)_0}}
     {\norm{(\hat{x}, \hat{\xi })}_{(TE)_1}}
     =0.
$$
Since we already know that the first component $f$
of $Tf$ is $\mathrm{sc}^1$ it suffices to check the second component
and show that
% XYZXYZ
\begin{equation}\label{eq:lim}
\begin{split}
     \lim_{\norm{\hat{x}}_2+\norm{\hat{\xi }}_1\to 0}
     \frac{\norm{Df(x+\hat{x})(\xi +\hat{\xi })-Df(x)(\xi +{\color{cyan} \hat{\xi }})
     -D^2 f(x)(\xi ,\hat{x})}_0}
     {\norm{\hat{x}}_2+\norm{\hat{\xi }}_1}
     =0.
\end{split}
\end{equation}
We estimate
\begin{eqnarray}\label{est}
%\begin{split}
   && \nonumber
     \frac{\norm{
        Df(x+\hat{x})(\xi +\hat{\xi })-Df(x)(\xi +{\color{cyan} \hat{\xi }})
        -D^2 f(x)(\xi ,\hat{x})
     }_0}
     {\norm{\hat{x}}_2+\norm{\hat{\xi }}_1}
     \\
   &\le&
      \frac{\norm{Df(x+\hat{x})\hat{\xi }-Df(x) {\color{cyan} \hat{\xi }}}_0}
     {\norm{\hat{x}}_2}
     \\\nonumber
   &&
     +\frac{\norm{Df(x+\hat{x})\xi -Df(x)\xi -D^2 f(x)(\xi ,\hat{x})}_0}
     {\norm{\hat{x}}_2} .
%\end{split}
\end{eqnarray}
Because $D^2 f \colon U_2 \oplus E_1 \oplus E_1 \to F_0$ is continuous
by hypothesis~(b) there exists an open neighborhood $V$ of
$x$ in $U_2$ and $\delta>0$ such that for every $y \in V$ and every
$v$ and $w$ in $B_\delta$, namely the $\delta$-ball around the origin
of $E_1$, it holds 
$$
     \norm{D^2 f(y)(v,w)}_0 \leq 1.
$$
By bilinearity of $D^2 f(y)$ for any $v,w \in E_1$ we get the estimate
\begin{equation}\label{uni}
     \norm{D^2 f(y)(v,w)}_0 \leq \frac{\norm{w}_1\norm{v}_1}{\delta^2}
\end{equation}
at each $y\in V$.
We can assume without loss of generality that $V$ is convex. We
rewrite the first term in (\ref{est}) as follows
\begin{equation}\label{eq:lim22}
     \frac{1}{\norm{\hat{x}}_2}
     \norm{Df(x+\hat{x})\hat{\xi }-Df(x)\hat{\xi }}_0
     =\biggl\|\int_0^1 D^2
        f(x+t\hat{x})\Bigl(\hat{\xi },\frac{\hat{x}}{||\hat{x}||_2}\Bigr)
     dt\biggr\|_0.
\end{equation}
From uniform boundedness (\ref{uni}) we conclude that
$$
     \lim_{\norm{\hat{x}}_2+\norm{\hat{\xi }}_1\to 0}
     \frac{1}{\norm{\hat{x}}_2}
     \norm{Df(x+\hat{x})\hat{\xi }-Df(x)\hat{\xi }}_0
     \le\lim_{\norm{\hat{\xi }}_1\to 0}
     \frac{c \norm{\hat{\xi }}_1}{\delta^2}
     =0
$$
where $c\ge 1$ is a bound for the linear inclusion
$E_2\INTO E_1$, so $\norm{\frac{\hat x}{\norm{\hat x}_2}}_1\le c$.
Hence in view of (\ref{est}) in order to show (\ref{eq:lim}) we are
left with showing
\begin{equation}\label{eq:lim2}
     \lim_{\norm{\hat{x}}_2\to 0}
    \frac{1}{\norm{\hat{x}}_2}
     \norm{Df(x+\hat{x})\xi -Df(x)\xi -D^2f(x)(\xi ,\hat{x})}_0=0.
\end{equation}
Fix a constant $\kappa\ge 1/\delta^2$ where $\delta$ is the constant
in~(\ref{uni}). Now choose $\epsilon>0$. By taking advantage of the
fact that $E_2$ is dense in $E_1$ we can choose
$$
     \xi ' \in E_2,\qquad
     \norm{\xi -\xi '}_1 \leq \frac{\epsilon}{3 \kappa c}.
 $$
Choose $W \subset V$ a convex open neighborhood of $x$ with the
property that for every $x+\hat{x} \in W$ it holds that
$$
   \frac{1}{\norm{\hat{x}}_2}
     \norm{df(x+\hat{x})\xi '-df(x)\xi '-d^2f(x)(\xi ',\hat{x})}_0
     \leq \frac{\epsilon}{3}.
$$
Suppose that $x+\hat{x} \in W$.
We are now ready to estimate
\begin{eqnarray*}
   &&
   \frac{1}{\norm{\hat{x}}_2}
     \norm{Df(x+\hat{x})\xi -Df(x)\xi -D^2f(x)(\xi ,\hat{x})}_0\\
   &\leq&
   \frac{1}{\norm{\hat{x}}_2}
     \norm{df(x+\hat{x})\xi '-df(x)\xi '-d^2f(x)(\xi ',\hat{x})}_0\\
   &&
     +\biggl\|\int_0^1 D^2
      f(x+t\hat{x}) \Bigl(\xi -\xi ',\frac{\hat{x}}{||\hat{x}||_2}\Bigr)dt
     \biggr\|_0
     +\Bigl\| D^2 f(x) \Bigl(\xi -\xi ',\frac{\hat{x}}{||\hat{x}||_2}\Bigr) \Bigr\|_0\\
     &\leq&
     \epsilon.
\end{eqnarray*}
To obtain the first inequality we wrote each of the three terms $\xi $
in line one in the form $\xi =\xi ' +(\xi -\xi ')$, we used that $df=Df$
for diagonal restrictions of $f$, and we used formula~(\ref{eq:lim22})
for $\hat \xi =\xi -\xi '$. The second inequality uses, in particular,
the estimate~(\ref{uni}) on both $D^2f$ terms.
This proves (\ref{eq:lim2}) and therefore the first two axioms of
scale differentiability of $Tf$.
% XYZXYZ

It remains to prove axiom three, namely that the tangent map of
$Tf$, i.e.
$$
     T^2 f=(Tf, D(Tf)) \colon T^2 U=(TU)^1\oplus TE
     \to T^2 F=(TF)^1\oplus TF
$$
is $\mathrm{sc}^0$: the map $T^2 f$ must be level preserving
and the corresponding level maps
$$
     (T^2U)_m
     =U_{m+2}\oplus E_{m+1}\oplus E_{m+1}\oplus E_m
     \to %(T^2F)_m=
     F_{m+2}\oplus F_{m+1}\oplus F_{m+1}\oplus F_m
$$
given by formula~(\ref{eq:T^2f}) must be continuous for all $m\in\N_0$.
For  the $Tf$ part both assertions are true, because $Tf\in\SCo$.
Concerning the $D(Tf)$ part there are three terms to be checked.
Since $Tf\in\SCo$ part~(iii) of Lemma~\ref{le:charact_sc-1_by_Df}
applies and asserts that term one exists as a map $Df\colon U_{m+2}\oplus
E_{m+1}\to F_{m+1}$ and is continuous, similarly for the map
$Df\circ(\iota,\Id)\colon U_{m+2}\oplus E_m\to U_{m+1}\oplus E_m\to F_m$ in
term three. Concerning term two use hypothesis~(c)
to see that $D^2f\colon U_{m+2}\oplus E_{m+1}\oplus E_{m+1}\to F_m$
is well defined and continuous.
This finishes the proof of the implication
that under the assumptions~(a-c) of the Lemma $f$ is $\mathrm{sc}^2$.

'$\Rightarrow$'
For the other implication, namely that if $f$ is $\mathrm{sc}^2$
it satisfies the conditions (a-c) of the Lemma, we point
out that by a result of Hofer, Wysocki, and Zehnder
\citet[Prop.\,2.3]{Hofer:2010b} it follows that $f$ is actually of
class $C^2$ as a map $f \colon U_{m+2} \to F_m$ for every $m \in
\mathbb{N}_0$. This in particular implies properties~(a) and~(b).
Property~(c) is straightforward; cf. proof of
Lemma~\ref{le:charact_sc-1_by_Df}~(iii)
based on Lemma~\ref{le:sc-deriv-reg-preserving} parts~(b) and~(c).
This concludes the proof of Lemma~\ref{le:charact_sc-2_by_Df}.
\end{proof}%END

\begin{exercise}[Symmetry of scale Hessian]\index{scale!Hessian}
Show that the \textbf{scale Hessian}\index{Hessian!scale --}
$\Hess_x f:=D^2f(x)\colon E_1\oplus E_1\to F_0$
is symmetric, that is $\Hess_x f(\xi,\eta)=\Hess_x f(\eta,\xi)$
for all $\xi,\eta\in E_1$.

\vspace{.1cm}\noindent
[Hint: The usual second derivative $d^2f(x)\colon E_2\oplus E_2\to F_0$
is symmetric and $E_2$ is a dense subset of the Banach space $E_1$.]
\end{exercise}

Applying the arguments in the proof of Lemma~\ref{le:charact_sc-2_by_Df}
inductively -- Lemma~\ref{le:charact_sc-1_by_Df} playing the role of
the induction hypothesis -- we obtain

\begin{lemma}[Characterizing $\SC^k$ by higher
$\SC$-derivatives $D^kf(x)$]
\label{le:charact_sc-k_by_Df} %\mbox{} \\
%Let $(U,C,E)$ and $(V,D,F)$ be $\SC$-triples.
Let $k\in\N$ and $f\colon U\to V$ be $\SC^{k-1}$.
Then $f$ is $\SC^k$ iff the following conditions~hold:
\begin{itemize}
%\item[\texttt{(ptw diff'able)}]
\item[\rm (i)]
  The restriction $f\colon U_k\to V_0$, that is the top diagonal map of
  height $k$, is pointwise $k$ times differentiable in the usual sense.
%\item[\texttt{(extension)}]
\item[\rm (ii)]
  Its $k^{\rm th}$ derivative $d^kf(x)\in\Ll(E_k\oplus\dots\oplus E_k,F_0)$
  at any $x\in U_k$ has a continuous extension
  \[
     D^kf(x)\colon \underbrace{E_{k-1}\oplus\dots\oplus E_{k-1}}_{\text{$k$ times}}
     \to F_0.
  \]
\item[\rm (iii)]
  The continuous extension $D^kf(x)\colon E_{k-1}\oplus\dots\oplus E_{k-1}\to F_0$
  restricts, for all $m\in\N_0$ and $x\in U_{m+k}$, to continuous
  $k$-fold multilinear maps
  \[
     D^k_m f(x):=D^kf(x)\colon 
     \underbrace{E_{k-1+m}\oplus\dots\oplus E_{k-1+m}}_{\text{$k$ times}}
     \to F_{m}
  \] 
  such that the corresponding maps
  \[
     D^kf|_A 
     \colon  A:=U_{k+m}\oplus E_{k-1+m}\oplus\dots\oplus E_{k-1+m}\to F_{m}
  \]
  are continuous.
\end{itemize}
\end{lemma}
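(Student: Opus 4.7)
The plan is induction on $k\in\N$, with the base cases $k=1$ and $k=2$ already established by Lemmas~\ref{le:charact_sc-1_by_Df} and~\ref{le:charact_sc-2_by_Df}. So assume the equivalence has been shown for $\SC^{k-1}$ and prove it for $\SC^k$. By Definition~\ref{def:sc^k}, $f$ is $\SC^k$ iff $Tf\colon TU\to TV$ is $\SC^{k-1}$, where $TE=E^1\oplus E^0$ and $TF=F^1\oplus F^0$. Applying the induction hypothesis to $Tf$ reduces the entire lemma to comparing the $(k-1)$-fold conditions on $Tf$ with the $k$-fold conditions on $f$. Iterating the formula~\eqref{eq:T^2f} shows that $D^{k-1}(Tf)_{(x,\xi)}$ is, componentwise, a polynomial in the derivatives $D^j f(x)$ for $j=1,\dots,k$ applied to tangent arguments, in which the top-order piece is precisely $D^k f(x)(\xi,\cdot,\dots,\cdot)$. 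This explicit bookkeeping is the bridge between the two scales of hypotheses.

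For '$\Rightarrow$', I would first invoke \citet[Prop.\,2.3]{Hofer:2010b} (already used in the proof of Lemma~\ref{le:charact_sc-2_by_Df}) to conclude that $f\colon U_{m+k}\to V_m$ is of class $C^k$ in the usual sense for every $m\in\N_0$, which directly yields (i) together with the existence of $d^k f(x)$ for $x\in U_k$. Then the induction hypothesis applied to $Tf$ supplies continuous extensions and level-restrictions of $D^{k-1}(Tf)$; identifying its top-order multilinear component with $D^k f$ as above transports these extension and continuity properties to yield (ii) and (iii). For '$\Leftarrow$', I would define the candidate $(k-1)$-th $\SC$-derivative of $Tf$ by the explicit multilinear formula obtained by extending~\eqref{eq:T^2f} inductively. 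Each term is either $D^{k-1} f$ or $D^k f$ applied to appropriate tangent arguments; hypothesis (iii) supplies the continuity and extension needed for the unique top-order term involving $D^k f$, while the $\SC^{k-1}$-assumption on $f$ (via Lemma~\ref{le:charact_sc-k_by_Df} for $k-1$, which by induction we may assume) handles every remaining lower-order term. Verifying the three conditions of the lemma for $Tf$ at level $k-1$ then yields $Tf\in\SC^{k-1}$, hence $f\in\SC^k$.

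The main technical obstacle is the pointwise-differentiability clause of the reverse direction: showing that the candidate $D^{k-1}(Tf)_{(x,\xi)}$ is genuinely the Fr\'echet derivative of the top-diagonal map $Tf\colon (TU)_{k-1}\to (TV)_0$ at each $(x,\xi)\in(TU)_{k-1}$. This is exactly the step where Lemma~\ref{le:charact_sc-2_by_Df} required the delicate argument~\eqref{eq:lim}--\eqref{eq:lim2}: a uniform boundedness bound like~\eqref{uni} obtained from the continuity of $D^k f$ on a convex neighborhood supplied by (iii), together with density of $E_k$ in $E_{k-1}$ to approximate a direction $\xi\in E_{k-1}$ by some $\xi'\in E_k$ along which the ordinary derivative $d^k f$ can legitimately be used in place of the sc-extension, and finally an $\epsilon/3$-split. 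The same bookkeeping carries over verbatim to the induction step; only the indices shift, the bilinear form $D^2 f$ in~\eqref{eq:lim22} is replaced by the multilinear $D^k f$, and the convexity/density ingredients are drawn one level higher in the scale. The remaining axiom -- that $T(T^{k-1}f)=T^k f$ is level-preserving with continuous level maps -- then follows automatically from (iii) combined with the $\SC^{k-1}$ property of $f$, exactly as in the closing paragraphs of the proof of Lemma~\ref{le:charact_sc-2_by_Df}.
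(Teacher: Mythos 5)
Your proposal matches the paper's approach exactly: the paper's entire proof is the single remark that one applies the arguments of the $\SC^2$ characterization (Lemma~\ref{le:charact_sc-2_by_Df}) inductively, with the lower-order characterization serving as induction hypothesis, and your sketch is a faithful -- indeed more detailed -- elaboration of precisely that induction, including the correct identification of the delicate step (pointwise differentiability of the candidate derivative via the uniform bound, density of $E_k$ in $E_{k-1}$, and the $\epsilon/3$ split) and the use of \citet[Prop.\,2.3]{Hofer:2010b} for the forward direction.
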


%%%%%%%%%%%%%%%%%%%%%%%%%%%%%%%%%%
%%%%%%%%%%%%%%%%%%%%%%%%%%%%%%%%%%
%%% SECTION %%%%%%%%%%%%%%%%%%%%%%%%
%%%%%%%%%%%%%%%%%%%%%%%%%%%%%%%%%%
%%%%%%%%%%%%%%%%%%%%%%%%%%%%%%%%%%
\section{Differentiability --  Scale vs Fr\'{e}chet}
\label{sec:props-sc-differentiability}
\sectionmark{Scale versus Fr\'{e}chet}

First we investigate how the new class $\SC^1$ of continuously scale
differentiable maps $f\colon U\to V$ relates to $C^1$ continuous
differentiability in the usual Fr\'{e}chet sense
of all diagonal maps $f\colon U_{m+1}\to V_m$ of height $1$.
Then we investigate how the class $\SC^k$ of higher scale
differentiable maps $f\colon U\to V$  relates 
to $C^\ell$ differentiability of all diagonal maps $f\colon U_{m+\ell}\to V_m$
of height $\ell\in\{0,\dots,k\}$, hence up to height of at most $k$.
For further details see~\citet{Hofer:2010b}.

%%% SUBSECTION %%%%%%%%%%%%%%%%%%%%%
%%%%%%%%%%%%%%%%%%%%%%%%%%%%%%%%%%
\subsection*{Maps of class \boldmath$\SC^1$}

%%% SUBSUBSECTION %%%%%%%%%%%%%%%%%%%
%%%%%%%%%%%%%%%%%%%%%%%%%%%%%%%%%%
%\subsubsection*{Continuity properties of \boldmath$\SC$-derivative
%$Df$ and diagonal derivative~$df$}

\begin{convention}[Topologies]
Given Banach spaces $E_0$ and $F_0$, then $\Ll(E_0,F_0)$ denotes
the vector space of bounded linear maps $T\colon E_0\to F_0$
equipped with the (complete) operator norm
(Section~\ref{sec:lin-op}). By
$
     \Llco(E_0,F_0)
$
we denote\index{$\Llco(E,F)$ compact-open topology}
the same vector space equipped
with the compact-open topology.
%This is a topological vector space; see Section~\ref{sec:TVS}.
\end{convention}

%\newpage
\begin{lemma}[Continuity properties of $Df$ and the diagonal differential~$df$]
\label{le:f_1-is-C1}
Let $f\colon U\to V$ be of class $\SC^1$. Then the following is true.
\begin{itemize}
\item[\rm (i)]
  The map $U_1\oplus E_0\to F_0$, $(x,\xi)\mapsto Df(x)\xi$, is continuous.
\item[\rm (ii)]
  The usual differential $df\colon U_1\to\Ll(E_1,F_0)$  of the diagonal map
  $f\colon U_1\to V_0$ is continuous, in symbols $f\in C^1(U_1,V_0)$.
\item[\rm (iii)]
  Every diagonal map $f\colon U_{m+1}\to V_m$ is of class $C^1$. In
  other words, its differential, the
%  in view of~(\ref{eq:diag-m-deriv-extends})
  so-called\index{differential!diagonal --}\index{diagonal!differential}
  \textbf{diagonal differential} %, cf.~(\ref{eq:diag-m-deriv-extends}),
  \[
     df\colon U_{m+1}\to\Ll(E_{m+1},F_m)
  \]
  is a continuous map.
\item[\rm (iv)]
  At $x\in U_{m+1}$ the diagonal derivative $df(x)\colon E_{m+1}\to F_m$
  in (iii) extends to $E_m$ and the extension is the restriction
  $Df(x)|_{E_m}\in\Ll(E_m,F_m)$ of the $\SC$-derivative~(\ref{eq:sc-differential});
  cf. Lemma~\ref{le:sc-deriv-reg-preserving}~(b). That is, the diagram
  \begin{equation}\label{eq:diag-m-deriv-extends}
  \begin{tikzcd} %[row sep=small] %[column sep=tiny]
     E_m\arrow[rrrr, dashed, "{Df(x) |_{E_m}=:D_mf(x)}"]
     &&&&
       F_m
     \\
     E_{m+1}
     \arrow[u, hook, "I_{m+1}"]
     \arrow[rrrru, "{df(x)\,\in\,\Ll(E_{m+1},F_m),\;\;x\in U_{m+1}}"']
     &&&&
  \end{tikzcd} 
  \end{equation}
  commutes. As a map $Df:U_{m+1}\oplus E_m\to F_m$ the level $m$ scale
  derivative is continuous; cf.~(\ref{eq:cont-comp-open}).
\end{itemize}
\end{lemma}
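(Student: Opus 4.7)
The plan is to establish the four parts in order $(i) \to (iv) \to (iii) \to (ii)$, since (ii) is just the $m=0$ case of (iii), and (iv) collects facts we need en route.

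First I would read off (i) directly from the axiom \texttt{($Tf$ is $\text{sc}^0$)}. At level $0$ of the tangent map, $(Tf)_0\colon U_1\oplus E_0 \to V_1\oplus F_0$, $(x,\xi)\mapsto(f(x),Df(x)\xi)$, is continuous, and projecting onto the second factor yields (i). Part (iv) is essentially a reorganization of information already in Lemma~\ref{le:sc-deriv-reg-preserving}: level preservation $Df(x)(E_m)\subset F_m$ for $x\in U_{m+1}$ is part (b), continuity of the map $U_{m+1}\oplus E_m\to F_m$, $(x,\xi)\mapsto Df(x)\xi$ is the $\SC^0$ level-$m$ component of $Tf$, and the commuting extension diagram~(\ref{eq:diag-m-deriv-extends}) is obtained by restricting the level-$0$ extension axiom~(\ref{eq:sc-deriv-extends}) to points $x\in U_{m+1}\subset U_1$ and then restricting the operators along the inclusion $E_{m+1}\INTO E_1$.

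Next I would prove the norm-topology continuity of $df\colon U_{m+1}\to\Ll(E_{m+1},F_m)$. Starting from the continuous bilinear-in-$\xi$ map $\Phi\colon U_{m+1}\oplus E_m\to F_m$, $(x,\xi)\mapsto Df(x)\xi$ from (iv), and the compact inclusion $I_{m+1}\colon E_{m+1}\INTO E_m$, the \texttt{(compactness)} axiom together with the cited Proposition~\ref{prop:cont-BS}~c) (the same step used for Lemma~\ref{le:sc-deriv-reg-preserving}~(d)) produces continuity of $U_{m+1}\to\Ll(E_{m+1},F_m)$, $x\mapsto Df(x)\circ I_{m+1}=Df(x)|_{E_{m+1}}$. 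Once this is identified with $df$ as a map into $\Ll(E_{m+1},F_m)$, the norm continuity asserted in (iii) is established.

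Finally, I would show that the diagonal map $f\colon U_{m+1}\to V_m$ is Fréchet differentiable at each $x\in U_{m+1}$ with derivative $D_{m+1}f(x):=Df(x)|_{E_{m+1}}$, which combined with the previous paragraph gives the $C^1$ conclusion. The pointwise differentiability axiom gives the fundamental theorem of calculus identity
\[
f(x+\xi)-f(x)=\int_0^1 df(x+t\xi)\,\xi\,dt
\]
a priori in $F_0$ for $\xi\in E_1$ small enough that $x+t\xi\in U_1$ throughout. For $\xi\in E_{m+1}$ small the curve $t\mapsto Df(x+t\xi)\xi$ takes values in $F_m$ and is continuous there by (iv), so its $F_m$-valued Bochner integral exists and agrees with the $F_0$-valued one via the continuous inclusion $F_m\INTO F_0$; thus the identity holds in $F_m$. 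Subtracting $Df(x)\xi=\int_0^1 Df(x)\xi\,dt$ and estimating in $F_m$ yields
\[
\Norm{f(x+\xi)-f(x)-Df(x)\xi}_{F_m}
\le\sup_{t\in[0,1]}\Norm{Df(x+t\xi)-Df(x)}_{\Ll(E_{m+1},F_m)}\Norm{\xi}_{E_{m+1}},
\]
and the supremum tends to $0$ as $\Norm{\xi}_{E_{m+1}}\to 0$ by the norm continuity established in the previous paragraph.

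The main obstacle is the last step, namely upgrading pointwise Fréchet differentiability of the top diagonal map $f\colon U_1\to V_0$ (which is given) to Fréchet differentiability of every lower diagonal map $f\colon U_{m+1}\to V_m$ in stronger target norm. The work lies entirely in justifying that the FTC identity, derived in the weak norm $F_0$ from the axiom \texttt{(ptw diff'able)}, continues to hold in the strong norm $F_m$; this requires the continuous inclusion $F_m\INTO F_0$ and the observation that the integrand is already a continuous $F_m$-valued curve because of (iv). Everything else is a direct pull-back of the relevant axioms and of the compactness-plus-pointwise-continuity trick from Lemma~\ref{le:sc-deriv-reg-preserving}.
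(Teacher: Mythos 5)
Your proposal is correct and follows essentially the same route as the paper: (i) is read off the level-$0$ component of the tangent-map axiom, norm continuity of $df\colon U_{m+1}\to\Ll(E_{m+1},F_m)$ comes from Proposition~\ref{prop:cont-BS}~c) applied to the compact inclusion $I_{m+1}$, and differentiability of the lower diagonal maps with derivative $Df(x)|_{E_{m+1}}$ is obtained from the fundamental-theorem-of-calculus identity, upgraded from $F_0$ to $F_m$ exactly as you describe (continuity of the integrand in $F_m$, injectivity of the inclusion $F_m\INTO F_0$, level preservation of $f$). The only divergence is in a sub-step: where you conclude $\sup_{t\in[0,1]}\Norm{Df(x+t\xi)-Df(x)}_{\Ll(E_{m+1},F_m)}\to 0$ directly from norm continuity of $Df$ at $x$ (which is valid, since the segment $x+t\xi$ lies in any prescribed $E_{m+1}$-ball around $x$ once $\xi$ is small), the paper instead runs a Banach--Steinhaus plus dominated-convergence argument to reach the same limit.
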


Of course, the lemma could be stated more economically, but
we enlist the assertions in their order of proof.

\begin{proof}
We follow essentially~\citet{Cieliebak:2018a}.
(i)~By assumption $f$ is $\SC^1$, so the induced map $f\colon U_1\to V_0$ is
pointwise differentiable and for every $x\in U_1$ the usual
derivative $df(x)\in\Ll(E_1,F_0)$ extends from $E_1$ to a map
$Df(x)\in\Ll(E_0,F_0)$.
Moreover, by axiom \texttt{($Tf$ is $\text{sc}^0$)} the map
\[
     \varphi\colon U_1\oplus E_0\to F_0,\quad
     (x,\xi)\mapsto Df(x)\xi
\]
is continuous, cf.~(\ref{eq:axiom-TF-sc0}), which is assertion (i).

(ii)~As the inclusion $S:=I_1\colon E_1\INTO E_0$ is compact, continuity
of the map
\[
     U_1\mapsto\Ll(E_1,F_0),\quad
     x\mapsto \varphi(x,S\,\cdot)=Df(x)\,\cdot=df(x)\,\cdot
\]
holds by Proposition~\ref {prop:cont-BS}~c).
We used that $Df(x)=df(x)$ along $E_1$.

(iii+iv) For $m=0$ the assertions are true by~(i) and~(ii) and~(ii)
will be a key input for the present proof, see Step 1 below, that
\begin{itemize}
\item[a)]
  as a map $f\colon U_{m+1}\to V_m$ is of class $C^1$, thereby proving (iii), and 
\item[b)]
  its derivative $df(x)=Df(x)|_{E_{m+1}}\colon E_{m+1}\to F_m$ is
  the $\SC$-derivative $Df(x)\colon E_0\to V_0$ applied to the
  elements of $E_{m+1}$ or, equivalently, the restriction to the dense
  subset $E_{m+1}$ of the
%  (continuous)
  level operator $D_mf(x)=Df(x)|_{E_m}\in\Ll(E_m,F_m)$ which
  exists by Lemma~\ref{le:sc-deriv-reg-preserving}~(b).
\end{itemize}
By density $E_{m+1}\subset E_m$ part b) shows that
the continuous extension of $df(x)\colon E_{m+1}\to F_m$
to $E_m$ is the level operator $D_mf(x)$.
The yet missing continuity assertion in (iv) holds true
by~(\ref{eq:comp-op-top}).
Step 2 below will prove a) and b) which then completes the proof of (iii+iv).
Step 1 is just a preliminary.

\vspace{.1cm}\noindent
\textbf{Step 1.} Given $x\in U_1$, let $\xi\in E_1$
be sufficiently small such that the image of the map
$\gamma\colon [0,1]\to U_1\subset E_1$, $t\mapsto x+t\xi$, is contained in
$U_1$. Then
\[
     f(x+\xi)-f(x)
     =\int_0^1 \frac{d}{dt} f(x+t\xi)\, dt
     =\int_0^1 Df(x+t\xi)\xi \, dt .
\]

%\vspace{.1cm}
\noindent
\textit{Proof of Step 1.} As $f\in C^1(U_1,V_0)$ by~(ii),
identity one is the integral form of the mean value theorem;
see e.g.~\citet[XIII Thm.\,4.2]{Lang:1993a}.
Identity two holds since
$\frac{d}{dt} f(x+t\xi)=df(x+t\xi)\xi=Df(x+t\xi)\xi$.
Equality two is by~(\ref{eq:sc-deriv-extends}) -- by definition of
$\SC^1$ the scale derivative restricted to $E_1$ is $df(x)$.

\vspace{.1cm}\noindent
In the proof of Step 2 we will use Step 1 for the elements
of the subset $U_{m+1}\subset U_1$ and for small $\xi\in E_{m+1}\subset E_1$.
For such $x$ and $\xi$ the term $f(x+\xi)-f(x)$ even
lies in $F_{m+1}$, since $f$ is level preserving (it is of class
$\SC^0$ by assumption).
However, we shall only estimate the $F_m$ norm, as this
gives us the opportunity to bring in compactness of the inclusion
$S:=I_{m+1}\colon  E_{m+1}\INTO E_m$ on the domain side~of~$f$.

\vspace{.1cm}\noindent
\textbf{Step 2.} $\forall m$ the map $f\colon U_{m+1}\to V_m$ is of class
$C^1$~with~derivative~${\color{brown} df=Df}$.

\vspace{.1cm}\noindent
\textit{Proof of Step 2.} 
Pick $x\in U_{m+1}$ and a non-zero short vector $\xi\in E_{m+1}$ to get
\begin{equation*}
\begin{split}
   &\frac{1}{\Abs{\xi}_{E_{m+1}}}\Abs{f(x+\xi)-f(x)-{\color{brown} Df}(x)\xi}_{F_m}\\
   &=\frac{1}{\Abs{\xi}_{E_{m+1}}}
     \Abs{\int_0^1 \left( Df(x+t\xi)\xi-Df(x)\xi\right) dt}_{F_m}\\
   &\le c\int_0^1\Abs{Df(x+t\xi)\frac{\xi}{\Abs{\xi}_{E_{m+1}}}
     -Df(x)\frac{\xi}{\Abs{\xi}_{E_{m+1}}}}_{F_m} dt\\
%   &= c\int_0^\eps \Norm{Df(x+t\xi)-Df(x)}_{\Ll(E_{m+1},F_m)} dt\\
   &\le c\int_0^1 \Norm{Df(x+t\xi)-Df(x)}_{\Ll(E_{m+1},F_m)} dt\;\;
     \longrightarrow\;\; 0 \quad\text{, as $\Abs{\xi}_{E_{m+1}}\to 0$.}
\end{split}
\end{equation*}
Here the equality holds by Step 1.
Concerning inequality one note that the path $[0,1]\to F_m$,
$t\mapsto Df(x+t\xi)\xi-Df(x)\xi$, is continuous by
Lemma~\ref{le:sc-deriv-reg-preserving}~(c) since $x+t\xi\in
U_{m+1}$ and $\xi\in E_{m+1}\subset E_m$.
So the map is in $\L^1([0,1],F_m)$, hence
the norm of the integral is less or equal than the integral along the norm;
see e.g.~\citet[VI \S4 (4)]{Lang:1993a}.
Inequality two holds by definition of the operator norm.

We prove convergence to zero. This will
follow from continuity of the map $Df\colon U_{m+1}\to \Ll(E_{m+1},F_m)$,
see Lemma~\ref{le:sc-deriv-reg-preserving}~(d).
However, due to infinite dimension it is not
just some compactness argument: Let $\xi_\nu\to 0$
be any in $E_{m+1}$ convergent sequence. Then the family of
bounded linear operators
\[
     \Ff:=\{Df(x+t\xi_\nu)\mid
     \nu\in\N,t\in[0,1]\}\subset\Ll(E_{m+1},F_m)
\]
generates, for each element $\zeta\in E_{m+1}$, a bounded orbit
\[
     \Ff\zeta:=\{Df(x+t\xi_\nu)\zeta\mid
     \nu\in\N,t\in[0,1]\}\subset B_{R(\zeta)}\subset F_m .
\]
Indeed by continuity of the map $Df\colon U_{m+1}\to \Ll(E_{m+1},F_m)$, as
guaranteed by Lemma~\ref{le:sc-deriv-reg-preserving}~(d), and
convergence $\xi_\nu\to 0$ there is a radius $R=R(\zeta)$ such that
the whole sequence of elements $Df(x+\xi_\nu)\zeta$ of $F_m$ lies in the ball
$B_R\subset F_m$ of radius $R$ and centered at $Df(x)\zeta$.
But by convexity of $B_R$ all segments from the center $Df(x)\zeta$
to $Df(x+\xi_\nu)\zeta$ also lie in $B_R$.
The Banach--Steinhaus Theorem~\ref{thm:Banach-Steinhaus}
then provides a uniform upper bound $c_\Ff$ for the operator norms of
all members of $\Ff$.
Now the constant function $g\equiv 2c_\Ff\colon [0,1]\to [0,\infty)$ is integrable
and dominates ($g\ge \abs{F_\nu}$) each function
\[
     F_\nu(t):=\Norm{Df(x+t\xi_\nu)-Df(x)}_{\Ll(E_{m+1},F_m)}\le c_\Ff + c_\Ff
     ,\quad t\in[0,1].
\]
The pointwise limit $F_\nu(t)\to 0$, as $\nu\to\infty$, is the
constant function $0$ on $[0,1]$, again by continuity of $Df$ and by
continuity of the norm function.
Thus the dominated convergence theorem applies,
see e.g.~\citet[VI Thm.\,5.8]{Lang:1993a},
and yields $\lim_\nu\int F_\nu=\int \lim_\nu F_\nu=\int 0 =0$.
This proves convergence to zero.

It remains to prove continuity of the map
\[
     df\colon U_{m+1}\to\Ll(E_{m+1},F_m),\quad
     x\mapsto df(x)=\Phi(x)S .
\]
Continuity holds by Proposition~\ref {prop:cont-BS}~c)
for the by~(\ref{eq:axiom-TF-sc0}), cf. Lemma~\ref{le:charact_sc-1_by_Df},
continuous map $\Phi\colon U_{m+1}\oplus E_m\to F_m$, $(x,\xi)\mapsto Df(x)\xi$,
and the compact inclusion $S:=I_{m+1}\colon E_{m+1}\to E_m$.
As any $\xi\in E_{m+1}$ lies in $E_1$, one has
\[
     \Phi(x) S\xi:=Df(x) I_{m+1}\xi=df(x)\xi
\]
since the diagram~(\ref{eq:sc-deriv-extends}) commutes.
This proves Lemma~\ref{le:f_1-is-C1}.
\end{proof}

\begin{lemma}[{Characterization of $\SC^1$ via
diagonal maps being of class $C^1$}]
\label{le:charact_sc-1_by_df} %\mbox{ } \\
An $\SC^0$-map $f\colon U\to V$ is of class $\SC^1$ iff
\begin{itemize}
\item[\rm (i)]
  all diagonal maps $f\colon U_{m+1}\to V_m$ are of class $C^1$ and for each of them
\item[\rm (ii)]
  the derivative $df(x)\in\Ll(E_{m+1},F_m)$, at any $x\in U_{m+1}$,
  extends to a continuous linear operator on $E_m$,
  notation $D_mf(x):E_m\to F_m$, and
\item[\rm (iii)]
  the extension as a map
  $D_mf:U_{m+1}\oplus E_m\to F_m$, $(x,\xi)\mapsto D_mf(x)\xi$,
  is continuous; cf.~(\ref{eq:cont-comp-open}).
\end{itemize}
%If one side is true scale derivative $Df(x)=D^{(0)}f(x)$
%and top level extension coincide and so do level operators
%$D_mf(x)=D^{(m)}f(x)$ and extensions.
\end{lemma}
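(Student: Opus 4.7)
The plan is to reduce both implications to the already-established Lemma~\ref{le:charact_sc-1_by_Df}, which characterizes $\SC^1$ via the $\SC$-derivative $Df(x)$ living on $E_0$ for $x\in U_1$.

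\textbf{Forward direction.} Assume $f\in\SC^1(U,V)$. Lemma~\ref{le:f_1-is-C1}(iii) says every diagonal map $f\colon U_{m+1}\to V_m$ is of class $C^1$ in the Fr\'{e}chet sense, giving~(i). Lemma~\ref{le:f_1-is-C1}(iv) identifies its derivative $df(x)\in\Ll(E_{m+1},F_m)$ with the restriction of the $\SC$-derivative along~$E_{m+1}$, and Lemma~\ref{le:sc-deriv-reg-preserving}(b) provides its extension to the level operator $D_mf(x)\colon E_m\to F_m$, giving~(ii). Continuity of $(x,\xi)\mapsto D_mf(x)\xi$ on $U_{m+1}\oplus E_m$ is Lemma~\ref{le:sc-deriv-reg-preserving}(c), giving~(iii).

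\textbf{Reverse direction.} Assume $f$ is $\SC^0$ and satisfies (i)--(iii). I would define the $\SC$-derivative at $x\in U_1$ as
\[
     Df(x):=D_0f(x)\in\Ll(E_0,F_0),
\]
i.e., the continuous extension to $E_0$ provided by~(ii) at level $m=0$. Then the first two axioms \texttt{(ptw diff'able)} and \texttt{(extension)} of Lemma~\ref{le:charact_sc-1_by_Df} hold by (i) and (ii) at $m=0$. To get axiom~(iii) of that lemma, I must show that for every $m\in\N_0$ and $x\in U_{m+1}$ the restriction $Df(x)|_{E_m}$ takes values in $F_m$ and equals $D_mf(x)$, and that the resulting map $U_{m+1}\oplus E_m\to F_m$ is continuous. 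The continuity part is then exactly hypothesis~(iii).

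\textbf{The key identification} is $Df(x)|_{E_m}=D_mf(x)$ for $x\in U_{m+1}$. This is the only step requiring an argument, and I would handle it as follows. For $\xi\in E_{m+1}$ a short vector, the difference quotient $(f(x+t\xi)-f(x))/t$ converges as $t\to 0$ in $F_0$ to $df^{(0)}(x)\xi$ (the derivative of the top diagonal map $f\colon U_1\to V_0$) and also in $F_m$ to $df^{(m)}(x)\xi$ (the derivative of the diagonal map $f\colon U_{m+1}\to V_m$); since the inclusion $F_m\INTO F_0$ is continuous, the two limits coincide in $F_0$. Thus $df^{(0)}(x)$ and $df^{(m)}(x)$ agree on $E_{m+1}$. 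Now both $Df(x)|_{E_m}\colon E_m\to F_0$ (continuous from $E_m$ into $F_0$ via the continuous inclusion $F_m\INTO F_0$ applied after $D_mf(x)$... wait, I only know $Df(x)|_{E_m}$ is continuous into $F_0$) and $D_mf(x)\colon E_m\to F_m\INTO F_0$ are continuous maps $E_m\to F_0$ that agree on $E_{m+1}$, which is dense in $E_m$ by \texttt{(density)} applied to $E_\infty\subset E_{m+1}\subset E_m$. Hence the two continuous extensions coincide as maps $E_m\to F_0$; in particular $Df(x)\xi=D_mf(x)\xi\in F_m$ for every $\xi\in E_m$.

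\textbf{The main (mild) obstacle} is the bookkeeping above: keeping straight that the same symbol $df(x)$ is used for the Fr\'{e}chet derivatives of distinct diagonal maps of $f$ at different levels, and exploiting that these derivatives are automatically consistent because each is a limit of the same difference quotient in a weaker or stronger norm. Once this consistency is recorded, (iii) follows by density and hypothesis~(iii) of the lemma gives the missing continuity statement. Invoking Lemma~\ref{le:charact_sc-1_by_Df} then concludes $f\in\SC^1(U,V)$.
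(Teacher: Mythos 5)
Your proof is correct and follows essentially the same route as the paper: the forward direction is Lemma~\ref{le:f_1-is-C1} (plus Lemma~\ref{le:sc-deriv-reg-preserving}), and the reverse direction verifies the $\SC^1$ axioms with $Df(x):=D_0f(x)$. The only difference is that you explicitly justify the identification $Df(x)|_{E_m}=D_mf(x)$ via the difference-quotient and density argument, a consistency step the paper's two-line proof leaves implicit; that step is sound.
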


\begin{proof}
'$\Rightarrow$' Lemma~\ref{le:f_1-is-C1}.
'$\Leftarrow$' By (i-ii) for $m=0$ the first two axioms of
$\SC^1$ are satisfied. By (ii-iii) for all $m$ the axiom
\texttt{($Tf$ is $\text{sc}^0$)} is also satisfied.
\end{proof}

\begin{remark}
For any map $f\colon U\to V$ of class $\SC^1$
the induced map $f\colon E_\infty\supset U_\infty\to V_\infty\subset F_\infty$
between Fr\'{e}chet spaces is of class $C^1$;
cf.~\citet[Probl.\,5.5]{Cieliebak:2018a}.
\end{remark}

%\newpage
%%% SUBSUBSECTION %%%%%%%%%%%%%%%%%%%
%%%%%%%%%%%%%%%%%%%%%%%%%%%%%%%%%%
\subsection*{Maps of class \boldmath$\SC^k$}

It is an immediate consequence of 
Lemma~\ref{le:charact_sc-1_by_df},
together with the identity $(TU)^1=T(U^1)$,
that for an $\SC^k$ map $f\colon U\to V$
one can lift both indices equally
and still have an $\SC^k$ map, say $f\colon U^\ell\to V^\ell$.

\begin{lemma}[Lifting indices, {\citet[Prop.\,2.2]{Hofer:2010b}}]
\label{le:lifting-indices}
If\index{lifting indices}\index{indices!lifting --}
$f\colon U\to V$ is an $\SC^k$-map, then the induced map
$f\colon U^1\to V^1$ is also of class $\SC^k$.
\end{lemma}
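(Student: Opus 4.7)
The plan is to induct on $k\ge 0$, with the characterization Lemma~\ref{le:charact_sc-1_by_df} handling the case $k=1$ and the identity $(TU)^1=T(U^1)$ (and similarly for $V$) handling the inductive step via tangent maps.

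Base case $k=0$. If $f\colon U\to V$ is of class $\SC^0$, then $f$ is level preserving with continuous level maps $f_m\colon U_m\to V_m$. The $m^{\mathrm{th}}$ level of the induced map $f\colon U^1\to V^1$ is $f|_{U_{m+1}}\colon U_{m+1}\to V_{m+1}$, which is just $f_{m+1}$, hence continuous. So $f\colon U^1\to V^1\in\SC^0$.

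Case $k=1$. Assume $f\colon U\to V$ is $\SC^1$. I will verify conditions (i)--(iii) of Lemma~\ref{le:charact_sc-1_by_df} for the induced map $f\colon U^1\to V^1$. The diagonal map $f\colon (U^1)_{m+1}\to (V^1)_m$ is $f\colon U_{m+2}\to V_{m+1}$, which is of class $C^1$ by Lemma~\ref{le:f_1-is-C1}~(iii); this gives (i). Its derivative at $x\in U_{m+2}$ is $df(x)\in\Ll(E_{m+2},F_{m+1})$, and by Lemma~\ref{le:sc-deriv-reg-preserving}~(b) the $\SC$-derivative $Df(x)$ restricts to a bounded operator $D_{m+1}f(x)\colon E_{m+1}\to F_{m+1}$, i.e. from $(E^1)_m$ to $(F^1)_m$, which extends $df(x)$ by~(\ref{eq:diag-m-deriv-extends}); this gives (ii). Continuity of the joint map $U_{m+2}\oplus E_{m+1}\to F_{m+1}$, $(x,\xi)\mapsto Df(x)\xi$, is precisely Lemma~\ref{le:sc-deriv-reg-preserving}~(c) applied at the shifted level, giving (iii). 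Hence $f\colon U^1\to V^1$ is of class $\SC^1$.

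Inductive step $k\ge 2$. Assume the assertion holds for $k-1$ and suppose $f\colon U\to V$ is of class $\SC^k$. By definition, $Tf\colon TU\to TV$ is of class $\SC^{k-1}$. The key observation is the canonical identification
\[
     T(U^1)=(U^1)^1\oplus (E^1)^0=U^2\oplus E^1=(U^1\oplus E^0)^1=(TU)^1,
\]
and likewise $T(V^1)=(TV)^1$. Under these identifications the tangent map of the induced map $f\colon U^1\to V^1$ coincides with the induced map of $Tf$ on shifted scales, namely
\[
     T\bigl(f\colon U^1\to V^1\bigr)=\bigl(Tf\colon (TU)^1\to (TV)^1\bigr),
\]
simply because both sides send $(x,\xi)\mapsto(f(x),Df(x)\xi)$ on the same sets. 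Applying the induction hypothesis to the $\SC^{k-1}$-map $Tf$, the right-hand side is of class $\SC^{k-1}$. The case $k=1$ already established that $f\colon U^1\to V^1$ is $\SC^1$, so its tangent map is well defined; by the equality above, this tangent map is $\SC^{k-1}$, whence $f\colon U^1\to V^1$ is $\SC^k$ by Definition~\ref{def:sc^k}.

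The only step requiring genuine work is the $k=1$ case; the inductive step is purely formal once the identification $T(U^1)=(TU)^1$ is in place. No essential obstacle is anticipated, as all the needed continuity and extension properties at the shifted base points $x\in U_{m+2}$ are already packaged in Lemma~\ref{le:sc-deriv-reg-preserving} and Lemma~\ref{le:f_1-is-C1}.
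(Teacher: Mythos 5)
Your proof is correct and follows essentially the same route as the paper: the case $k=1$ via the characterization Lemma~\ref{le:charact_sc-1_by_df} (you spell out conditions (i)--(iii) at the shifted levels where the paper merely notes that shifting amounts to forgetting the $m=0$ maps), and the inductive step via the identification $(TU)^1=T(U^1)$ together with the induction hypothesis applied to $Tf$ and Definition~\ref{def:sc^k}. Your explicit check that the tangent map of the shifted map coincides with the shifted tangent map is a small point the paper leaves implicit, but it is the same argument.
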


\begin{proof}
Induction over $k\in\N$. Case $k=1$: This holds true by 
Lemma~\ref{le:charact_sc-1_by_df} which characterizes $\SC^1$ by some
conditions on all\,\footnote{
  saying ``\emph{all} diagonal maps $f\colon U_{m+1}\to V_m$'' refers to the set
  $\{f\colon U_{m+1}\to V_m\}_{m\in\N_0}$
  }
diagonal maps $f\colon U_{m+1}\to V_m$ and
their extensions $D_mf(x)$. Replacing $U,V$ by
$U^1,V^1$ means to simply forgetting the two maps for $m=0$.
\\
Induction step $k\Rightarrow k+1$:
Let $f\colon U\to V$ be $\SC^{k+1}$.
By definition this means that $f$ is $\SC^1$ and $Tf$ is $\SC^{k}$.
So by induction hypothesis applied to $Tf\in\SC^k(TU,TV)$ that same
map just between shifted spaces, namely $Tf\colon (TU)^1\to (TV)^1$,
is as well of class $\SC^k$.
But $(TU)^1=T(U^1)$, %and $(TV)^1=T(V^1)$, 
hence $Tf\colon T(U^1)\to T(V^1)$
is $\SC^k$. 
Note that $f\colon U^1\to V^1$ is also of class $\SC^1$
as a consequence of the case $k=1$
applied to $f\in\SC^1(U,V)$.
But an $\SC^1$ map, say $f\colon U^1\to V^1$,  whose tangent map is $\SC^k$
is of class $\SC^{k+1}$ by Definition~\ref{def:sc^k}.
\end{proof}

\begin{lemma}[Necessary and sufficient conditions for $\SC^k$-smoothness]
\label{le:nec_suff_cond_for_sc_by_Ck} %\mbox{ } 
Let $U,V$ be relatively open subsets of partial quadrants
in $\SC$-Banach spaces $E,F$.
\begin{labeling}{\rm (Necessary)}
\item[\rm (Necessary)]
  If $f\colon U\to V$ is $\SC^k$, then all diagonal maps $f\colon U_{m+\ell}\to V_m$
  of height $\ell$ are of class $C^\ell$ for all
  heights from $0$ up to $k$.
\item[\rm (Sufficient)]
  Assume that a map $f\colon U\to V$ induces for every level $m\in\N_0$
  and every height $\ell$ between $0$ and $k$
  a diagonal map $f\colon U_{m+\ell}\to V_m$ which, moreover, is of class
  $C^{\ell+1}$.
  Such a map $f\colon U\to V$ is of class $\SC^{k+1}$.
\end{labeling}
\end{lemma}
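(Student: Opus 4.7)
The plan is to prove both directions by a joint induction on $k\in\N_0$, hinging on the recursion $f\in\SC^{k+1}$ iff $f\in\SC^1$ and $Tf\in\SC^k$ from Definition~\ref{def:sc^k}, together with the explicit form $Tf(x,\xi)=\bigl(f(x),Df(x)\xi\bigr)$. The base cases will be the already-proved characterizations Lemma~\ref{le:charact_sc-1_by_df} (for $\SC^1$, which is the case $k=0$ of sufficiency) and Lemma~\ref{le:f_1-is-C1}(iii) (for $\SC^1$-maps having $C^1$ height-one diagonals, which is the case $k=1$ of necessity); the necessary case $k=0$ is just the definition of $\SC^0$. The identification $Df=df$ on diagonal arguments (Lemma~\ref{le:f_1-is-C1}(iv)) will be used throughout to translate between scale and Fréchet data.

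For the \textbf{necessary} direction, the induction step $k\to k+1$ runs as follows. Assume $f\in\SC^{k+1}$, so $Tf\in\SC^k$, and by the induction hypothesis every height-$\ell$ diagonal map of $Tf$, namely
\[
     Tf\colon U_{m+\ell+1}\oplus E_{m+\ell}\to V_{m+1}\oplus F_m,\qquad \ell\le k,
\]
is of class $C^\ell$. Taking the top height $\ell=k$ and projecting onto the second component shows that $(x,\xi)\mapsto Df(x)\xi$ is $C^k$ from $U_{m+k+1}\oplus E_{m+k}$ to $F_m$, whence, by linearity in $\xi$, $Df\colon U_{m+k+1}\to\Ll(E_{m+k+1},F_m)$ is $C^k$ in the operator norm. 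By Lemma~\ref{le:f_1-is-C1}(iv) this is exactly the Fréchet derivative of the height-$(k+1)$ diagonal $f\colon U_{m+k+1}\to V_m$, which is therefore $C^{k+1}$. The lower heights $\ell\le k$ are covered by applying the induction hypothesis to $f\in\SC^k\subset\SC^{k+1}$.

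For the \textbf{sufficient} direction, assume the diagonal maps $f\colon U_{m+\ell}\to V_m$ are $C^{\ell+1}$ for all $m$ and all $\ell\in\{0,\dots,k+1\}$, and argue $f\in\SC^{k+2}$. The base case $k=0$ follows directly from Lemma~\ref{le:charact_sc-1_by_df}: the hypothesis gives $f_m\in C^1(U_m,V_m)$, whose Fréchet derivative at $x\in U_{m+1}$ is the continuous extension to $E_m$ demanded by the lemma, and the evaluation $(x,\xi)\mapsto df_m(x)\xi$ is continuous. For the step, by Definition~\ref{def:sc^k} one must verify $Tf\in\SC^{k+1}$; by the induction hypothesis applied to $Tf$ (on the Banach scales $TE$ and $TF$), it suffices that each height-$\ell$ diagonal of $Tf$ with $\ell\in\{0,\dots,k\}$ be $C^{\ell+1}$. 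Its first component is $f\colon U_{m+\ell+1}\to V_{m+1}$, which is $C^{\ell+1}$ by the hypothesis with $m$ shifted to $m+1$. For its second component, start from the hypothesis $f\colon U_{m+\ell+1}\to V_m\in C^{\ell+2}$, read off $df\in C^{\ell+1}\bigl(U_{m+\ell+1},\Ll(E_{m+\ell+1},F_m)\bigr)$, and then invoke Proposition~\ref{prop:cont-BS}(c) together with the compact inclusion $E_{m+\ell+1}\INTO E_{m+\ell}$ to enlarge the $\xi$-domain from $E_{m+\ell+1}$ to $E_{m+\ell}$ while retaining $C^{\ell+1}$ regularity, arriving at the required evaluation $(x,\xi)\mapsto Df(x)\xi$ on $U_{m+\ell+1}\oplus E_{m+\ell}\to F_m$.

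The main obstacle is precisely this last step in the sufficient direction: bridging the gap between $\xi\in E_{m+\ell+1}$, which the hypothesis delivers naturally from a Fréchet derivative, and $\xi\in E_{m+\ell}$, which the structure of $Tf$ demands, without sacrificing the $C^{\ell+1}$ class. This is where the defining \texttt{(compactness)} axiom of Banach scales enters essentially, funneled through Proposition~\ref{prop:cont-BS}(c) in complete analogy with the corresponding steps in the proofs of Lemmas~\ref{le:charact_sc-1_by_Df}, \ref{le:charact_sc-2_by_Df}, and \ref{le:charact_sc-k_by_Df}; everything else is routine bookkeeping organized around the decomposition $Tf=(f,Df\cdot)$.
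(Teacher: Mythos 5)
Your overall architecture --- induction on $k$ organized around the recursion $f\in\SC^{k+1}\Leftrightarrow f\in\SC^1$ and $Tf\in\SC^k$, with the already-proved $\SC^1$ characterizations as base cases --- is sound and is essentially the same skeleton the paper uses (the paper likewise reduces to an induction on $k$ and defers the technical induction steps to Hofer--Wysocki--Zehnder, Prop.~2.3 and~2.4). But the key step of your sufficient direction is backwards. You propose to obtain the second component of the height-$\ell$ diagonal of $Tf$, namely $(x,\xi)\mapsto Df(x)\xi$ on $U_{m+\ell+1}\oplus E_{m+\ell}\to F_m$, by starting from $df\in C^{\ell+1}\bigl(U_{m+\ell+1},\Ll(E_{m+\ell+1},F_m)\bigr)$ and using Proposition~\ref{prop:cont-BS}(c) with the compact inclusion $E_{m+\ell+1}\INTO E_{m+\ell}$ to ``enlarge the $\xi$-domain from $E_{m+\ell+1}$ to $E_{m+\ell}$.'' Proposition~\ref{prop:cont-BS}(c) does the opposite: pre-composing with a compact inclusion \emph{shrinks} the domain to the smaller (higher-regularity) level and in return upgrades compact-open continuity to norm continuity. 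It can never extend an operator from $E_{m+\ell+1}$ to the larger space $E_{m+\ell}$; as Remark~\ref{rem:uniqueness-BLT} stresses, such an extension is not automatic (the completion of $E_{m+\ell+1}$ is itself, so B.L.T.\ does not help). The extension has to come from the hypothesis at height $\ell$ rather than height $\ell+1$: the height-$\ell$ diagonal $f\colon U_{m+\ell}\to V_m$ is $C^{\ell+1}$ by assumption, and its Fr\'echet derivative already lives in $\Ll(E_{m+\ell},F_m)$; the chain-rule identity $df(x)=df_{\text{(height }\ell)}(x)\circ I_{m+\ell+1}$ then identifies it as the required extension. This is exactly the move the paper makes in its $k=0$ case (and which you yourself make correctly in your base case before abandoning it in the induction step). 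After that correction there remains the genuinely technical issue of showing the evaluation map is $C^{\ell+1}$ and not merely $C^\ell$, which is where the compactness axiom enters in the manner of Lemma~\ref{le:charact_sc-2_by_Df}; neither your sketch nor the paper's carries this out in full.

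A smaller but related weakness sits in your necessary direction: the inference ``$(x,\xi)\mapsto Df(x)\xi$ is $C^k$ on $U_{m+k+1}\oplus E_{m+k}$, whence, by linearity in $\xi$, $Df\colon U_{m+k+1}\to\Ll(E_{m+k+1},F_m)$ is $C^k$ in operator norm'' is not a consequence of linearity alone --- for $k=0$ this is precisely the compact-open versus norm distinction of Remark~\ref{rem:noncont-norm-top}, rescued only by the compact inclusion $E_{m+k+1}\INTO E_{m+k}$ via Proposition~\ref{prop:cont-BS}(c), and for $k\ge1$ one needs a $C^k$ analogue of that proposition which the paper does not supply. Here at least the direction of the compactness argument is right, so this is an unfilled technical hole of the same kind the paper itself delegates to the literature, rather than an error.
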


\begin{proof}[Sketch of proof]
\textbf{Necessary.}
Suppose $f\in\SC^k(U,V)$.
Firstly, it suffices to prove the case $\ell=k$, because an $\SC^k$ map is also
an $\SC^\ell$ map for $\ell\in\{0,\dots,k\}$.
Secondly, it suffices to prove the case $m=0$, namely the

\vspace{.1cm}\noindent
\textbf{Claim.} The map $f\colon U_k\to V_0$ is of class $C^k$.

\vspace{.1cm}\noindent
Given $m\in\N_0$, the claim implies that $f\colon U_{m+k}\to V_m$ is of
class $C^k$ and we are done. Indeed by Lemma~\ref{le:lifting-indices}
the map $f\colon U^m\to V^m$ is also of class $\SC^k$ and for this map the
claim asserts that $f\colon (U^m)_k\to (V^m)_0$ is of class $C^k$.

One proves the claim by induction over $k$. For $k=0$ the map
$f\colon U_0\to V_0$ is $C^0$ as $f\in\SC^0$, for $k=1$ the map
$f\colon U_1\to V_0$ is $C^1$ by Lemma~\ref{le:f_1-is-C1} (ii).

The induction step $k\Rightarrow k+1$ is very similar
in character to the proof of Lemma~\ref{le:f_1-is-C1}~(iii+iv)
just more technical as one is looking at $k$-fold
derivatives, thus $k$-multilinear maps.
For details see~\citet[Prop.\,2.3]{Hofer:2010b}.

\textbf{Sufficient.}
Proof by induction over $k$. Case $k=0$. By assumption there
is for each $m\in\N_0$ a $C^1$ level map $f_m:=f|_{U_m}\colon U_m\to V_m$.
Together with the restriction $i_{m+1}$ of the linear, hence smooth,
embedding $I_{m+1}\colon E_{m+1}\INTO E_m$ one has a commutative diagram
  \begin{equation*}
  \begin{tikzcd} %[row sep=small] %[column sep=tiny]
     U_m\arrow[rr, "f_m\in C^1"]
     &&
       V_m
     \\
     U_{m+1}
     \arrow[u, hook, "C^\infty\ni i_{m+1}"]
     \arrow[rru, dashed, "{f=f_m\circ i_{m+1}\in C^1}"']
     &&
  \end{tikzcd} 
  \end{equation*}
of $C^1$ maps, in particular, all diagonal maps $f\colon  U_{m+1}\to
V_m$ are $C^1$. By the chain rule one gets the identity
$$
     df(x)=df_m(x)\circ I_{m+1}\in\Ll(E_{m+1},F_m)
$$
for $x\in U_{m+1}$.
The identity also shows that $df_m(x)\in\Ll(E_m,F_m)$ extends $df(x)$ from
$E_{m+1}$ to $E_m$. Thus (i) and (ii) in Lemma~\ref{le:charact_sc-1_by_df}
are satisfied and it remains to check (iii).
But this follows by pre-composing the first variable of the (by the
$C^1$-assumption on $f_m$) continuous
map $U_m\oplus E_m\to F_m$, $(x,\xi)\mapsto df_m(x)\xi$, with the
continuous embedding $U_{m+1}\INTO U_m$.
The step $k\Rightarrow k+1$ is very technical,
see~\citet[Prop.\,2.4]{Hofer:2010b}.
\end{proof}

%%%%%%%%%%%%%%%%%%%%%%%%%%%%%%%%%%
%%%%%%%%%%%%%%%%%%%%%%%%%%%%%%%%%%
%%% SECTION %%%%%%%%%%%%%%%%%%%%%%%%
%%%%%%%%%%%%%%%%%%%%%%%%%%%%%%%%%%
%%%%%%%%%%%%%%%%%%%%%%%%%%%%%%%%%%
\section{Chain rule}\label{sec:chain-rule}

A key element of calculus, the chain rule, is also available
in $\SC$-calculus. This is rather surprising given the
fact that the $\SC$-derivative arises by
differentiating the diagonal map $f\colon U_1\to V_0$
thereby loosing one level, so for a composition
one would expect the loss of two levels.
However, using {\color{magenta}\texttt{(compactness)}} of the embeddings
$E_{m+1}\INTO E_m$ one can avoid loosing two levels.

\begin{theorem}[Chain rule~{\citet[Thm.\,2.16]{Hofer:2007a}}]
\label{thm:chain-rule}
Suppose $f\colon U\to V$ and $g\colon V\to W$ are $\SC^1$-maps.
Then the composition $g\circ f\colon U\to W$ is also $\SC^1$ and
\begin{equation*}\label{eq:chain-rule-T}
     T(g\circ f)=Tg \circ Tf.
\end{equation*}
Equivalently, in terms of $\SC$-derivatives it holds that
%for every $(x,\xi)\in U^1\oplus E^0=TU$ it holds that
\begin{equation}\label{eq:chain-rule-D}
     D(g\circ f)|_x\,\xi
     =Dg|_{f(x)}\, Df|_x\,\xi,\quad
     (x,\xi)\in U^1\oplus E^0=TU.
\end{equation}
\end{theorem}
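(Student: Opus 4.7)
The plan is to verify the three conditions of $\SC^1$ for $h := g \circ f$, guided by the formula that must hold. For $x \in U_1$ level preservation of $f$ puts $f(x) \in V_1$, so $Dg(f(x)) \in \Ll(F_0, W_0)$ is defined, and then $Dg(f(x)) \circ Df(x) \in \Ll(E_0, W_0)$ is forced to be the candidate for $Dh(x)$. That $h$ is $\SC^0$ is immediate, because composing level-preserving maps preserves levels and each composite level map is continuous. Once the derivative formula is established, the tangent map axiom is also free, since $Th = Tg \circ Tf$ then exhibits $Th$ as a composition of $\SC^0$ maps. It therefore suffices to prove pointwise differentiability of the top diagonal $h \colon U_1 \to W_0$ with derivative the restriction of $Dg(f(x)) \circ Df(x)$ to $E_1$; this diagonal derivative is then automatically extended to $E_0$ by the already-defined operator, settling \texttt{(extension)}.

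For pointwise differentiability at $x \in U_1$, I fix small $\xi \in E_1$ and set $\eta := f(x+\xi) - f(x) \in F_1$ (level preservation of $f$). The integral form of the mean value theorem is available since $g \colon V_1 \to W_0$ is of class $C^1$ by Lemma~\ref{le:f_1-is-C1}(ii):
\begin{equation*}
     h(x+\xi) - h(x) = \int_0^1 Dg\bigl(f(x)+t\eta\bigr)\, \eta\, dt .
\end{equation*}
Writing $\eta = Df(x)\xi + \rho(\xi)$ with $|\rho(\xi)|_{F_0} = o(|\xi|_{E_1})$ (pointwise differentiability of $f \colon U_1 \to V_0$), the remainder splits as
\begin{equation*}
     h(x+\xi) - h(x) - Dg(f(x))\, Df(x)\, \xi = I_1(\xi) + I_2(\xi) ,
\end{equation*}
with $I_1(\xi) := \int_0^1 [Dg(f(x)+t\eta) - Dg(f(x))]\, Df(x)\, \xi\, dt$ and $I_2(\xi) := \int_0^1 Dg(f(x)+t\eta)\, \rho(\xi)\, dt$. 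The term $I_2$ is handled by Banach--Steinhaus: pointwise orbits of the family $\{Dg(f(x)+t\eta)\}$ are bounded by compact-open continuity of $Dg$, so the operator norms are uniformly bounded, whence $|I_2(\xi)|_{W_0} \le C\,|\rho(\xi)|_{F_0} = o(|\xi|_{E_1})$.

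The term $I_1$ is the main obstacle and the place where the \texttt{(compactness)} axiom of a scale becomes indispensable. A naive bound using norm-continuity of $y \mapsto Dg(y)$ as a map $V_1 \to \Ll(F_0,W_0)$ is unavailable, since such norm continuity fails in general (Remark~\ref{rem:noncont-norm-top}). The saving observation is that $I_1$ only evaluates the operator difference on vectors of the form $Df(x)\xi$, and after normalising by $|\xi|_{E_1}$ all such vectors lie in the set $K := \overline{Df(x)\bigl(\{\zeta \in E_1 : |\zeta|_{E_1} \le 1\}\bigr)}$. This set is \emph{compact} in $F_0$, because $Df(x)|_{E_1} = df(x)$ factors as $Df(x) \circ I_1$ with $I_1 \colon E_1 \INTO E_0$ compact and $Df(x) \colon E_0 \to F_0$ continuous. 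Since $\eta \to 0$ in $F_1$ as $\xi \to 0$ in $E_1$ by $\SC^0$-continuity of $f \colon U_1 \to V_1$, the compact-open continuity of $Dg \colon V_1 \to \Llco(F_0,W_0)$ delivers uniform convergence on the compact set $K$, yielding $|I_1(\xi)|_{W_0} = o(|\xi|_{E_1})$ as $\xi \to 0$ in $E_1$. This completes pointwise differentiability, identifies $Dh(x)$ as the claimed extension, and delivers the chain rule formula.
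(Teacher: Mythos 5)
Your proof is correct and takes essentially the same route as the paper's: the same mean-value-theorem decomposition along the segment $f(x)+t\eta=tf(x+\xi)+(1-t)f(x)$ into the same two integrals, with the \texttt{(compactness)} axiom used in exactly the same way to place the normalized vectors $Df(x)\xi/\abs{\xi}_{E_1}$ in a compact subset of $F_0$ on which the compact-open continuity of $Dg$ gives uniform convergence. The only cosmetic difference is that you control the remainder term via Banach--Steinhaus, where the paper invokes the joint continuity of $(y,\eta)\mapsto Dg|_y\,\eta$ from Lemma~\ref{le:charact_sc-1_by_Df}~(iii).
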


\begin{proof}
The main principles and tools of the proof have been detailed
and referenced in the slightly simpler setting of proving Step~2 in
the proof of Lemma~\ref{le:f_1-is-C1} (iii+iv).
Fix $x\in U_1$. Because $V_1$ is an open neighborhood
of $f(x)$ in the cone $D_1\subset F_1$ and because
the level map $f\colon U_1\to V_1$ is continuous,
there is a radius $\delta>0$ open ball $B_\delta$ in $E_1$
centered at $0$ such that $x+B_\delta$ is contained in $U_1$
and such that the map
\[
     \phi(t,\xi):=tf(x+\xi)+(1-t) f(x) \in V_1
\]
takes values in $V_1$ for all $t\in[0,1]$ and $\xi\in B_\delta$.
Because \underline{$g$ is $\SC^1$}, as a map $g\colon V_1\to W_0$ it is of
class $C^1$ by Lemma~\ref{le:charact_sc-1_by_df}~(i).
Apply the mean value theorem, observing that $\p_t\phi(t,h)=f(x+\xi)-f(x)$,
and {\color{brown} add zero} to obtain
\begin{equation*}
\begin{split}
   &g(f(x+\xi))-g(f(x))-{\color{cyan} Dg|_{f(x)}}Df|_x\xi\\
   &=\int_0^1 Dg|_{\phi(t,\xi)}\left(\p_t\phi(t,\xi)
     -{\color{brown} Df|_x\xi}\right) dt\\
   &\quad+\int_0^1\left({\color{brown} Dg|_{\phi(t,\xi)}}
     -{\color{cyan} Dg|_{f(x)}}\right) Df|_x \xi\, dt.
\end{split}
\end{equation*}

Divide by $\abs{\xi}_{E_1}$, so the first integral becomes
\begin{equation}\label{eq:gyhg687}
     \int_0^1 Dg|_{\phi(t,\xi)}\, h(\xi)\, dt,\qquad
     h(\xi):=\frac{f(x+\xi)-f(x)-Df|_x \xi}{\abs{\xi}_{E_1}}.
\end{equation}
Since \underline{$f$ is $\SC^1$} the restriction
of $Df|_x\colon E_0\to F_0$ to $E_1$ is $df|_x$ whenever $x\in U_1$,
see~(\ref{eq:sc-deriv-extends}), hence $h(\xi)\to 0$ as
$\abs{\xi}_{E_1}\to 0$ by Definition~\ref{def:Frechet-differential} of
the Fr\'{e}chet derivative $df|_x:=df(x)$.
Now $\phi\colon [0,1]\times B_\delta\to V_1$ is continuous
and $\phi(t,\xi)\to f(x)$, as $\abs{\xi}_{E_1}\to 0$, uniformly in $t\in[0,1]$.
Since \underline{$g$ is $\SC^1$} Lemma~\ref{le:charact_sc-1_by_Df}~(iii)
guarantees that the map $V_1\oplus F_0\to G_0$,
$(y,\eta)\mapsto Dg|_y\eta$, is continuous. Thus
$Dg|_{\phi(t,\xi)} h(\xi)\to 0$, as $\abs{\xi}_{E_1}\to 0$, uniformly in $t\in[0,1]$.
So the integral~(\ref{eq:gyhg687}) vanishes in the limit as
$\abs{\xi}_{E_1}\to 0$.

The second integral divided by $\abs{\xi}_{E_1}$ becomes
\begin{equation}\label{eq:gyhg66787}
     \int_0^1 \left( Dg|_{\phi(t,\xi)}-Dg|_{f(x)}\right)
     \frac{Df|_x\xi}{\abs{\xi}_{E_1}} \, dt .
\end{equation}
By {\color{magenta}\texttt{(compactness)}} of the inclusion $E_1\INTO E_0$
and continuity of the $\SC$-derivative $Df|_x\colon E_0\to F_0$ 
the set of all $Df|_x\xi/\abs{\xi}_{E_1}$ with $0\not=\xi\in B_\delta$
has {\color{magenta} compact closure} in $F_0$.\footnote{
  images of compact sets under continuous maps are compact
  }
Since the map $V_1\oplus F_0\to G_0$, $(y,\eta)\mapsto Dg|_y\eta$, is
continuous by Lemma~\ref{le:charact_sc-1_by_Df}~(iii) --
due to \underline{$g$ being $\SC^1$} -- it follows as above that
the integrand in~(\ref{eq:gyhg66787}) converges in $G_0$ to $0$ uniformly in
$t\in[0,1]$, so the integral~(\ref{eq:gyhg66787}) converges in $G_0$ to $0$,
both as $\abs{\xi}_{E_1}\to 0$.
 
This shows that the $\SC^0$ map given by the composition
$g \circ f\colon U\to W$ satisfies the first two axioms
in Definition~\ref{def:sc-differentiability} of $\SC^1$.
Indeed as a map $g\circ f\colon U_1\to V_1\to W_0$ is pointwise differentiable
and at $x\in U_1$ the derivative $d(g \circ f)|_x\colon E_1\to G_0$ has a
continuous extension, namely the composition of bounded linear
operators $Dg|_{f(x)} Df|_x\colon E_0\to G_0$. So by definition
this composition is the $\SC$-derivative $D(g\circ f)|_x$ associated
to $g\circ f$.
Thus $T(g\circ f)=Tg\circ Tf\colon TU\to TW$.
Because both $Tf$ and $Tg$ are $\SC^0$, so is $T(g\circ f)$.
Thus $g\circ f$ satisfies axiom three in Definition~\ref{def:sc-differentiability}.
So $g\circ f$ is $\SC^1$.
\end{proof}

%%%%%%%%%%%%%%%%%%%%%%%%%%%%%%%%%%
%%%%%%%%%%%%%%%%%%%%%%%%%%%%%%%%%%
%%% SECTION %%%%%%%%%%%%%%%%%%%%%%%%
%%%%%%%%%%%%%%%%%%%%%%%%%%%%%%%%%%
%%%%%%%%%%%%%%%%%%%%%%%%%%%%%%%%%%
\section{Boundary recognition}\label{sec:boundary-recognition}

Let $C$ be a partial quadrant in an $\SC$-Banach space $E$.
Pick a linear $\SC$-isomorphism $T\colon E\to \R^n\oplus W$
with $T(C)=[0,\infty)^n\oplus W$. For $x\in C$ write
$Tx=(a_1,\dots,a_n,w)\in [0,\infty)^n\oplus W$ and define its
\textbf{\Index{degeneracy index}}\index{index!degeneracy --}
by
\begin{equation}\label{eq:deg-index}
     d_C(x):=\#\{i\in\{1,\dots,n\}\mid a_i=0\} \in\N_0.
\end{equation}
A point $x\in C$ satisfying $d_C(x)=0$ is an interior point of $C$,
a boundary point if $d_C(x)=1$, and a corner point if $d_C(x)\ge 2$.
See Figure~\ref{fig:fig-deg-index-quadrant}.
\begin{figure}%[h]
  \centering
  \includegraphics%[width=0.9\textwidth]
                             [height=4cm]
                             {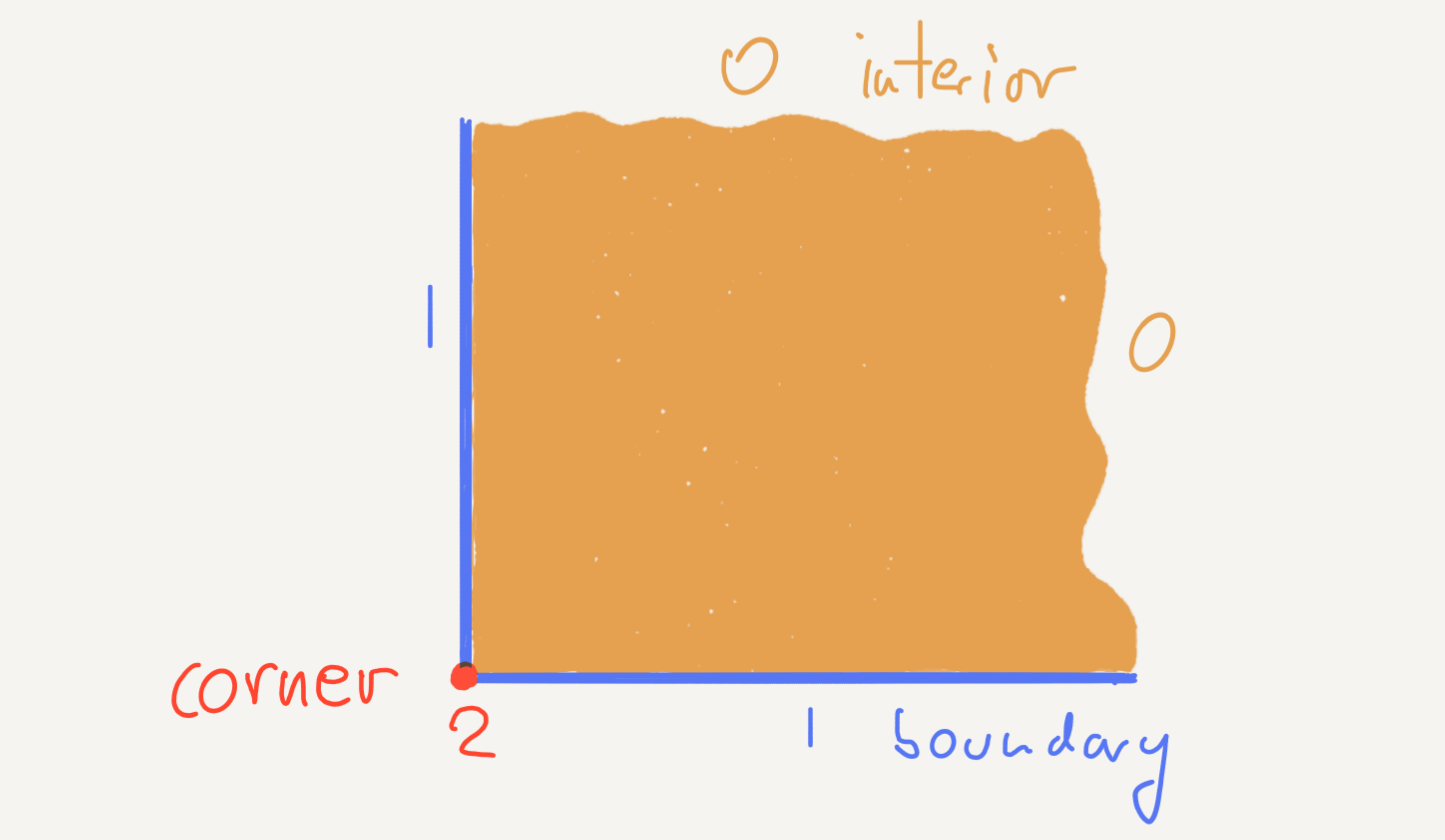}
  \caption{Quadrant $C\subset\R^2$ and points of degeneracy index
                  {\color{red} two}, {\color{cyan} one}, {\color{brown} zero}}
  \label{fig:fig-deg-index-quadrant}
\end{figure}

\begin{exercise}
The degeneracy index $d_C$ does not depend on the choice
of linear $\SC$-isomorphism $T\colon E\to \R^n\oplus W$.
\end{exercise}

\begin{theorem}[Invariance under $\SC^1$-diffeomorphisms]
\label{thm:inv-deg-ind}
Let $(U,C,E)$ and $(V,D,F)$ be partial quadrants. Let
$f\colon U\to V$ be an \textbf{\boldmath$\SC^1$-diffeomorphism},
that\index{sc-diffeomorphism}
is an $\SC^1$-map with an $\SC^1$-inverse, then for every $x\in U$ one
gets equality
\[
     d_C(x)=d_D(f(x)).
\]
\end{theorem}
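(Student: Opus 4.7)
The plan is to transfer the question to a statement about tangent cones of the partial quadrants and then to a finite-dimensional linear algebra fact. First I would reduce to standard form via the linear $\SC$-isomorphisms of Definition~\ref{def_partial-quad}, identifying $(E,C) = (\R^n \oplus W,[0,\infty)^n \oplus W)$ and $(F,D) = (\R^{n'} \oplus W',[0,\infty)^{n'} \oplus W')$; the degeneracy index is invariant under this reduction. Fix $x \in U$ and set $y := f(x)$.

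For a smooth base point $x \in U_\infty$, the chain rule (Theorem~\ref{thm:chain-rule}) applied to $f^{-1} \circ f = \id_U$ and $f \circ f^{-1} = \id_V$ produces mutually inverse scale derivatives $Df(x)$ and $Df^{-1}(y)$; by Corollary~\ref{cor:sc-deriv_is_sc-op_smooth-pts} both are $\SC$-operators, so $Df(x)$ is a linear $\SC$-isomorphism $E \to F$. I would then introduce the tangent cone $T_xC$ consisting of all $\xi \in E_0$ admitting a curve $t \mapsto x + t\xi + o(t)$ lying in $C$ for small $t \geq 0$; after reordering coordinates so that the first $k := d_C(x)$ half-line coordinates of $x$ vanish, a direct computation gives $T_xC = [0,\infty)^k \oplus \R^{n-k} \oplus W$. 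Pointwise Fr\'echet differentiability of the diagonal map $f\colon U_1 \to V_0$ combined with the \texttt{(extension)} axiom then forces $Df(x)(T_xC) \subseteq T_yD$; the reverse inclusion comes from applying the same argument to $f^{-1}$. Consequently $Df(x)$ restricts to a linear bijection $T_xC \to T_yD$.

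The smooth case now reduces to a linear algebra observation: the maximal linear subspace contained in a partial quadrant of the form $[0,\infty)^k \oplus V$ is $\{0\}^k \oplus V$, which has codimension exactly $k$ in the ambient space. A linear isomorphism between two such quadrants must carry maximal linear subspace onto maximal linear subspace and preserves codimension, which forces $d_C(x) = d_D(y)$ for every $x \in U_\infty$.

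The hard part will be extending to an arbitrary $x \in U$, since the scale derivative $Df(x)$ need not be available off $U_1$, let alone off $U_\infty$. My plan here is a density argument: any $x$ with $d_C(x) = k$ in standard coordinates can be approximated by smooth points $x_\nu \in U_\infty$ of the \emph{same} degeneracy $d_C(x_\nu) = k$, obtained by keeping the first $k$ half-line coordinates equal to zero and approximating only the $W$-component in $W_\infty$ (dense in $W_0$). By the smooth case $d_D(f(x_\nu)) = k$, and continuity of the level map $f_0$ gives $f(x_\nu) \to y$; upper semi-continuity of $d_D$ (strictly positive half-line coordinates remain positive under small perturbations) yields $d_D(y) \geq \limsup_\nu d_D(f(x_\nu)) = k = d_C(x)$. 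The symmetric argument using that $f^{-1}$ is also an $\SC^1$-diffeomorphism produces the reverse inequality. The principal subtlety, beyond coordinate bookkeeping, is showing rigorously that the tangent-cone equality $Df(x)(T_xC) = T_yD$ holds for tangent vectors living only in $E_0$ even though $Df(x)$ is defined by extension from $E_1$; here the compactness of the inclusion $E_1 \hookrightarrow E_0$ together with a truncation argument inside the cone must be invoked, essentially in the manner of~\citet*{Hofer:2017a}.
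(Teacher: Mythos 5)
The paper does not actually prove Theorem~\ref{thm:inv-deg-ind}; it defers entirely to \citet[Thm.\,1.19]{Hofer:2007a}. Your proposal therefore supplies an argument where the text has none, and the argument is sound and close in spirit to the original: normalize via Definition~\ref{def_partial-quad}, linearize at points of sufficient regularity, show the scale derivative carries tangent cone to tangent cone, read off the degeneracy index as the codimension of the lineality space $T_xC\CAP(-T_xC)$, and then propagate to arbitrary points by approximation together with upper semicontinuity of $d_C$ and the symmetric estimate for $f^{-1}$. Two remarks. First, the step you flag as the ``principal subtlety'' is easier than you fear: to pass from $Df(x)\xi\in T_yD$ for $\xi\in T_xC\CAP E_1$ to all of $T_xC$ you do not need compactness of $E_1\INTO E_0$ or any truncation; since $T_xC=[0,\infty)^k\oplus\R^{n-k}\oplus W$ and $E_1=\R^n\oplus W_1$ with $W_1$ dense in $W_0$, the set $T_xC\CAP E_1$ is dense in $T_xC$, and the inclusion follows from continuity of $Df(x)\in\Ll(E_0,F_0)$ and closedness of $T_yD$. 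Second, you can weaken ``smooth base point'' to $x\in U_1$: the chain rule (Theorem~\ref{thm:chain-rule}) already makes $Df(x)\colon E_0\to F_0$ a bounded linear bijection for $x\in U_1$, and the cone argument only uses this level-zero isomorphism, not the full $\SC$-operator property of Corollary~\ref{cor:sc-deriv_is_sc-op_smooth-pts}; the approximation step is unchanged since $E_\infty$ is dense anyway. With those simplifications your outline compiles into a complete proof.
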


\begin{proof}
\citet[Thm.\,1.19]{Hofer:2007a}
\end{proof}

%%%%%%%%%%%%%%%%%%%%%%%%%%%%%%%%%%
%%%%%%%%%%%%%%%%%%%%%%%%%%%%%%%%%%
%%% SECTION %%%%%%%%%%%%%%%%%%%%%%%%
%%%%%%%%%%%%%%%%%%%%%%%%%%%%%%%%%%
%%%%%%%%%%%%%%%%%%%%%%%%%%%%%%%%%%
\section{Sc-manifolds}\label{sec:sc-manifolds}

The new notion of differentiability of maps between the new linear spaces 
-- $\SC^k$ differentiability of maps between $\SC$-Banach spaces -- 
allows to carry over the new calculus to topological spaces modeled locally
on $\SC$-Banach spaces. This results in a new class of manifolds,
called $\SC^k$-manifolds. Their construction parallels the definition
of $C^k$ Banach manifolds; see Section~\ref{sec:B-mfs}.

To complement Section~\ref{sec:B-mfs} (case $C^k$)
we spell out here the smooth case (case $\SC^\infty$).
Suppose $X$ is a topological space.\index{sc-charts}
An \textbf{\boldmath$\SC$-chart} $(V,\phi,(U,C,E))$ for $X$
consists of an $\SC$-triple $(U,C,E)$ and a homeomorphism
$\phi\colon X\supset V\to U\subset C$ between open subsets.
Two\index{sc-charts!sc-compatible --}
$\SC$-charts are called \textbf{\boldmath$\SC$-smoothly compatible}
if the transition map (cf. Figure~\ref{fig:fig-sc-transition})
\[
     \phi\circ\widetilde{\phi}^{-1}\colon 
     \widetilde{E}\supset
     \widetilde{\phi}(V\CAP \widetilde{V})\to\phi(V\CAP\widetilde{V})
     \subset E
\]
is\index{sc-diffeomorphism}
\begin{figure}[b]
  \centering
  \includegraphics%[width=0.9\textwidth]
                             [height=4cm]
                             {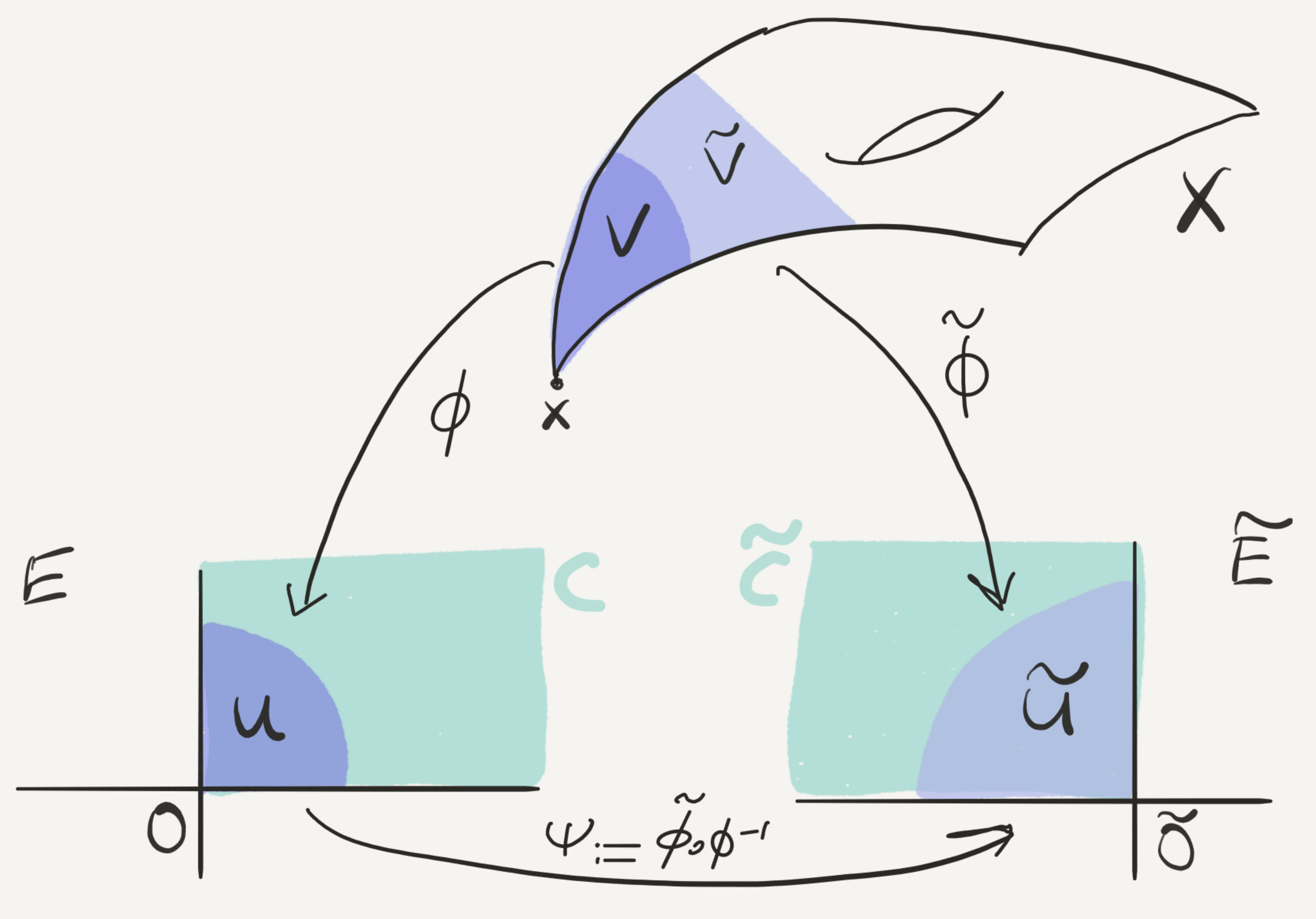}
  \caption{Transition map between $\SC$-charts of $\SC$-manifold $X$}
  \label{fig:fig-sc-transition}
\end{figure}
an \textbf{\boldmath$\SC$-smooth diffeomorphism}
(invertible $\SC$-smooth map with $\SC$-smooth inverse).
An \textbf{\boldmath$\SC$-smooth atlas for $X$} is a collection
$\Aa$ of pairwise $\SC$-smooth compatible Banach $\SC$-charts for $X$
such that the chart domains form a cover $\{V_i\}_i$ of $X$.
Two atlases are called \textbf{equivalent} if their union forms an atlas.

\begin{exercise}
Let $X$ be a topological space endowed with an $\SC$-smooth atlas $\Aa$.
a)~Is it true that $X$ is connected iff it is path connected? 
b)~Show that if $X$ is connected, then all model $\SC$-Banach spaces
$\widetilde{E}$ appearing in the charts of $\Aa$ are (linearly) $\SC$-isomorphic
to one and the same $\SC$-Banach space, say $E$.
In this case one says that $(X,\Aa)$ is
\textbf{modeled on \boldmath$E$}.

\vspace{.1cm}\noindent
[Hint: b) Given a transition map
$\psi\colon E\supset U\to \widetilde{U}\subset\widetilde{E}$
between two $\SC$-charts, observe that $U_\infty$ is a dense subset of $U$ and
that $\SC$-derivatives taken at smooth points are $\SC$-operators by 
Corollary~\ref{cor:sc-deriv_is_sc-op_smooth-pts}.]
\end{exercise}

\begin{definition}\label{def:sc-mfs}
An \textbf{\boldmath$\SC$-manifold} is a paracompact Hausdorff space
$X$, see Definition~\ref{def:HD-paracompact},
endowed with an equivalence class of
$\SC$-smooth atlases.\index{sc-manifold}
If all model spaces are $\SC$-Hilbert spaces one speaks of an
\textbf{Hilbert \boldmath$\SC$-manifold}.\index{sc-manifold!Hilbert --}
\end{definition}

\begin{definition}[Sc-smooth maps between $\SC$-manifolds]
a)~A continuous map $f\colon X\to Y$ between $\SC$-manifolds
is\index{sc-smooth!map}
called \textbf{\boldmath$\SC$-smooth} if for all $\SC$-charts
$\phi\colon X\supset V\to C\subset E$ and $\psi\colon Y\subset W\to
D\subset F$ the chart representative
\[
     \psi\circ f\circ \phi^{-1}\colon E\supset C\supset \phi(V\CAP f^{-1}(W))
     \to D\subset F
\]
is of class $\SC^\infty$ as a map from an open subset of
the partial quadrant $C$ in the $\SC$-Banach space $E$
\begin{figure}%[h]
  \centering
  \includegraphics%[width=0.9\textwidth]
                             [height=4cm]
                             {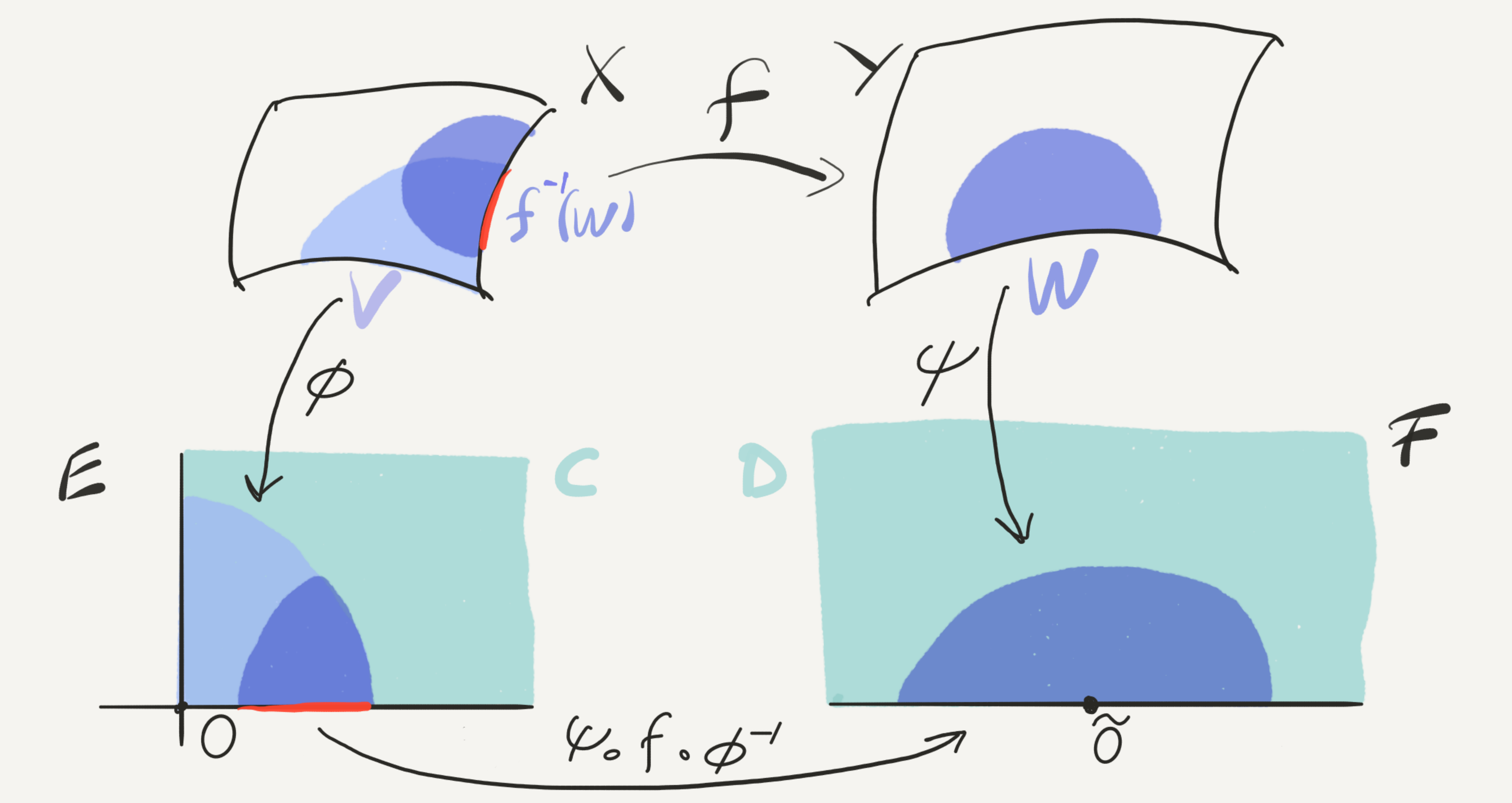}
  \caption{Local representative of $\SC$-smooth map between
                 $\SC$-manifolds}
  \label{fig:fig-sc-representative}
\end{figure}
into the $\SC$-Banach space $F$. See Figure~\ref{fig:fig-sc-representative}.

b)~An\index{sc-diffeomorphism!between sc-manifolds}
\textbf{\boldmath$\SC$-diffeomorphism} between $\SC$-manifolds
is an invertible $\SC$-smooth map whose inverse is $\SC$-smooth.
\end{definition}

%%% SUBSECTION %%%%%%%%%%%%%%%%%%%%%
%%%%%%%%%%%%%%%%%%%%%%%%%%%%%%%%%%
\subsubsection*{Detecting boundaries and corners}

Suppose $X$ is an $\SC$-manifold.
To define the \textbf{degeneracy index} of a point
$x\in X$, pick an $\SC$-chart $\phi\colon X\supset V\to C\subset E$ about $x$
and set
\[
     d_X(x):=d_C(\phi(x))\in\N_0.
\]
By Theorem~\ref{thm:inv-deg-ind}
the definition \emph{does not depend}\footnote{
  For M-polyfolds the definition might depend on the choice of chart.
  The way out will be to take the minimum over all charts.
  }
on the choice of $\SC$-chart.
One calls a point $x$ of degeneracy index $d_X(x)=k$
an \textbf{\Index{interior point}} if $k=0$,
a \textbf{\Index{boundary point}} if $k=1$,
and a \textbf{corner point of complexity \boldmath$k$} in case
$k\ge 2$.\index{corner point of complexity $k\ge 2$}
This is illustrated by Figure~\ref{fig:fig-deg-index-quadrant}
for $X=C=[0,\infty)^2$.

%%% SUBSUBSECTION %%%%%%%%%%%%%%%%%%%
%%%%%%%%%%%%%%%%%%%%%%%%%%%%%%%%%%
\subsubsection*{Levels of \boldmath$\SC$-manifolds are
topological Banach manifolds}

A point $x$ of an $\SC$-manifold $X$ is said to
be\index{levels of sc-manifolds}
\textbf{on level \boldmath$m$}\index{sc-manifold!levels of --}
if $\phi(x)\in E_m$ lies on level $m$ for some (thus every)
$\SC$-chart $\phi\colon X\supset V\to C\subset E$ about $x$.
Indeed the definition does not depend on the choice of chart,
even for topological $\SC$-manifolds (those of class $\SC^0$),
since any transition map is of class $\SC^0$, hence level preserving
(with continuous level maps).
\textbf{Level \boldmath$m$ of the $\SC$-manifold}
is the set
\[
     X_m:=\{\text{all points of $X$ on level $m$}\}.
\]
By levelwise\index{$X_m$ level $m$ of sc-manifold}
continuity of transition maps each level $X_m$ of an $\SC$-manifold is
a topological Banach manifold (in general not $C^1$).

To summarize, an $\SC$-manifold $X$ decomposes
into a nested sequence of topological Banach manifolds
\[
    X=X_0\supset X_1\supset X_2\supset\dots\supset
    X_\infty:=\bigcap_{m\ge 0} X_m
\]
whose intersection $X_\infty$ carries the structure of a smooth
Fr\'{e}chet manifold with boundaries and corners;
cf.~\citet[\S 5.3]{Cieliebak:2018a}.

Furthermore, each level $X_k$ of an $\SC$-manifold $X$ inherits
the structure of an $\SC$-manifold denoted by $X^k$
and called the \textbf{shifted \boldmath$\SC$-manifold $X^k$}.
By\index{sc-manifold!shifted --}\index{$X^k$ shifted sc-manifold structure}
definition level $m$ of $X^k$ is level $X_{k+m}$ of $X$.

%%% SUBSECTION %%%%%%%%%%%%%%%%%%%%%
%%%%%%%%%%%%%%%%%%%%%%%%%%%%%%%%%%
\subsubsection*{Levels of strong \boldmath$\SC$-manifolds
are smooth Banach manifolds}

Suppose $(U,C,E)$ and $(V,D,F)$ are $\SC$-triples.
The notion of scale differentiability $\SC^1$ is based
on usual $C^1$ differentiability of all diagonal maps of height one.
A natural way to strengthen this is to ask
all level maps (height zero) to be $C^1$ (or $C^k$).
Given $k\in\N$ or $k=\infty$, an $\SC^0$ map $f\colon U\to V$ between
$\SC$-triples is called \textbf{strongly \boldmath$\SC^k$} or of
\textbf{class \boldmath$\SSC^k$} if all level maps
$
    f_m\colon U_m\to V_m
$
are of class $C^k$.
This means that on each level one works with the usual calculus on
Banach spaces. Now one calls a paracompact Hausdorff space $X$
an\index{strongly scale differentiable}\index{sc-differentiable!strongly --}
\textbf{\boldmath$\SSC^k$-manifold}
if\index{$ssc^k$@$\SSC^k$-manifold}\index{ssc-manifold}
all transition maps are of class $\SSC^k$,
that is if they are level-wise $C^k$.

Important classes of function spaces
fit into the framework of strong scale differentiability,
for instance loop spaces of finite dimensional manifolds.

\begin{example}[Loop spaces are $\SSC^\infty$-manifolds]
\label{ex:loop-space-ssc}
Let $M$ be a manifold of finite dimension.
Then the \textbf{\Index{loop space}} 
\[
     X:=W^{1,2}(\SS^1,M)
\]
that consists of all absolutely continuous maps
$x\colon \R\to M$ of period one, that is $x(t+1)=x(t)$ for every $t$,
is a strongly $\SC$-smooth manifold.
\end{example}

\begin{example}
The previous example generalizes to $X:=W^{k,p}(N,M)$ where
$N$ can be any compact manifold-with-boundary of finite dimension $n$
and where the numbers $k\in\N$ and $p\in[1,\infty)$ must satisfy the
condition $k>n/p$ (assuring continuity of the functions that are
the elements of $X$).
\end{example}

%%% SUBSECTION %%%%%%%%%%%%%%%%%%%%%
%%%%%%%%%%%%%%%%%%%%%%%%%%%%%%%%%%
\subsubsection*{Tangent bundle of \boldmath$\SC$-manifolds}

Let $X$ be an $\SC$-manifold. For an $\SC$-chart $(V,\phi,(U,C,E))$ we
shall use the short notation $(V,\phi)$ with the understanding that
$U=\phi(V)$ is an open subset of a partial quadrant $C$ in an
$\SC$-Banach space $E$.
Recall that $X^1$ denotes the $\SC$-manifold that arises from $X$ by
forgetting level zero. Let $V^1\subset X^1$ denote the corresponding
scale of levelwise open subsets generated by $V_1:=V\CAP X_1$.
Now consider tuples $(V,\phi,x,\xi)$ where $(V,\phi)$ is an $\SC$-chart
of $X$, the point $x\in V^1$ lies on level one, and $\xi\in E^0$ is a
vector in level zero of the $\SC$-Banach space $E$.
Two tuples are called equivalent if the two points $x,\widetilde{x}\in X^1$
are equal and the two vectors correspond to one another
through the $\SC$-derivative, in symbols
\[
     x=\widetilde{x},\qquad
     D(\widetilde{\phi}\circ\phi^{-1})|_{\phi(x)}\xi=\widetilde{\xi}.
\]
An\index{tangent vector to sc-manifold}
equivalence\index{sc-manifold!tangent vector to --}
class $[V,\phi,x,\xi]$ is called a \textbf{tangent vector to the
\boldmath$\SC$-manifold} at a point $x$ on level one. There is a
canonical projection defined on $TX$, the \textbf{set of all tangent
vectors at all points of \boldmath$X_1$}, namely
\[
     p\colon TX\to X^1,\quad
     [V,\phi,x,\xi]\mapsto x.
\]

\begin{exercise}[Tangent bundle as $\SC$-manifold]
\label{exc:tangent-bdl-to-sc-mf}
Naturally\index{sc-manifold!tangent bundle of --}
endow\index{tangent bundle!of sc-manifold}
the set $TX$ with the structure of an $\SC$-manifold
such that the projection $p\colon TX\to X^1$ becomes $\SC$-smooth
as a map between $\SC$-manifolds.

\vspace{.1cm}\noindent
[Hint: See Remark~\ref{rem:B-mf-set}.
Each chart $\phi\colon X\supset V\to E$ of $X$ gives a bijection
\[
     \Phi:=T\phi\,\colon \,TV:=p^{-1}(V\CAP X^1)\to TE=E^1\oplus E^0,\quad
     [V,\phi,x,\xi]\mapsto\left(\phi(x),\xi\right)
\]
onto the open subset $U^1\oplus E^0$ of $E^1\oplus E^0$
%cf.~\citet[p.32 \S 1.4.2]{Hofer:2005a}
where $U=\phi(V)$.]
\end{exercise}

%%%%%%%%%%%%%%%%%%%%%%%%%
%%%%%%%%% REFERENCES %%%%%%
%%%%%%%%%%%%%%%%%%%%%%%%
%%%\bibliographysty le{plain}
         %   erzeugt:     [1] Joa Weber
%%%\bibliographystyle{abbrv}
         %  erzeugt:      [1] J. Weber and 
%\bibliographystyleintro{alpha}
         %  article:    [Web05]  J. Weber
         %  book:      [Web05]  Joa Weber
         % more authors: [HZ87]
%%%%%%%%%%%%%%%%%%%%%%%%%
% Using hyperref, one should say:
%\cleardoublepage
%\phantomsection
%\addcontentsline{toc}{section}{References}
%\bibliographyintro{$HOME/Dropbox/0-Libraries+app-data/Bibdesk-BibFiles/library_math}{}
%$

\cleardoublepage
\phantomsection

        % SC-CALCULUS
%%%%%%%%%%%%%%%%%%%%%%%%%%%%%%%%%%
%%%%%%%%%%%%%%%%%%%%%%%%%%%%%%%%%%
%%%%%%%%%%%%%%%%%%%%%%%%%%%%%%%%%%
%% CHAPTER %%%%%%%%%%%%%%%%%%%%%%%%%
%%%%%%%%%%%%%%%%%%%%%%%%%%%%%%%%%%
%%%%%%%%%%%%%%%%%%%%%%%%%%%%%%%%%%
%%%%%%%%%%%%%%%%%%%%%%%%%%%%%%%%%%
\chapter[Sc-retracts -- the local models for M-polyfolds]
{Sc-retracts -- local models}
\label{sec:local-models}
%\addcontentsline{toc}{chapter}{Sc-retracts -- the local models}
\chaptermark{Sc-retracts}

%%%%%%%%%%%%%%%%%%%%%%%%%%%%%%%%%%
%%% SUBSECTION %%%%%%%%%%%%%%%%%%%%%
%%%%%%%%%%%%%%%%%%%%%%%%%%%%%%%%%%
%\subsection*{Departing from Poincar\'{e}'s last geometric theorem}

Let us indoctrinate you right away to the intuition
behind the key players $O$ and maps between them.
Think of an $\SC$-retract $O$ as a \emph{compressed open set}
-- the image of some idempotent map $r=r\circ r\colon U\to U$,
called a projection or retraction.
Vice versa, think of the open set $U$, likewise $r$, as a 
\textbf{decompression of \boldmath$O$}.
The great variety of possible properties of such $O$
--~there can be corners and even jumping dimension~-- 
are desirable in applications, because
solution spaces to PDEs often exhibit such behavior.
In contrast, to do analysis it is desirable that domains of maps are
open, so difference quotients, hence derivatives, can be defined.
Idempotents $r=r\circ r\colon U\to U$ combine and provide both of these,
somehow contradictory, properties. One uses such $O$ as geometric
model space and when it comes to analysis one just decompresses
$O=r(U)$ and uses the open set $U$ as domain. For instance, to define
differentiability of a function $f\colon O\to \R$ one
\textbf{\Index{decompress}es} the domain $O$
and calls $f$ differentiable if the pre-composition
$f\circ r\colon U\to \R$ is.\index{decompression of $f$}
In such context we often call $f\circ r$ or $r$ itself a
\textbf{decompression~of~\boldmath$f$}.
A second highly useful property of images $O=r(U)$ of projections
$r\colon U\to U$ is that any such is precisely the fixed point set $\Fix\, r=O$ of $r$.

In Chapter~\ref{sec:local-models}
our main source is again~\citet{Hofer:2017a},
together with~\citet{Cieliebak:2018a} and \citet{Fabert:2016a}.
Concerning terminology our convention is and was to assign the adjective
$\SC$-smooth (or the equivalent symbol $\SC^\infty$) to maps
that are $k=\infty$ many times continuously scale differentiable.
In case of sets, e.g. $\SC$-manifolds or $\SC$-retracts,
the ``$\SC$'' itself already indicates $\SC$-smooth.

\bigskip

\textit{Outline of Chapter~\ref{sec:local-models}.}
In the present chapter %~\ref{sec:local-models} 
M-polyfolds\footnote{
  \underline{M}-polyfolds are defined analogous to
  \underline{m}anifolds, just based on $\SC$-differentiability and
  more general model spaces. % , the M hints at \underline{m}anifold. 
  In contrast, polyfolds correspond classically to orbifolds.
  }
are constructed based on the
new notion of scale differentiability and locally modeled on
rather general topological spaces $O$ which might have corners,
even jumping dimension along components, but they will still be
accessible to the new weaker form of calculus -- $\SC$-calculus.
The class of spaces are $\SC$-retracts, generalizing
smooth retracts in Banach manifolds.
Section~\ref{sec:Cartan} ``Cartan's last theorem'' deals with smooth retracts
and is the motivation for the generalizations in the following sections.
Section~\ref{sec:sc-retracts} ``Sc-smooth retractions and their images $O$''
provides the local model spaces $O$ for M-polyfolds.
A key step is to extend $\SC$-calculus from $\SC$-Banach spaces $E$
to $\SC$-smooth retracts $O$.
Section~\ref{sec:M-polyfolds} ``M-polyfolds and their tangent bundles''
defines M-polyfolds, in analogy to manifolds, by patching together
local models and asking transition maps to be $\SC$-smooth
(in the sense of the extended $\SC$-calculus).
Section~\ref{sec:strong-bundles} ``Strong bundles over M-polyfolds''
provides the environment to implement $\SC$-Fredholm sections $f$.
The need for $\SC^+$-sections requires fibers be shiftable
in scale by $+1$ leading to double scale structures.
In practice $f$ arises as a differential operator of order $\ell$
leading to asymmetry in base and fiber levels.

%\bigskip
%\noindent
%\textit{Detailed summary of Chapter~\ref{sec:local-models}}
\subsubsection{Detailed summary of Chapter~\ref{sec:local-models}}
%\bigskip

Section~\ref{sec:Cartan} ``Cartan's last theorem''
recalls and proves the surprising result that the image $O=r(U)$
of a smooth idempotent map $r=r\circ r\colon U\to U$ on a Banach
manifold, called a \textbf{smooth retraction}, is a smooth submanifold.

\bigskip

Section~\ref{sec:sc-retracts} ``Sc-smooth retractions and their images $O$''
is at the heart of the whole theory.
It introduces the local model spaces for M-polyfolds, called
\textbf{\boldmath$\SC$-retracts} and denoted by $(O,C,E)$, or simply $O$.
These are images $O$ of $\SC$-smooth idempotents $r=r^2\colon U\to U$,
called \textbf{\boldmath$\SC$-retractions}, defined on $\SC$-triples $(U,C,E)$.
It is useful to observe that image and fixed point set of $r$ coincide
and to think of $r$ as a projection onto its fixed point set, in symbols
\[
     r=r\circ r\colon U\to O:=\im r=\Fix\, r.
\]
While the domain $U$ is a (relatively) open subset of a
partial quadrant $C$ in an $\SC$-Banach space $E$,
its image $O=r(U)$ is a projected or compressed version of $U$.
Motivated by continuous retractions
one might expect that the compressed set, the $\SC$-retract $O=r(U)$
has non-smooth properties, e.g. jumping dimension or
having corners, as illustrated by Figure~\ref{fig:fig-sc-retract-schippe}.
In contrast, the images of in the usual sense smooth retractions on
Banach manifolds are smooth Banach submanifolds
by Theorem~\ref{thm:Cartan}.

How can one do analysis and define a derivative on a
possibly non-open set $O=r(U)$? The key idea is to
decompress $O$ and use the open subset $U$ of $E$ as domain.
(We assume $C=E$ for illustration).
Let us call $U$, likewise $r$, a \textbf{decompression of \boldmath$O$}.
Of course, if one defines a property of $O$ using a decompression
one needs to check independence of the chosen decompression of $O$.
For instance, one defines \textbf{\boldmath{$\SC$-smoothness}} of a map
between\index{sc-smooth!retract map}\index{retract map!sc-smooth --}
$\SC$-retracts
\[
     f\colon O\to O^\prime
\]
if some, hence by Lemma~\ref{le:for-some-thus-every-123} any, decompression
\[
     f\circ r\colon U\to U^\prime,\qquad O=r(U)
\]
of $f$ is an $\SC$-smooth map in the ordinary sense; see Definition~\ref{def:sc^k}.
Such $f$ is called an \textbf{\boldmath$\SC$-smooth retract map}
-- the future M-polyfold transition maps.
Given an $\SC$-retract $(O,C,E)$, the tangent map
of a decompression $r=r\circ r\colon U\to U$ of $O=r(U)$ is an
$\SC$-smooth retraction itself
\[
     Tr=(Tr)\circ (Tr)\colon TU=U^1\oplus E^0\to TU,\quad
     (x,\xi)\mapsto \left(r(x),Dr(x)\xi\right).
\]
Hence the image
\[
     TO:=Tr(TU)=\Fix\, Tr\subset O^1\oplus E^0
\]
is an $\SC$-retract $(TO,TC,TE)$ in the tangent $\SC$-triple
$(TU,TC,TE)$. Here $TO$ is independent of the choice
of\index{$p\colon TO\to O^1$ tangent bundle of $\SC$-retract}
the decompression $r$ of $O$ by Lemma~\ref{le:TO}.
The\index{sc-retract!tangent bundle of --}
\textbf{tangent bundle of the \boldmath$\SC$-retract $O$}
is the natural surjection
\[
     p\colon TO\to O^1,\quad (x,\xi)\mapsto x.
\]
It is an $\SC$-smooth map between $\SC$-retracts.
The tangent space at $x\in O^1$
\[
     T_x O:=\Fix[Dr(x)\colon E\to E]\subset E
\]
is a Banach subspace, even an $\SC$-subspace for $x\in O_\infty$,
by Corollary~\ref{cor:sc-deriv_is_sc-op_smooth-pts}.

The \textbf{tangent map of an \boldmath$\SC$-smooth retract map}
$f\colon O\to O^\prime$ is defined as the restriction to $TO$ of the tangent map
\begin{equation*}
\begin{split}
     Tf:=T(f\circ r)|_{TO}\colon Tr(TU)=TO&\to T O^\prime\\
     (x,\xi)&\mapsto \left(f(x),Df(x)\xi\right)
\end{split}
\end{equation*}
of some, by Lemma~\ref{le:gyu6768} any, decompression $f\circ r$.
Here $f\circ r(x)=f(x)$ since 
$O^1\subset O=\Fix\, r$ and $D(f\circ r)|_x=Df(x)$
on $T_x O=\Fix\, Dr(x)\subset E$.
Section~\ref{sec:sc-retracts} on $\SC$-retracts is rounded off by the
chain rule for compositions of $\SC$-smooth retract maps.

\bigskip

Section~\ref{sec:M-polyfolds} ``M-polyfolds and their tangent bundles''
defines M-polyfolds in analogy to Banach manifolds just using the
rather general class of $\SC$-retracts as local models and requiring
only scale smoothness of the transition maps. In particular, to define
an \textbf{M-polyfold \boldmath$X$} one starts with a paracompact
Hausdorff space $X$. E.g. $\SC$-manifolds are M-polyfolds ($r=id$ and $O=U$)
and so are open subsets of M-polyfolds.
Sc-smoothness of maps
\[
     f\colon X\to Y
\]
between M-polyfolds is defined
in terms of local coordinate representatives of $f$ which are
required to be $\SC$-smooth retract maps.
An M-polyfold $X$ inherits a set scale structure from the local
model spaces. Let $X_m$, called \textbf{level \boldmath$m$ of $X$},
consist of all points of $X$ which
are mapped in some, hence any, coordinate chart into level
$m$ of model space. Each level $X_m$ is a topological Banach manifold
and inherits the structure of an M-polyfold denoted by $X^m$.

To construct the \textbf{tangent bundle \boldmath$p\colon TX\to X^1$}
one first defines $TX$ as a set and then a natural map $p$,
using the local coordinate charts $\phi\colon V\to O$ of $X$ to define
bijections denoted by $T\phi\colon TX\supset TV\to TO$. 
Given an atlas $\Aa$ of $X$, these bijections induce the collection
$$
     \Bb=\{(T\phi)^{-1}(W)\mid 
     \text{$\phi\in \Aa$ and $W\subset TO$ open}\}\subset 2^{TX}
$$
of subsets of $TX$. It forms a basis of a paracompact
Hausdorff topology. Endowing $X$ with that topology the bijections $T\phi$
become homeomorphisms and one gets a natural M-polyfold atlas $T\Aa$
for $TX$.

%%%%%%%%%%%%%%%%%%%%%%%%%%%%%%%%%%%%%%%%%%%%%%%
\vspace{0.1cm} %\noindent
\textit{Sub-M-polyfolds.} A subset $A\subset X$ of an M-polyfold
is a \textbf{\Index{sub-M-polyfold}}
if around any point $a\in A$ there is an open neighborhood
$V\subset X$ and an $\SC$-smooth retraction
$r=r^2\colon V\to V$ such that $A\CAP V=r(V)=\Fix\, r$.
Such $r$ is called a \textbf{local generator} for the sub-M-polyfold $A$. 
Viewed as a map $r\colon V\to A$ a local generator is $\SC$-smooth and
$T_ar(T_aX)=T_a A$ at any point $a\in A\CAP V$. At smooth points
the tangent space $T_aA$ is $\SC$-complemented~in~$T_aX$.

%%%%%%%%%%%%%%%%%%%%%%%%%%%%%%%%%%%%%%%%%%%%%%%
\vspace{0.1cm} %\noindent
\textit{Boundaries and corners -- tameness.}
Recall from~(\ref{eq:deg-index}) that the degeneracy index
$k=d_C(p)$ of a point $p$ of a partial quadrant $C$ tells whether $p$
is an interior point ($k=0$), a boundary point ($k=1$), or a corner
point of complexity $k\ge 2$. Unfortunately, for points $x$ of
M-polyfolds $X$ the degeneracy index $d_X(x):=d_C(\phi(x))$ defined
in terms of an M-polyfold chart $\phi$ may depend on the chart; see
Figure~\ref{fig:fig-deg-index-M-pfs-L_a}.
Thus one introduces a new class, the so-called \textbf{tame}
M-polyfolds, for which there is no dependence on $\phi$.

\bigskip

Section~\ref{sec:strong-bundles} ``Strong bundles over M-polyfolds''
provides the environment to implement partial differential operators
whose zero sets will represent the moduli spaces
which are under investigation in many different geometric analytic
situations. Often moduli spaces, hence zero sets, are of finite dimension
and are modeled on the kernels of surjective Fredholm operators.
To achieve surjectivity in a given geometric PDE scenario
one usually perturbs some already present, but inessential, quantity.
These perturbations should be related to bounded operators,
so the overall Fredholm property is preserved.
\\
Recall from Proposition~\ref{prop:stability-sc-Fredholm} that the
$\SC$-Fredholm property of a linear map $T\colon E\to F$ is preserved under
addition of $\SC^+$-operators $S\colon E\to F$. The latter operators are
characterized by the property of improving their output regularity by one level, that is
$S(E_m)\subset F_{m+1}$. As a consequence all level operators
$S_m\colon E_m\to F_{m+1}\INTO F_m$ are compact.

%%%%%%%%%%%%%%%%%%%%%%%%%%%%%%%%%%%%%%%%%%%%%%%
\vspace{0.1cm} %\noindent
\textit{Motivation.}
Replacing now the linear domain $E$ by an M-polyfold $X$ as domain of
a partial differential operator $f$ of order, say $\ell$, the task at
hand\footnote{
  freely borrowed from one of my favorite authors
  }
is to construct vector bundles $P\colon Y\to X$ with fibers modeled on an
$\SC$-Banach space $F$, so that the differential operator becomes a
section $f\colon X\to Y$.
Concerning the implementation of Fredholm properties one has to allow
for fiber level shifts by $+1$, that is all fibers $Y_x:=P^{-1}(0)$ should
be identifiable with the $\SC$-Banach space $F^0=F$, as well
as with the shifted one $F^1$; cf. Remark~\ref{rem:sc-plus}.
In practice, the level indices $m$ correspond to the degree of
differentiability of the level elements. So the domain of $f$ should be $X^{\ell+m}$
in which case $f$ takes values in level $m$, sometimes even $m+1$.
Then one can exploit composition with \emph{compact} embeddings up to
level $0$; see Remark~\ref{rem:motivation-asym-prod}.
This motivates the following asymmetric double scale
structure which must be subsequently reduced to two versions
of individual scales, in order to be accessible to scale calculus
(there is no double scale calculus).

%%%%%%%%%%%%%%%%%%%%%%%%%%%%%%%%%%%%%%%%%%%%%%%
\vspace{0.1cm} %\noindent
\textit{Trivial-strong-bundle retracts $K$ -- the local models.}
Let $E,F$ be Banach scales and $U\subset E$ be open.
The \textbf{non-symmetric product} $U\triangleright F$ is the
subset $U\times F$ of the Banach space $E\oplus F$ endowed with the
\textbf{double scale}, also called double filtration,
defined by
\[
     (U\triangleright F)_{m,k}:=U_m\oplus F_k,\qquad
     m\in\N_0,\quad k\in\{0,\dots m+1\}.
\]
Projection onto the first component
\[
     U\triangleright F\to E,\quad
     (u,\xi)\mapsto u
\]
is called the \textbf{trivial-strong-bundle projection}.
However, for $\SC$-calculus one needs one scale structure, not a
double scale. Consider the $\SC$-manifolds
\[
     (U\triangleright F)^{[0]}:=U\oplus F,\qquad
     (U\triangleright F)^{[1]}:=U\oplus F^1.
\]
For $i\in\{0,1\}$ projection on component one is an $\SC$-smooth map
$$
     p=p^{[i]}\colon (U\triangleright F)^{[i]}\to U
$$
between $\SC$-manifolds
called a \textbf{trivial strong sc-bundle}.
A \textbf{trivial-strong-bundle retraction}
is an idempotent \textbf{trivial-strong-bundle map}\footnote{
  i.e. double scale preserving and with $\rho_u\xi:=\rho(u,\xi)$ being
  linear in $\xi$
  }
\begin{equation*}
\begin{split}
     R=R\circ R\colon  U\triangleright F&\to U\triangleright F\\
     (u,\xi)&\mapsto\left(r(u),\rho_u\xi\right)
\end{split}
\end{equation*}
The first component $r$ of $R$ is necessarily an $\SC$-smooth retraction on
$U$, called \textbf{associated base retraction}. Its image, the $\SC$-retract
$O=r(U)$, is called the \textbf{associated base retract}.
A \textbf{trivial-strong-bundle retract}\footnote{
  'strong' indicates 'doubly scaled' and the
  retraction acts on a 'trivial bundle'
  }
$(K, C\triangleright F, E\triangleright F)$ is the image
\[
     K:=R(U\triangleright F)=\left(\Fix\, R\right)
     \subset \left(O\triangleright F\right)
\]
of a trivial-strong-bundle retraction
$R=R\circ R$ on $U\triangleright F$
where $O=r(U)$ is the associated base retract.
One likewise calls the natural surjection
\[
     p\colon K\to O,\quad (x,\xi)\mapsto x
\]
a \textbf{trivial-strong-bundle retract}.
Call $K:=R(U\triangleright F)$ \textbf{tame} if $R$ is tame.
As a subset of the doubly scaled space $U\triangleright F$
there is an induced double scale
\begin{equation*}
\begin{split}
     K_{m,k}:&=K\CAP \left(U_m\oplus F_k\right)\\
   &=\bigcup_{x\in O_m}\left(\{x\}\oplus \Fix\,[\rho_x\colon F_k\to F_k]\right)
\end{split}
\end{equation*}
for $m\in\N_0$ and $k\in\{0,\dots m+1\}$. The spaces
\begin{equation*}
     K^{[i]}:=K\CAP\left(E^0\oplus F^i\right)
     =\im R^{[i]}=R(U\triangleright F)^{[i]}
     ,\qquad i=0,1
\end{equation*}
with levels $K^{[i]}_m=K_{m,m+i}$ are $\SC$-retracts,
hence M-polyfolds. The surjections
\begin{equation*}
\begin{split}
     p=p^{[i]}\colon K^{[i]}&\to O,\qquad\quad i=0,1\\
     (x,\xi)&\mapsto x
\end{split}
\end{equation*}
are $\SC$-smooth maps between $\SC$-retracts.

A \textbf{section} of a trivial-strong-bundle retract $p\colon K\to O$
is a map $s\colon O\to K$ that satisfies $p\circ s=\id_O$.
If $s$ is $\SC$-smooth as an $\SC$-retract map
\[
     s^{[i]}\colon O\to K^{[i]},\quad
     x\mapsto \bigl(x,\bs^{[i]}(x)\bigr),\qquad
     \bs^{[i]}\colon O\to F^i 
\]
it is called an \textbf{\boldmath$\SC$-section} (case $i=0$) or
an \textbf{\boldmath$\SC^+$-section} (case $i=1$). The map
$\bs^{[i]}\colon O\to F^i$ is called the \textbf{principal part} of the section.

%%%%%%%%%%%%%%%%%%%%%%%%%%%%%%%%%%%%%%%%%%%%%%%
\vspace{0.1cm} %\noindent
\textit{Strong bundles.}  
A \textbf{strong bundle over an M-polyfold} $X$ is a continuous
surjection $P\colon Y\to X$ defined on a paracompact Hausdorff space $Y$
such that each pre-image $Y_x:=P^{-1}(x)$ is a Banachable space,
together with an equivalence class of strong bundle atlases.
\\
As usual, one patches together local model bundles which in our
case are the trivial-strong-bundle retracts
$K=R(U\triangleright F)\to O$ outlined above.
A strong bundle atlas for $P\colon Y\to X$ consists of suitably compatible
\textbf{strong bundle charts}
\[
     \left(\Phi,P^{-1}(V),(K, C\triangleright F, E\triangleright F)\right).
\]
Such tuple consists of
\begin{itemize}
\item %[--]
  a trivial-strong-bundle retract $K$, that is
  $p\colon K=R(U\triangleright F)\to O$
  where $O=r(U)$ is the associated base retract;
\item %[--]
  a homeomorphism $\varphi\colon X\supset V\to O$ between an open
  subset of the base M-polyfold $X$ of $Y$ and the base retract $O$ of $K$;
\item %[--]
  a homeomorphism $\Phi\colon P^{-1}(V)\to K$ which covers $\varphi$ in the sense
  that the diagram
  \begin{equation*}
  \begin{tikzcd} [column sep=tiny] %row sep=small, 
    Y
    \arrow[d, "P"']
    & \supset
      &
      P^{-1}(V)
      \arrow[d, "P"']
      \arrow{rrrrr}[name=U]{\Phi\;\,}
        &&&&&
%        \hphantom{hh } 
        K=R(U\triangleright F)
        \arrow[d, "p", shift right=10.8]
    \\
     X
     & \supset
      &
      V
      \arrow[swap]{rrrrr}[name=D]{\,\;\varphi}
      \arrow[to path={(U) node[midway,scale=1.2] {\;\;\,\,$\circlearrowleft$}  (D)}]
        &&&&&
        O=r(U)\hphantom{hhh\,} %\qquad
  \end{tikzcd} 
  \end{equation*}
  commutes. Consequently, for every point $v\in V$ the restriction
  of $\Phi$ to $P^{-1}(v)$ takes values in $p^{-1}(\varphi(v))$.
  It is also required that $\Phi$ as a map
  \[
     \Phi\colon Y_v=P^{-1}(v)\stackrel{\simeq}{\longrightarrow} p^{-1}(\varphi(v))
     =\rho_{\varphi(v)}(F),\qquad \forall v\in V
  \]
 is a continuous linear bijection between the Banach/able space fibers.
\end{itemize}

A strong bundle atlas $\Aa^Y_X$ for $P\colon Y\to X$ provides
a double scale structure on $X$ induced by local charts.
As earlier, one extracts two individual scale structures
and obtains two \textbf{induced \boldmath$\SC$-bundle atlases}
$\Aa^{Y^{[0]}}_X$ and $\Aa^{Y^{[1]}}_X$ for $\SC$-bundles\footnote{
  The definition of $\SC$-bundles is indicated around~(\ref{eq:sc-bundles}).
  }
\[
     P^{[0]}\colon Y^{[0]}\to X,\qquad P^{[1]}\colon Y^{[1]}\to X.
\]
A \textbf{section} of a strong bundle $P\colon Y\to X$
is a map $s\colon X\to Y$ that satisfies $P\circ s=\id_X$.
If $s$ is $\SC$-smooth as a map between M-polyfolds
\[
     s^{[i]}\colon X\to Y^{[i]}
\]
then $s$ is called in case $i=0$ an \textbf{\boldmath$\SC$-section} of
$P\colon Y\to X$ and in case $i=1$ an \textbf{\boldmath$\SC^+$-section} of
$P\colon Y\to X$.
%\bigskip
%%%%%%%%%%%%%%%%%%%%%%%%%%%%%%%%%%%%%%%%%%%%%%%
%After this survey you could, upon first reading,
%skip the remainder of Chapter~\ref{sec:local-models} and
%proceed with the introduction to Chapter~\ref{sec:Fredholm}.

%%%%%%%%%%%%%%%%%%%%%%%%%%%%%%%%%%
%%%%%%%%%%%%%%%%%%%%%%%%%%%%%%%%%%
%%% SECTION %%%%%%%%%%%%%%%%%%%%%%%%
%%%%%%%%%%%%%%%%%%%%%%%%%%%%%%%%%%
%%%%%%%%%%%%%%%%%%%%%%%%%%%%%%%%%%
%\sectionmark{Cartan's last theorem}
\section{Cartan's last theorem}\label{sec:Cartan}
\sectionmark{Cartan's last theorem}

In the realm of continuous linear operators $R$ on a Banach space $E$
an idempotent $R=R^2$ is called a \textbf{\Index{projection}}.
Note that the image 
$
     \im R=\Fix\, R
$
is equal to the fixed point set of
$R$. %\footnote{
  (Both inclusions are immediate, only '$\subset$' uses idempotency.)
%  }
But the image of a linear operator is a linear subspace and the
fixed point set of a continuous map is a closed subset. So the image
of a projection is a closed linear subspace which, furthermore, is
complemented by the (again due to continuity) closed linear subspace
$\ker R$. To summarize
\[
     R^2=R\in\Ll(E)\quad\Rightarrow\quad
     E=\ker R\oplus \im R=\ker R\oplus \Fix\, R.
\]

More generally, given a topological space $X$, 
a continuous idempotent map $r=r\circ r\colon X\to X$ is
called a \textbf{\Index{retraction} on \boldmath$X$}
and the closed subset
\[
     \im r=\Fix\, r\subset X
\]
is called a \textbf{\Index{retract} of \boldmath$X$}.

\begin{theorem}[{\citet{Cartan:1986a}}]
\label{thm:Cartan}
The image of a smooth retraction $r\colon X\to X$ on a Banach manifold
is a topologically closed smooth submanifold of~$X$.
\end{theorem}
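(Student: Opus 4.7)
The plan is to localize around a fixed point $p \in \im r$, split the ambient Banach space using the derivative of $r$ at $p$, and apply the implicit function theorem to realize $\im r$ as a smooth graph. After passing to a chart and translating, we may assume $r \colon U \to U$ is a smooth idempotent on an open subset $U$ of a Banach space $E$ with $0 \in \im r$. Differentiating $r \circ r = r$ at $0$ yields a continuous linear projection $\pi := Dr(0) \in \Ll(E)$, hence $E = F \oplus G$ splits as a direct sum of closed subspaces with $F := \im \pi$ and $G := \ker \pi$. Here $F$ will play the role of tangent space to $\im r$ at $p$.

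Writing coordinates $x = (a,b) \in F \oplus G$ and components $r = (r_1, r_2)$, the equality $Dr(0) = \pi$ gives $\partial_a r_1(0) = \id_F$ while $\partial_b r_1(0)$, $\partial_a r_2(0)$, $\partial_b r_2(0)$ all vanish. Consider the smooth map
\[
     \Psi \colon U \to G, \qquad \Psi(a,b) := b - r_2(a,b),
\]
which satisfies $\Psi(0) = 0$ and $\partial_b \Psi(0) = \id_G$. By the implicit function theorem, its zero set is locally the graph of a smooth map $\beta \colon F \to G$ with $\beta(0) = 0$ and $D\beta(0) = 0$. Set $M := \Psi^{-1}(0) = \{(a, \beta(a))\}$; this is a smooth submanifold of $E$ modeled on $F$, and the inclusion $\im r \subseteq M$ is immediate since any $x \in \Fix(r)$ satisfies $r_2(x) = b$.

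The main obstacle is the reverse inclusion $M \subseteq \im r$, where idempotency does the decisive work. Given $(a, \beta(a)) \in M$, set $a^* := r_1(a, \beta(a))$, so that $r(a, \beta(a)) = (a^*, \beta(a))$; this point lies in $\Fix(r) \subseteq M$ by $r \circ r = r$, forcing $\beta(a^*) = \beta(a)$. The auxiliary smooth map
\[
     T \colon F \to F, \qquad T(a) := r_1(a, \beta(a))
\]
satisfies $T \circ T = T$ (using idempotency of $r$ together with the just-derived identity $\beta(a^*) = \beta(a)$) and $DT(0) = \partial_a r_1(0) + \partial_b r_1(0) \circ D\beta(0) = \id_F$.

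It remains to invoke the technical core of the proof: a smooth idempotent $T$ on a Banach space with $DT(0) = \id$ is the identity on an open neighborhood of $0$. Indeed, the inverse function theorem makes $T$ a local diffeomorphism near $0$, so $\im T$ contains an open neighborhood of $T(0) = 0$; idempotency $T \circ T = T$ then forces $T = \id$ on $\im T$. Applied to our $T$ this gives $a = T(a) = a^*$, so $(a, \beta(a)) \in \Fix(r) = \im r$. Hence $M = \im r$ locally, which shows $\im r$ is a smooth submanifold near $p$; since $p \in \im r$ was arbitrary, this holds globally. Topological closedness of $\im r = \Fix(r)$ in $X$ follows from continuity of $r$ and the Hausdorff property of the Banach manifold $X$.
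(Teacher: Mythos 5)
Your argument is correct, but it takes a genuinely different route from the one in the text. The proof given here (following Cieliebak) constructs the explicit map $\alpha:=(\1-R)\circ(\1-r)+R\circ r$ with $R:=dr(0)$, verifies the algebraic identity $\alpha\circ r=R\circ\alpha$ together with $d\alpha(0)=\1$, and then applies the inverse function theorem once: $\alpha$ is a local chart that conjugates $r$ to its linearization $R$, so $\Fix\, r$ is carried by $\alpha$ onto an open piece of the closed linear subspace $\Fix\, R$. You instead split $E=F\oplus G$ by the projection $Dr(0)$, use the implicit function theorem on $\Psi(a,b)=b-r_2(a,b)$ to produce a candidate graph $M$ containing $\Fix\, r$, and then close the gap $M\subset\Fix\, r$ with the auxiliary idempotent $T(a)=r_1(a,\beta(a))$ satisfying $DT(0)=\id_F$, which by the inverse function theorem must be the identity on an open neighborhood of $0$ (namely on the open set $T(V')\subset\im T$, which suffices for the local statement). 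Both proofs ultimately rest on the inverse function theorem and on the fact that the derivative of an idempotent at a fixed point is a continuous linear projection; the conjugation argument is shorter once one has guessed the formula for $\alpha$ and hands you the submanifold chart directly, while your graph argument makes more visible exactly where idempotency is used (twice: once for $\beta(a^*)=\beta(a)$, once to force the local idempotent $T$ with invertible derivative to be the identity). One small point you pass over, which the text handles explicitly in its Step 1: to get a local representative of $r$ that is again a self-map of an open set, one should shrink the chart domain $V$ to the $r$-invariant open set $V\CAP r^{-1}(V)$; similar routine shrinking is needed so that $a^*$ stays in the domain of $\beta$. These are minor localization details and do not affect the validity of your proof.
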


\begin{proof}
We follow~\citet{Cieliebak:2018a}.
Closedness of the set $\im r=\Fix\, r$ holds by continuity of $r$.
To be a submanifold is a local property. Pick $x\in \Fix\, r$
and a Banach chart $(V,\phi,E)$ about $x$ with $\phi(x)=0$;
cf. Section~\ref{sec:B-mfs}.
It suffices to show that $\Fix\, r$ is locally near $x$
the image under a diffeomorphism, say $\alpha^{-1}$, of an open
subset of a linear subspace, say $\Fix\,[R\colon E\to E]$
for some $R=R^2\in\Ll(E)$, of the Banach space $E$.
This takes three steps.

%%%%%%%%%%% STEP 1 %%%%%%%%%%%%%%%%%
\vspace{.1cm}\noindent
\textbf{Step 1. (Localize)}
The retraction $r$ on $X$ descends to a smooth retraction on an
open subset $U\subset E$ of the local model Banach space, still denoted by
\[
     r=r^2\colon E\supset U\to U,\quad
     r(0)=0,\quad
     U:=\phi(V\CAP r^{-1}(V)).
\]
The derivative $R:=dr(0)=R^2\in\Ll(E)$ is a projection
and the maps
\[
     \alpha,\beta\colon U\to E,\quad
     \alpha:=\beta+R\circ r,\quad
     \beta:=(\1-R)\circ(\1-r)
\]
take on the same value $\alpha(0)=\1-R=\beta(0)$ at the origin.

\vspace{.1cm}\noindent
\textit{Proof of Step 1.}
Observe that $V\CAP r^{-1}(V)$ is not only an open neighborhood of the
fixed point $r(x)=x\in V$, but it is also invariant under $r$: Indeed
\begin{equation}\label{eq:guy6576}
     r\left(V\CAP r^{-1}(V)\right)
     \subset\left( r(V)\CAP V \right)
     \subset\left( V\CAP r^{-1}(V) \right)
\end{equation}
where both inclusions are immediate, only the second one uses $r\circ r=r$.
Hence the local representative $\phi\circ r\circ\phi$, hereafter still
denoted by $r$, is a smooth retraction on $U$ and it maps $0=\phi(x)$
to itself. The latter fixed point property enters the identity
$R=dr|_0=d(r\circ r)|0= dr_{r(0)}\circ dr|_0=R\circ R$.

%%%%%%%%%%% STEP 2 %%%%%%%%%%%%%%%%%
\vspace{.1cm}\noindent
\textbf{Step 2. (Local diffeomorphism)}
The map $\alpha$ conjugates $r$ and $R$
\[
     \alpha\circ r=R\circ \alpha\colon E\supset U\to E
\]
and it holds that $\alpha(0)=0$ and $d\alpha(0)=\1$.

\vspace{.1cm}\noindent
\textit{Proof of Step 2.}
The retraction properties of $r$ and $R$ imply the identities
\[
     \beta\circ r=(\1-R)\circ(\1-r)\circ r
     =(\1-R)\circ(r-r^2)=0
\]
and
\[
     R\circ \beta=R\circ (\1-R)\circ(\1-r)
     =(R-R^2)\circ(\1-r).
\]
These two identities imply, respectively, the identities
\[
     \alpha\circ r=\beta\circ r+R\circ r\circ r=R\circ r
\]
and
\[
     R\circ \alpha=R\circ\beta+R\circ R\circ r=R\circ r.
\]
Thus $\alpha\circ r=R\circ \alpha$. Hence
$\alpha(0)=\alpha(r(0)=R(\alpha(0))=R(\1-R)=0$ and
\begin{equation*}
\begin{split}
     d\alpha(0)
   &=d\left((\1-R)\circ(\1-r)+R\circ r\right)|_0\\
   &=(\1-R)\circ(\1-R)+R\circ R\\
   &=\1-2R+R^2+R^2\\
   &=\1.
\end{split}
\end{equation*}

%%%%%%%%%%% STEP 3 %%%%%%%%%%%%%%%%%
\vspace{.1cm}\noindent
\textbf{Step 3. (Conjugation to linearization)}
There is an open subset $W\subset U$ of $E$ such that $\alpha\colon W\to E$
is a diffeomorphism onto its image $\alpha(W)$ and $r(W)\subset W$.
Moreover, the linear retraction $R=dr(0)\colon E\to E$ restricts
to a smooth retraction on $W$ and coincides with the composition
\[
     R=\alpha\circ r\circ \alpha^{-1}\colon 
     \alpha(W)\to W\to r(W)\subset W\to\alpha(W).
\]

\vspace{.1cm}\noindent
\textit{Proof of Step 3.} 
Since $d\alpha(0)=\1$ is invertible there is by the inverse function
theorem an open neighborhood $W^\prime\subset U$ of the fixed point
$0\in E$ of $\alpha$ and $r$ such that the restriction
$\alpha\colon W^\prime\to E$ is a diffeomorphism onto its image.
To obtain, in addition, invariance under $r$ replace
$W^\prime$ by $W:=W^\prime\CAP r^{-1}(W^\prime)$.
To see this repeat the arguments that led to~(\ref{eq:guy6576}).

%%%%%%%%%%% STEP 4 %%%%%%%%%%%%%%%%%
\vspace{.1cm}\noindent
\textbf{Step 4. (Diffeomorphism to open set in Banach space)} 
Step~3 shows
\begin{equation*}
\begin{split}
     \Fix\,[r\colon W\to W]
   &=\alpha^{-1}\left(\Fix\, [R\colon \alpha(W)\to\alpha(W)]\right)\\
   &=\alpha^{-1}\left(\alpha(W)\CAP \Fix\, [R\colon E\to E]\right).
\end{split}
\end{equation*}
Step~4 proves Theorem~\ref{thm:Cartan}:
Indeed $\alpha(W)$ is an open neighborhood in $E$ of the fixed point $0$
of $r$ and $\Fix\, R$ is a (closed) linear subspace of $E$.
So the intersection is an open neighborhood of $0$ in the Banach
space $\Fix\, R$. But that intersection is diffeomorphic, under $\alpha^{-1}$,
to the part of $\Fix\, r$ in the open set $W$.
\end{proof}

%\newpage
%%%%%%%%%%%%%%%%%%%%%%%%%%%%%%%%%%
%%%%%%%%%%%%%%%%%%%%%%%%%%%%%%%%%%
%%% SECTION %%%%%%%%%%%%%%%%%%%%%%%%
%%%%%%%%%%%%%%%%%%%%%%%%%%%%%%%%%%
%%%%%%%%%%%%%%%%%%%%%%%%%%%%%%%%%%
\boldmath
%\section[Sc-smooth retractions and their images $O=r(U)=\Fix\, r$]
\section{Sc-smooth retractions and their images~$O$}\unboldmath
\label{sec:sc-retracts}
\sectionmark{Sc-smooth retractions}

In this section the local model spaces for M-polyfolds
are constructed and the maps between them are endowed
with an adequate notion of $\SC$-smoothness, namely,
$\SC$-smoothness when viewed as maps between decompressed domains.
The model spaces are images $O$ of
$\SC$-smooth retractions $r=r^2\colon U\to U$
on $\SC$-triples $(U,C,E)$. It is useful to observe that
image and fixed point set of $r$ coincide and
to think of $r$ as a projection onto its image
\[
     r=r\circ r\colon U\to O:=\im r=\Fix\, r.
\]

%%% SUBSECTION %%%%%%%%%%%%%%%%%%%%%
%%%%%%%%%%%%%%%%%%%%%%%%%%%%%%%%%%
\subsubsection*{Sc-retracts and sc-smoothness of maps between them}

\begin{definition}[Sc-retracts $O$]\index{sc-smooth!retraction}
An \textbf{sc-smooth retraction} on an $\SC$-triple
$(U,C,E)$ is an $\SC$-smooth idempotent map $r=r\circ r\colon U\to U$.
Note that
\[
     r\circ r=r\qquad
     \Leftrightarrow\qquad
     \im r=\Fix\, r .
\]
$\Fix\, r\subset U$ is (relatively) closed by continuity of $r$.
An \textbf{\Index{sc-retract}} $O\subset C\subset E$
in\index{$O$ $\SC$-smooth retract}
a partial quadrant $C$ in a Banach scale $E$
is the image (fixed point set)
\[
     O=r(U)=\Fix\, r,\qquad r\circ r=r\colon U\to U
\]
of \underline{some} $\SC$-smooth retraction $r$ whose domain
$U\subset C$ is (relatively) open. Usually we abbreviate
the notation $(O,C,E)$ of an \textbf{\boldmath$\SC$-retract} 
by simply writing $O$.
As pointed out in~\citet[before Prop.\,2.3]{Hofer:2017a}, the ambient partial
quadrant $C\subset E$ matters, because it is possible that $O$ is an
$\SC$-retract with respect to some non-trivial $C$, but not for $C=E\colon$
Think of $(O,E,E)$ as local models for M-polyfolds in regions
without boundary and $(O,C,E)$ as such near boundaries with corners;
cf.~\citet[after Def.\,1.13]{Hofer:2010b}.
\end{definition}

\begin{lemma}\label{le:O^k}
If $r\colon U\to U$ is an $\SC$-smooth retraction, then all level maps
are \emph{continuous} retractions
\[
     r_m=r_m\circ r_m\colon U_m\to U_m,\qquad U_m:=U\CAP E_m
\]
and $O_m:=O\cap E_m$ is equal to the image $r(U_m)$.
In terms of shifted scales
\begin{equation}\label{eq:O^k=r(U^k}
     O^k=r(U^k)=\Fix\,[r\colon U^k\to U^k],\qquad k\in\N_0.
\end{equation}
\end{lemma}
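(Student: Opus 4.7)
The plan is to reduce everything to the level-wise information that $\SC$-smoothness already packs in through the axiom $\SC^0$, together with the elementary observation that for any idempotent map its image equals its fixed-point set.

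First I would unpack what $\SC$-smoothness of $r$ gives at the level of sets. Since $\SC^\infty \subset \SC^0$, the map $r$ is level preserving ($r(U_m)\subset U_m$ for every $m$) and each restriction $r_m := r|_{U_m}\colon U_m \to U_m$ is continuous. Idempotency of $r_m$ is then immediate by restriction: for $x\in U_m$ one has $r_m(r_m(x)) = r(r(x)) = r(x) = r_m(x)$, since $r$ is a retraction on $U$. This disposes of the first claim that each $r_m$ is a continuous retraction on $U_m$.

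Next I would prove the identity $O_m = O\cap E_m = r(U_m)$ by double inclusion, using the basic equivalence $\im r = \Fix\, r$ recorded right before Definition~\ref{def_partial-quad} of $\SC$-retracts. For $``\subset"$: if $x\in O\cap E_m$, then $x\in U\cap E_m = U_m$ and $r(x)=x$ (since $x\in \Fix\, r$), so $x = r_m(x)\in r(U_m)$. For $``\supset"$: if $y=r(x)$ with $x\in U_m$, then by level preservation $y\in U_m\subset E_m$, and $r(y)=r(r(x))=r(x)=y$ shows $y\in\Fix\, r = O$, hence $y\in O\cap E_m = O_m$. In particular this also gives $O_m = \Fix\, r_m$, which is the level-wise version of $O=\Fix\, r$.

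Finally, the statement about shifted scales is just the previous identity applied level-wise after the index shift. By definition of the shifted scale (Definition~\ref{def:shifted-scale}) one has $(U^k)_m = U_{k+m}$ and $(O^k)_m = O_{k+m}$, and $r$ restricts to an $\SC$-smooth retraction $r\colon U^k\to U^k$ (level preservation on $U$ is exactly level preservation on $U^k$). Applying the first two parts of the lemma to this restriction yields
\[
     (O^k)_m = O_{k+m} = r(U_{k+m}) = r((U^k)_m) = \Fix\bigl[r_{k+m}\colon (U^k)_m\to (U^k)_m\bigr],
\]
which is precisely the scale-wise identity $O^k = r(U^k) = \Fix[r\colon U^k\to U^k]$. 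There is no real obstacle here: the only thing to be careful about is to keep straight that the ``$\Fix$'' on the right-hand side is interpreted as a scale, meaning its $m$-th level is $\Fix\, r \cap (U^k)_m$, which by the level-wise computation above equals $O_{k+m}$.
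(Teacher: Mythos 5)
Your proposal is correct and follows essentially the same route as the paper: the core of the argument is the double-inclusion proof of $O_m=r(U_m)$ using level preservation of $r$ (from $\SC^0$) and the identity $\im r=\Fix\, r$, exactly as in the paper's proof. The extra details you supply on idempotency of the level maps and on the level-wise reading of the shifted-scale identity~(\ref{eq:O^k=r(U^k}) are routine and consistent with what the paper leaves implicit.
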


\begin{proof}
To be shown is the equality of sets $O_m=r(U_m)$.
'$\subset$' Pick $x\in O\CAP E_m\subset U\CAP E_m$, then
$x=r(x)\in r(U_m)$.
'$\supset$' Pick $x\in U_m$, then $r(x)\in r(U)\CAP E_m=O\CAP E_m$
since $U_m\subset U$ and $r$ is level preserving, respectively.
\end{proof}

Whereas the image of a \emph{smooth} retraction on a Banach manifold
is a smooth submanifold by Cartan's last theorem, Theorem~\ref{thm:Cartan},
an $\SC$-retract can be connected and nevertheless have
pieces of various dimensions; see Figure~\ref{fig:fig-sc-retract-schippe}.
\begin{figure}%[h]
  \centering
  \includegraphics%[width=0.9\textwidth]
                             [height=4cm]
                             {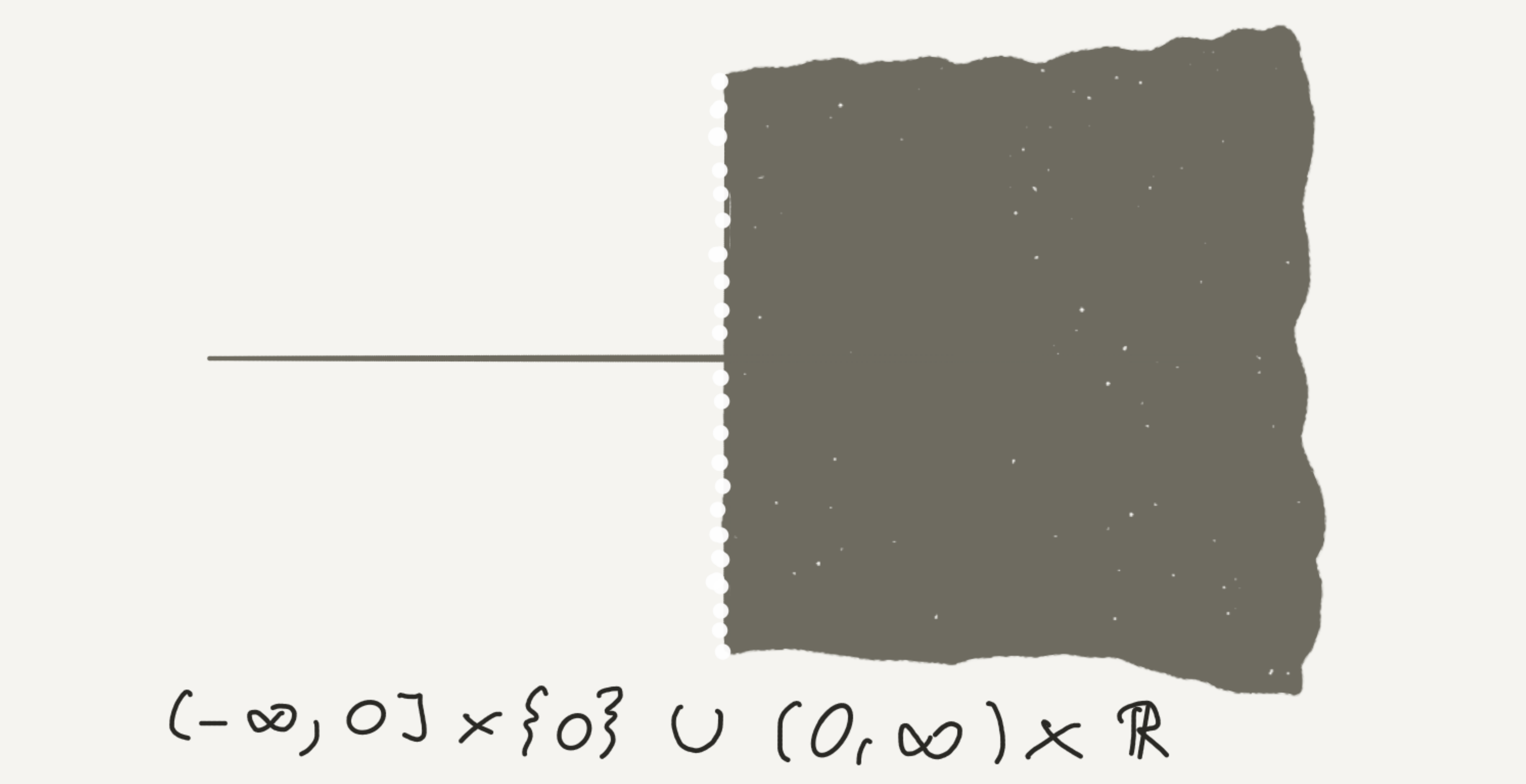}
  \caption{Jumping dimension along $\SC$-retract
                $O=\im r_\pi$ in Section~\ref{sec:splicing-example}}
  \label{fig:fig-sc-retract-schippe}
\end{figure}
How can one ever do analysis on such spaces? Let's see:

\vspace{.1cm}
\noindent
\textbf{Decompression.} 
To start with, given a map $f\colon O\to O^\prime$ between $\SC$-retracts,
one can ``decompress'' or ``unpack'' the, possibly ``cornered'',
domain $O$ of the map $f$ into an open set by pre-composing with an
$\SC$-smooth retraction $r\colon U\to U$ whose image is $O=r(U)$.
Indeed the map $f\circ r\colon U\to O^\prime\subset U^\prime$ 
has the same image as $f$, but lies within the reach of
$\SC$-calculus since domain and target are (relatively) open subsets
of partial quadrants $C$ and $C^\prime$ in $\SC$-Banach spaces.

\begin{definition}[Sc-smooth maps among $\SC$-retracts --
decompress domain]
\label{def:sc-map-between-sc-retracts}
A map $f\colon O\to O^\prime$ between $\SC$-retracts is called an
\textbf{\boldmath$\SC$-smooth retract map}
if\index{sc-smooth!retract map}\index{retract map!decompressing a --}
the composition $f\circ r\colon U\to U^\prime$
is $\SC$-smooth\footnote{
  Sc-smoothness of $f\circ r\colon U\to U^\prime$ implies
  continuity of $f\colon O\to O^\prime$.
  }
for some, thus by Lemma~\ref{le:for-some-thus-every-123} for every,
$\SC$-smooth retraction $r$ whose image is
$O=r(U)$.\index{$f\colon O\to O^\prime$ $\SC$-smooth retract map}
Let us refer to such pre-composition
process\index{decompressing domain}
as \textbf{decompressing (the domain of) \boldmath$f$}.
${\text{Sc-smooth}}$ retract maps $O\to O^\prime$ are continuous.
\end{definition}

\begin{lemma}\label{le:for-some-thus-every-123}
Given $\SC$-smooth retractions with equal image $O=r(U)=s(V)$ and a
map $f\colon O\to U^\prime$, then if one of the maps
\[
     f\circ r\colon U\to U^\prime,\qquad
     f\circ s\colon V\to U^\prime
\]
is $\SC$-smooth, so is the other one.
\end{lemma}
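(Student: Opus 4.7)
The plan is to use the defining property of retractions together with the chain rule. The crux is the observation that a retraction restricts to the identity on its own image: since $r\colon U\to U$ is an $\SC$-smooth retraction with image $O=r(U)$, we have $O=\Fix\, r$, and in particular $r|_O=\id_O$; likewise $s|_O=\id_O$.

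Assume $f\circ r\colon U\to U'$ is $\SC$-smooth; we want to show the same for $f\circ s\colon V\to U'$. First I would observe that both $U$ and $V$ are relatively open subsets of the same partial quadrant $C\subset E$ (this is the ambient of $(O,C,E)$), and that $s(V)=O\subset U$. Hence the composition $r\circ s\colon V\to U$ is well-defined. For every $v\in V$, since $s(v)\in O=\Fix\, r$, one has
\[
     (r\circ s)(v)=r(s(v))=s(v),
\]
so $r\circ s=s$ as maps from $V$ into $U$.

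Next I would invoke the chain rule, Theorem \ref{thm:chain-rule}. The map $s\colon V\to V$ is $\SC$-smooth by assumption; since $s(V)\subset U$ with $U$ a (relatively) open subset of the same partial quadrant, viewing $s$ as an $\SC$-smooth map $s\colon V\to U$ is harmless (the scale structure on $U$ is that induced from $E$, identical to the one inherited by $s(V)\subset V$). The chain rule then gives that the composition
\[
     (f\circ r)\circ s\colon V\to U'
\]
is $\SC$-smooth. But by the identity $r\circ s=s$ established above,
\[
     (f\circ r)\circ s=f\circ(r\circ s)=f\circ s,
\]
so $f\circ s$ is $\SC$-smooth. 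The reverse implication is obtained by swapping the roles of $r$ and $s$.

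There is no real obstacle here; the only thing to be careful about is the book-keeping on ambient spaces when forming $r\circ s$, namely checking that $s(V)\subset U$ so that the composition makes sense within the formal framework of $\SC$-maps between $\SC$-triples. Everything else is the chain rule applied to the tautology that a retraction is the identity on its fixed-point set.
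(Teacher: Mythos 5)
Your proof is correct and follows essentially the same route as the paper: both rest on the identity $r\circ s=s$ (coming from $\im s=O=\Fix\, r$) and then apply the chain rule, Theorem~\ref{thm:chain-rule}, to $(f\circ r)\circ s$. The extra care you take in checking that $s(V)\subset U$ so the composition is well-defined is a reasonable addition but does not change the argument.
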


\begin{proof}
By assumption $\im r=O=\Fix\, s$ and $\im s=O=\Fix\, r$, hence
\[
     s\circ r=r\colon U\to O,\qquad
     r\circ s=s\colon V\to O.
\]
By hypothesis $s$ is $\SC$-smooth. If also $f\circ r$ is $\SC$-smooth,
so is by the chain rule their composition $(f\circ r)\circ s$.
But $f\circ (r\circ s)=f\circ s$.
\end{proof}

%%% SUBSECTION %%%%%%%%%%%%%%%%%%%%%
%%%%%%%%%%%%%%%%%%%%%%%%%%%%%%%%%%
\subsubsection*{Tangent bundle of sc-retracts and tangent map
of sc-retract maps}

\begin{lemma}[Tangent map of retraction is itself a retraction]
\label{le:Tr}
Let $r\colon U\to U$ be an $\SC$-smooth retraction
on an $\SC$-triple $(U,C,E)$.
Then its tangent map
\begin{equation}\label{eq:Tr}
     Tr=(Tr)\circ (Tr)\colon TU\to TU,\quad
     (x,\xi)\mapsto\left(r(x),Dr(x)\xi\right)
\end{equation}
is an $\SC$-smooth retraction
on\index{sc-triple!tangent --}
the \textbf{tangent \boldmath$\SC$-triple}
\[
     T \left(U,C,E\right)
     :=\left(TU,TC,TE\right)
     :=\left(U^1\oplus E^0,C^1\oplus E^0,E^1\oplus E^0\right).
\]
\end{lemma}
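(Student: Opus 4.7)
The plan is to verify three claims: (a) the formula $(x,\xi)\mapsto(r(x),Dr(x)\xi)$ really does define a map $Tr\colon TU\to TU$ valued in the tangent $\SC$-triple; (b) this map is $\SC$-smooth; (c) it satisfies $Tr\circ Tr=Tr$. Claims (a) and (b) are essentially built into the definitions we have accumulated, so the substantive step will be (c), where the chain rule does the real work.

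For claims (a) and (b): by hypothesis $r$ is $\SC$-smooth, in particular $\SC^1$, so the third axiom \texttt{($Tf$ is $\SC^0$)} in Definition~\ref{def:sc-differentiability} already says that $Tr\colon TU\to TU$ is a well-defined $\SC^0$-map, i.e.\ it is level preserving with continuous level maps. Moreover, $r\in\SC^\infty(U,U)$ means $r\in\SC^k(U,U)$ for every $k\in\N$, so by Definition~\ref{def:sc^k} the tangent map $Tr$ is of class $\SC^{k-1}$ for every $k$, hence $Tr\in\SC^\infty(TU,TU)$. This takes care of (a) and (b); note that we are really using nothing beyond the definitions of $\SC^1$ and $\SC^k$.

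For claim (c), apply the chain rule (Theorem~\ref{thm:chain-rule}) to the composition $r\circ r$, which gives the identity $T(r\circ r)=Tr\circ Tr$ of $\SC^0$-maps on $TU$. Since $r$ is a retraction we have $r\circ r=r$, so
\[
     Tr\circ Tr=T(r\circ r)=Tr
\]
as maps $TU\to TU$. Unpacking the formula for the tangent map, this reads
\[
     \bigl(r(r(x)),\,Dr(r(x))\,Dr(x)\,\xi\bigr)
     =\bigl(r(x),\,Dr(x)\,\xi\bigr)
\]
for every $(x,\xi)\in U^1\oplus E^0$; the first component is just $r\circ r=r$ on $U^1$, and the second component is the $\SC$-derivative form of the chain rule applied to $r\circ r=r$.

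The main obstacle, if there is any, is purely bookkeeping: one must be sure that the chain rule may legitimately be invoked for the pair $r,r$ between the $\SC$-triples $(U,C,E)\to(U,C,E)\to(U,C,E)$, i.e.\ that $r$ qualifies as an $\SC^1$-map between $\SC$-triples in the sense of Convention~\ref{conv:sc-convention}. This is immediate from the hypothesis that $r$ is $\SC$-smooth on the $\SC$-triple $(U,C,E)$, so no further work is required, and the three claims together establish that $Tr$ is an $\SC$-smooth retraction on $(TU,TC,TE)$.
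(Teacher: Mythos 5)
Your proof is correct and follows essentially the same route as the paper: smoothness of $Tr$ from the iterative definition of $\SC^k$, and idempotency from the chain rule applied to $r\circ r=r$. The only cosmetic difference is that the paper verifies the equivalent condition $\im Tr=\Fix\,Tr$ by a pointwise computation with the $\SC$-derivative form~(\ref{eq:chain-rule-D}), whereas you invoke the functorial identity $T(g\circ f)=Tg\circ Tf$ of Theorem~\ref{thm:chain-rule} directly, which is the same content.
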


\begin{proof}
The tangent map of an $\SC$-smooth map is
$\SC$-smooth by the iterative definition of $\SC^k$-smoothness;
see Definition~\ref{def:sc^k}.
It remains to show that
\[
     \im Tr=\Fix\, Tr.
\]
'$\supset$' A fixed point $x=f(x)$ of a map lies in its image.
'$\subset$' An element of $Tr(TU)$ is of the form
$(y,{\color{brown}\eta})=(r(x), {\color{brown} Dr|_x\,\xi})$ for some
$(x,\xi)\in U_1\oplus E^0$.
Hence $r(y)=r\circ r(x)=r(x)=y$ and
\[
     Dr|_y\, {\color{brown}\eta}=Dr|_{r(x)}\,{\color{brown}Dr|_x\,\xi}
     =D(r\circ r)|_x\,\xi=Dr|_x\,\xi=\eta
\]
where we used the chain rule~(\ref{eq:chain-rule-D}).
Hence $Tr(y,\eta)=(y,\eta)\in\Fix\, Tr$.
\end{proof}

\begin{lemma}[Tangent maps of two decompressions have same image]
\label{le:TO}
Assume an $\SC$-smooth retract $(O,C,E)$
is the image of two $\SC$-smooth retractions
\[
     r(U)=O=s(V),\qquad
     r=r\circ r\colon U\to U,\quad
     s=s\circ s\colon V\to V.
\]
Then both tangent maps have equal image
$Tr(TU)=Ts(TV)\subset(O^1\oplus E^0)$.
\end{lemma}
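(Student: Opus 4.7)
My plan is to exploit the fact, noted in Lemma~\ref{le:Tr}, that both tangent maps are themselves sc-smooth retractions, hence $Tr(TU)=\Fix\, Tr$ and $Ts(TV)=\Fix\, Ts$. It then suffices to prove the two inclusions $\Fix\, Tr\subset\Fix\, Ts$ and $\Fix\, Ts\subset\Fix\, Tr$, and the statement follows at once. The key algebraic observation driving both inclusions is the chain-rule identity $Ts\circ Tr=Tr$, obtained from the pointwise identity $s\circ r=r$ on $U$.

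To justify $s\circ r=r$ on $U$, I would first verify that $O=r(U)\subset V$: indeed $O=s(V)\subset V$ since $s$ maps $V$ into itself. For any $x\in U$ the element $r(x)$ lies in $O=\Fix\, s$, so $s(r(x))=r(x)$. By symmetry $r\circ s=s$ on $V$. These compositions are well-defined and sc-smooth because $r(U)\subset V$ (and symmetrically $s(V)\subset U$, since $s(V)=O=r(U)\subset U$), so the chain rule of Theorem~\ref{thm:chain-rule} applies and yields $Ts\circ Tr=T(s\circ r)=Tr$ on $TU$, as well as $Tr\circ Ts=Ts$ on $TV$. One should briefly check that the composition on tangent bundles is legal at the level of scales: if $(x,\xi)\in TU=U^1\oplus E^0$, then $x\in U_1$ forces $r(x)\in O_1\subset V_1$, so $(r(x),Dr(x)\xi)\in TV$, which is exactly what the chain rule needs.

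Given $Ts\circ Tr=Tr$, any element $(y,\eta)\in Tr(TU)$ --- say $(y,\eta)=Tr(x,\xi)$ --- satisfies
\[
     Ts(y,\eta)=Ts(Tr(x,\xi))=Tr(x,\xi)=(y,\eta),
\]
so $(y,\eta)\in\Fix\, Ts=Ts(TV)$. The reverse inclusion is identical after swapping the roles of $r$ and $s$ via $Tr\circ Ts=Ts$. Combining both gives $Tr(TU)=Ts(TV)$.

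The only step that requires some care is verifying that all compositions take place within sets where the maps and their sc-derivatives are defined, i.e.\ checking the inclusions $O\subset U\cap V$ at every relevant level and then applying the chain rule at the tangent-bundle level shift $(\cdot)^1\oplus(\cdot)^0$. Once those level-tracking bookkeeping details are in place, the proof is purely formal and reduces to the idempotency identity $\Fix\, Tr=\im Tr$ already established in Lemma~\ref{le:Tr}.
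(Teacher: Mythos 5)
Your proposal is correct and follows essentially the same route as the paper: both proofs reduce the claim to $\Fix\, Tr=\Fix\, Ts$ via Lemma~\ref{le:Tr}, derive $s\circ r=r$ from $r(U)=O=\Fix\, s$, apply the chain rule to get $Ts\circ Tr=T(s\circ r)=Tr$, and track levels to confirm that $\Fix\,[r\colon U^1\to U^1]=O^1\subset V^1$ so the composition on tangent bundles is defined. The only cosmetic difference is that the paper evaluates $Ts$ directly on a fixed point $(x,\xi)$ of $Tr$ rather than first stating the operator identity, which amounts to the same computation.
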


\begin{proof}
We need to show $\Fix\, Tr=\Fix\, Ts$.
'$\subset$' Pick $(x,\xi)\in \Fix\, Tr=Tr(TU)\subset TU\subset E^1\oplus E^0$.
Then ${\color{brown}\xi\in\Fix\, Dr|_x}\subset E$
and with~(\ref{eq:O^k=r(U^k}) for $r$ and $s$ we conclude
\[
     x\in(\Fix\, r)\CAP E^1=\Fix\,[r\colon U^1\to U^1]=O^1
     =\Fix\,[s\colon V^1\to V^1]\subset V^1.
\]
So $(x,\xi)\in V^1\oplus E^0=TV$ lies in the domain of $Ts$ and
we get that
\begin{equation*}
\begin{split}
     Ts(x,\xi)=\left(s(x),Ds|_x\, {\color{brown}\xi}\right)
   &=\left(s(r(x)),Ds|_{r(x)}\,{\color{brown}Dr|_x\,\xi}\right)\\
   &=\left(r(x),D(s\circ r)|_x\,\xi\right)\\
  &=\left(x,{\color{brown}Dr|_x\,\xi}\right)\\
   &=\left(x,{\color{brown}\xi}\right).
\end{split}
\end{equation*}
Here we used twice the identity $s\circ r=r$
which holds since $r(U)=O=\Fix\, s$ by hypothesis.
'$\supset$' Same argument.
\end{proof}

\begin{definition}[Tangent of $\SC$-retract]\label{def:TO}
If $O=r(U)$ is an $\SC$-retract, then
\[
     TO:=Tr(TU)=\Fix\, Tr\subset \left( O^1\oplus E^0\right)
\]
is an $\SC$-retract, too. Notation $T(O,C,E):=(TO,TC,TE)$.
The definition of $TO$ does not depend
on the choice of $(r,U)$ by Lemma~\ref{le:TO}.
\end{definition}

\begin{lemma}[Tangent bundle of $\SC$-retract $O$]
The natural projection
\[
     p\colon TO\to O^1,\quad (x,\xi)\mapsto x
\]
is\index{tangent bundle! of $\SC$-smooth retract}
an\index{sc-smooth!retract!tangent bundle of --}
open surjective $\SC$-smooth retract map,
cf.\index{$TO\to O^1$ tangent bundle of $\SC$-smooth retract $O$}
Definition~\ref{def:sc-map-between-sc-retracts},
called \textbf{tangent bundle of the
\boldmath$\SC$-retract \unboldmath$\mbf{O}$}.
The pre-image of a point, denoted~by
\[
     T_x O:=p^{-1}(x)\subset E,   %=\Fix\,[Dr(x)],
%     \quad O=r(U),
     \qquad x\in O^1
\]
is a Banach subspace of $E$, an $\SC$-subspace whenever $x\in O_\infty$.
\end{lemma}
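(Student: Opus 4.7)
The plan is to verify each claim by fixing a decompression $r\colon U\to U$ of $O=r(U)$, so that by Lemma~\ref{le:Tr} the tangent map $Tr\colon TU\to TU$ decompresses $TO=\Fix\, Tr$, and by the lifting-of-indices Lemma~\ref{le:lifting-indices} the restriction $r\colon U^1\to U^1$ is an $\SC$-smooth retraction decompressing $O^1$. Surjectivity is immediate: for any $x\in O^1$ one has $r(x)=x$ and $Dr|_x\cdot 0=0$, so $(x,0)=Tr(x,0)\in\Fix\, Tr=TO$ and $p(x,0)=x$. For $\SC$-smoothness as a retract map (Definition~\ref{def:sc-map-between-sc-retracts}) I would precompose $p$ with $Tr$ and compute
\[
     (p\circ Tr)(x,\xi)=p\bigl(r(x),Dr|_x\xi\bigr)=r(x).
\]
This map $U^1\oplus E^0\to U^1$ factors as the coordinate projection $(x,\xi)\mapsto x$ (linear, hence $\SC$-smooth) followed by $r\colon U^1\to U^1$; the chain rule Theorem~\ref{thm:chain-rule} then delivers $\SC$-smoothness of $p\circ Tr$, which is all that is required.

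Openness will be the main technical step; the obstacle is that $p(W)$ is only an openness claim \emph{inside the $\SC$-retract $O^1$}, and the fibers of $p$ may vary from point to point, so one cannot simply invoke openness of a coordinate projection. The way around is compact-open continuity of $Dr$ from Lemma~\ref{le:sc-deriv-reg-preserving}(c) (equivalently Lemma~\ref{le:charact_sc-1_by_Df}(iii)). Given $W\subset TO$ open and $(x,\xi)\in W$, pick a product neighborhood $V\times N\subset U^1\oplus E^0$ of $(x,\xi)$ with $(V\times N)\cap TO\subset W$. For $y\in V\cap O^1$ I claim $(y,Dr|_y\xi)\in TO$: indeed $r(y)=y$ since $y\in O^1$, and the chain rule applied to $r\circ r=r$ at the fixed point $y$ yields $(Dr|_y)^2=Dr|_y$, whence $Tr(y,Dr|_y\xi)=(y,Dr|_y\xi)$. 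Continuity of $(y,\eta)\mapsto Dr|_y\eta$ on $U^1\oplus E^0\to E^0$ applied to the fixed argument $\xi$ lets me shrink $V$ to a smaller neighborhood $V'$ of $x$ on which $Dr|_y\xi\in N$; then $V'\cap O^1\subset p(W)$, proving $p(W)$ open in $O^1$.

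Finally, the fiber $p^{-1}(x)=\{\xi\in E\mid(x,\xi)\in TO\}$ equals $\Fix\,[Dr|_x\colon E\to E]$, since $x\in O^1$ already takes care of the first coordinate condition $r(x)=x$. As $Dr|_x\in\Ll(E,E)$ is continuous (Definition~\ref{def:sc-differentiability} applied at $x\in U^1$), this fixed-point set is the closed subspace $\ker(\1-Dr|_x)$ of the Banach space $E$, hence a Banach subspace. When $x\in O_\infty$, Corollary~\ref{cor:sc-deriv_is_sc-op_smooth-pts} promotes $Dr|_x$ to an element of $\Llsc(E,E)$, and the identity $(Dr|_x)^2=Dr|_x$ makes it an $\SC$-projection; Lemma~\ref{le:sc-projections-ker-im} then identifies $T_xO=\im Dr|_x=\Fix\, Dr|_x$ as an $\SC$-subspace of $E$, completing the proof.
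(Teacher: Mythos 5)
Your proof is correct, and its core coincides with the paper's: you decompress $p$ to $p\circ Tr\colon(x,\xi)\mapsto r(x)$ and invoke Lemma~\ref{le:lifting-indices} for $\SC$-smoothness of $r\colon U^1\to U^1$, obtain surjectivity from $r(U^1)=O^1$ (the paper cites Lemma~\ref{le:O^k}; your explicit $(x,0)\in\Fix\,Tr$ is an equivalent shortcut), and identify the fiber as the closed subspace $\Fix\,[Dr(x)]=\ker(\1-Dr(x))$. You diverge from the paper exactly where the paper stops proving things: openness and the $\SC$-subspace claim are both outsourced to Exercise~\ref{exc:jhjghj8}, whereas you supply arguments. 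Your openness proof -- propagating a fixed point $(x,\xi)\in\Fix\,Tr$ to nearby base points $y\in O^1$ via $(y,Dr|_y\xi)\in\Fix\,Tr$, using $(Dr|_y)^2=Dr|_y$ from the chain rule and the joint continuity of $(y,\eta)\mapsto Dr|_y\eta$ guaranteed by the axiom \texttt{($Tf$ is $\text{sc}^0$)} -- is sound at level $0$ and, since the same continuity holds for each level map $(Tr)_m\colon U_{m+1}\oplus E_m\to U_{m+1}\oplus E_m$, it runs verbatim on every level; it would be worth one sentence making that uniformity explicit. For the $\SC$-subspace claim at $x\in O_\infty$ you route through Corollary~\ref{cor:sc-deriv_is_sc-op_smooth-pts} plus Lemma~\ref{le:sc-projections-ker-im} (image of an $\SC$-projection is an $\SC$-subspace), which is cleaner than the paper's intended direct verification of the three Banach-scale axioms in Exercise~\ref{exc:jhjghj8}~a); the only point to note is that the idempotency $(Dr(x))^2=Dr(x)$, obtained on $E_0$ from the chain rule, restricts to every level because the level operators are restrictions, so $Dr(x)$ really is an $\SC$-projection.
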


\begin{proof}
Let $O=r(U)$. Then $TO:=Tr(TU)$ where $TU:=U^1\oplus E^0$. Hence the
first component of $Tr$ is the map $r$ on the domain $U^1$, see~(\ref{eq:Tr}).
But $r(U^1)=O^1$ by Lemma~\ref{le:O^k} which proves surjectivity of $p$.
The decompression
\[
     p\circ Tr\colon TU=U^1\oplus E^0\to TO\to U^1,\quad
     \left(x,\xi\right)\mapsto \left(r(x),Dr|_x\,\xi\right)\mapsto r(x)
\]
of $p$ is constant in $\xi$, and in $x$ it is the map $r\colon U^1\to U^1$
which is $\SC$-smooth by Lemma~\ref{le:lifting-indices},
since $r\colon U\to U$ is $\SC$-smooth by assumption.
The pre-image
\begin{equation*}\label{eq:T_xO-full}
     T_xO:=p^{-1}(x)=\{x\}\oplus \Fix\,[Dr(x)\colon E\to E]
     \subset O^1\oplus E
\end{equation*}
is the fixed point set of a linear operator on the Banach space $E$
and therefore it is a linear subspace. It is a closed linear subspace,
because the linear operator is continuous.
For simplicity we shall simply write
\begin{equation}\label{eq:T_xO}
     T_xO:=p^{-1}(x)=\Fix\,[Dr(x)\colon E\to E]\subset E.
\end{equation}
The $\SC$-derivative $Dr(x)\colon E\to E$ at any $x\in O_\infty$
restricts to a continuous linear operator on every level $E_m$
by Corollary~\ref{cor:sc-deriv_is_sc-op_smooth-pts}.
Hence $(p^{-1}(x))_m:=p^{-1}(x)\CAP E_m$ % with $m\in\N_0$
are the levels of a Banach scale by Exercise~\ref{exc:jhjghj8}.
\end{proof}

\begin{exercise}\label{exc:jhjghj8}
a) Show that $T_xO$ is an $\SC$-subspace of $E$ whenever $x\in O_\infty$.
b)~Show that the projection $p\colon TO\to O^1$ is an open map.

\vspace{.1cm}\noindent
[Hint: a) Let $O=r(U)$. Show that
$F_m:=(p^{-1}(x))_m:=p^{-1}(x)\CAP E_m$ equals
$F_m=\Fix\,[Dr(x)\colon E_m\to E_m]$ and
$F=F_0\supset F_1\supset\dots$ satisfies the three axioms
\texttt{(Banach levels)}, \texttt{(compactness)}, and
\texttt{(density)} of a Banach scale.
b)~First consider the case $C=E$, decompress $p$.]
\end{exercise}

\begin{lemma}\label{le:gyu6768}
Let $f\colon O\to O^\prime$ be an $\SC$-smooth retract map.
If $r\colon U\to U$ and $s\colon V\to V$ are 
$\SC$-smooth retractions with image  $O$,
then the restrictions
\[
     T(f\circ r)|_{TO}=T(f\circ s)|_{TO}\colon TO\to TO^\prime
\]
a) coincide and b) take values in $TO^\prime$
and c) are $\SC$-smooth retract maps.
\end{lemma}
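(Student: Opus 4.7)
The plan is to handle the three assertions (a), (b), (c) in that order, each time exploiting the identities $s\circ r=r$ and $r\circ s=s$ that follow from $\im r=O=\Fix s$ and $\im s=O=\Fix r$ (as already used in the proof of Lemma~\ref{le:for-some-thus-every-123}), together with the chain rule Theorem~\ref{thm:chain-rule} and the characterization $TO=\Fix Tr=\Fix Ts$ from Lemma~\ref{le:TO} and Definition~\ref{def:TO}.

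For part (a), fix $(x,\xi)\in TO\subset O^1\oplus E^0$, so $r(x)=x=s(x)$ and $Dr|_x\xi=\xi=Ds|_x\xi$. Writing $f\circ r=(f\circ s)\circ r$ (since $s\circ r=r$) and applying the chain rule at $x$ gives
\[
D(f\circ r)|_x\,\xi=D(f\circ s)|_{r(x)}\,Dr|_x\,\xi=D(f\circ s)|_x\,\xi,
\]
while on the first component $(f\circ r)(x)=f(x)=(f\circ s)(x)$. This yields the desired equality of tangent maps on $TO$.

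For part (b), pick any $\SC$-smooth retraction $r'\colon U'\to U'$ with $\im r'=O'$ (such exists since $O'$ is an $\SC$-retract). Because $f$ takes values in $O'=\Fix r'$, we have $r'\circ f=f$ on $O$ and hence $r'\circ(f\circ r)=f\circ r$ on $U$. Differentiating at $x\in O^1$ and using $(f\circ r)(x)=f(x)$ gives $Dr'|_{f(x)}\,D(f\circ r)|_x=D(f\circ r)|_x$. Combined with $r'(f(x))=f(x)$ this shows that
\[
T(f\circ r)(x,\xi)=\bigl(f(x),\,D(f\circ r)|_x\,\xi\bigr)
\]
is fixed by $Tr'$ for every $(x,\xi)\in TO$; by Definition~\ref{def:TO} this means the value lies in $TO'=\Fix Tr'$.

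For part (c), recall that by Lemma~\ref{le:Tr} the tangent map $Tr\colon TU\to TU$ is itself an $\SC$-smooth retraction with image $TO$, so decompressing the restriction $T(f\circ r)|_{TO}$ along $Tr$ amounts to showing that
\[
T(f\circ r)|_{TO}\circ Tr=T(f\circ r)\colon TU\to TU'
\]
is $\SC$-smooth; here the equality uses $(f\circ r)\circ r=f\circ r$ together with the functoriality of $T$ (the chain rule written as $T(g\circ h)=Tg\circ Th$). Since $f\colon O\to O'$ is by hypothesis an $\SC$-smooth retract map, $f\circ r$ is $\SC^\infty$, and hence so is its tangent map $T(f\circ r)$ by the iterative Definition~\ref{def:sc^k}. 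By Definition~\ref{def:sc-map-between-sc-retracts} this makes $T(f\circ r)|_{TO}$ an $\SC$-smooth retract map $TO\to TO'$. The only real delicacy is bookkeeping the two tangent scales $T(\cdot)=(\cdot)^1\oplus(\cdot)^0$ so that the identities $s\circ r=r$, $r'\circ f=f$ really do produce the cancellations above on $TO\subset O^1\oplus E^0$; once this is verified, (a)--(c) follow uniformly from the chain rule and Lemma~\ref{le:for-some-thus-every-123}.
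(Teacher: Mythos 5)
Your proposal is correct and follows essentially the same route as the paper: part (a) via the identities $s\circ r=r$, $r\circ s=s$ and the chain rule on $TO=\Fix Tr=\Fix Ts$ (you use the mirror-image identity to the paper's, which is immaterial), part (b) by composing with a retraction onto $O^\prime$ and observing the image is fixed by its tangent map, and part (c) by decompressing along $Tr$ using $T(f\circ r)\circ Tr=T(f\circ r)$. No gaps.
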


\begin{proof}
a) For $(x,\xi)\in TO={\color{brown}\Fix\, Ts}$,
as $r\circ s=s$ ($\Fix\, r=O=\im s$), we get
\[
     T(f\circ r)\,{\color{brown}(x,\xi)}
     =T(f\circ r)\, {\color{brown} Ts(x,\xi)}
     =T(f\circ r\circ s)\, (x,\xi)
     =T(f\circ s)\, (x,\xi).
\]
b) Let $O^\prime=t(W)$, then it suffices to show
$\im T(f\circ r)\subset\Fix\, Tt$. Observe that $t\circ f=f$ since
$\im f\subset O^\prime=\Fix\,t$. Hence $(x,\xi)\in TU$ provides 
a {\color{cyan}fixed~point}
\[
     Tt\left({\color{cyan}T(f\circ r)\,(x,\xi)}\right)
     =T(t\circ f\circ r) \,(x,\xi)
     ={\color{cyan}T(f\circ r) \,(x,\xi)}.
\]
c) The decompression of $T(f\circ r)|_{TO}$ given by
\[
     T(f\circ r) \circ Tr
     =T(f\circ r\circ r)=T(f\circ r)
     \colon TU\to TU^\prime
\]
is $\SC$-smooth, because $f\circ r\colon U\to U^\prime$ is $\SC$-smooth due
to the assumption that $f\colon O\to O^\prime$ is an $\SC$-smooth retract map,
see Definition~\ref{def:sc-map-between-sc-retracts}.
\end{proof}

\begin{definition}[Tangent of retract maps via domain
decompression]
\label{def:Tf-sc-retracts}
The \textbf{tangent map} of an\index{$Tf\colon TO\to TO^\prime$}
$\SC$-smooth retract map $f\colon O\to O^\prime$ is the restriction
\[
     Tf:=T(f\circ r)|_{TO}\colon Tr(TU)=TO\to TO^\prime,\quad
     (x,\xi)\mapsto \left(f(x),Df(x)\xi\right)
\]
of the tangent map $T(f\circ r)\colon TU\to TU^\prime$
for a decompression $r$ of $O=r(U)$.
\end{definition}

Some remarks are in order.
Firstly, by Lemma~\ref{le:gyu6768} the definition of $Tf$
does not depend on the $\SC$-smooth retraction $r\colon U\to U$
with image $O$.
Secondly, concerning component one $f\circ r(x)=f(x)$ since
$x\in O^1\subset O=\Fix\, r$.
Thirdly, concerning component two
\[
     D(f\circ r)|_x
     =Df|_{r(x)}\circ Dr(x)
     =Df(x)\colon T_xO
     \to T_{f(x)}O^\prime
\]
since $T_x O$ is the fixed point set of $Dr(x)$.

\begin{theorem}[Chain rule for $\SC$-smooth retract maps]
\label{thm:sc-retract-chain rule}
Let $f\colon O\to O^\prime$ and $g\colon O^\prime\to O^{\prime\prime}$
be $\SC$-smooth retract maps. Then the composition
$g\circ f\colon O\to O^{\prime\prime}$ is also a $\SC$-smooth 
retract map and the tangent maps satisfy
\[
     T(g\circ f)=Tg\circ Tf\colon TO\to TO^{\prime\prime}.
\]
\end{theorem}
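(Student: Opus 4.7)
The plan is to reduce the statement to the chain rule for ordinary $\SC$-smooth maps between open subsets of $\SC$-Banach spaces, Theorem~\ref{thm:chain-rule}, combined with Definition~\ref{def:Tf-sc-retracts} of the tangent of a retract map and the decompression-invariance in Lemma~\ref{le:gyu6768}. First I would fix three decompressions: $\SC$-smooth retractions $r\colon U\to U$ with $r(U)=O$, $s\colon V\to V$ with $s(V)=O^\prime$, and $t\colon W\to W$ with $t(W)=O^{\prime\prime}$. By hypothesis on $f$ and $g$, combined with Lemma~\ref{le:for-some-thus-every-123} (freedom to pick decompressions), the decompressed maps $f\circ r\colon U\to V$ and $g\circ s\colon V\to W$ are $\SC$-smooth in the ordinary sense between (relatively) open subsets of partial quadrants in $\SC$-Banach spaces; the images $O^\prime\subset V$ and $O^{\prime\prime}\subset W$ lie inside the chosen decompression domains, so no codomain issues arise.

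Next I would observe that, since $f\circ r$ takes values in $O^\prime=\Fix\, s$, inserting $s$ costs nothing:
\[
     (g\circ f)\circ r
     \,=\, g\circ f\circ r
     \,=\, g\circ s\circ f\circ r
     \,=\, (g\circ s)\circ (f\circ r).
\]
The right-hand side is a composition of two ordinary $\SC$-smooth maps between open sets of $\SC$-Banach spaces; Theorem~\ref{thm:chain-rule} therefore yields that $(g\circ s)\circ (f\circ r)\colon U\to W$ is $\SC$-smooth. Hence $(g\circ f)\circ r$ is $\SC$-smooth, which by Definition~\ref{def:sc-map-between-sc-retracts} means that $g\circ f\colon O\to O^{\prime\prime}$ is an $\SC$-smooth retract map.

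For the tangent identity I would apply the tangent form of Theorem~\ref{thm:chain-rule} to the same factorization, obtaining
\[
     T\bigl((g\circ s)\circ(f\circ r)\bigr)
     \,=\, T(g\circ s)\circ T(f\circ r)
\]
as maps $TU\to TW$, and then restrict both sides to $TO\subset TU$. The left-hand side equals $T\bigl((g\circ f)\circ r\bigr)|_{TO}=T(g\circ f)$ by Definition~\ref{def:Tf-sc-retracts}. On the right-hand side, Lemma~\ref{le:gyu6768} gives $T(f\circ r)|_{TO}=Tf$ and that this map takes values in $TO^\prime$; applying $T(g\circ s)$ next and invoking Definition~\ref{def:Tf-sc-retracts} once more yields $T(g\circ s)|_{TO^\prime}\circ Tf = Tg\circ Tf$. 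Assembling these equalities produces $T(g\circ f)=Tg\circ Tf$.

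The only genuine delicacy, and the place where I expect the main friction, is the bookkeeping of decompression domains in the very first step: in order to write $(g\circ s)\circ(f\circ r)$ honestly as a composition of $\SC$-smooth maps between open subsets of $\SC$-Banach spaces, one must know that the image of $f\circ r$ lies inside the domain $V$ of the chosen decompression $s$ of $O^\prime$. Since that image equals $O^\prime\subset V$, this holds automatically, but it is precisely the point where Lemma~\ref{le:for-some-thus-every-123} does the real work. Beyond that, the argument is pure assembly of the ordinary chain rule with the definitions of $\SC$-smooth retract maps and their tangent maps.
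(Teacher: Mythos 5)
Your proposal is correct and follows essentially the same route as the paper's own proof: insert the retraction $s$ of $O^\prime$ (the paper calls it $r^\prime$) via $f = s\circ f$ to factor $(g\circ f)\circ r = (g\circ s)\circ(f\circ r)$, apply the ordinary chain rule (Theorem~\ref{thm:chain-rule}), and restrict the resulting tangent-map identity to $TO$ using Definition~\ref{def:Tf-sc-retracts} and Lemma~\ref{le:gyu6768}. The only cosmetic differences are that you introduce an unused decompression $t$ of $O^{\prime\prime}$ and spell out the restriction step slightly more explicitly than the paper does.
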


\begin{proof}
Sc-smoothness of the retract maps $f$ and $g$ by definition
means $\SC$-smoothness of $f\circ r$ and $g\circ r^\prime$
where $r\colon U\to U$ and $r^\prime\colon U^\prime\to U^\prime$ are $\SC$-smooth
retractions with images $O$ and $O^\prime$, respectively. The inclusion
$\im f\subset O^\prime=Fix\, r^\prime$ provides the identity $f=r^\prime\circ f$.
Hence $(g \circ f)\circ r=(g\circ r^\prime)\circ(f\circ r)$ is a composition of two
$\SC$-smooth maps, so it is $\SC$-smooth itself by the chain
rule for $\SC$-smooth maps, Theorem~\ref{thm:chain-rule}.
By definition of $Tf$, the chain rule, and $f=r^\prime\circ f$ we get
$
     Tg \circ Tf
     =T(g\circ r^\prime)\circ T(f\circ r) |_{TO}
%     =T(g\circ r^\prime\circ f\circ r) |_{TO}
     =T(g\circ f\circ r) |_{TO}
     =:T(g\circ f)
$.
\end{proof}

In Section~\ref{sec:M-polyfolds}
the next exercise will be useful a) to show
that open subsets of M-polyfolds are M-polyfolds
and b) to construct sub-M-polyfold charts.

\begin{exercise}\label{exc:open-sets-in-sc-retracts}
Given an $\SC$-retract $(O,C,E)$, prove the following.
\begin{itemize}
\item[a)]
  Open subsets $O^\prime$ of the $\SC$-retract $O$ are $\SC$-retracts in $C$.
\item[b)]
  Suppose $V$ is an open subset of $O$ and $s=s\circ s\colon V\to V$
  is an idempotent $\SC$-smooth retract map.
  The image of such $s$ is an $\SC$-retract $(o,C,E)$.
\end{itemize}

\vspace{.1cm}\noindent
[Hints: Let $O=\im[r\colon U\to U]$. a) How about $U^\prime:=r^{-1}(O^\prime)$ and
$r^\prime:=r|_{U^\prime}$\,?
b) Let $o:=\im s=\Fix\, s$, then $o\subset s^{-1}(o)=V\subset O=\Fix\, r$.
How about $U^\prime:=r^{-1}(V)=r^{-1}(s^{-1}(o))\subset U$
and the $\SC$-smooth map $s\circ r\colon  U^\prime\to U^\prime$\,?]
\end{exercise}

%%% SUBSECTION %%%%%%%%%%%%%%%%%%%%%
%%%%%%%%%%%%%%%%%%%%%%%%%%%%%%%%%%
\subsection{Special case: Splicings and splicing cores }

Following~\citet[Def.\,2.18]{Hofer:2017a},
%~\citet[Def.\,5.6]{Fabert:2016a},
an\index{splicing!sc-smooth --}\index{sc-smooth!splicing}
\textbf{\boldmath$\SC$-smooth splicing on an $\SC$-Banach space $E$}
consists of the following data. A relatively open neighborhood $V$
of $0$ in a partial quadrant $[0,\infty)^\ell\times\R^{d-\ell}$ in
$\R^d$ and a family $\{\pi_v\}_{v\in V}$ of $\SC$-projections
$\pi_v=\pi_v\circ\pi_v\in\Llsc(E)$ such that the map
\[
     \pi\colon \R^d\oplus E\supset V\oplus E\to E,\quad
     (v,f)\mapsto \pi_v f
\]
is $\SC$-smooth. Note that in this case each projection
$\pi_v$ restricts to a continuous linear operator
$\pi_v|_{E_m}\in\Ll(E_m)$ on every level. But in the operator norm
these operators do not, in general, depend continuously on $v$.
The subset of $\R^d\oplus E$ composed of the images (fixed points) of each
projection, i.e.
\[
     K^{\pi}:=\bigcup_{v\in V}\{v\}\times\im\pi_v
     =\{(v,f)\in V\oplus E\mid \pi_vf=f\}
\]
is called the\index{splicing!core}
\textbf{splicing core} of the splicing.

\begin{exercise}[Induced sc-smooth retraction]
Given an $\SC$-smooth splicing $\{\pi_v\}_{v\in V\subset\R^d}$
on an $\SC$-Banach space $E$, consider the map given by
\[
     r_\pi\colon V\oplus E\to V\oplus E,\quad
     (v,f)\mapsto\left(v,\pi_v f\right).
\]
Show that the map $r_\pi$ defines an $\SC$-smooth retraction on the $\SC$-triple
$(V\oplus E, ([0,\infty)^\ell\times\R^{d-\ell})\oplus E,\R^d\oplus E)$
and that its image is the splicing core $K^\pi$.
\end{exercise}

As remarked in~\citet[previous to Def.\,5.6]{Fabert:2016a}, this setup of
splicing with finitely many ``gluing'' parameters covers the $\SC$-retractions
relevant for Morse theory and holomorphic curve moduli spaces.

%%% SUBSECTION %%%%%%%%%%%%%%%%%%%%%
%%%%%%%%%%%%%%%%%%%%%%%%%%%%%%%%%%
\subsection{Splicing core with jumping finite dimension}
\label{sec:splicing-example}

%Fix a bump function $\beta\in C_0^\infty(\R,[0,\infty))$ supported in
%$[-1,1]$ of
Fix a smooth  bump function $\beta\ge 0$ on $\R$ supported in $[-1,1]$ of
unit~$L^2$~norm. For $t>0$ consider the family
$\beta_t(s):=\beta(s+e^{1/t})$ of left translates of $\beta$ by
$e^{1/t}$ -- huge left translations for $t$ near $0$
and almost no translation for $t\sim\infty$.
Fix a strictly increasing sequence of reals $\delta_m$
starting at $\delta_0=0$ and let $E=L^2(\R)$ be the $\SC$-Hilbert
space whose levels are given by the weighted Sobolev spaces
$E_m:=W^{m,2}_{\delta_m}(\R)$ introduced in Exercise~\ref{exc:Wkp_R}.
Consider the family %of operators
\begin{equation*}
\begin{split}
     \{\pi_t\colon E\to E\}_{t\in\R},\qquad
     \pi_t f:=\begin{cases}
       0&\text{, $t\le 0$}\\
       \INNER{f}{\beta_t}\beta_t&\text{, $t>0$}
     \end{cases}
\end{split}
\end{equation*}
of linear operators on $E$. Note that the image of $\pi_t$ is $\{0\}$
whenever $t\le 0$, whereas for each $t>0$ the image of $\pi_t$ is
$\R \beta_t$, hence one dimensional.

\begin{exercise}
Check that each linear operator $\pi_t\colon E\to E$ is continuous
and a projection, that is $\pi_t\circ\pi_t=\pi_t$.
%
%\vspace{.1cm}\noindent
%[Hints: ]
\end{exercise}

\begin{proposition}\label{prop:spiked-half-plane}
The map $\pi\colon \R\oplus E\to E$, $(t,f)\mapsto \pi_t f$, is $\SC$-smooth.
\end{proposition}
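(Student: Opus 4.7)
The plan is to deduce $\SC$-smoothness from the classical sufficient criterion in Lemma~\ref{le:nec_suff_cond_for_sc_by_Ck}: it suffices to show that for every $k,m\in\N_0$ the diagonal map
\begin{equation*}
\pi\colon V_{m+k}\oplus E_{m+k}\to E_m,\qquad (t,f)\mapsto \pi_t f
\end{equation*}
is of class $C^{k+1}$ (in fact $C^\infty$). On the open half $t<0$ the map is identically zero and on $t>0$ it is classically smooth between any prescribed pair of weighted Sobolev levels, because $t\mapsto \beta_t$ is $C^\infty$ into each $W^{m,2}_{\delta_m}$ for $t>0$ and $\pi_t f=\inner{f}{\beta_t}\beta_t$ depends linearly on $f$. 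So the entire issue is matching at $t=0$.

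The key is the strict inequality $\delta_m<\delta_{m+k}$. First I would choose $t_0>0$ so small that $\supp\beta_t\subset(-\infty,-1]$ for $0<t\le t_0$; on that support $\gamma_\delta(s)=e^{-\delta s}$. A change of variable $u=s+e^{1/t}$ gives
\begin{equation*}
\Norm{\gamma_\delta^{\pm 1}\,\beta_t}_{L^2}\le C\,e^{\pm\delta\,e^{1/t}},
\end{equation*}
and since each $\p_t$ produces the factor $-e^{1/t}/t^{2}$ together with a translated derivative of $\beta$,
\begin{equation*}
\Norm{\gamma_{\delta}\,\p_t^{j}\beta_t}_{W^{m,2}}+\Norm{\gamma_{\delta}^{-1}\,\p_t^{j}\beta_t}_{L^2}
\le P_{j,m}\!\bigl(e^{1/t},\,1/t\bigr)\cdot e^{\pm\delta\,e^{1/t}}
\end{equation*}
for a universal polynomial $P_{j,m}$. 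Writing the pairing as $\inner{f}{\p_t^{j_1}\beta_t}=\inner{\gamma_{\delta_{m+k}} f}{\gamma_{\delta_{m+k}}^{-1}\p_t^{j_1}\beta_t}_{L^2}$ and applying Cauchy--Schwarz, then Leibniz'ing out $\p_t^{j}\bigl(\inner{f}{\beta_t}\beta_t\bigr)=\sum_{j_1+j_2=j}\binom{j}{j_1}\inner{f}{\p_t^{j_1}\beta_t}\p_t^{j_2}\beta_t$, gives for all $0<t\le t_0$, all $f\in E_{m+k}$ and all $0\le j\le k+1$ the estimate
\begin{equation*}
\Norm{\p_t^{j}\pi_t f}_{E_m}
\le C_{j,m,k}\,\Norm{f}_{E_{m+k}}\,Q_{j,m,k}\!\bigl(e^{1/t},1/t\bigr)\,
e^{-(\delta_{m+k}-\delta_m)\,e^{1/t}},
\end{equation*}
with $Q_{j,m,k}$ again polynomial. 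Because $\delta_{m+k}-\delta_m>0$, the double exponential $e^{-c\,e^{1/t}}$ dominates every polynomial in $e^{1/t}$ as $t\downarrow 0$, so each such derivative extends continuously by $0$ across $t=0$, matching the identically zero behaviour on $t\le 0$.

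Since $\pi_t f$ is linear in $f$, every mixed partial derivative of $\pi$ with respect to directions $(\tau_i,\phi_i)\in\R\oplus E$ is a finite sum of terms of the two forms $\tau_{i_1}\cdots\tau_{i_j}\,\p_t^{j}\pi_t f$ and $\tau_{i_1}\cdots\tau_{i_{j-1}}\,\p_t^{j-1}\pi_t \phi_{i_j}$; all higher derivatives in $f$ vanish identically. The estimate above therefore controls every partial derivative of order $\le k+1$ uniformly on $V_{m+k}\oplus E_{m+k}$, and continuity on the region $t>0$ is classical. This proves $\pi\in C^{k+1}(V_{m+k}\oplus E_{m+k},E_m)$ for every $k,m\in\N_0$, and Lemma~\ref{le:nec_suff_cond_for_sc_by_Ck} then yields $\pi\in\SC^{k+1}$ for every $k$, i.e.\ $\pi$ is $\SC$-smooth. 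The one subtlety to handle carefully is the bookkeeping of the $e^{1/t}/t^2$ factors generated by repeated $\p_t$; strict monotonicity of $(\delta_m)$ is precisely what makes the super-exponential decay beat them.
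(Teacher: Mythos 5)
Your analytic mechanism (the weight gain $\delta_{m+k}-\delta_m>0$ beating the $e^{1/t}/t^2$ factors) is exactly the right one, but the abstract criterion you feed it into cannot work, and your own key estimate already shows why. Lemma~\ref{le:nec_suff_cond_for_sc_by_Ck} (Sufficient) requires the diagonal maps of \emph{every} height $\ell\in\{0,\dots,k\}$ to be of class $C^{\ell+1}$; in particular, for height $\ell=0$ it demands that the level maps $\pi\colon\R\oplus E_m\to E_m$ be $C^1$. This is false: already at level $0$ (where $\delta_0=0$, $E_0=L^2$) the map $\pi$ is not Fr\'echet differentiable at $(0,0)$. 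Indeed, the only candidate derivative is $D=0$ (since $\pi$ vanishes on both coordinate axes), but along $(\tau,\phi_\tau)$ with $\phi_\tau:=\tau\beta_\tau$ one has $\Norm{\phi_\tau}_{L^2}=\tau$ and $\pi_\tau\phi_\tau=\tau\beta_\tau$, so
\begin{equation*}
\frac{\Norm{\pi_\tau\phi_\tau-0}_{L^2}}{\Abs{\tau}+\Norm{\phi_\tau}_{L^2}}=\frac{1}{2}\not\to 0 ,
\end{equation*}
and the same test (normalized in $E_m$) kills every level. This is consistent with your displayed bound: at $k=0$ the exponential gain $e^{-(\delta_{m+k}-\delta_m)e^{1/t}}$ equals $1$ and the polynomial $Q(e^{1/t},1/t)$ is unbounded as $t\downarrow 0$. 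The splicing (like the shift map) lives precisely in the gap between the necessary condition ($C^\ell$ at height $\ell$) and the sufficient condition ($C^{\ell+1}$ at height $\ell$) of Lemma~\ref{le:nec_suff_cond_for_sc_by_Ck}, so no amount of bookkeeping rescues this route.

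The repair is to argue directly from the definition, i.e.\ via Lemma~\ref{le:charact_sc-1_by_Df} and its higher analogue Lemma~\ref{le:charact_sc-k_by_Df}: only the diagonal maps of height $\ge 1$ need to be differentiable, their derivatives must \emph{extend} from $E_{m+1}$ to $E_m$, and the extensions $D\pi\colon U_{m+1}\oplus(\R\oplus E_m)\to E_m$ must be jointly continuous (compact-open, not norm, continuity on equal levels). Your estimates with $k\ge 1$ supply the differentiability and extension parts, but you still owe the equal-level continuity statements, e.g.\ that $(t,\phi)\mapsto\pi_t\phi$ is continuous $\R\oplus E_m\to E_m$, which at $t=0$ uses $\Norm{\gamma_{\delta_m}\phi_0\cdot\chi_{\supp\beta_t}}_{L^2}\to 0$ rather than any weight gain. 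Note that the text itself does not prove the proposition but defers to \citet[Le.\,1.23]{Hofer:2010b} and \citet[Prop.\,6.8]{Cieliebak:2018a}, whose proofs are organized exactly along these lines.
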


\begin{proof}
The result and details of the (hard) proof of $\SC$-smoothness are given
in \citet[Ex.\,1.22 and Le.\,1.23]{Hofer:2010b}; see
also~\citet[Prop.\,6.8]{Cieliebak:2018a}.
\end{proof}

To summarize, the family $\{\pi_t\}_{t\in\R}$ of projections defines
an $\SC$-smooth splicing on $E=L^2$. The corresponding splicing core
$K^\pi\subset \R\oplus E$ is represented in
Figure~\ref{fig:fig-sc-retract-schippe} as a subset of $\R^2$
homeomorphic to $K^\pi=\im r_\pi=r_\pi(\R\oplus E)$.
Although connected, there are parts of dimension one and two.

%%%%%%%%%%%%%%%%%%%%%%%%%%%%%%%%%%
%%%%%%%%%%%%%%%%%%%%%%%%%%%%%%%%%%
%%% SECTION %%%%%%%%%%%%%%%%%%%%%%%%
%%%%%%%%%%%%%%%%%%%%%%%%%%%%%%%%%%
%%%%%%%%%%%%%%%%%%%%%%%%%%%%%%%%%%
%\sectionmark{M-polyfolds}
\section{M-polyfolds and their tangent bundles}
\label{sec:M-polyfolds}
\sectionmark{M-polyfolds}

M-polyfolds are defined analogous to $\SC$-manifolds, just
use as local models instead of $\SC$-triples $(U,C,E)$
$\SC$-retracts $O=\im[r=r^2\colon U\to U]$ in $C\subset E$.

Recall two standard methods to define
manifolds.\index{manifolds!methods to define --}
\textbf{Method 1} starts with a topological space $X$,
then one defines a collection of homeomorphisms to open sets in
model Banach spaces, whose domains are open subsets of $X$ which
together cover $X$. The collection must be suitably compatible on overlaps.
\textbf{Method 2} starts with \emph{only a set} $S$, then one defines a
collection of bijections between subsets of $S$ onto open subsets
of local model Banach spaces, again the domains together must cover
$S$. Now one uses the bijections to define a topology on the set $S$,
essentially by declaring pre-images of open sets in model space to be
open sets in $S$.

In practice one often employs Method 1 to define a manifold $X$.
Then one employs Method 2 in order to define the tangent bundle $TX$.
Namely, \emph{as a  set} called $TX$ of equivalence classes
whose definition utilizes the manifold charts of $X$ and their tangent
maps. The latter are used to define the required bijections that endow
the set $TX$ of equivalence classes with a topology.

%%% SUBSUBSECTION %%%%%%%%%%%%%%%%%%%
%%%%%%%%%%%%%%%%%%%%%%%%%%%%%%%%%%
\subsubsection*{M-polyfolds and maps between them}

\begin{figure}%[h]
  \centering
  \includegraphics%[width=0.9\textwidth]
                             [height=4.5cm]
                             {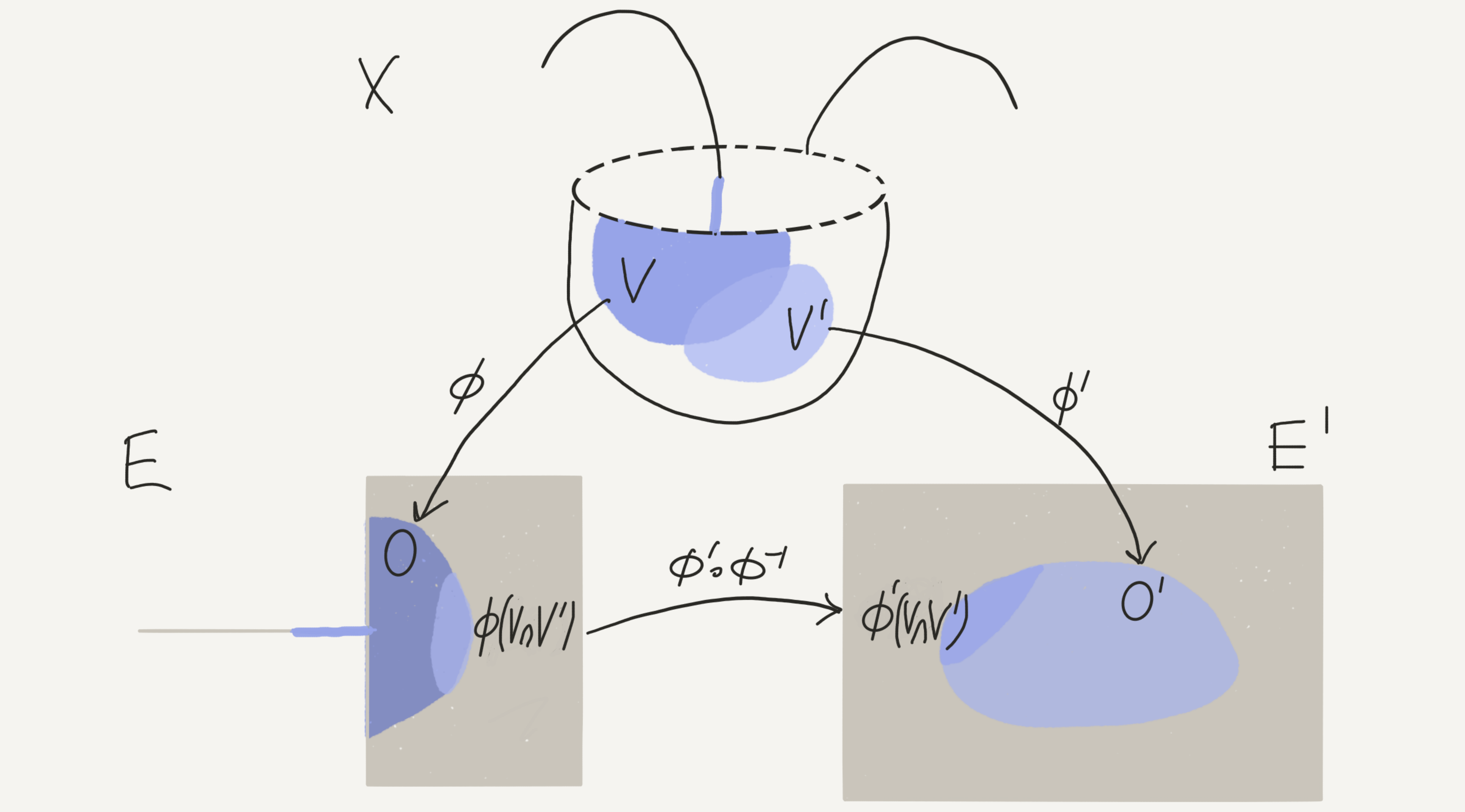}
  \caption{Transition map between M-polyfold charts of M-polyfold $X$}
  \label{fig:fig-M-polyfold-transition}
\end{figure}

\begin{definition}\label{def:M-polyfold-charts}
Let $X$ be a topological space.
An\index{M-polyfold!charts}
\textbf{M-polyfold chart} $(V,\phi,(O,C,E))$, often abbreviated
$(V,\phi,O)$, consists of
\begin{itemize}
\item%[--]
  an open set $V$ in $X$;
\item%[--]
  an $\SC$-retract $O=r(U)=\Fix\, r$ in a partial quadrant $C\subset E$;
\item%[--]
  a homeomorphism $\phi\colon V\to O$
  (open sets in $\SC$-retracts are $\SC$-retracts).
\end{itemize}
Two M-polyfold charts\index{M-polyfold charts!compatible --}
are \textbf{\boldmath$\SC$-smoothly compatible}
if\index{M-polyfold charts!transition map}
the \textbf{transition map}
\[
     \psi:=\phi^\prime\circ\phi^{-1}\colon O\supset\phi(V\CAP V^\prime)
     \to\phi^\prime(V\CAP V^\prime)\subset O^\prime
\]
and its inverse are both $\SC$-smooth retract maps\footnote{
  Indeed open subsets of $\SC$-retracts are $\SC$-retracts
  by Exercise~\ref{exc:open-sets-in-sc-retracts}.
  }
(i.e. $\SC$-smooth after domain decompression).
An\index{M-polyfold!atlas}
\textbf{M-polyfold atlas for \boldmath$X$} is a collection
$\Aa$ of pairwise $\SC$-smoothly compatible
M-polyfold charts $\phi\colon V\to O$ whose domains cover~$X$.
Two atlases are called \textbf{equivalent}
if\index{M-polyfold!atlases!equivalent}
their union is again an M-polyfold atlas.
\end{definition}

\begin{definition}\label{def:M-polyfold}
An \textbf{\Index{M-polyfold}} is a paracompact
Hausdorff space $X$ endowed with an equivalence class
of M-polyfold atlases.
\end{definition}

\begin{definition}\label{def:M-polyfold-map}
A map $f\colon X\to X^\prime$ between M-polyfolds is
called\index{M-polyfold map!sc-smooth --}
an \textbf{\boldmath$\SC$-smooth M-polyfold map}
if every local M-polyfold chart representative
\[
     \phi^\prime\circ f\circ \phi^{-1}\colon O\supset\phi(V\CAP f^{-1}V^\prime)
     \to\phi^\prime(V^\prime)\subset O^\prime
\]
of $f$ is an $\SC$-smooth retract map.
An \textbf{\boldmath$\SC$-smooth diffeomorphism}
between M-polyfolds\index{M-polyfold!diffeomorphism}
\begin{figure}%[h]
  \centering
  \includegraphics%[width=0.9\textwidth]
                             [height=4.5cm]
                             {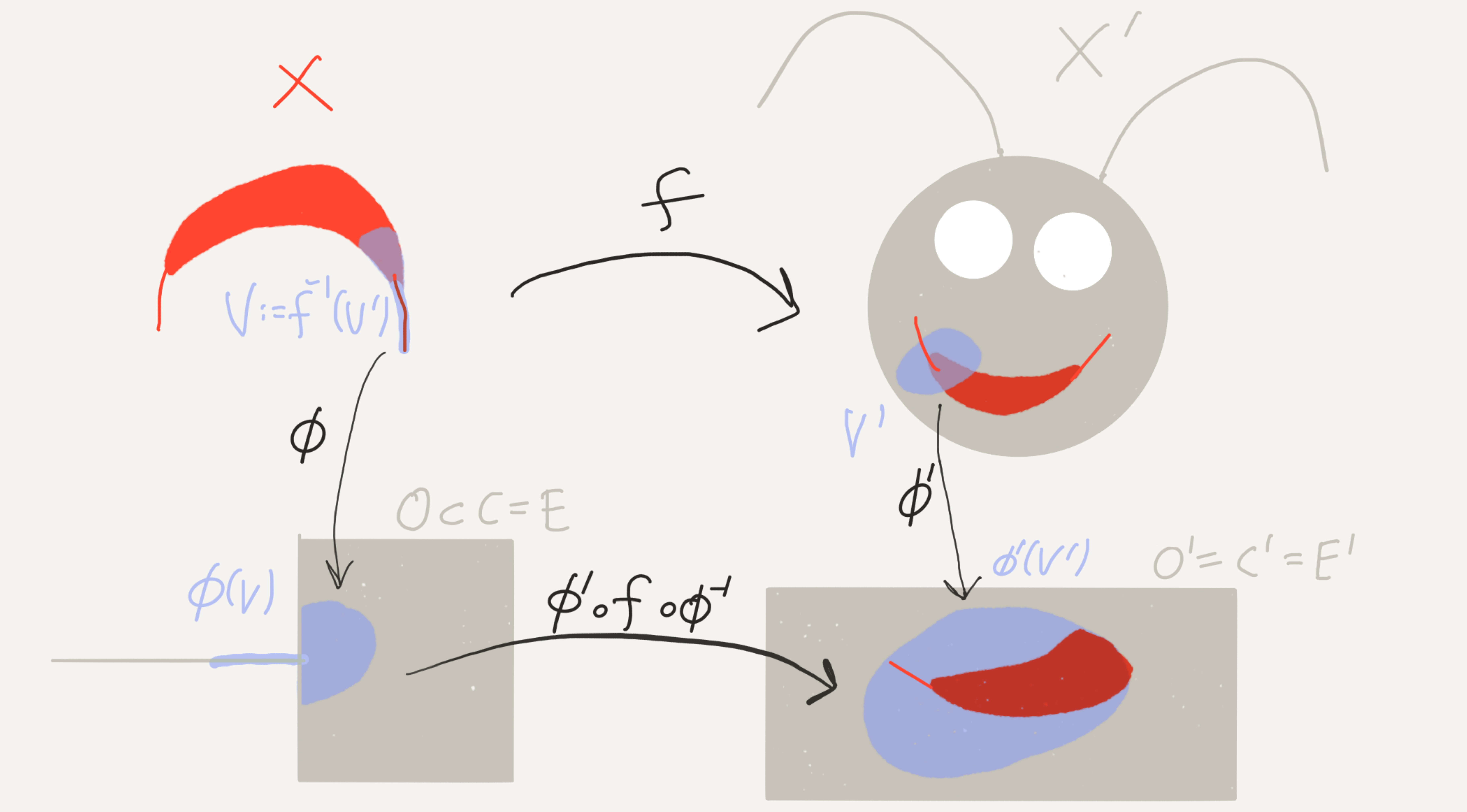}
  \caption{Freedom of speech among M-polyfolds --  local representative ;-)}
  \index{freedom of speech}\index{M-polyfold map!``freedom of speech''}
  \label{fig:fig-M-polyfold-map-representative}
\end{figure}
is a bijective $\SC$-smooth map between M-polyfolds
whose inverse is also $\SC$-smooth.
\end{definition}

\begin{exercise}\label{exc:open-subsets-M-pfs}
a) Sc-manifolds are M-polyfolds.
\\
b) Open subsets of M-polyfolds are M-polyfolds.
\\
c) Check that $X$ and $X^\prime$ in
Figure~\ref{fig:fig-M-polyfold-map-representative}
are M-polyfolds.
\\
d) Use the $\SC$-retract $O=\im r_\pi$ in
Figure~\ref{fig:fig-sc-retract-schippe} to built
further fun M-polyfolds.
\\
e) It
is\index{open problem@{{\color{red}open problem}}}
an {\color{red} open problem},
\citet[Quest.\,4.1]{Hofer:2017a},
whether there is an $\SC$-smooth retract
$(O,E,E)$ so that $O$ is homeomorphic to the letter
\textsf{T} in $\R^2$.
\end{exercise}

\begin{definition}\label{def:M-polyfold-levels}
One defines \textbf{level \boldmath$m$} of an M-polyfold $X$
to be the set $X_m$\index{M-polyfold!levels}
that consists of all points $x\in X$ which are mapped
to level $m$ in some, hence any,\footnote{
  transition maps are $\SC$-smooth, thus level preserving
  }
M-polyfold chart. 
\end{definition}

Thus for an M-polyfold $X$ there is the nested sequence of levels
\[
     X=X_0\supset X_1\supset\dots\supset X_\infty:=\bigcap_{m\ge 0} X_m.
\]
Each level $X_m$ inherits the structure of an M-polyfold,
notation $X^m$, see~\citet[p.\,21]{Hofer:2017a}
for charts, and each inclusion $X_{m+1}\INTO X_m$ is
continuous (as a map between topological spaces),
see~\citet[Le.\,2.1]{Hofer:2017a}.

%%% SUBSUBSECTION %%%%%%%%%%%%%%%%%%%
%%%%%%%%%%%%%%%%%%%%%%%%%%%%%%%%%%
\subsubsection*{Construction of the M-polyfold tangent bundle}

%%%%%%%%%%%%%%%%%%%
%\smallskip\noindent
\textsc{The base M-polyfold.}
Let $X$ be an M-polyfold, in particular, a paracompact Hausdorff space,
with M-polyfold atlas $\Aa=\{(V_i,\phi_i,(O_i,C_i,E_i))\}_{i\in I}$.

%%%%%%%%%%%%%%%%%%%
\smallskip\noindent
\textsc{The tangent bundle as a set.}
By definition $TX$ is the set of equivalence classes of tuples
$(x,V,\phi,(O,C,E),\xi)$, abbreviated $(x,\phi,\xi)$, that consist of
\begin{itemize}
\item%[--]
  a point $x\in X_1$ on level 1;
\item%[--]
  an M-polyfold chart $\phi\colon X\supset V\to O=r(U)\subset C\subset E$
  for $X$ about $x$;
\item%[--]
  a tangent vector $\xi\in T_{\phi(x)}O=\Fix\,[Dr|_{\phi(x)}\in\Ll(E)]$,
  see~(\ref{eq:T_xO}).
\end{itemize}
Two tuples $(x,V,\phi,O,\xi)\sim
(x^\prime,V^\prime,\phi^\prime,O^\prime,\xi^\prime)$
are said \textbf{equivalent} if
\[
     x=x^\prime,\qquad
     Dr(\phi^\prime\circ \phi^{-1})|_{\phi(x)}\xi=\xi^\prime.
\]

%%%%%%%%%%%%%%%%%%%
\smallskip\noindent
\textsc{The natural projection.}
There is a \textbf{natural projection}
\begin{equation}\label{eq:p:TX->X^1}
     p\colon TX\to X^1,\quad
     [x,\phi,\xi]\mapsto x.
\end{equation}
The pre-image of any point $x\in X^1$, denoted by
\[
     T_xX:=p^{-1}(x)
\]
and called the \textbf{tangent space of
\boldmath$X$ at $x$},\index{$T_xX$ M-polyfold tangent space}
is a linear space over the reals:
\[
     \lambda[x,\phi,\xi]+\mu[x,\phi,\eta]:=[x,\phi,\lambda\xi+\mu\eta]
     ,\qquad\lambda,\mu\in\R.
\]
To represent the two input equivalence classes choose the same
M-polyfold chart $\phi$ about $x$ for both of them (choose any two
representatives and restrict to the intersection of their
domains). This way $\xi$ and $\eta$ are both in the same vector space,
here $T_{\phi(x)}O$, and so adding them makes sense.

%%%%%%%%%%%%%%%%%%%
\smallskip\noindent
\textsc{The induced bijections.}
For every M-polyfold chart $\phi\colon V\to O$, say where $O=\im r=\Fix\, r$
for some $\SC$-smooth retraction $r$ on an $\SC$-triple $(U,C,E)$,
the map named and defined by
\begin{equation}\label{eq:T-phi}
    T\phi\colon TV:=p^{-1}(V\CAP X_1)\to TO=\Fix\, Tr,\quad
     [x,\phi,\xi]\mapsto \left(\phi(x),\xi\right)
\end{equation}
is a bijection.
For a given level 1 point $x\in V_1:=V\CAP X_1$
the map
\[
     T_x\phi:=T\phi(x,\cdot)\colon T_x X=T_x V=p^{-1}(x)\to T_{\phi(x)}O,\quad
     [x,\phi,\xi]\mapsto\xi
\]
is a bijection (the identity) on the Banach subspace
$T_{\phi(x)}O=\Fix\, Dr|_{\phi(x)}$ of~$E$; cf.~(\ref{eq:T_xO}).
So $T_x X$ inherits the Banach space structure of $T_{\phi(x)}O$.
At smooth points $T_{\phi(x)}O$ is an $\SC$-subspace of $E$ by
Exercise~\ref{exc:jhjghj8}, so the linear bijection
$T_x\phi$ endows $T_x X$ with the structure of an $\SC$-Banach space.

%%%%%%%%%%%%%%%%%%%
\smallskip\noindent
\textsc{The induced topology.}
Consider the collection $\Bb$ that consists of all
subsets of $TX$ that are pre-images under $T\phi$
of all open subsets in the target space $TO$,
for all M-polyfold charts $\phi\colon V\to O$ of $X$, in symbols
\[
     \Bb:=\left\{
     (T\phi)^{-1}W\mid
     \text{$(V,\phi,O)\in\Aa$, $W\subset TO$ open}
     \right\}\subset 2^{TX}.
\]

\begin{exercise}
Show that $\Bb$ is a basis for a topology; cf.
Theorem~\ref{thm:basis-some-topology}.
\end{exercise}

By definition the \textbf{topology on \boldmath$TX$}
is the topology generated by the basis~$\Bb$: The open sets in $TX$
are arbitrary unions of members of $\Bb$.\index{topology!on $TX$}

\begin{proposition}
The topology on $TX$ is Hausdorff and paracompact.
\end{proposition}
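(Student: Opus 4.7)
The plan is to handle Hausdorffness and paracompactness separately, reducing both to properties of the local model $TO$ via the bijections $T\phi$ from~(\ref{eq:T-phi}).

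For Hausdorffness, pick two distinct points $\tau_1,\tau_2\in TX$ and split into two cases based on the natural projection $p\colon TX\to X^1$ of~(\ref{eq:p:TX->X^1}). If $p(\tau_1)\neq p(\tau_2)$, I use Hausdorffness of the M-polyfold $X$ to pick disjoint M-polyfold chart domains $V_1,V_2\subset X$ about the two base points; the basis elements $TV_i:=p^{-1}(V_i\cap X_1)=(T\phi_i)^{-1}(TO_i)$ are then disjoint open sets in $TX$ separating $\tau_1,\tau_2$. If instead $p(\tau_1)=p(\tau_2)=x$, I restrict to a common M-polyfold chart $(V,\phi,(O,C,E))$ about $x$ so that $T\phi$ sends $\tau_1,\tau_2$ to two distinct points $(\phi(x),\xi_1),(\phi(x),\xi_2)\in TO\subset O^1\oplus E^0$. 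Since $E^1\oplus E^0$ is a normed, hence Hausdorff, space, there are disjoint open $W_1,W_2\subset TO$ around these points; their pre-images $(T\phi)^{-1}(W_i)$ are disjoint basis elements in $TX$ separating $\tau_1$ and $\tau_2$.

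For paracompactness, the first (almost immediate) step is to verify that the projection $p\colon TX\to X^1$ is continuous from the definition of the basis $\Bb$: given any open $\Omega\subset X^1$ and any $\tau\in p^{-1}(\Omega)$, one picks a chart $\phi\colon V\to O$ about $p(\tau)$ with $V\cap X_1\subset\Omega$ and observes that $TV=(T\phi)^{-1}(TO)$ is a basis neighborhood of $\tau$ contained in $p^{-1}(\Omega)$. Given now an arbitrary open cover $\Uu$ of $TX$, I would (i) refine $\Uu$ to a cover by basis sets of the form $(T\phi_\alpha)^{-1}(W_\alpha)$ with $W_\alpha\subset TO_\alpha$ open; (ii) invoke paracompactness of the Hausdorff space $X^1$ to obtain a locally finite refinement $\{V_\beta\cap X_1\}$ of $\{V_\alpha\cap X_1\}$, which by continuity of $p$ pulls back to a locally finite open cover $\{TV_\beta\}$ of $TX$; (iii) within each chart piece $TV_\beta$, which is homeomorphic via $T\phi_\beta$ to the metrizable (hence paracompact) space $TO_\beta\subset E_\beta^1\oplus E_\beta^0$, refine $\Uu|_{TV_\beta}$ to a locally finite family of open subsets of $TV_\beta$; and (iv) take the union of these per-chart refinements as the final refinement of $\Uu$.

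The principal technical obstacle is the amalgamation step (iv): a given $\tau\in TX$ may lie in several $TV_\beta$, each contributing its own locally finite family, and I must ensure that a sufficiently small neighborhood of $\tau$ meets only finitely many members of the union. The resolution is exactly the transfer of local finiteness across $p$ from (ii): pick a neighborhood $N\subset X^1$ of $p(\tau)$ intersecting only finitely many $V_\beta\cap X_1$; then $p^{-1}(N)$ is a neighborhood of $\tau$ meeting only finitely many $TV_\beta$, and within each of these finitely many $TV_\beta$ we can further shrink to a neighborhood (using paracompactness/local finiteness within the fiber-chart) meeting only finitely many members of the per-chart refinement. The intersection of these finitely many local-finiteness neighborhoods witnesses the local finiteness of the amalgamated refinement, completing the argument.
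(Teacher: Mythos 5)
The paper offers no proof of its own here---it only cites \citet[\S\,2.6.3]{Hofer:2017a}---so your argument must stand on its own. Your Hausdorff argument and your overall paracompactness strategy (transport local finiteness from the paracompact base $X^1$ through $p$, use metrizability of $TO\subset E^1\oplus E^0$ inside each chart, then amalgamate) are sound, but step (iv) has a genuine gap. The family $\Ww_\beta$ you build in step (iii) is locally finite only \emph{as a family in the subspace} $TV_\beta$: its members may accumulate at a point $\tau\in\overline{TV_\beta}\setminus TV_\beta$ (compare the cover $\{(\tfrac{1}{n+2},\tfrac{1}{n})\}_{n\ge 1}$ of $(0,1)$, locally finite in $(0,1)$ but accumulating at $0$). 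Your resolution only treats those $\beta$ with $\tau\in TV_\beta$; for an index with $TV_\beta\cap p^{-1}(N)\neq\emptyset$ but $\tau\notin TV_\beta$ there need be no neighborhood of $\tau$ meeting only finitely many members of $\Ww_\beta$, so the amalgamated family can fail to be locally finite. The repair is standard but must be said: $X^1$ is paracompact Hausdorff, hence normal, so the locally finite open cover $\{V_\beta\}$ admits a shrinking $\{V'_\beta\}$ with $\overline{V'_\beta}\subset V_\beta$; replace every $W\in\Ww_\beta$ by $W\cap p^{-1}(V'_\beta)$. The shrunken families still cover $TX$ since $\{V'_\beta\}$ still covers $X_1$, each of their members lies in $p^{-1}(\overline{V'_\beta})$, a closed set contained in $TV_\beta$ by continuity of $p$, and therefore a point $\tau\notin TV_\beta$ has the open neighborhood $TX\setminus p^{-1}(\overline{V'_\beta})$ missing every member coming from $\beta$. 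With this modification your finite-intersection argument closes.

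A smaller point: your justification of the continuity of $p$ assumes that every open $\Omega\subset X^1$ contains, around each of its points, a trace $V\cap X_1$ of an open set $V$ of $X$. That is false in general, because the level-one topology is strictly finer than the subspace topology inherited from $X$. Argue instead chartwise: $T\phi$ identifies $p^{-1}(\Omega\cap V_1)$ with the preimage of the open set $\phi(\Omega\cap V_1)\subset O^1$ under the local projection $TO\to O^1$, and this preimage is open in $TO$ because that projection is $\SC^0$, hence continuous on level zero; thus $p^{-1}(\Omega)$ is a union of basis elements of $\Bb$. (For the same reason, in the Hausdorff argument it is cleanest to separate the two base points by disjoint open subsets of $X$ and pass to their $p$-preimages, rather than insisting that the separating sets themselves be chart domains.)
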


\begin{proof}
\citet[\S\,2.6.3]{Hofer:2017a}
\end{proof}

\begin{exercise}
The map $p\colon TX\to X^1$ in~(\ref{eq:p:TX->X^1}) is continuous and open.
\end{exercise}

%%%%%%%%%%%%%%%%%%%
\smallskip\noindent
\textsc{The M-polyfold charts.}
For any M-polyfold chart $\phi\colon V\to O$ of $X$, where $O=r(U)$ say,
the bijection $T\phi\colon TV\to TO$ defined by~(\ref{eq:T-phi})
is an M-polyfold chart for $TX$:
\begin{itemize}
\item%[--]
  $TV\subset TX$ is open by definition of $\Bb$;
\item%[--]
  $TO=Tr(TU)$ is an $\SC$-retract by Definition~\ref{def:TO};
\item%[--]
  $T\phi\colon TV\to TO$ is a homeomorphism by definition of $\Bb$.
\end{itemize}
Furthermore, if $\phi,\phi^\prime\in\Aa$ are compatible for $X$,
then $T\phi,T\phi^\prime$ are compatible for $TX$:
We need to show that the map
$(T\phi^\prime)\circ (T\phi)^{-1}\colon TO\to TO^\prime$ given by
\begin{equation*}
\begin{split}
     (\phi(x),\xi)
   &\mapsto[x,\phi,\xi]
     =\left[x,\phi^\prime,D(\phi^\prime\circ\phi^{-1})_{\phi(x)}\,\xi\right]\\
   &\mapsto
     \left(\phi^\prime(x),D(\phi^\prime\circ\phi^{-1})_{\phi(x)}\,\xi\right)
     =T(\phi^\prime\circ\phi^{-1})\,(x,\xi)
\end{split}
\end{equation*}
is an $\SC$-smooth retract map. But
$\phi^\prime\circ\phi^{-1}$ is an $\SC$-smooth retract map by the
chain rule, Theorem~\ref{thm:sc-retract-chain rule}, and so is the
tangent map. 
This\index{$T\Aa$ M-polyfold atlas for $TX$}
shows that an M-polyfold atlas $\Aa$ for $X$ induces an
\textbf{M-polyfold atlas for \boldmath$TX$}, namely
\[
     T\Aa:=\left\{\left(TV,T\phi,(TO,TC,TE)\right)\mid
     \left(V,\phi,(O,C,E)\right)\in\Aa\right\}.
\]

Let us then summarize the previous constructions and findings in form of

\begin{theorem}
Let $X$ be an M-polyfold. Then $TX$ is an M-polyfold and
\[
     p\colon TX\to X^1
\]
is an $\SC$-smooth
map between M-polyfolds.
\end{theorem}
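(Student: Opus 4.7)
The plan is to collect the pieces already in place and verify the two assertions in turn. For the first assertion, I would show that the collection $T\Aa$ defined in the excerpt is a genuine M-polyfold atlas for $TX$. Each candidate chart $T\phi\colon TV\to TO$ is a bijection by construction; its domain $TV=p^{-1}(V\cap X_1)$ is open in $TX$ by the very definition of the basis $\Bb$; and the image $TO=Tr(TU)=\Fix\,Tr$ is an $\SC$-retract in the tangent $\SC$-triple $(TU,TC,TE)$ by Lemma~\ref{le:Tr} and Definition~\ref{def:TO}, independent of the choice of decompression $r$. Continuity of $T\phi$ and of its inverse follows because $\Bb$ was arranged precisely so that pre-images under $T\phi$ of open sets in $TO$ are open in $TX$. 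The domains $\{TV\}$ cover $TX$ since each equivalence class $[x,\phi,\xi]$ has $x\in V$ for some chart $\phi\in\Aa$.

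Next I would verify $\SC$-smooth compatibility. Given two compatible M-polyfold charts $\phi,\phi'$ of $X$, the transition map for $T\Aa$ is
\[
(T\phi')\circ(T\phi)^{-1}\;=\;T(\phi'\circ\phi^{-1})\bigr|_{TO\cap\text{dom}}\colon TO\supset \cdot\;\to\; TO'\;,
\]
as computed in the excerpt. Since $\phi'\circ\phi^{-1}$ is an $\SC$-smooth retract map by hypothesis, its tangent is an $\SC$-smooth retract map by Lemma~\ref{le:gyu6768}(c), and applying the same argument to $\phi\circ(\phi')^{-1}$ gives the inverse direction. Paracompactness and the Hausdorff property of the topology generated by $\Bb$ are the cited input from \citet[\S 2.6.3]{Hofer:2017a}; together with $T\Aa$ this completes the proof that $TX$ is an M-polyfold.

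For the second assertion, I would exhibit a local chart representative of $p$ and invoke Definition~\ref{def:M-polyfold-map}. Choose a chart $(V,\phi,(O,C,E))$ of $X$ with $O=r(U)$, the induced chart $(TV,T\phi,(TO,TC,TE))$ of $TX$, and the shifted chart $(V^1,\phi,(O^1,C^1,E^1))$ of $X^1$. In these coordinates $p$ becomes
\[
\phi\circ p\circ (T\phi)^{-1}\colon TO\to O^1,\qquad (y,\eta)\mapsto y,
\]
which is exactly the tangent bundle of the $\SC$-retract $O$ from the previous section. Its decompression $TU=U^1\oplus E^0\to U^1$, $(x,\xi)\mapsto r(x)$, is $\SC$-smooth because $r\colon U^1\to U^1$ is $\SC$-smooth by Lemma~\ref{le:lifting-indices} and the projection $TU\to U^1$ in the first component is $\SC$-smooth. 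Hence $\phi\circ p\circ(T\phi)^{-1}$ is an $\SC$-smooth retract map, proving $p\colon TX\to X^1$ is $\SC$-smooth between M-polyfolds.

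The main obstacle is not conceptual but bureaucratic: essentially all of the nontrivial analytic content — the chain rule for retract maps (Theorem~\ref{thm:sc-retract-chain rule}), well-definedness of $TO$ independent of decompression (Lemma~\ref{le:TO}), and Hausdorffness/paracompactness of the topology on $TX$ — has already been absorbed into earlier results. What remains is to unwind definitions carefully and confirm that the induced atlas $T\Aa$ actually satisfies each bullet of Definition~\ref{def:M-polyfold-charts}; the delicate point to watch is keeping the level-shift $X\mapsto X^1$ straight when identifying the codomain of $p$ with the base of each chart.
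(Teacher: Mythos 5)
Your proposal is correct and follows essentially the same route as the paper: the paper's "proof" is precisely the preceding construction (the set $TX$, the bijections $T\phi$, the basis $\Bb$, the cited Hausdorff/paracompactness, compatibility of $T\phi,T\phi'$ via the chain rule for retract maps, and the $\SC$-smoothness of $p$ via the decompression $(x,\xi)\mapsto r(x)$ already established in the lemma on the tangent bundle of an $\SC$-retract). You have correctly identified all the inputs, including the level shift to $X^1$ in the codomain of $p$.
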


%%% SUBSUBSECTION %%%%%%%%%%%%%%%%%%%
%%%%%%%%%%%%%%%%%%%%%%%%%%%%%%%%%%
\subsubsection*{M-polyfold tangent maps}

\begin{definition}
The \textbf{tangent map} of an $\SC$-smooth M-polyfold map
$f\colon X\to Y$\index{tangent map!of M-polyfold map}
is the $\SC$-smooth M-polyfold map defined by
\[
     Tf\colon TX\to TY,\quad
     [x,\phi,\xi]\mapsto
     \left[f(x),\psi,D(\psi\circ\phi^{-1})|_{\phi(x)}\,\xi\right]
\]
where $\psi$ is any M-polyfold chart about $f(x)$.
\end{definition}

\begin{exercise}
Show that $Tf\colon TX\to TY$ is $\SC$-smooth as a map
between M-polyfolds.
Show that for $x\in X_1$ the map
\[
     T_xf\colon T_x X\to T_{f(x)}Y,\quad
     v:=[x,\phi,\xi]\mapsto
     \left[f(x),\psi,D(\psi\circ\phi^{-1})|_{\phi(x)}\,\xi\right]
\]
is a continuous linear operator and 
$T_xf$ is an $\SC$-operator whenever $x\in X_\infty$.
\end{exercise}

%%% SUBSECTION %%%%%%%%%%%%%%%%%%%%%%
%%%%%%%%%%%%%%%%%%%%%%%%%%%%%%%%%%
\subsection{Sub-M-polyfolds}

An M-polyfold $X$ is locally modeled on the images $O$
of $\SC$-smooth retractions $r=r^2\colon E\supset U\to U$
in an $\SC$-Banach space $E$.
Thus it is natural to define a \emph{sub-M-polyfold}
of $X$ as a subset $A\subset X$
that is locally the image of an $\SC$-smooth retraction $r=r^2\colon V\to V$
acting on an open subset $V$ of $X$.

\begin{definition}
A subset $A\subset X$ of an M-polyfold\index{M-polyfold!sub- --}
is called a \textbf{\Index{sub-M-polyfold}}
if around any point $a\in A$ there is an open neighborhood
$V\subset X$ and an $\SC$-smooth retraction
$r=r^2\colon V\to V$ such that $A\CAP V=r(V)=\Fix\, r$.
Such $r$ is called a \textbf{local generator}
for\index{local generator!of sub-M-polyfold}
the\index{M-polyfold!sub --!local generator}
sub-M-polyfold $A$.
\end{definition}

\begin{proposition}\label{prop:sub-M-pf}
Suppose $A\subset X$ is a sub-M-polyfold.
\begin{itemize}
\item[\rm (i)]
  A sub-M-polyfold $A$ inherits an M-polyfold structure from the
  ambient $X$.
\item[\rm (ii)]
  The inclusion $\iota\colon A\INTO X$ is an $\SC$-smooth map between
  M-polyfolds and a homeomorphism onto its image.
\item[\rm (iii)]
  A local generator $r$ for $A$, viewed as a map
  $r\colon V\to A$, is $\SC$-smooth and
  $
     T_ar(T_aX)=T_a A
  $
  at any point
  $a\in A\CAP V$.
\item[\rm (iv)]
  At points $a\in A_\infty$ the
  tangent space $T_aA$ is $\SC$-complemented~in~$T_aX$.
\end{itemize}
\end{proposition}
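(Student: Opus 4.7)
The plan is to transport everything to the local model through an M-polyfold chart $(V,\phi,(O,C,E))$ about $a\in A$, where we may shrink $V$ so that a local generator $r=r^2\colon V\to V$ with $r(V)=A\cap V$ is defined on all of $V$. The whole proposition then reduces to statements about the sc-smooth retraction $\tilde r:=\phi\circ r\circ\phi^{-1}\colon O\to O$ and its image $\tilde O:=\phi(A\cap V)=\Fix\,\tilde r$, pulled back through $\phi$.

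For (i), I would apply Exercise~\ref{exc:open-sets-in-sc-retracts}(b) to the idempotent sc-smooth retract map $\tilde r\colon O\to O$ to conclude that $\tilde O$ is an sc-retract in $(C,E)$; concretely, if $O=s(U)$ is a decompression, then $\tilde r\circ s\colon U\to U$ is an sc-smooth retraction whose image is $\tilde O$ (idempotency uses $s|_O=\id_O$ and $\tilde r(O)\subset O$). Then $\phi|_{A\cap V}\colon A\cap V\to\tilde O$ is the candidate M-polyfold chart for $A$. Compatibility of two such charts follows because their transition map is the restriction of the ambient transition map $\phi'\circ\phi^{-1}$ to $\tilde O$, composed with the sc-smooth retract map induced by the other generator; the chain rule, Theorem~\ref{thm:sc-retract-chain rule}, then delivers sc-smoothness in both directions. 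Paracompactness and Hausdorffness of $A$ are inherited from $X$.

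For (ii), in the local coordinates just constructed the inclusion $\iota$ corresponds to the set-theoretic inclusion $\tilde O\hookrightarrow O$, which is an sc-smooth retract map because its decompression $\tilde r\circ s\colon U\to O$ is sc-smooth by construction; that $\iota$ is a homeomorphism onto its image is immediate since $\phi$ is a homeomorphism. For (iii), the local representative of $r\colon V\to A$ is $\tilde r\colon O\to\tilde O$, whose decompression $\tilde r\circ s$ is sc-smooth; moreover at $a\in A\cap V$ we have $r(a)=a$ and $r\circ r=r$, so $Dr_a\circ Dr_a=Dr_a$ and $Dr_a|_{T_aA}=\id$ because $r$ restricts to the identity on $A\cap V$. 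Hence $Dr_a$ is a projection on $T_aX$ with image exactly $\Fix\,Dr_a=T_aA$ (the inclusion $T_aA\supset\im Dr_a$ coming from $r$ landing in $A$).

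For (iv), at a smooth point $a\in A_\infty$, Corollary~\ref{cor:sc-deriv_is_sc-op_smooth-pts} ensures that the linearization of the decompressed retraction is an sc-operator at $\phi(a)$, and Exercise~\ref{exc:jhjghj8} tells us $T_{\phi(a)}O$ is an sc-subspace of $E$; restricting the sc-operator to this sc-subspace yields an sc-projection $Dr_a$ on $T_aX$. Exercise~\ref{exc:sc-projections-split} then provides the sc-splitting $T_aX=\ker Dr_a\oplus T_aA$. The main obstacle I anticipate is the bookkeeping in (i): one must carefully verify that the image $\tilde O$ really is an sc-retract in the ambient partial quadrant $C$ (not only in $E$), and that the transition maps between two overlapping sub-M-polyfold charts — each of which is a restriction of ambient sc-smooth retract maps to smaller sc-retracts — are themselves sc-smooth retract maps in the precise sense of Definition~\ref{def:sc-map-between-sc-retracts}; this is where the decompression-of-a-decompression argument has to be done with care, and where Exercise~\ref{exc:open-sets-in-sc-retracts}(b) is used nontrivially.
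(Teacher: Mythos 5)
Your argument cannot really be measured against the paper's own proof, because the paper gives none: Proposition~\ref{prop:sub-M-pf} is disposed of by a bare citation to Prop.\,2.6 of Hofer--Wysocki--Zehnder. What you have produced is a correct, self-contained derivation from the chapter's own toolkit, and the ingredients are the right ones: Exercise~\ref{exc:open-sets-in-sc-retracts}(b), via the decompression $\tilde r\circ s$ (whose idempotency indeed rests on $s|_O=\id_O$ and $\tilde r(O)\subset O$), makes $\phi(A\CAP V)$ an $\SC$-retract in $(C,E)$; the chain rule for retract maps, Theorem~\ref{thm:sc-retract-chain rule}, gives $\SC$-smoothness of the restricted transition maps and of the local representatives of $\iota$ and $r$; idempotency of the decompressed linearization $D(\tilde r\circ s)(\phi(a))$ identifies its image with its fixed point set, which is $T_{\phi(a)}\tilde O$, yielding (iii); and at smooth points Corollary~\ref{cor:sc-deriv_is_sc-op_smooth-pts} together with Exercises~\ref{exc:jhjghj8} and~\ref{exc:sc-projections-split} turns that linearization into an $\SC$-projection on the $\SC$-subspace $T_{\phi(a)}O$, whence the splitting in (iv). Two places deserve more care than you give them, though neither invalidates the strategy. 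First, in (iii) the inclusion $\im Dr_a\subset T_aA$ is best justified not by ``$r$ lands in $A$'' but by idempotency: writing $R:=\tilde r\circ s$, one has $DR(x)\bigl(DR(x)\xi\bigr)=DR(x)\xi$, so every image vector lies in $\Fix\,DR(x)=T_{\phi(a)}\tilde O$. Second, paracompactness is \emph{not} a hereditary property, so ``inherited from $X$'' is not by itself an argument; one should either use that $A\CAP V=\Fix\,r$ is closed in $V$, so that $A$ is locally closed in $X$, or invoke the fact that M-polyfolds are metrizable (being paracompact, Hausdorff and locally metrizable), so that every subspace is paracompact.
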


\begin{proof}
\citet[Prop.\,2.6]{Hofer:2017a}.
\end{proof}

%%% SUBSUBSECTION %%%%%%%%%%%%%%%%%%%
%%%%%%%%%%%%%%%%%%%%%%%%%%%%%%%%%%
\subsection{Boundary and corners -- tameness}

Unfortunately, on M-polyfolds the degeneracy index of a point, defined
through an M-polyfold chart, might depend on the choice
of chart, as this example shows:
For a real parameter $a\ge 0$ define the
orthogonal projection
\[
     r_a=r_a\circ r_a\colon C\to C=[0,\infty)^2,\quad
     (x,y)\mapsto\frac{x+ay}{1+a^2}\, \left(1,a\right).
\]
The image of the retraction $r_a$ is the half line $L_a=\{(x,ax)\mid
x\ge 0\}$ in the quadrant $C$.
On the M-polyfold $X=[0,\infty)$ we choose the global chart $\phi_a\colon X\to
O=r_a(C)=L_a\subset C$ shown in Figure~\ref{fig:fig-deg-index-M-pfs-L_a}. 
In this chart the degeneracy index, see
Section~\ref{sec:boundary-recognition}, of each point $x\in X$
(also depending on whether {\color{cyan} $a=0$} or $a>0$) is given by
\[
     d_C\left(\phi_a(x)\right)=
     \begin{cases}
       {\color{red}      2}&\text{, $x=0$,}\\
       {\color{cyan}    1}&\text{, $x>0$ and {\color{cyan} $a=0$ ($L_0$)},}\\
       {\color{brown}  0}&\text{, $x>0$ and $a>0$ ($L_a$).}
     \end{cases}
\]
On the other hand, representing $X$ in the obvious global M-polyfold chart
\[
     \phi^\prime=\id\colon X\to [0,\infty)
     =O^\prime=\im r^\prime,\quad
     r^\prime=\id,\quad
     U^\prime=C^\prime=[0,\infty)\subset \R
\]
the degeneracy indices of points are the rather different, but
\begin{figure}%[h]
  \centering
  \includegraphics%[width=0.9\textwidth]
                             [height=4cm]
                             {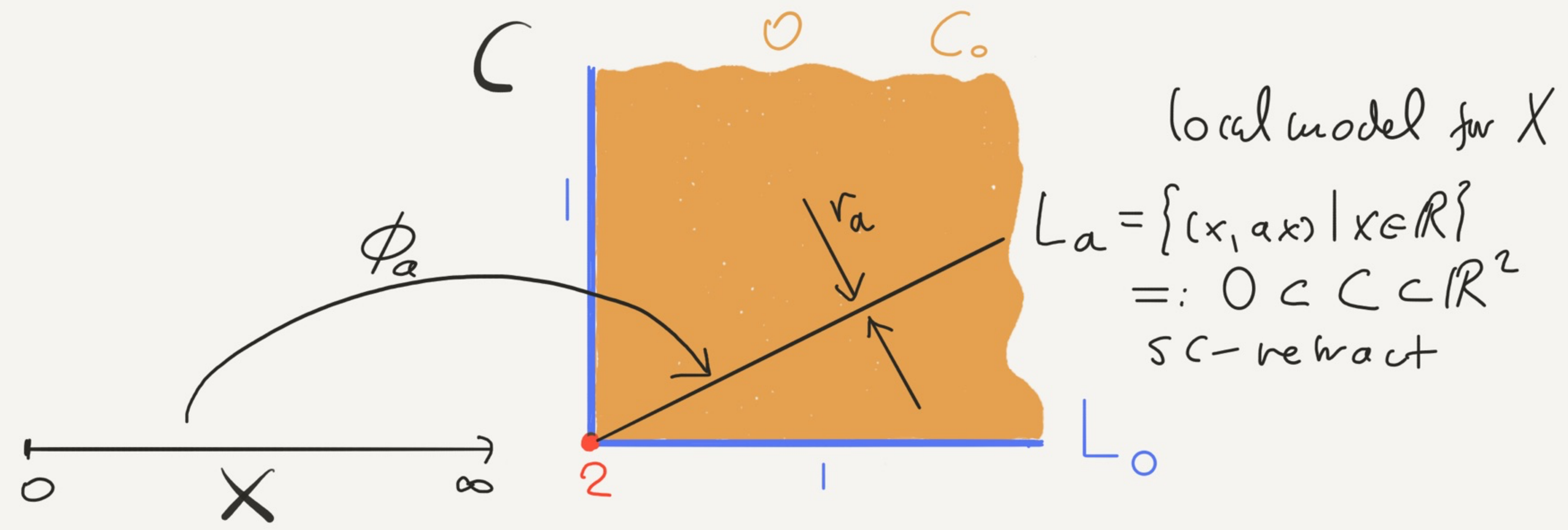}
  \caption{Global M-polyfold chart $\phi_a\colon X=[0,\infty)\to O=L_a\subset C$}
  \label{fig:fig-deg-index-M-pfs-L_a}
\end{figure}
expected, values
\[
     d_{C^\prime}\left(\phi^\prime(x)\right)=
     \begin{cases}
       1&\text{, $x=0$,}\\
       0&\text{, $x>0$.}
     \end{cases}
\]
Of course, the discrepancy between $d_C$ and $d_{C^\prime}$
could be caused by incompatibility of charts. However, this is not the
case, both transition retract maps are $\SC$-smoothly compatible.
Indeed the decompression of $\psi:=\phi^\prime\circ {\phi_a}^{-1}$
\[
     \psi\circ r_a=\phi^\prime\circ {\phi_a}^{-1}\circ r_a\colon 
     C\to r_a(C)=L_a\to O^\prime=[0,\infty),\quad
     (x,y)\mapsto\frac{x+ay}{1+a^2}
\]
is even $C^\infty$ smooth and so is
$\psi^{-1}\circ r^\prime\colon [0,\infty)\to O=L_a\subset C$,
$x\mapsto (x,ax)$.

\begin{definition}[Degeneracy index on M-polyfolds $X$]
Given a point $x\in X$, just take the minimum
\[
     d_X(x):=\min_\phi d_C\left(\phi(x)\right)
\]
over\index{$d_X$ degeneracy index on M-polyfold $X$}
all M-polyfold charts $\phi\colon V\to O\subset C$ about the point $x$.
\end{definition}

%%% SUBSUBSECTION %%%%%%%%%%%%%%%%%%%
%%%%%%%%%%%%%%%%%%%%%%%%%%%%%%%%%%
\subsubsection*{Degeneracy index stratification of quadrant -- Tameness}

To see what went wrong for the chart $\phi_a$ in the example above
note that the quadrant
$C={\color{brown} C_0}\CUP {\color{cyan} C_1}\CUP {\color{red} C_2}$
decomposes into disjoint subsets $C_i:={d_C}^{-1}(i)$,
the\index{degeneracy index!stratification}
strata\index{stratification!degeneracy index --}
of the \textbf{degeneracy index stratification}.
Now one identifies two problems:
\begin{itemize}
\item[\rm a)]
  The $\SC$-retraction $r_a$ does not preserve
the degeneracy index strata.
\item[\rm b)]
  The $\SC$-retract $r_a(C)=L_a$
  is in a certain sense not transverse to the degeneracy index
  stratification of the quadrant $C$.
\end{itemize}

One avoids the problem by giving a name to retractions that do not
have the defects a) and b) and then considers only such in theorems.

\begin{definition}\label{def:tame-retraction}
An $\SC$-smooth retraction $r\colon U\to U$ on an $\SC$-triple
$(U,C,E)$\index{tame!retraction}
is\index{sc-smooth!retraction!tame --}
called \textbf{tame} if
\begin{itemize}
\item[\rm a)]
  the map $r$ preserves the $d_C$-stratification:
  $d_C(r(x))=d_C(x)$ $\forall x\in U$;
\item[\rm b)]
  the image of $r$ is transverse to the $d_C$-stratification:
  For every smooth point $x$ in the image $r(U_\infty)=O_\infty$
  there must be an $\SC$-complement $A$ of the
  $\SC$-subspace $T_x O=Dr(x)E$ of $E$,
  cf.~(\ref{eq:T_xO}) and Exercise~\ref{exc:jhjghj8}, with $A\subset
  E_x:=T_x C(x)$ where $C(x):={d_C}^{-1}(d_C(x))$ is the stratum~of~$x$.
\end{itemize}
\end{definition}

If in b) above such $A$ exists, then one can choose $A=(\1-Dr(x))E$
by \citet[Prop.\,2.9]{Hofer:2017a}. So for tame $\SC$-smooth retractions
$r\colon U\to U$ with image $O$ one has the $\SC$-splittings
\[
     E=H_x\oplus V_x,\quad
     H_x:=T_x O=Dr(x)E,\quad
     V_x:= (\1-Dr(x))E,\quad
     x\in O_\infty
\]
one for each smooth point of $O=r(U)$.

\begin{remark}[Fixed origin]
Consider the quadrant $C:=[0,\infty)^n$ in $\R^n$
and suppose $r\colon C\to C$ is a \emph{tame} smooth retraction.
Then the origin $0=r(0)$ is fixed by $r$. Indeed $x=0$ is the only point
in $C$ with $d_C(x)=n$.
Moreover, the image $r(C)$ is an open neighborhood of $0$ in $C$;
cf.~\citet[Problem\,6.5]{Cieliebak:2018a}.
\end{remark}

\begin{definition}
An $\SC$-retract $(O,C,E)$ is called
\textbf{tame}\index{tame!retract}\index{sc-retract!tame --}
if $O=r(U)$ is the image of a tame $\SC$-smooth retraction $r$.
An M-polyfold is called
\textbf{tame}\index{M-polyfold!tame --}\index{tame!M-polyfold}
if $X$ admits an equivalent M-polyfold atlas modeled on
tame $\SC$-smooth retracts.
\end{definition}

For tame M-polyfolds $X$ the degeneracy
index $d_X(x):=d_C(\phi(x))$ of a point $x\in X$
defined via an M-polyfold chart $\phi\colon V\to O\subset C$
about $x$ does not depend on the choice of the chart;
see~\citet[Eq. (2.12)]{Hofer:2017a}.

%%%%%%%%%%%%%%%%%%%%%%%%%%%%%%%%%%
%%%%%%%%%%%%%%%%%%%%%%%%%%%%%%%%%%
%%% SECTION %%%%%%%%%%%%%%%%%%%%%%%%
%%%%%%%%%%%%%%%%%%%%%%%%%%%%%%%%%%
%%%%%%%%%%%%%%%%%%%%%%%%%%%%%%%%%%
%\sectionmark{Strong bundles}
\section{Strong bundles over M-polyfolds}
\label{sec:strong-bundles}
\sectionmark{Strong bundles}

We recall the notion of a vector bundle over a manifold and sketch
how to generalize the base to M-polyfolds (bringing in scale and retracts) and
accommodate Fredholm sections via $\SC^+$-sections
(bringing in double scales).

%%% SUBSUBSECTION %%%%%%%%%%%%%%%%%%%
%%%%%%%%%%%%%%%%%%%%%%%%%%%%%%%%%%
\subsubsection*{Motivation and comparison of old and new concepts}

The following overview is not meant to be, and is not, rigorous.

%%%%%%%%%%%%%%%%%%%
\vspace{.2cm}\noindent
\textit{Classical vector bundles over manifolds
-- trivial bundles $U\times F\to U$.}

\noindent 
%Roughly speaking, 
A classical vector bundle over a manifold is locally
modeled by trivial bundles $U\times F\to U$. Here $U$ is an open
subset of a linear space $E$, the model space of the manifold, and
$F$ is a linear space, the model of the fibers of the vector bundle.
Any two local models must be related by a diffeomorphism
\[
     \Psi\colon U\times F\to \tilde U\times F,\quad
     (u,\xi)\mapsto \left(\psi(u),\Tt_u\xi\right)
\]
called a \emph{vector bundle transition map}, whose
second\index{transition map!vector bundle --}
component $(u,\xi)\mapsto \Tt (u,\xi)$
restricts at every point $u$ to a vector space
isomorphism~$\Tt_u:=\Tt(u,\cdot)\colon F\to F$.
So the building blocks for classical vector
bundles are \textbf{trivial bundles}
\[
     U\times F\to U.
\]

%%%%%%%%%%%%%%%%%%%
\vspace{.2cm}\noindent
\textit{Sc-bundles over M-polyfolds
-- trivial-bundle sc-retracts~$R(U\oplus F)\to r(U)$.}

\noindent
To define $\SC$-bundles over M-polyfolds one needs to generalize
trivial bundles taking into account that now one deals with $\SC$-triples
$(U,C,E)$ and $\SC$-Banach spaces $F$.
A useful notation for trivial bundles is $U\oplus F\to U$ which
indicates that the set $U\times F$ sits inside the $\SC$-direct sum
$E\oplus F$ and thereby inherits the scale structure
$(U\oplus F)_m=U_m\oplus F_m$.
\\
Because local models for the base M-polyfold are $\SC$-retracts $O$,
one should replace $U$ by its image $O=r(U)$ under an $\SC$-retraction.
It is suggesting to replace the whole space $U\oplus F$ by its
image under an $\SC$-retraction
\[
     R=R\circ R\colon U\oplus F\to U\oplus F,\quad
     (u,\xi)\mapsto\left(r(u),\rho_u\xi\right)
\]
for which $F\to F$, $\xi\mapsto \rho_u\xi:=\rho(u,\xi)$ is linear,
at any $u\in U$. Such retraction
\begin{itemize}
\item %[--]
  produces a local M-polyfold model $O=r(U)$ in the component $U$ and
\item %[--]
also respects the linear structure of the second component $F$.
\end{itemize}
A crucial observation is that along $O=r(U)=\Fix\, r\subset U$ idempotency
of $R$ implies that the linear map $\rho_x=(\rho_x)^2\colon F\to F$
is also idempotent, hence a projection.
Choose the identity retraction $R(u,\xi):=(u,\xi)$ and forget scale structures
to recover trivial bundles $U\oplus F\to U$, hence classical vector bundles.
The building blocks
\begin{equation}\label{eq:sc-bundles}
     K=R(U\oplus F)\subset U\oplus F
\end{equation}
for $\SC$-bundles over M-polyfolds
are called \emph{trivial-bundle sc-retracts}.
Projection onto the second component provides an $\SC$-smooth surjection
\[
     p\colon K\to O,\quad (x,\xi)\mapsto x
\]
onto an $\SC$-retract,
the local model of an M-polyfold. Each pre-image $p^{-1}(x)$ is a
closed linear subspace $\rho_x(F)=\Fix\,\rho_x$ of $F$.
\\
To summarize, the local models for $\SC$-bundles over M-polyfolds,
called \textbf{trivial-bundle sc-retracts},
are $\SC$-retracts $K$ in $U\oplus F$ that are families
\[
     K=R(U\oplus F)=\bigcup_{x\in O}\left(\{x\}\oplus \Fix\,\rho_x\right)
     \;\to\; O
\]
of projection images $\Fix\,\rho_x$ in $F$ parametrized
by local M-polyfold models $O$.
To\index{sc-bundles over M-polyfolds}
construct \textbf{\boldmath$\SC$-bundles over M-polyfolds}
one defines $\SC$-bundle charts as in
Definition~\ref{def:strong-bundle-charts}
disregarding double scales -- just replace the double scale symbol
$\triangleright$ by the $\SC$-direct sum $\oplus$.
Compatibility of charts and $\SC$-bundle atlases are defined as usual.

\begin{definition}[Sc-bundles over M-polyfolds]
An \textbf{\Index{sc-bundle} over an M-polyfold} $X$
is an $\SC$-smooth surjection $\pi\colon Y\to X$
between\index{$(pi$@$\pi\colon Y\to X$ sc-bundle}
M-polyfolds endowed with an equivalence class of
$\SC$-bundle atlases.
\end{definition}

%%%%%%%%%%%%%%%%%%%
\vspace{.2cm}\noindent
\textit{Accommodating Fredholm sections: double scale gives two
  scales $\mbox{}^{[0]}$ and $\mbox{}^{[1]}$.}

\noindent
The local model building blocks for strong bundles over M-polyfolds
are trivial-strong-bundle retracts $K=R(U\triangleright F)$. These
come with a double scale structure $K_{m,k}$ by definition
and a natural projection $p:K\to O$ where the $\SC$-retract
$O=r(U)$ is called the \emph{associated base retract} and the
$\SC$-retraction $r:U\to U$ is the first component of $R$.
Reducing the double scale in two ways to a scale
one obtains two $\SC$-bundles
\[
     p ^{[i]}\colon K ^{[i]}=R(U\triangleright F) ^{[i]}\to O=r(U),\qquad
     i=0,1
\]
over the M-polyfold $O$. The sections of $p^{[1]}$
generalize $\SC^+$ operators.

%\newpage
%%% SUBSECTION %%%%%%%%%%%%%%%%%%%%%%
%%%%%%%%%%%%%%%%%%%%%%%%%%%%%%%%%%
\subsection{Trivial-strong-bundle retracts - the local models}

Throughout $F$ is an $\SC$-Banach space and $(U,C,E)$ an
$\SC$-triple, that is $U$ is a relatively open subset of the partial
quadrant $C$ in the $\SC$-Banach space $E$.

\begin{remark}[Motivation for non-symmetric product and shift by $1$]
\label{rem:motivation-asym-prod}
At first sight the introduction of a double scale/filtration in
Definition~\ref{def:non-symm-product}, even an asymmetric one, and its
immediate reduction to a single scale in
Definition~\ref{def:str-triv-bdls}, in two versions though, might be
confusing and even appear superfluous given that the two versions
inherit their scale structure as subsets of the simple and well known
$\SC$-direct sums $U\oplus F$ and $U\oplus F^1$.

a) To perceive the need to shift the vector space part of $U\oplus F$ by one,
recall stability of the $\SC$-Fredholm property under addition of
$\SC^+$-operators; see Proposition~\ref{prop:stability-sc-Fredholm}.

b) In practice, when implementing a differential operator $f$ of order $\ell$
the level indices $m$ of $U$ indicate differentiability, simply speaking.
So one needs to forget the first $\ell$ levels and choose $U_\ell$, or
any sublevel of it, as domain for $f$. More precisely, one chooses the
shifted scale $U^\ell$. Then $f\colon (U^\ell)_m\to F_m$ and one can
subsequently exploit composition with the compact embeddings $F_m\INTO
F_{m-1}\dots\INTO F_0$.
\end{remark}

\begin{definition}[Non-symmetric product -- double scale]
\label{def:non-symm-product}
The\index{$(U\triangleright F)_{m,k}$ non-symmetric product}
\textbf{\Index{non-symmetric product}} $U\triangleright F$ is the subset
$U\times F$ of the Banach space $E\oplus F$
endowed with the\index{scale!double --}
\textbf{double scale}, also called\index{double!scale}\index{double!filtration}
\textbf{double filtration}, defined by\footnote{
  We use the symbol $U\triangleright F$, as opposed to $U\triangleleft F$,
  since the levels of $U$ are unlimited, any $m\in\N_0$ is allowed,
  whereas the ones of $F$ depend on $m$ and are restricted to
  $0,\dots,m+1$.
  }
\[
     (U\triangleright F)_{m,k}:=U_m\oplus F_k,\qquad
     m\in\N_0,\quad k\in\{0,\dots m+1\}.
\]
\end{definition}

Non-symmetric products $U\triangleright F$
serve as \textbf{total spaces} of strong trivial bundles.
Projection\index{strong bundle!trivial!total space}
onto\index{trivial-strong-bundle!total space}
the first component
\[
     U\triangleright F\to E,\quad
     (u,\xi)\mapsto u
\]
is called the \textbf{trivial-strong-bundle projection}.
However, for $\SC$-calculus one needs one scale structure, not a
double scale. To achieve this substitute $k$ by a useful function of $m$,
say $k=m$ or $k=m+1$.

\begin{definition}[Trivial strong bundles: Two relevant scale structures]
\label{def:str-triv-bdls}
Motivated by~\citet[\S 2.5]{Hofer:2017a} we denote the $\SC$-manifolds
$U\oplus F$ and $U\oplus F^1$ by the symbols
\[
     (U\triangleright F)^{[0]}:=U\oplus F,\qquad
     (U\triangleright F)^{[1]}:=U\oplus F^1.
\]
By definition
of\index{strong bundle!trivial}
shifted scales the levels are\footnote{
  cf. Definition~\ref{def:shifted-scale}
  }
\[
     (U\triangleright F)^{[0]}_m=U_m\oplus F_m,\qquad
     (U\triangleright F)^{[1]}_m=U_m\oplus F_{m+1}.
\]
The projections onto the first component
\[
     p=p^{[i]}\colon (U\triangleright F)^{[i]}\to U
     ,\quad (u,\xi)\mapsto u
     ,\qquad i=0,1
\]
are $\SC$-smooth maps between $\SC$-manifolds
called \textbf{\Index{trivial strong bundles}}.
We often write $p$ for simplicity and because the values do not depend
on the choice of shift $i$ for the second component $F$.
If the domain matters we shall write $p^{[i]}$.
\end{definition}

\begin{definition}[Morphisms of trivial strong bundles]
a) A\index{trivial-strong-bundle!map}
\textbf{trivial-strong-bundle map}
$\Psi\colon  U\triangleright F\to \tilde U\triangleright \tilde F$ is a map
that \emph{preserves the double scale} and is of the form
\[
     \Psi(u,\xi)=\left(\varphi(u),\Gamma(u,\xi)\right)
\]
where $\Gamma_u\xi:=\Gamma(u,\xi)$ is linear in $\xi$.
Moreover, it is required that both induced maps between $\SC$-manifolds
\[
     \Psi=\Psi^{[i]}\colon  (U\triangleright F)^{[i]}\to (\tilde U\triangleright \tilde F)^{[i]}
     ,\quad i=0,1
\]
are $\SC$-smooth.
b) A\index{trivial-strong-bundle!isomorphism}
\textbf{trivial-strong-bundle isomorphism}
is\index{isomorphism!of trivial-strong-bundles}
an invertible strong trivial bundle map whose
inverse is also a strong trivial bundle map.
\end{definition}

It is the previous definition where \underline{double scale preservation}
is required.

\begin{exercise}
Check that the second component $\Gamma$ of a strong trivial bundle
map $\Psi$ gives rise to an $\SC$-operator $\Gamma_u\in\Llsc(F,\tilde F)$
along the smooth points $u\in U_\infty$.
\end{exercise}

\begin{definition}[Trivial-strong-bundle retraction]
A\index{trivial-strong-bundle!retraction}
\textbf{trivial-strong-bundle retraction}
is an idempotent trivial-strong-bundle map
\[
     R=R\circ R\colon  U\triangleright F\to U\triangleright F,\quad
     (u,\xi)\mapsto\left(r(u),\rho_u\xi\right).
\]
The first component $r$ of $R$ is necessarily an $\SC$-smooth retraction on
$U$, called the \textbf{associated base retraction},
whose image\index{associated!base retraction}
$\SC$-retract $O=r(U)$ is called the\index{associated!base retract}
\textbf{associated base retract}.
One\index{base retraction!associated --}
calls\index{base retract!associated --}
$R$ \textbf{tame} in case the associated base retraction is tame.
\end{definition}

\begin{exercise}
Let $R(u,\xi)=\left(r(u),\rho_u\xi\right)$
be a strong trivial bundle retraction. Check that $r$ is an
$\SC$-smooth retraction on $U$ and that $\rho_x\in\Ll(F)$ is a
projection for $x\in \Fix\, r =r(U)=:O$, even an $\SC$-projection
at smooth points, i.e. $x\in O_\infty$.
\end{exercise}

\begin{definition}[Trivial-strong-bundle retracts $K$ -- the local models]
A\index{trivial-strong-bundle!retract}
\textbf{trivial-strong-bundle retract}\footnote{
  'strong' indicates 'doubly scaled' and the
  retraction acts on a 'trivial bundle'
  }
$(K, C\triangleright F, E\triangleright F)$, or simply $K$,
is\index{$p\colon K\to O$ trivial-strong-bundle retract}
given by the image
\[
     K=R(U\triangleright F)=\left(\Fix\, R\right)
     \subset \left(O\triangleright F\right)
\]
of a trivial-strong-bundle retraction
$R=R\circ R$ on $U\triangleright F$
where $O=r(U)$ is the associated base retract.
One\index{strong bundle! trivial--}
likewise calls the natural surjection
\[
     p\colon K\to O,\quad (x,\xi)\mapsto x
\]
\textbf{trivial-strong-bundle retract}.
For\index{trivial-strong-bundle!retract}
simplicity we identify point pre-images
\[
     p^{-1}(x)=\{x\}\times K_x,
     \qquad K_x:=\rho_x(F)
\]
with the Banach subspace $K_x:=\rho_x(F)$ of $F$,
an $\SC$-subspace for smooth points,
called the \textbf{fiber of \boldmath$K$} over $x$.
Call $K=R(U\triangleright F)$ \textbf{tame} if $R$ is tame.
\end{definition}

Being a subset of the doubly scaled space $U\triangleright F$
a trivial-strong-bundle retract $K$ inherits the double scale
\[
     K_{m,k}:=K\CAP \left(U_m\oplus F_k\right)
     =\bigcup_{x\in O_m}\left(\{x\}\oplus \Fix\,[\rho_x\colon F_k\to F_k]\right)
\]
for $m\in\N_0$ and $k\in\{0,\dots m+1\}$.
Note that the spaces\footnote{
  the symbol $R(U\triangleright F)^{[i]}$ abbreviates
  $R((U\triangleright F)^{[i]})$
  }
\begin{equation}\label{eq:sc-ret-K}
     K^{[i]}:=K\CAP\left(E^0\oplus F^i\right)
     =\im R^{[i]}=R(U\triangleright F)^{[i]}
     ,\qquad i=0,1
\end{equation}
with levels $K^{[i]}_m=K_{m,m+i}$ are $\SC$-retracts,
so M-polyfolds. The surjections
\[
     p=p^{[i]}\colon K^{[i]}\to O
     ,\quad(x,\xi)\mapsto x, \qquad i=0,1
\]
are both $\SC$-smooth maps between $\SC$-retracts.
Indeed by Definition~\ref{def:sc-map-between-sc-retracts}
this requires that some, hence any, decompression, say
\[
     p\circ R\colon  (U\triangleright F)^{[i]} \to K^{[i]}\to O
     ,\quad(u,\xi)\mapsto r(u),\qquad i=0,1
\]
be $\SC$-smooth. But the associated base retraction $r$ is
$\SC$-smooth by assumption.

\begin{definition}[Strong retract maps]
A map $\Ff$ of the form
\[
     \Ff\colon K\to \tilde K,\quad (x,\xi)\mapsto\left(f(x),\phi_x\xi\right),
     \qquad f\colon O\to \tilde O
\]
between trivial-strong-bundle retracts
is\index{retract map!strong --}
called a \textbf{\Index{strong retract map}}
if $\Ff$ is linear in the fibers, that is $\phi_x\colon K_x\to
\tilde K_{f(x)}$ is linear, if $\Ff$ preserves the double filtrations,
and if both induced maps between $\SC$-retracts
\[
     \Ff^{[i]}\colon  K^{[i]}\to \tilde K^{[i]},\qquad i=0,1
\]
are $\SC$-smooth (meaning $\SC$-smoothness after decompression).
\end{definition}

\begin{definition}[$\SC$- and $\SC^+$-sections of trivial-strong-bundle
retracts]
A\index{section!of trivial-strong-bundle}
\textbf{section}\index{trivial-strong-bundle!section of --}
of a trivial-strong-bundle retract $p\colon K\to O$
is a map $s\colon O\to K$ that satisfies $p\circ s=\id_O$.
If $s$ is $\SC$-smooth as an $\SC$-retract map
\[
     s^{[i]}\colon O\to K^{[i]},\quad
     x\mapsto \bigl(x,\bs^{[i]}(x)\bigr),\qquad
     \bs^{[i]}\colon O\to F^i
\]
it\index{sc-section!of trivial-strong-bundle}
is\index{trivial-strong-bundle!sc-section of --}
called in case $i=0$
an \textbf{\boldmath$\SC$-section}
and in case $i=1$\index{trivial-strong-bundle!$\SC^+$-section}
an\index{sc$^+$-section!of trivial-strong-bundle}
\textbf{\boldmath$\SC^+$-section}.
The map $\bs^{[i]}\colon O\to F^i$ is called the \textbf{principal part}
of\index{sc-section!principal part of --}
the\index{section!principal part of --}\index{principal part of section}
section.
\end{definition}

Note that a section is $\SC$-smooth iff its principal part is. For
simplicity we sometimes omit the superscript$^{[i]}$ if the
level shift is clear from the context.

%%% SUBSECTION %%%%%%%%%%%%%%%%%%%%%%
%%%%%%%%%%%%%%%%%%%%%%%%%%%%%%%%%%
\subsection{Strong bundles}

Throughout $F$ is an $\SC$-Banach space and $(U,C,E)$ an
$\SC$-triple, that is $U$ is a relatively open subset of the partial
quadrant $C$ in the $\SC$-Banach space $E$.

\begin{definition}[Strong bundle charts]\label{def:strong-bundle-charts}
Let $P\colon Y\to X$ be a continuous surjection from a paracompact Hausdorff
space $Y$ onto an M-polyfold $X$ such that every pre-image
$Y_x:=P^{-1}(x)$ has the structure of a Banachable space.\footnote{
  A \textbf{\Index{Banachable space}} is an equivalence class
  that consists of all Banach spaces with pairwise equivalent norms.
  }
A\index{strong bundle!chart}
\textbf{strong bundle chart}\index{bundle chart!strong --}
for $P\colon Y\to X$ is a tuple
\[
     \left(\Phi,P^{-1}(V),(K, C\triangleright F, E\triangleright F)\right)
\]
that consists of
\begin{itemize}
\item%[--]
  a trivial-strong-bundle retract $p\colon K=R(U\triangleright F)\to O$
  where $O=r(U)$ is the associated base retract;
\item%[--]
  a homeomorphism $\varphi\colon V\to O$ between an open subset of the
  base M-polyfold $X$ of $Y$ and the base retract $O$ of $K$;
\item%[--]
  a homeomorphism $\Phi\colon P^{-1}(V)\to K$ that covers $\varphi$, 
  that is the diagram
  \begin{equation*}\label{eq:strong-bundle-chart}
  \begin{tikzcd} [column sep=tiny] %row sep=small, 
    Y
    \arrow[d, "P"']
    & \supset
      &
      P^{-1}(V)
      \arrow[d, "P"']
      \arrow{rrrrr}[name=U]{\Phi\;\,}
        &&&&&
%        \hphantom{hh } 
        K=R(U\triangleright F)
        \arrow[d, "p", shift right=10.8]
    \\
     X
     & \supset
      &
      V
      \arrow[swap]{rrrrr}[name=D]{\,\;\varphi}
      \arrow[to path={(U) node[midway,scale=1.2] {\;\;\,\,$\circlearrowleft$}  (D)}]
        &&&&&
        O=r(U)\hphantom{hhh\,} %\qquad
  \end{tikzcd} 
  \end{equation*}
  commutes. As a consequence, for every point $v\in V$ the restriction
  of $\Phi$ to $P^{-1}(v)$ takes values in $p^{-1}(\varphi(v))$.
  One also requires that $\Phi$ viewed as a map
  \[
     \Phi\colon Y_v=P^{-1}(v)\stackrel{\simeq}{\longrightarrow} p^{-1}(\varphi(v))
     =\rho_{\varphi(v)}(F),\qquad \forall v\in V
  \]
 is a continuous linear bijection\footnote{
     making sense although the domain is just Banachable
     }
  between fibers.\footnote{
      strictly speaking, the target is $\{\varphi(v)\}\times\rho_{\varphi(v)}(F)$
      }
\end{itemize}
\end{definition}

\begin{definition}[Strong bundle atlases]
Two\index{strong bundle!chart!compatible --}
strong bundle charts are called \textbf{compatible} if, firstly,
the transition map
\[
     \Psi:=\tilde \Phi\circ \Phi^{-1}\colon 
     K\supset\Phi(P^{-1}(V\CAP\tilde V))
     \to\tilde\Phi(P^{-1}(V\CAP\tilde V))
     \subset\tilde K
\]
is a strong retract map, thus preserves the double scales,
and, secondly,  the two induced maps $\Psi^{[0]}$ and $\Psi^{[1]}$
between open subsets of $\SC$-retracts (cf.~(\ref{eq:sc-ret-K})),
hence M-polyfolds, are $\SC$-smooth diffeomorphisms.
A\index{atlas!strong bundle --}\index{strong bundle!atlas}
\textbf{strong bundle atlas} $\Aa^Y_X$ consists of
pairwise\index{$\Aa^Y_X$ strong bundle atlas}
compatible strong bundle charts covering $Y$.
Two such atlases are called equivalent if their union
is again a strong bundle atlas.
\end{definition}

\begin{definition}[Strong bundles over M-polyfolds]
A \textbf{\Index{strong bundle} over an M-polyfold} $X$
is a continuous surjection $P\colon Y\to X$
from\index{$P\colon Y\to X$ strong bundle}
a paracompact Hausdorff space
equipped with an equivalence class of strong bundle atlases.
\end{definition}

\begin{exercise}[A strong bundle provides two M-polyfolds]
Check that a strong bundle atlas $\Aa^Y_X$ for $Y\to X$ naturally
provides two M-polyfold atlases $\Aa_{Y^{[0]}}$ and $\Aa_{Y^{[1]}}$
for M-polyfolds $Y^{[0]}$ and $Y^{[1]}$, respectively.
\end{exercise}

\begin{exercise}[A strong bundle provides two $\SC$-bundles]
A strong bundle atlas $\Aa^Y_X$ for $P\colon Y\to X$ naturally
provides\index{$\Aa^{Y^{[0]}}_X$ induced sc-bundle atlas}
two \textbf{induced \boldmath$\SC$-bundle atlases}
$\Aa^{Y^{[0]}}_X$\index{sc-bundle!atlases!induced --}
and $\Aa^{Y^{[1]}}_X$ for $\SC$-bundles $P^{[0]} \colon Y^{[0]}\to X$
and $P^{[1]}\colon Y^{[1]}\to X$, respectively.
\end{exercise}

%%% SUBSUBSECTION %%%%%%%%%%%%%%%%%%%
%%%%%%%%%%%%%%%%%%%%%%%%%%%%%%%%%%
\subsubsection*{Induced double scale and section types}

A strong bundle $P\colon Y\to X$ carries an asymmetric double scale
structure $Y_{m,k}$, where $m\in\N_0$ and $k=0,\dots,m+1$,
transmitted from the local models $K$ by the strong bundle charts.
Here it enters that the transition maps $\Psi$ are strong retract
maps, thus preserve the double scale of the local models~$K$.

\begin{definition}[$\SC$- and $\SC^+$-sections of strong bundles]
A\index{section!of strong bundle}
\textbf{section}\index{strong bundle!section of --}
of a strong bundle $P\colon Y\to X$
is a map $s\colon X\to Y$ that satisfies $P\circ s=\id_X$.
If a section $s$ of $P\colon Y\to X$
is $\SC$-smooth as a map between M-polyfolds
\[
     s^{[i]}\colon X\to Y^{[i]}
\]
then\index{sc-section!of strong bundle}
$s$ is\index{strong bundle!sc-section of --}
called an \textbf{\boldmath$\SC$-section} (case $i=0$)
or\index{strong bundle!$\SC^+$-section}
an\index{sc$^+$-section!of strong bundle}
\textbf{\boldmath$\SC^+$-section} (case $i=1$).
\end{definition}

%%% SUBSUBSECTION %%%%%%%%%%%%%%%%%%%
%%%%%%%%%%%%%%%%%%%%%%%%%%%%%%%%%%
\subsubsection*{Pull-back bundle}

Suppose $f\colon Z\to X$ is an $\SC$-smooth map between M-polyfolds
and $P\colon Y\to X$ is a strong bundle over $X$.
The pull-back bundle $P_f\colon f^*Y\to X$
consists of the subset of $Z\times Y$ defined by
\[
     f^*Y:=\{(z,y)\in Z\times Y\mid P(y)=f(z)\}
\]
and projection $P_f(z,y)=P_1(z,y)=z$ onto the first component.
Together with projection onto the second component,
denoted by $P_2$, the diagram
  \begin{equation*}
  \begin{tikzcd} [column sep=tiny]
    Z\times Y
    & \supset
      &
      f^*Y
      \arrow[d, dashed, "P_f"', "P_1"]
      \arrow[rrrrr, dashed, "P_2"]
        &&&&&
        Y
        \arrow[d, "P"]
        \arrow[d, phantom, "\circlearrowleft", shift right=10.5]
    \\
     &
      &
      Z
      \arrow[rrrrr, "f"']
        &&&&&
        X%\hphantom{hhh\,} %\qquad
  \end{tikzcd} 
  \end{equation*}
commutes.

%\arrow[d, "p", shift right=10.8]

\begin{exercise}[Induced strong bundle structure]
Given an $\SC$-smooth map $f\colon Z\to X$ between M-polyfolds,
show that a strong bundle structure on $P\colon Y\to X$ induces naturally
a strong bundle structure on the pull-back bundle $f^*P\colon f^*Y\to Z$.
\end{exercise}

%%%%%%%%%%%%%%%%%%%%%%%%%
%%%%%%%%% REFERENCES %%%%%%
%%%%%%%%%%%%%%%%%%%%%%%%
%%%\bibliographysty le{plain}
         %   erzeugt:     [1] Joa Weber
%%%\bibliographystyle{abbrv}
         %  erzeugt:      [1] J. Weber and 
%\bibliographystyleintro{alpha}
         %  article:    [Web05]  J. Weber
         %  book:      [Web05]  Joa Weber
         % more authors: [HZ87]
%%%%%%%%%%%%%%%%%%%%%%%%%
% Using hyperref, one should say:
%\cleardoublepage
%\phantomsection
%\addcontentsline{toc}{section}{References}
%\bibliographyintro{$HOME/Dropbox/0-Libraries+app-data/Bibdesk-BibFiles/library_math}{}
%$

\cleardoublepage
\phantomsection
      % LOCAL MODELS

% PART III -- APPENDIX
\appendix
%%%%%%%%%%%%%%%%%%%%%%%%%%%%%%%%%%
%%%%%%%%%%%%%%%%%%%%%%%%%%%%%%%%%%
%%%%%%%%%%%%%%%%%%%%%%%%%%%%%%%%%%
%% CHAPTER %%%%%%%%%%%%%%%%%%%%%%%%%
%%%%%%%%%%%%%%%%%%%%%%%%%%%%%%%%%%
%%%%%%%%%%%%%%%%%%%%%%%%%%%%%%%%%%
%%%%%%%%%%%%%%%%%%%%%%%%%%%%%%%%%%
\chapter{Background from Topology and
Functional Analysis}\label{sec:app-FA}
\chaptermark{Topology and Functional Analysis}

%%%%%%%%%%%%%%%%%%%%%%%%%%%%%%%%%%
%%%%%%%%%%%%%%%%%%%%%%%%%%%%%%%%%%
%%% SECTION %%%%%%%%%%%%%%%%%%%%%%%%
%%%%%%%%%%%%%%%%%%%%%%%%%%%%%%%%%%
%%%%%%%%%%%%%%%%%%%%%%%%%%%%%%%%%%
\section{Analysis on topological vector spaces}
\label{sec:Ana-TVS}
All vector spaces will be over the real numbers $\R$.
Let us first repeat

%%% SUBSUBSECTION %%%%%%%%%%%%%%%%%%%
%%%%%%%%%%%%%%%%%%%%%%%%%%%%%%%%%%
\subsubsection{Some basics about sets}
The elements of a set $S$ are often called \textbf{\Index{points}}.
If a set $S$ contains only finitely many elements it is called \textbf{finite}.
The number of elements of a \Index{finite set} is denoted
by\index{set!finite --}
$\abs{S}$.\index{$(\abs$@$\abs{S}$ number of elements of finite set $S$}
The set with no element is called the \textbf{\Index{empty set}},
denoted by $\emptyset$ or, in order to indicate the ambient
universe $S$, by
$\emptyset_S$.\index{$(empty$@$\emptyset_S$ empty set in ambient universe $S$}
We avoid terminology like \emph{a set of sets}, instead
we shall speak of a \textbf{\Index{family} of sets} or of a
\textbf{\Index{collection} of sets}.
Let $2^S$ be the collection of all subsets of
$S$.\index{$(2^S$@$2^S$ collection of all subsets of set $S$}
The empty set $\emptyset$ is a subset of any set $S$,
in symbols $\emptyset\subset S$ or $\emptyset\in 2^S$.
Our use of $\subset$ allows for equality, otherwise we write
$\subsetneq$.\index{$(subs$@$\subset$, $\subsetneq$}
For more basics on set theory and logic see
e.g.~\citet[Ch.\,I]{Munkres:2000a}. See also
Ch.\,I, in particular I.9 on axiomatics, in~\citet{Dugundji:1966a}.

\begin{definition}
Let $S$ be a set.
Given a family $\Aa\subset 2^S$ of subsets $A$~of~$S$,
union and intersection of the members of $\Aa$ are the subsets
of $S$ defined by
\[
     \bigcup\Aa
     =\bigcup\{A\mid A\in\Aa\}
     :=\bigcup_{A\in\Aa} A
     :=\{x\in S\mid \exists A\in\Aa\colon  x\in A\}\subset S
\]
and
\[
     \bigcap\Aa
     =\bigcap\{A\mid A\in\Aa\}
     :=\bigcap_{A\in\Aa} A
     :=\{x\in S\mid \forall A\in\Aa\colon  x\in A\}\subset S.
\]
\end{definition}

\begin{exercise}
For $\Aa=\emptyset,\{\emptyset_S\}\subset 2^S$ show 
$
     \bigcup\{\emptyset_S\}=\emptyset_S
     =\bigcap\{\emptyset_S\}
$,
but\index{$(cup$@$\bigcup\emptyset=\emptyset_S$, $\bigcap\emptyset=S$, for $\emptyset\subset 2^S$}
\[
     \bigcup\emptyset=\emptyset_S,\qquad
     \bigcap\emptyset=S,\qquad
     \text{where $\emptyset\subset 2^S$.}
\]

\vspace{.1cm}\noindent %\newline
[Hint: Final assertion -- empty truth.]
\end{exercise}

%%% SUBSUBSECTION %%%%%%%%%%%%%%%%%%%
%%%%%%%%%%%%%%%%%%%%%%%%%%%%%%%%%%
\subsubsection{Maps and exponential law}

Suppose $A,B,C$ are sets. A
\textbf{map \boldmath$f$ from $A$ to $B$}, in symbols $f\colon A\to B$,
is determined by a subset\index{$f\colon A\to B$ map}\index{map!between sets}
$G(f)\subset A\times B$ such that for each domain element $a\in A$
the set $\{b\in B\mid(a,b)\in G(f) \}$ has precisely 1 element.
The unique $b\in B$ such that $(a,b)\in G(f)$ is denoted by $f(a)$
and called the \textbf{image} of $a$ under $f$.
The set $A$ is the \textbf{\Index{domain}} of $f$ and $B$ the
\textbf{\Index{codomain}} or the \textbf{\Index{target}}.
The subset $G(f)\subset A\times B$ is called the \textbf{graph} of
$f$.\index{graph of a map}
A \textbf{\Index{function}} is a map $f\colon A\to \R$ that takes values
in the set of real numbers $\R$.

Let $\Map(A,B)$, or $B^A$, denote the \textbf{set of all maps} from $A$ to
$B$.\index{$B^A=\Map(A,B)$ set of all maps $f\colon A\to B$}
Motivated by the exponential notation the bijection
\[
     \Lambda\colon C^{A\times B} \to \left(C^B\right)^A,
     \quad f\mapsto F,\qquad
     F(a)(b):=\left(F(a)\right) (b):=f(a,b)
\]
is called the\index{exponential!map}\index{exponential! law}
\textbf{exponential map} or the \textbf{exponential law}.

%%%%%%%%%%%%%%%%%%%%%%%%%%%%%%%%%%
%%% SUBSECTION %%%%%%%%%%%%%%%%%%%%%
%%%%%%%%%%%%%%%%%%%%%%%%%%%%%%%%%%
\subsection{Topological spaces}\label{sec:top-spaces}
For an elementary overview see e.g.~\citet[Ch.\,II]{Munkres:2000a},
for an exhaustive treatment~\citet{Dugundji:1966a},
we also found extremely useful~\citet{Muger:2016a}.

\begin{definition}[Topology] % an topological space]
A \textbf{\Index{topology}} on a set $M$
is a family $\Tt$ of subsets $U\subset M$,
called the \textbf{\Index{open!sets}},
such that the following axioms hold.
\begin{itemize}
\item[(i)]
  Both the empty set $\emptyset$ and $M$ itself are open.
\item[(ii)]
  Arbitrary unions of open sets are open.
\item[(iii)]
  Finite intersections of open sets are open.
\end{itemize}
Such pair $(M,\Tt)$ is called a\index{topological!space}
\textbf{topological space}.
The\index{$U^{\rm C}:=X\setminus U$ complement}
complements $\comp{U}:=X\setminus U$ of the open sets form the
family of \textbf{\Index{closed set}s}.
\end{definition}

\begin{exercise}\label{exc:int-top-is-top}
The intersection of a collection of topologies is a topology.
\end{exercise}

A topology $\Tt$ on a set $M$ induces on any subset $A\subset M$
a topology $\Tt^{\cap A}$ which consists of the intersections
of\index{$\Tt^{\cap A}:=\Tt\CAP A$ subset topology}
$A$
with all the members of the family $\Tt$ of subsets of $M$.
The topology $\Tt^{\cap A}$ is called the\index{subset!topology}
\textbf{subset topology}
or\index{topology!induced --}\index{topology!subset --}
the\index{induced!topology}
\textbf{induced topology} on a subset $A$.
A \textbf{\Index{subspace}}\index{topological!space!subspace of --}
is a subset of a topological space endowed with the subset topology.

Properties of topological spaces that are inherited by
subspaces are called \textbf{\Index{hereditary properties}}.

A topological space is called \textbf{compact}
if every open cover admits a finite sub-cover.
A\index{compact!set}
subset $K$ of a topological space $(M,\Tt)$
is called \textbf{compact}
if the topology on $K$ induced by $\Tt$ is compact.
A subset is called
\textbf{\Index{pre-compact}}\index{compact!pre- --}
if its closure is compact.

One often writes, instead of the pair $(M,\Tt)$,
simply $M$ and calls it a topological space.
An \textbf{open \Index{neighborhood}} of a subset $P\subset M$
is an open set $U$ that contains $P$, in symbols $P\subset U\in\Tt$.
Any subset $A\subset M$ that contains an open neighborhood
of $P$ is called a \textbf{neighborhood} of $P$. If $P=\{x\}$ is a
point set we speak of a neighborhood of a point $x\in M$.
It is convenient to write $U_x$ to indicate that a set $U$ contains
the point $x$. With this convention ``for any open
neighborhood\index{$U_x\subset X$ means $x\in U\subset X$}
$U$ of $x$'' becomes ``\textbf{for any open \boldmath$U_x$}''.

%%% SUBSUBSECTION %%%%%%%%%%%%%%%%%%%
%%%%%%%%%%%%%%%%%%%%%%%%%%%%%%%%%%
\subsubsection{Basis of a given topology}

\begin{definition}[Basis]\label{def:basis-topology}
Given a sub-collection $\Cc\subset\Tt$ of a topology,
let
\[
     \qquad\qquad\qquad\quad\quad
     \boxed{\Tt_\Cc
      :=\left\{{\textstyle \bigcup\sigma}\mid\sigma\subset\Cc\right\}
     \subset\Tt\subset 2^M}
     \qquad\qquad\;
     {\color{gray}\footnotesize(\text{recall 
     $\textstyle\bigcup\sigma\subset M$})}
\]
be the collection of all unions of elements $C\subset M$ of $\Cc$.
If a sub-collection $\Bb\subset\Tt$ satisfies
$\Tt_\Bb=\Tt$, i.e. if all open sets are unions of elements of $\Bb$,
one calls $\Bb$ a\index{topology!basis of the --}
\textbf{\Index{basis of the topology} \boldmath$\Tt$} and
says that \textbf{the topology is generated by \boldmath$\Bb$}.
\end{definition}

The elements of a basis $\Bb$ are called \textbf{basic open sets}.
Any\index{basic!open sets}
open set is a union of basic ones.
Uniqueness of a basis fails as badly, as existence is trivial: Given $\Tt$,
pick $\Bb:=\Tt$.
Often in practice, the smaller a basis, the better. So a criterion
for being a basis is desirable.

\begin{lemma}\label{le:basis-characterisation}
For a subset $\Cc\subset\Tt$ of a topology
the following are equivalent:
\begin{itemize}
\item[(i)]
  The collection $\Cc$ is a basis of $\Tt$, in symbols $\Tt_\Cc=\Tt$.
\item[(ii)]
  The collection $\Cc$ is \emph{dominated by $\Tt$}
  in the following sense:
  Each point $x\in U\in\Tt$ of an open set
  also lies in a collection member $C\in\Cc$ that is
  contained in $U$, in symbols $x\in C\subset U$.
\end{itemize}
\end{lemma}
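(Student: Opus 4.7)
The plan is to prove the equivalence by unpacking each direction directly from the definition of $\Tt_\Cc$ as the collection of all unions $\bigcup\sigma$ with $\sigma\subset\Cc$. Both directions should be short, essentially set-theoretic manipulations, with no technical obstacle.

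First I would handle (i)$\Rightarrow$(ii). Assume $\Tt_\Cc=\Tt$ and pick any $x\in U\in\Tt$. By the hypothesis $U\in\Tt_\Cc$, so $U=\bigcup\sigma$ for some $\sigma\subset\Cc$. By definition of union, $x\in U$ forces the existence of some $C\in\sigma$ with $x\in C$; this $C$ belongs to $\Cc$ (since $\sigma\subset\Cc$) and satisfies $C\subset\bigcup\sigma=U$. That is exactly the domination property (ii).

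Next I would address (ii)$\Rightarrow$(i), which is the more substantive half but still immediate. The inclusion $\Tt_\Cc\subset\Tt$ is automatic: every element of $\Tt_\Cc$ is a union of members of $\Cc\subset\Tt$, and $\Tt$ is closed under arbitrary unions by axiom~(ii) for a topology. For the reverse inclusion $\Tt\subset\Tt_\Cc$, fix $U\in\Tt$. By hypothesis, for every $x\in U$ there exists $C_x\in\Cc$ with $x\in C_x\subset U$. Set $\sigma:=\{C_x\mid x\in U\}\subset\Cc$. Then
\[
     U=\bigcup_{x\in U}\{x\}\subset\bigcup_{x\in U}C_x=\bigcup\sigma\subset U,
\]
so $U=\bigcup\sigma\in\Tt_\Cc$. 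Combined with the previous inclusion this gives $\Tt_\Cc=\Tt$, i.e. (i).

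The main (non-)obstacle is really just the edge case $U=\emptyset$ in the second direction: then the index set of the union is empty, and we get $\emptyset=\bigcup\emptyset\in\Tt_\Cc$ by taking $\sigma=\emptyset\subset\Cc$, consistent with the exercise noted earlier that $\bigcup\emptyset=\emptyset_M$. No other delicate point arises, so the proof should fit in a few lines once the two directions are written out as above.
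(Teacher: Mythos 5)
Your proof is correct and complete: both directions are the standard set-theoretic unpacking of the definition of $\Tt_\Cc$, and you even handle the empty-set edge case. The paper itself gives no argument here, deferring to Dugundji (III.2); what you have written is precisely the canonical proof that reference contains, so there is nothing to fix or compare.
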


\begin{proof}
See e.g.~\citet[III.2]{Dugundji:1966a}.
\end{proof}

\begin{definition}[Sub-basis]\label{def:sub-basis-topology}
For a sub-collection $\Ss\subset\Tt$ of a topology, let
\[
     \boxed{\Bb_\Ss
     :=\left\{{\textstyle \bigcap\sigma}\mid\text{$\sigma\subset\Ss$,
     $\abs{\sigma}<\infty$}\right\}
     \subset\Tt\subset 2^M}
\]
be the collection of all finite intersections of elements of $\Ss$.
If $\Bb_\Ss$ is a basis~of~$\Tt$, i.e. if all open sets are
arbitrary unions of finite intersections of elements~of~$\Ss$,
one calls $\Ss$\index{topology!sub-basis of the --}
a \textbf{\Index{sub-basis of the topology} \boldmath$\Tt$} and
$\Bb_\Ss$ the \textbf{basis generated~by~\boldmath$\Ss$}.
\end{definition}

\begin{definition}
A topological space is called \textbf{\Index{second countable}}
if it admits a countable basis. This property is hereditary.
\end{definition}

\begin{definition}
A subset of a topological space is called \textbf{\Index{dense}}
if\index{dense subset}
it meets (has non-empty intersection with)
every non-empty open set or, equivalently,
if its closure is equal to the whole space.
A topological space is called \textbf{\Index{separable}} if it admits
a dense sequence (countable subset).
Separability is \emph{not} hereditary.
\end{definition}

\begin{exercise}
Show that second countability is hereditary, whereas separability is
not, that second countable implies separable
and that in metric spaces (endowed with the metric topology $\Tt_d$)
the converse is true, too.
\end{exercise}

\begin{definition}[Local basis]\label{def:local-basis}
Let $(M,\Tt)$ be a topological space and $x\in M$.
A collection $\Bb(x)$ of open neighborhoods $B_x$ of $x$
is\index{topology!local basis of --}
called\index{local basis of the topology at $x$}
a \textbf{local basis of the topology at \boldmath$x$}
if every open neighborhood $U_x$ of $x$
contains a member of $\Bb(x)$, in symbols $U_x\supset B_x\in \Bb(x)$.
\end{definition}

\begin{exercise}\label{exc:neighbourhood-base}
Let $(M,\Tt)$ be a topological space.
\begin{itemize}
\item[(i)]
  Given a basis $\Bb$ of $\Tt$, for every $x\in M$ the family
  $\Uu(x) =\{U\in \Tt\mid x\in U\}$ of all open neighborhoods
  of $x$ is a local basis of $\Tt$ at $x$.
\item[(ii)]
  Vice versa, given for every point $x$ of $M$ a local basis $\Bb(x)$
  for $\Tt$ at $x$, show that their union
%  \begin{equation}\label{eq:B-collection}
$
     \Bb
%     :=\bigcup\{\Bb(x)\mid x\in M\}
     :=\bigcup_{x\in M}\Bb(x)
     =\{B\mid B\in\Bb(x),\; x\in M\}\subset\Tt\subset 2^M
$
%  \end{equation}
forms a basis of $\Tt$.
\end{itemize}
\end{exercise}

%%% SUBSUBSECTION %%%%%%%%%%%%%%%%%%%
%%%%%%%%%%%%%%%%%%%%%%%%%%%%%%%%%%
\subsubsection{From sets to topologies}

Starting with just a set $S$, let $\Cc$ be any collection of
subsets of $S$. The definitions above still provide collections
$\Tt_\Cc, \Bb_\Cc\subset 2^S$. Note that always
$\emptyset\in\Tt_\Cc$ and $M\in\Bb_\Cc$ (pick
$\sigma:=\emptyset\subset\Cc$).
It is a natural question to ask under what conditions on $\Cc$
the collections $\Tt_\Cc$ or $\Tt_{\Bb_\Cc}$ are topologies on $S$.

\begin{exercise}[Any collection is a sub-basis of \underline{some} topology]
\label{exc:sub-basis-top}
Let $S$ be a set and $\Ss\subset 2^S$ \emph{any} collection of subsets.
Then $\Tt_{\Bb_\Ss}$ is a topology on $S$, the smallest
topology that contains $\Ss$, and $\Bb_\Ss$ is a basis.

\vspace{.1cm}\noindent %\newline
[Hints: Let $\Tt^\Ss$ be the intersection of all topologies $\Tt$
containing $\Ss$ (for example $\Tt=2^S$). Show $\Tt^\Ss=\Tt_{\Bb_\Ss}$.
See e.g.~\citet[III.3]{Dugundji:1966a}.]
\end{exercise}

While any collection of subsets of $S$ is a sub-basis of some
topology on $S$, a sufficient condition for
being a basis of some topology is the following.

\begin{theorem}[Being a basis of \underline{some} topology]
\label{thm:basis-some-topology}
Given a set $S$, let $\Bb\subset 2^S$ be a collection of subsets $V$
of\index{topology!basis for some --}
$S$ such that
\begin{itemize}
\item[(i)]
  $\Bb$ is a cover of $S$ (the union of all members of $\Bb$ is $S$) and
\item[(ii)]
  every point $p\in V_1\CAP V_2$ in an intersection of two $\Bb$ members
  simultaneously belongs to a $\Bb$ member $V_3\subset V_1\CAP V_2$
  contained in the intersection.
\end{itemize}
Under these conditions $\Tt_\Bb$ is a topology on $S$, the smallest
topology containing~$\Bb$, and~$\Bb$ is a basis.
\end{theorem}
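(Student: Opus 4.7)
The plan is to verify, in turn, the three axioms for $\Tt_\Bb$ to be a topology on $S$, then note that $\Bb$ is evidently a basis of this topology, and finally that any topology containing $\Bb$ must contain $\Tt_\Bb$.

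First I would dispose of the two easy axioms. The empty set lies in $\Tt_\Bb$ as the empty union (take $\sigma=\emptyset\subset\Bb$, whose union is $\emptyset_S$). The whole set $S$ lies in $\Tt_\Bb$ by hypothesis~(i), which says precisely that $S=\bigcup\Bb$. Closure under arbitrary unions is essentially a tautology: if $\{U_\alpha\}_{\alpha\in A}\subset\Tt_\Bb$ with $U_\alpha=\bigcup\sigma_\alpha$ for some $\sigma_\alpha\subset\Bb$, then $\bigcup_\alpha U_\alpha=\bigcup(\bigcup_\alpha\sigma_\alpha)\in\Tt_\Bb$.

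The main obstacle, and the step where hypothesis~(ii) is actually used, is closure under finite intersections. By induction it suffices to treat two open sets $U_1=\bigcup\sigma_1$ and $U_2=\bigcup\sigma_2$ in $\Tt_\Bb$. The strategy is to exhibit $U_1\cap U_2$ as a union of $\Bb$-members by producing, for each point $p\in U_1\cap U_2$, a set $V(p)\in\Bb$ with $p\in V(p)\subset U_1\cap U_2$. Given such $p$, pick $V_1\in\sigma_1$ and $V_2\in\sigma_2$ with $p\in V_1\cap V_2$; hypothesis~(ii) then yields $V(p)\in\Bb$ with $p\in V(p)\subset V_1\cap V_2\subset U_1\cap U_2$. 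Setting $\sigma:=\{V(p)\mid p\in U_1\cap U_2\}\subset\Bb$, one checks both inclusions: $\bigcup\sigma\subset U_1\cap U_2$ since each $V(p)\subset U_1\cap U_2$, and $U_1\cap U_2\subset\bigcup\sigma$ since $p\in V(p)$ for each $p$. Hence $U_1\cap U_2=\bigcup\sigma\in\Tt_\Bb$.

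To conclude, $\Bb\subset\Tt_\Bb$ because each $V\in\Bb$ equals $\bigcup\{V\}$, and every element of $\Tt_\Bb$ is by definition a union of $\Bb$-members, so $\Bb$ is a basis of $\Tt_\Bb$ in the sense of Definition~\ref{def:basis-topology}. Finally, minimality is immediate: if $\Tt$ is any topology on $S$ with $\Bb\subset\Tt$, then $\Tt$ is closed under arbitrary unions, so $\bigcup\sigma\in\Tt$ for every $\sigma\subset\Bb$, giving $\Tt_\Bb\subset\Tt$. I expect no real difficulty beyond being careful that the union over the empty indexing set and the cover condition~(i) together produce $\emptyset$ and $S$ respectively, and that the proof of closure under finite intersection is phrased pointwise so that~(ii) can be applied at each $p$.
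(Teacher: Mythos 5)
Your proof is correct and complete; the paper itself does not write out an argument but merely cites Dugundji (III, Thm.\ 3.2), and the pointwise argument you give for closure under finite intersections -- using hypothesis~(ii) at each $p\in U_1\cap U_2$ to manufacture a $\Bb$-member $V(p)$ and then writing $U_1\cap U_2=\bigcup_p V(p)$ -- is exactly the standard one found there. Your handling of the edge cases (the empty union giving $\emptyset$, the cover condition giving $S$, and minimality from closure under unions) is also exactly what is needed, so there is nothing to add.
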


\begin{proof}
See e.g.~\citet[III Thm.\,3.2]{Dugundji:1966a}.
\end{proof}

\begin{exercise}
Let $S$ be a set. The three collections
$\Ss=\emptyset$, $\{\emptyset_S\}$, $\{S\}\subset 2^S$
lead, respectively, to the three bases 
$\Bb_\Ss=\{S\}$, $\{\emptyset_S,S\}$, $\{S\}\subset 2^S$
each\index{topology!trivial --}\index{topology!indiscrete --}
of which generates the trivial, also called indiscrete, topology
$\Tt_{\Bb_\Ss}=\{\emptyset_S,S\}$.
\end{exercise}

Here is another method to topologize a set $S$ starting with a family
of candidates for local bases, one candidate at each point $x$ of the set.
It is a two step process.
Firstly, at every point $x\in S$ we wish to specify a collection
$\Bb(x)$ of subsets $V_x\subset S$ in such a way that, secondly, we can
construct a unique topology $\Tt(\Bb)$ on $S$ for which the collection
$\Bb(x)$ will be a local basis at $x$ and this is true for every $x\in S$.
Since prior to step two there is no topology, hence no notion of local
basis, we call $\Bb(x)$ a local pre-basis at $x$.

\begin{definition}\label{def:local-pre-basis}
Let $S$ be a set and $x\in S$. Suppose $\Bb\subset 2^S$ is the union
of a collection of non-empty families $\emptyset\not=\Bb(x)$ of
subsets of $S$, one family associated to each point $x$ of $S$,
such that the following is satisfied at all points $x,y\in S$.
\begin{itemize}
\item[(1)]
  Every member of $\Bb(x)$ contains $x$.
  \hfill {\footnotesize\color{gray} ($\emptyset \notin \Bb(x)$)}
\item[(2)]
  The intersection $V_1\CAP V_2\supset V_3$ of any two members
  of $\Bb(x)$ contains a $\Bb(x)$-member $V_3$.\footnote{
    Note that (2) makes sense since any intersection $V_1\CAP V_2\ni x$
    is non-empty.
    }
  \hfill {\footnotesize\color{gray} ($\Bb(x)$ downward directed)}
\item[(3)]
  For any $\Bb(x)$-member $V_x$ each of
  its points $y$ belongs to a $\Bb(y)$-member
  $Y_y$ contained in $V_x$, i.e. any $V\in\Bb$ is a
  union of $\Bb$-members.
  \hfill {\footnotesize\color{gray} ($\Bb\subset\Tt(\Bb)$)}
\end{itemize}
The\index{pre-basis on a set}\index{topology!pre-basis for a --}
family $\Bb(x)$ is called a \textbf{local pre-basis at \boldmath$x$},
the union $\Bb :=\bigcup_{x\in M}\Bb(x)$ of all of them
is called a \textbf{pre-basis on the set \boldmath$S$}.
The family of subsets
\[
     \Tt(\Bb)
     :=\{U\subset S\mid \text{for every $y\in U$ there is
     a $\Bb(y)$ member $Y_y\subset U$}\}
\]
is called the \textbf{topology generated by the pre-basis\boldmath}
$\Bb$ on the set $S$.
\end{definition}

\begin{exercise}\label{exc:jhj45667}
a)~Under conditions (1) and (2) show that $\Tt(\Bb)$ is a topology on $S$.\footnote{
  While $\Tt(\Bb)$ under conditions (1) and (2) is already a topology,
  only in combination with (3) every member of $\Bb(x)$ will be an
  open set -- a necessary condition for a local basis.
  }
From now on suppose in addition condition (3).
b)~Show $\Bb\subset\Tt(\Bb)$.
c)~For each point $x\in S$ show that $\Bb(x)$ is a local basis of $\Tt(\Bb)$ at $x$. 
(Hence $\Tt_\Bb=\Tt(\Bb)$ by Exercise~\ref{exc:neighbourhood-base}~(ii),
i.e. $\Bb$ is a basis of $\Tt(\Bb)$.)
\end{exercise}

The conditions in Definition~\ref{def:local-pre-basis} are related to
the theory of filters; see e.g.~\citet[\S 1.1.2]{Narici:2011a}.
See also~\citet[Thm.\,2.3.1]{Narici:2011a}.

%%% SUBSUBSECTION %%%%%%%%%%%%%%%%%%%
%%%%%%%%%%%%%%%%%%%%%%%%%%%%%%%%%%
\subsubsection{Convergence and continuity}

\begin{definition}[Convergence]\label{def:top-conv}
A subset sequence $(x_n)\subset M$ in a topological space
is\index{convergence in topological space}
said\index{$x_n\to z$ in topological space}
to \textbf{converge} to a point $z\in M$,
in symbols $x_n\to z$, if any open neighborhood $U_z$
of $z$ contains all but finitely many of the sequence members.\footnote{
  In symbols, there is $N\in\N$ such that $x_n\subset U_z$
  whenever $n\ge N$.
  }
\end{definition}

\begin{definition}[Continuity]
A map $f\colon M\to N$ between topological spaces is
called\index{continuous!at a point}
\textbf{continuous at a point} $\mbf{x}$ if the pre-image of any open
neighborhood $V_{f(x)}$ of the image point $f(x)$ contains an open
neighborhood $U_x$ of $x$.
A\index{map!continuous --}
\textbf{continuous map} is one that is continuous at every point
of its domain. Let $C(M,N)$ denote the set of continuous maps from $M$
to\index{$C(M,N)$ continuous maps from $M$ to $N$}
$N$.
\end{definition}

\begin{exercise}
a) A map $f\colon M\to N$ between topological spaces is continuous at $x$
iff
the pre-image of any open neighborhood $V_{f(x)}$ is open.
\\
b) A map $f$ is continuous iff pre-images of open sets are open.
\end{exercise}

%%% SUBSUBSECTION %%%%%%%%%%%%%%%%%%%
%%%%%%%%%%%%%%%%%%%%%%%%%%%%%%%%%%
\subsubsection{Hausdorffness and paracompactness}

A \textbf{\Index{cover}} of a topological space $(M,\Tt)$ is
a family of subsets of $M$ whose union is $M$.
The members (elements) of such family are called
the sets of the cover or simply the\index{cover!sets}
\textbf{cover sets}.
A cover is called\index{locally!finite}
\textbf{locally finite} if every point of $M$
admits an open neighborhood which meets (intersects) only finitely
many cover sets.
A cover is called a \textbf{\Index{refinement}} of another cover
if every member of the former is a subset of some member of the latter.
A cover $\Uu$ is called \textbf{open} if every cover set is open,
in symbols $\Uu\subset \Tt$.\index{open!cover}

\begin{definition}\label{def:HD-paracompact}
A topological space $M$ is called\index{Hausdorff (or $T_2$)}
\textbf{Hausdorff}\index{topological!space!Hausdorff (or $T_2$)}
or {\boldmath$T_2$}\index{$T_2$ -- Hausdorff topological space}
whenever \textbf{the topology separates points}:\index{separating points}
Any two points admit disjoint open neighborhoods.
Such a topology is called a\index{Hausdorff (or $T_2$)!topology}
\textbf{Hausdorff topology}.
If the topology separates any two closed sets, then $M$
is\index{$T_4$ -- normal topological space}
called\index{topological!space!normal (or $T_4$)}
\textbf{normal} or {\boldmath$T_4$}.

A topological space is called\index{topological!space!paracompact}
\textbf{\Index{paracompact}} if every open cover $\Uu$
admits a locally finite open refinement $\Vv$.
\end{definition}

\begin{exercise}[Hausdorff property]\label{exc:hausd-comp-closed}
Show the following.
\begin{itemize}
\item[a)]
  The Hausdorff property ($T_2$) is hereditary, normality ($T_4$) is not.\footnote{
    However, \emph{closed} subspaces of normal spaces are
    normal; cf.~\citet[Exc.\,8.1.25]{Muger:2016a}.
    }
\item[b)]
  In Hausdorff spaces points and, more generally, compact sets are closed.
  Thus normal implies Hausdorff.
  \hfill {\footnotesize\color{gray} ($T_4$ $\Rightarrow$ $T_2$)}
\item[c)]
  In Hausdorff spaces limits are unique:
  \[
     \text{$x_n\to y$\, and\, $x_n\to z$}\qquad\Rightarrow\qquad y=z.
  \]
\item[d)]
  Metric spaces are normal ($T_4$). (With respect to the metric topology.)

\end{itemize}
%\vspace{.1cm}\noindent %\newline
[Hints: a) Counter-example $T_4$~\citet[Cor.\,8.1.47]{Muger:2016a}.
b) Show the complement of a point is open. c) By contradiction
$y\not= z$. d)~\citet[Le.\,8.1.11]{Muger:2016a}.]
\end{exercise}

Whereas already Hausdorff by itself is useful to avoid pathological spaces
like a real line with two origins, for a Hausdorff space
paracompactness is equivalent to existence of a continuous partition
of unity subordinate to any given open cover.
For a concise presentation including proofs
we recommend~\citet[\S 2.2]{Cieliebak:2018a}.

%%% SUBSUBSECTION %%%%%%%%%%%%%%%%%%%
%%%%%%%%%%%%%%%%%%%%%%%%%%%%%%%%%%
\subsubsection{Surjections}

\begin{lemma}\label{le:cont-surj-dense}
Let $M_\infty$ be a dense subset of a topological space $M$.
Then the image of $M_\infty$ under any continuous surjection
$f\colon M\twoheadrightarrow Y$ is a dense subset $f(M_\infty)$ of the
target topological space~$Y$.
\end{lemma}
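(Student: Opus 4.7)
The plan is to verify density of $f(M_\infty)\subset Y$ directly from the definition: show that $f(M_\infty)$ meets every non-empty open subset $V\subset Y$. The argument will combine in a single step the three given hypotheses, one for each geometric object: surjectivity of $f$ to produce a pre-image point in $f^{-1}(V)$, continuity of $f$ to guarantee that $f^{-1}(V)$ is open in $M$, and density of $M_\infty$ in $M$ to intersect this open pre-image with $M_\infty$.

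More concretely, I would proceed as follows. Pick an arbitrary non-empty open $V\subset Y$. Since $f$ is surjective, there is some $x\in M$ with $f(x)\in V$, equivalently $x\in f^{-1}(V)$, so the pre-image is a non-empty subset of $M$. By continuity of $f$ the set $f^{-1}(V)$ is open in $M$, hence it is a non-empty open subset of $M$. Now density of $M_\infty$ in $M$ means (in the formulation recalled just above the lemma) that $M_\infty$ meets every non-empty open set; applying this to $f^{-1}(V)$ yields a point $y\in M_\infty\cap f^{-1}(V)$. Then $f(y)\in f(M_\infty)\cap V$, which shows $f(M_\infty)\cap V\neq\emptyset$. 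Since $V$ was an arbitrary non-empty open subset of $Y$, the subset $f(M_\infty)\subset Y$ is dense.

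There is no serious obstacle here; the only thing to watch is to use the correct characterization of density (``meets every non-empty open set'') rather than the closure formulation, since the former combines most smoothly with continuity via pre-images. Surjectivity is used exactly once, to ensure that the pre-image $f^{-1}(V)$ of a non-empty open $V\subset Y$ is itself non-empty; without it the statement fails, as one could take $Y$ strictly larger than $f(M)$.
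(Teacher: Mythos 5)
Your proof is correct and uses exactly the same three ingredients in the same way as the paper's proof, which merely phrases the identical argument as a proof by contradiction (assuming some non-empty open $V\subset Y$ misses $f(M_\infty)$ and deriving $f^{-1}V\cap M_\infty=\emptyset$); your direct formulation is, if anything, slightly cleaner.
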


\begin{proof}
Suppose by contradiction that there is a non-empty open set
$V\subset Y$ disjoint to $f(M_\infty)$.
Then\index{$f^{-1} V$ pre-image}
the \textbf{\Index{pre-image}}
\[
     f^{-1} V:=\{x\in M\mid f(x)\in V\}\subset 2^M
\]
is an open subset of $M$ by continuity of $f$
and non-empty as $f$ is surjective. But
\[
     f^{-1} V\cap M_\infty
     =f^{-1}(V\cap f(M_\infty))
     =f^{-1}\emptyset
     =\emptyset
\]
which contradicts density of $M_\infty$ in $M$.
\end{proof}

%%% SUBSUBSECTION %%%%%%%%%%%%%%%%%%%
%%%%%%%%%%%%%%%%%%%%%%%%%%%%%%%%%%
\subsubsection{Compact-open topology}

Let $C(M,N)$ be the set of continuous functions between topological spaces
$M$ and $N$. Any pair given by a compact subset $K\subset M$ and an
open subset $U\subset N$ determines a collection of continuous functions
\begin{equation}\label{eq:co-top-basic-coll-general}
     \Ff_{K,U}:=\{f\in C(M,N)\mid f(K)\subset U\}\in 2^{C(M,N)}.
%     \text{$K\subset M$ compact, $U\subset N$ open},
\end{equation}
Let $\Ff=\{\Ff_{K,U}\}_{K,U}\subset 2^{C(M,N)}$ be the family of all
such collections and denote by $\Ttco:=\Tt_{\Bb_\Ff}$ the associated
topology on the set $C(M,N)$; cf. Exercise~\ref{exc:sub-basis-top}.
It consists of arbitrary unions of finite
intersections of elements of $\Ff$.
One calls $\Ttco$ the 
\textbf{\Index{compact-open topology} on \boldmath$C(M,N)$},
cf.~\citet[Ex.\,2.6.9]{Narici:2011a}, 
notation\index{$C(M,N)$ continuous z@$\Cco(M,N)$ is $C(M,N)$ endowed with compact-open topology}
\begin{equation}\label{eq:Cco_c}
     \Cco(M,N):=\left(C(M,N),\Ttco\right).
\end{equation}

\begin{exercise}\label{exc:c-topology}
a) Show that $\Cco(M,N)$ is Hausdorff if the target $N$ is.
\\
b) For metric spaces $(N,d)$ convergence in $\Ttco$ is
equivalent to uniform convergence on compact sets: Show that
$f_n\to f$ in $\Ttco$ if and only if
\[
     d_\infty^K\left(f_n,f\right)
     :=\sup_{x\in K} d\left(f_n(x),f(x)\right)
     \to 0
\]
for every compact subset $K\subset M$.

\vspace{.1cm}\noindent %\newline
[Hints: a) \citet[Ch.\,XII]{Dugundji:1966a}
or \citet[Le.\,7.9.1]{Muger:2016a}.
b) Cf. Proposition~\ref{prop:norm-top=bo-top}.]
\end{exercise}

\begin{remark}[Only sub-basis]\label{rmk:only-sub-basis}
In general, the collections $\Ff_{K,U}$ do not form a basis
for the compact-open topology 
$$
     \Ttco:=\Tt_{\Bb_\Ff}
$$
in symbols $\Ff\subsetneq\Bb_\Ff$, in general.
Indeed it is not necessarily true that any non-empty intersection
\[
     \emptyset\not=\left(\Ff_{K_1,U_1}\CAP\Ff_{K_2,U_2}\right)
     \supset\Ff_{K,U}\not=\emptyset.
\]
contains a non-empty family member $\Ff_{K,U}\in\Ff$ (let alone one that
contains a given point; cf. Theorem~\ref{thm:basis-some-topology}).
Hence $\Ff$ cannot be a basis: Indeed if $\Ff$ was a basis, then the
non-empty LHS was open, hence a union of members of $\Ff$ -- at least
one of which non-empty. We encountered two basis counter-examples on
math.stackexchange.com:
  \\
  \textbf{Counter-example A.}\index{counter-examples:!sub-basis only}
  Let $M=N=\{a,b\}$ with the discrete topology $\Tt=2^M$
  and\index{topology!discrete --}
  let $K_1=U_1=\{a\}$ and $K_2=U_2=\{b\}$.
  Then $\Ff_{K_1,U_1}\CAP\Ff_{K_2,U_2}=\{\id_M\}$
  contains only one element, the identity map.
  The inclusion $\Ff_{K,U}\subset\Ff_{K_1,U_1}\CAP\Ff_{K_2,U_2}$
  implies $K\supset K_1\CUP K_2=M\not=\emptyset$, hence $K=M$.
  Thus non-emptiness of $\Ff_{K,U}$ requires $U\not=\emptyset$.
  But $\Ff_{M,U}$ is not a subset of, equivalently equal to, the singleton
  $\{\id_M\}$ in any of the three possibilities $U=\{a\},\{b\},\{a,b\}$.
  \\
  \textbf{Counter-example B.} $M=N=\R$ with the standard topology.
  One can show that there are no subsets $K\subset\R$ compact and
  $U\subset\R$ open such that
  \[
     \emptyset\not=\left(\Ff_{\{0,1\},(0,1)}\CAP\Ff_{\{1,2\},(0,2)}\right)
     \supset\Ff_{K,U}\not=\emptyset
  \]
  by constructing certain continuous functions subject to (non-linear)
  pointwise constraints.
\end{remark}

%%%%%%%%%%%%%%%%%%%%%%%%%%%%%%%%%%
%%% SUBSECTION %%%%%%%%%%%%%%%%%%%%%
%%%%%%%%%%%%%%%%%%%%%%%%%%%%%%%%%%
\subsection{Topological vector spaces}\label{sec:TVS}

For topological vector spaces and, most importantly, topologies
on the vector space of continuous linear maps between them we
recommend the books by~\citet{Rudin:1991b},~\citet[III \S 3]{Schaefer:1999a},~\citet[\S 2.6]{Narici:2011a} (here the additive topological
group is investigated first and scalar multiplication is superimposed
only from Ch.\,4 onward), and~\citet{Treves:1967a}.
There is a book of
counter-examples by~\citet[CH.\,2]{Khaleelulla:1982a}.
The present section was originally inspired by the excellent Lecture
Notes by Kai~\citet{Cieliebak:2018a}.

\begin{definition}\label{def:TVS}
A\index{topological!vector space}
\textbf{topological vector space} (TVS) is a vector space $X$
endowed\index{TVS!topological vector space}
with\index{topology!compatible with vector space operations}
a\index{compatible topology}
topology \textbf{compatible with the vector space operations}
in the sense that both scalar multiplication $\R\times X\to X$ and
addition $X\times X\to X$, are continuous maps.
Also it is required that points are
closed.\footnote{Many books on topological
vector spaces do not require closedness of points.}
\end{definition}

\begin{lemma}\label{le:inv-trans-dil}
For a TVS $X$ (without using closedness of points) it holds:
\begin{itemize}
\item[\rm (i)]
  The closure of a linear subspace is again a linear subspace.
\item[\rm (ii)]
  Given a vector $y\in X$ and a scalar $\alpha\in \R$,
  translation $y+\cdot\colon X\to X$ and dilation $\alpha\cdot\colon X\to X$
  are linear homeomorphisms. Consequence:

  \textrm{\bf Invariance under translation and dilation.}
  If $U$ is an open subset of $X$, then so are $x+U$ and $tU$ for all $x\in X$
  and $t\in\R\setminus \{0\}$.\footnote{
    Consequently the open sets containing $0$ determine
    all open sets, hence the topology.
    }
\item[\rm  (iii)]
  Any\index{neighborhood!symmetric --}\index{symmetric!neighborhood}
  open neighborhood $V$ of $0$ contains an open neighborhood
  $U$ of $0$ which is \textbf{symmetric} $(U=-U)$ and fits into $V$
  ``twice'' $(U+U\subset V)$.
\item[\rm  (iv)]
  \textrm{\bf Closed and compact subsets are separated in a strong sense.}
  For any closed set $C$ and any disjoint compact set $K$
  there is an open neighborhood $U_0$ of $0$ such that the
  open neighborhoods $C+U_0$ of $C$   and $K+U_0$ of $K$
  are still disjoint,\footnote{
    Disjointness remains true if one takes
    the closure of either $C+U_0$ or of $K+U_0$.
    }
  in symbols $(C+U_0)\CAP (K+U_0)=\emptyset$.
\end{itemize}
\end{lemma}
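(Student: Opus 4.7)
The plan is to treat the four parts in order, each building on the previous, with part (iv) being the main obstacle where the interplay between closedness, compactness, and the additive structure must be orchestrated carefully.

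For (i), I would exploit that the map $m_{\alpha,\beta}\colon X\times X\to X$, $(x,y)\mapsto \alpha x+\beta y$, is continuous (continuous addition composed with continuous dilations). If $M\subset X$ is a linear subspace, then $m_{\alpha,\beta}^{-1}(\bar M)$ is closed in $X\times X$ and contains $M\times M$; since the product topology gives $\overline{M\times M}=\bar M\times \bar M$, we conclude $m_{\alpha,\beta}(\bar M\times \bar M)\subset \bar M$, proving $\bar M$ is a linear subspace. For (ii), translation $T_y:=y+\cdot$ has two-sided inverse $T_{-y}$, both continuous by continuity of addition, so $T_y$ is a homeomorphism; similarly dilation $D_\alpha$ for $\alpha\ne 0$ has inverse $D_{1/\alpha}$. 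Openness of $x+U$ and $tU$ is then just the image of an open set under a homeomorphism.

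For (iii), continuity of addition $+\colon X\times X\to X$ at $(0,0)\mapsto 0$ gives an open neighborhood of $(0,0)$ inside $+^{-1}(V)$, which contains a basic product $W_1\times W_2$ of open $0$-neighborhoods. Setting $W:=W_1\cap W_2$ yields an open $0$-neighborhood with $W+W\subset V$. To symmetrize, put $U:=W\cap(-W)$; this is open since $-W$ is open by (ii), it contains $0$, it satisfies $U=-U$, and $U+U\subset W+W\subset V$.

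The core difficulty lies in (iv); I plan a standard three-step-neighborhood argument. If $K=\emptyset$ take $U_0:=X$, so assume $K\ne\emptyset$. For each $k\in K$ the complement $X\setminus C$ is an open neighborhood of $k$, so by (ii) the set $-k+(X\setminus C)$ is an open neighborhood of $0$. Iterating (iii) three times produces a symmetric open $0$-neighborhood $W_k$ with $W_k+W_k+W_k\subset -k+(X\setminus C)$, equivalently $(k+W_k+W_k+W_k)\cap C=\emptyset$. The open sets $\{k+W_k\}_{k\in K}$ cover the compact set $K$, so finitely many $\{k_i+W_{k_i}\}_{i=1}^n$ suffice; set $U_0:=\bigcap_{i=1}^n W_{k_i}$, still an open symmetric $0$-neighborhood. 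To finish I would argue by contradiction: if $c+u=k+u'$ with $c\in C$, $k\in K$, $u,u'\in U_0$, pick $i$ with $k=k_i+w$, $w\in W_{k_i}$, so $c=k_i+w+u'-u\in k_i+W_{k_i}+U_0+(-U_0)\subset k_i+W_{k_i}+W_{k_i}+W_{k_i}$, contradicting disjointness from $C$. The hard part, as usual in TVS arguments, is recognizing that three copies of the neighborhood (not two) are needed so that the ``slack'' on both sides of $c=k+(u'-u)$ fits inside the single neighborhood $W_{k_i}$ assigned to $k$; the closedness of points in Definition~\ref{def:TVS} is not needed here, only continuity of the operations and compactness of $K$.
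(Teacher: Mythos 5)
Your proposal is correct and follows essentially the same route as the paper: your part (iii) is verbatim the paper's argument ($U:=\tilde U\cap(-\tilde U)$ after shrinking the product neighborhood of $(0,0)$), and for (i), (ii), (iv) the paper simply cites Narici--Beckenstein and Rudin, whose standard proofs you have reproduced (your three-neighborhood compactness argument for (iv) is exactly Rudin's Theorem~1.10). The only cosmetic remark: two applications of (iii), not three, already give $W+W+W\subset V$, but taking a third does no harm.
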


\begin{proof}
(i) \citet[Thm.\,4.4.1]{Narici:2011a}.
(ii) \citet[Thm.\,4.3.1]{Narici:2011a}.
(iii) By continuity of addition and as $0+0=0\in V$
there are open sets $W\ni 0$ and $\tilde W\ni 0$
with $W+\tilde W\subset V$. The open set $\tilde U:=W\CAP\tilde W$
satisfies $\tilde U+\tilde U\subset V$.
The open~set $U:=\tilde U\CAP -\tilde U$ is symmetric
and $U+U\subset \tilde U+\tilde U\subset V$.
(iv) \citet[Thm.\,1.10]{Rudin:1991b}.
\end{proof}

Because of the requirement that points of a TVS are closed,
part (iv) of the previous lemma applies to
$C=\{x\}$ and $K=\{y\}$ and yields disjoint
open neighborhoods of any two points $x\not= y$ of $X$.
This proves

\begin{corollary}\label{cor:TVS-Hausdorff}
A TVS is Hausdorff.
\end{corollary}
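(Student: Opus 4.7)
The plan is to deduce the Hausdorff property as an immediate consequence of the strong separation Lemma~\ref{le:inv-trans-dil}~(iv), exploiting the axiom in Definition~\ref{def:TVS} that points are closed.

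First, I would pick two distinct points $x \neq y$ in the TVS $X$. The singleton $\{x\}$ is closed by the standing assumption on TVS, and the singleton $\{y\}$ is compact (as any finite subset of a topological space is compact). Since $x \neq y$, the sets $C := \{x\}$ and $K := \{y\}$ are disjoint, so Lemma~\ref{le:inv-trans-dil}~(iv) applies and produces an open neighborhood $U_0$ of $0$ such that $(C + U_0) \cap (K + U_0) = \emptyset$.

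Next, I would observe that by Lemma~\ref{le:inv-trans-dil}~(ii), translation is a homeomorphism, so $C + U_0 = x + U_0$ is an open neighborhood of $x$ and $K + U_0 = y + U_0$ is an open neighborhood of $y$. These neighborhoods are disjoint by construction, which is exactly the Hausdorff separation condition for the two chosen points. Since $x, y$ were arbitrary distinct points of $X$, this establishes that $X$ is Hausdorff.

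There is no real obstacle here; the entire content has been front-loaded into Lemma~\ref{le:inv-trans-dil}~(iv) and the closed-points axiom. The only thing to check is that one genuinely needs the closed-points hypothesis (which is why Corollary~\ref{cor:TVS-Hausdorff} is stated separately from Lemma~\ref{le:inv-trans-dil}, whose proof explicitly notes it does \emph{not} use closedness of points). Without it, one could apply (iv) only to closed $C$ and could not take $C = \{x\}$.
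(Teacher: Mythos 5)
Your proof is correct and follows exactly the paper's own argument: the text preceding the corollary applies Lemma~\ref{le:inv-trans-dil}~(iv) with $C=\{x\}$ (closed by the axiom on points) and $K=\{y\}$ (compact), obtaining disjoint open neighborhoods. Your additional remarks — that $\{y\}$ is compact as a finite set and that $x+U_0$ is open by translation invariance — merely make explicit what the paper leaves implicit.
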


\begin{definition}\label{def:TVS-linear-maps}
(i) A subset $A$ of a TVS\index{bounded!subset of TVS}
is called a \textbf{bounded set}\index{subset!bounded --}
if for each open neighborhood $U\subset X$ of $0$
there is a constant $s>0$ such that $A\subset t U$
is contained in the rescaled neighborhood \emph{for all}\,\footnote{
  If $A\subset t U$ for some $t$, isn't the inclusion automatically
  true for all larger values of $t$?
  }
parameters $t>s$.

(ii) A linear map $T\colon X\to Y$\index{TVS!bounded linear map between --}
between\index{bounded!linear map between TVS's}
topological vector spaces is called \textbf{bounded}
if it takes bounded sets to bounded sets
and it is called \textbf{compact}\index{compact!linear operator}
if it takes bounded sets to\index{pre-compact!set}
\Index{pre-compact set}s (compact closure).
\end{definition}

\begin{exercise}[Bounded sets]
Subsets of a bounded set are clearly bounded.
If $A$ and $B$ are bounded sets, so are
$A\CUP B$, $A+B$ and $\alpha A$ whenever $\alpha\in\R$.

\vspace{.1cm}\noindent %\newline
[Hint: If you get stuck consult~\citet[I\,\S\,5.1]{Schaefer:1999a}.]
\end{exercise}

\begin{lemma}
In a TVS $X$ compact subsets are closed and bounded,
whereas the reverse holds iff $\dim X<\infty$.
\end{lemma}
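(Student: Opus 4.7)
The plan is to split the lemma into three claims and handle them separately.

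\textbf{Closedness.} Any compact subset of a Hausdorff space is closed by Exercise~\ref{exc:hausd-comp-closed}(b), and a TVS is Hausdorff by Corollary~\ref{cor:TVS-Hausdorff}; so this first assertion is immediate.

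\textbf{Boundedness.} Given a compact $K\subset X$ and an open neighborhood $U$ of $0$, I would first replace $U$ by a balanced open neighborhood $V\subset U$ of $0$, then absorb $K$ into a single dilate of $V$. To construct $V$, apply continuity of scalar multiplication at $(0,0)\in\R\times X$ to pick $\delta>0$ and an open $W\ni 0$ with $\alpha W\subset U$ whenever $\abs{\alpha}\le\delta$; then set $V:=\bigcup_{0<\abs{\alpha}\le\delta}\alpha W$, which is open (each $\alpha W$ is open by Lemma~\ref{le:inv-trans-dil}(ii)), contains $0$, lies in $U$, and is balanced, so in particular $V\subset tV$ for all $\abs{t}\ge 1$. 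Next, continuity of $\alpha\mapsto\alpha x$ at $\alpha=0$ yields $X=\bigcup_{n\in\N}nV$; compactness of $K$ extracts a finite sub-cover which collapses, using $V\subset tV$, to a single $NV$, so that $K\subset tV\subset tU$ for every $t>N$.

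\textbf{Characterization.} The direction $\dim X<\infty\Rightarrow$ (closed plus bounded implies compact) follows from the classical structure theorem that every Hausdorff finite-dimensional TVS is linearly homeomorphic to $\R^n$ with its standard topology (proved by induction on dimension using continuity of linear forms together with the TVS compatibility axioms), combined with Heine--Borel in $\R^n$. For the converse I would argue contrapositively: assuming $\dim X=\infty$, produce a closed bounded set which fails to be compact. The classical tool is Riesz's lemma: for any proper closed subspace $Y\subsetneq X$ and $\theta\in(0,1)$ there exists a unit vector $x\in X$ with $\dist(x,Y)\ge\theta$. Iterating this on the ascending tower of finite-dimensional subspaces spanned by previously chosen vectors produces a sequence $(x_n)$ in the closed unit ball with pairwise distance at least $1/2$, hence with no convergent subsequence.

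\textbf{Main obstacle.} Riesz's lemma is naturally phrased in normed spaces, where ``unit ball'' and ``distance'' have unambiguous meaning. To run the converse in a general TVS one must substitute a bounded balanced neighborhood $V$ of $0$ for the unit ball and use an absorbing gauge of $V$ in place of the norm distance; this works cleanly in locally bounded (in particular normed) spaces, but requires more care in full TVS generality. Everything else in the proof is a routine unwinding of definitions, so I expect the production and the correct invocation of Riesz's lemma in the appropriate generality to be the only substantive point.
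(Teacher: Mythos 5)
Your treatment of closedness and of boundedness is correct and is exactly what the paper's two citations deliver: compactness implies closedness via Hausdorffness (Corollary~\ref{cor:TVS-Hausdorff} plus Exercise~\ref{exc:hausd-comp-closed}~b)), and your balanced-neighborhood absorption argument is the standard proof of \citet[Thm.\,1.15~b)]{Rudin:1991b}, which is the paper's second reference. The finite-dimensional direction of the characterization, via the structure theorem and Heine--Borel, is also fine.

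The obstacle you flag in the converse, however, is not a technicality to be handled ``with more care'': it is fatal in full TVS generality, because the ``only if'' direction of the statement is false for general topological vector spaces. Infinite-dimensional Montel spaces -- for instance $C^\infty(\SS^1)$ with its usual Fr\'{e}chet topology -- and infinite-dimensional reflexive Banach spaces equipped with their weak topologies are Hausdorff TVS in which every closed bounded set \emph{is} compact (in the weak-topology example: bounded sets are norm-bounded by uniform boundedness, hence contained in a weakly compact ball by Kakutani's theorem, and weakly closed subsets of weakly compact sets are weakly compact). Consequently no variant of Riesz's lemma can produce the required non-compact closed bounded set in an arbitrary infinite-dimensional TVS; the lemma genuinely needs a norm, or at least a bounded neighborhood of $0$ (local boundedness), to define the ``unit ball'' and the gauge you would iterate against. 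The statement that is true in full generality, and the one \citet[Thm.\,1.22]{Rudin:1991b} actually proves, is that a TVS is \emph{locally compact} if and only if it is finite-dimensional; the Heine--Borel-style characterization you are asked to prove is valid for normed (more generally, locally bounded) spaces. Note that the paper's own proof cites Rudin only for the forward half of the lemma and offers no argument for the converse, so it does not resolve this issue either; your proposal should either restrict the converse to locally bounded spaces (where your Riesz-lemma iteration goes through verbatim) or replace ``the reverse holds'' by ``$X$ is locally compact.''
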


\begin{proof}
Exercise~\ref{exc:hausd-comp-closed}~b)
and \citet[Thm.\,1.15~b)]{Rudin:1991b}.
\end{proof}

%%% SUBSUBSECTION %%%%%%%%%%%%%%%%%%%
%%%%%%%%%%%%%%%%%%%%%%%%%%%%%%%%%%
\subsubsection{Spaces of linear maps as topological vector spaces
 -- $\SSfrak$-topologies}
Given\index{$\Ll(X,Y$ cont.lin.ops.}
topological vector spaces $X$ and $Y$, the set
\[
     \Ll(X,Y)
\]
of all continuous linear operators $T\colon X\to Y$
is a vector space under addition of two operators $T,S\in\Ll(X,Y)$,
defined by $(T+S)x:=Tx+Sx$, and scalar multiplication
with real numbers $\alpha\in\R$, defined by
$(\alpha T)x:= \alpha Tx$, both whenever $x\in X$.

We will review the standard abstract machinery that produces various
topologies on $\Ll(X,Y)$ for which both operations are continuous,
see e.g.~\citet[\S 11.2]{Narici:2011a} or~\citet[III.3]{Schaefer:1999a}.
For some of them points $T$ are closed,
so the operator space $\Ll(X,Y)$ endowed with such topology is a TVS:
An example is one of the most popular topologies, namely,
the\index{topology!c- --}\index{c-topology!is the compact-open topology}
compact-open topology or \textbf{c-topology} on $\Ll(X,Y)$.
Replacing the family of compact sets by any non-empty
family of bounded sets closed under finite unions
still guarantees that the generated topology is compatible
with addition and scalar multiplication.
Hausdorffness might be lost if the sets in the family are not any more
compact, but it can be recovered by assumptions on $Y$,
e.g. being normed.
\\
Actually all one needs are topological spaces $M$ and $N$;
cf. Exercise~\ref{exc:c-topology}.
How one arrives at the c-topology by generalizing a natural
construction which provides the point-open topology, or \textbf{p-topology},
is\index{topology!p- --}\index{p-topology!is the point-open topology}
nicely explained in~\citet[\S\,7.9.1]{Muger:2016a}.

\begin{exercise}\label{exc:cont-at-0}
Let $T\colon X\to Y$ be a linear map between topological vector spaces.
(i) Show that $T$ is continuous iff it is\index{continuous!at $0$}
\textbf{continuous at \boldmath$0$},
meaning that the pre-image of every open neighborhood of $0$ is open.
(ii) Show that continuity implies boundedness of $T$. (The reverse
holds if the domain $X$ is a Fr\'{e}chet space.)
\end{exercise}

Let $X$\index{$\SSfrak$-family}
and $Y$ be topological vector spaces.
Let $\mbf{\SSfrak}\subset 2^X$ be a non-empty family of
subsets $A$ of $X$, closed under finite unions, that is 
\[
     A_1,\dots,A_k\in \SSfrak\qquad\Rightarrow\qquad
     A_1\CUP\dots\CUP A_k\in\SSfrak.
\]
Examples are the families
\[
     \text{$\SSfrak_{\rm p}$ / $\SSfrak_{\rm c}$ / $\SSfrak_{\rm b}$
     $=\{$all finite-\underline{p}oint / \underline{c}ompact /
     \underline{b}ounded subsets of $X\}$.
     }
\]

\begin{definition}[Basic collections]\label{def:basic-collections}
For $A\in \SSfrak\subset 2^X$ and any element $U$ of the \textbf{family
\boldmath$\Uu_0$ of open neighborhoods of $0$ in $Y$} consider the collection
$\Bb_{A,U}$ of all continuous linear operators which map $A$ into $U$, in symbols
\begin{equation}\label{eq:basic-collection}
     \Bb_{A,U}:=\{T\in\Ll(X,Y)\mid T(A)\subset U\}\in 2^{\Ll(X,Y},\quad
     A\in\SSfrak,\,U\in\Uu_0
\end{equation}
Collections of the form $\Bb_{A,U}$ are called\index{basic!collection}
\textbf{basic collections}.
\end{definition}

\begin{lemma}\label{le:BbSU-family}
a) Any basic collection $\Bb_{A,U}\ni 0$ contains the zero operator.\\
b) Any intersection $\Bb_{12}:=\Bb_1\CAP \Bb_2$ of two basic collections
contains one, i.e.
\begin{equation}\label{eq:Bb-not}
     \Bb_3\subset\left(\Bb_1\CAP \Bb_2\right)\subset\Ll(X,Y),
     \quad \Bb_i:=\Bb_{A_i,U_i}
\end{equation}
for some $A_3\in\SSfrak\subset 2^X$ and some open origin neighborhood
$U_3\in\Uu_0\subset 2^Y$.
\\
c) If $U+U\subset V$, then $\Bb_{A,U}+\Bb_{A,U}\subset \Bb_{A,V}$.
\\
d) If $r\in\R\setminus\{0\}$, then $r\Bb_{A,U}=\Bb_{r^{-1}A,U}=\Bb_{A,rU}$.
\end{lemma}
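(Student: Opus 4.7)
The plan is to verify each of the four assertions directly from the definition~(\ref{eq:basic-collection}) of the basic collections $\Bb_{A,U}$, using only linearity of the operators and the fact that the family $\SSfrak$ is closed under finite unions while $\Uu_0$ is stable under finite intersection (as finite intersections of open $0$-neighborhoods are again open $0$-neighborhoods).

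For part~a) I would note that the zero operator $0\colon X\to Y$ sends every subset, in particular $A\in\SSfrak$, to $\{0\}$, which lies in any $U\in\Uu_0$ because $U$ is an open neighborhood of the origin. Hence $0\in\Bb_{A,U}$ irrespective of the choice of $(A,U)$.

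For part~b) I would take the explicit witness $A_3:=A_1\CUP A_2$ and $U_3:=U_1\CAP U_2$. The first lies in $\SSfrak$ by the closure-under-finite-unions hypothesis and the second is an open $0$-neighborhood in $Y$. If $T\in\Bb_{A_3,U_3}$, then $T(A_1)\subset T(A_3)\subset U_3\subset U_1$ and likewise $T(A_2)\subset U_2$, so $T\in\Bb_{A_1,U_1}\CAP\Bb_{A_2,U_2}$, which gives the claimed inclusion. Part~c) is the shortest: for $T,S\in\Bb_{A,U}$ and $a\in A$, linearity gives $(T+S)(a)=T(a)+S(a)\in U+U\subset V$, so $T+S\in\Bb_{A,V}$.

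Part~d) is the only place where a genuine equality of sets must be shown, and it is the step most prone to sign-of-scalar slips; this is the only mild obstacle. I would prove the two equalities in a chain of equivalences based on the linearity identity $T(r^{-1}A)=r^{-1}T(A)$, which uses $r\neq 0$. Concretely, $T\in r\Bb_{A,U}$ iff $T=rS$ with $S(A)\subset U$, iff $T(A)\subset rU$, iff $T\in\Bb_{A,rU}$; and $T\in\Bb_{r^{-1}A,U}$ iff $T(r^{-1}A)=r^{-1}T(A)\subset U$, iff $T(A)\subset rU$, iff $T\in\Bb_{A,rU}$. Chaining these equivalences yields the triple equality. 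No topological input beyond the definitions is needed for any of the four parts; the real content of the lemma is to set up the combinatorics that will allow the basic collections $\Bb_{A,U}$ to serve as a neighborhood base at $0$ for the $\SSfrak$-topology on $\Ll(X,Y)$ in the subsequent development.
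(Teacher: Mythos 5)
Your proof is correct and follows essentially the same route as the paper's (very terse) proof: the same witness $\Bb_{A_1\CUP A_2,\,U_1\CAP U_2}$ in part b), the same one-line linearity computation in part c), and the same identity $(rT)(r^{-1}A)=T(A)$ underlying part d). You have merely written out the verifications that the paper leaves implicit, so there is nothing to correct.
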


\begin{proof}
Let $T,S\in\Bb_{A,U}$.
a) Obvious. b) $\Bb_{A_1\CUP A_2, U_1\CAP U_2}$.
c) $\forall a\in A\colon$ $(T+S)a=Ta+Sa\in U+U\subset V$.
d) $(rT)(r^{-1} A)\subset U$ and $(rT)(A)\subset rU$.
\end{proof}

We\index{$\Bb(0)$ family of all basic collections $\Bb_{A,V}$}
denote by $\Bb(0)$ the\index{family!of all basic collections}
\textbf{family of all basic collections},\footnote{
  A collection of non-empty sets such that the intersection
  of any two of them contains another one is called a 
  \textbf{\Index{filter base}}.
  So $\Bb(0)$ is a filter base and so is each translate $\Bb(x)$.
  }
in symbols
\[
     \Bb(0)
     :=\{\Bb_{A,U}\mid \text{$A\in\SSfrak$, $U\in\Uu_0$}\}
     \subset 2^{\Ll(X,Y)}.
\]
The notation reminds us that each element $\Bb_{A,U}$ of $\Bb(0)$
contains the zero operator. For $T\in\Ll(X,Y)$ let $\Bb(T):=T+\Bb(0)$
be the translated family. We denote by
\begin{equation*}%\label{eq:Bb-2443}
     \Bb=\Bb_\SSfrak^{\Uu_0}
     :=\bigcup_{T\in\Ll(X,Y)} T+\Bb(0)\quad \subset 2^{\Ll(X,Y)}
\end{equation*}
the family of all translated basic collections.

\begin{theorem}\label{thm:Bb-basis}
Let $X$ and $Y$ be topological vector spaces. Let $\Uu_0\subset 2^Y$
be the collection of open sets containing the origin of $Y$.
Suppose $\SSfrak\subset 2^X$ is a non-empty family
of \underline{bounded}\,\footnote{
  Boundedness leads to $\Tt_\SSfrak$-continuity of ``$+$'' and scalar
  multiplication on $\Ll(X,Y)$.
  }
subsets $A$ of $X$ which is closed under finite unions. Then the
following is true (not using closedness of points in $X,Y$).
\begin{labeling}{\rm (local basis)}
\item[\rm (local basis)]
  The family $\Bb(0)$ of all basic collections $\Bb_{A,U}\subset\Ll(X,Y)$
  forms a local basis at $0$ of a topology $\Tt_{\SSfrak}$ on $\Ll(X,Y)$
  for which addition and scalar multiplication are
  continuous; cf. Remark~\ref{rmk:TVS-1}.
\item[\rm (basis)]
  The family $\Bb=\Bb_\SSfrak^{\Uu_0}$ is a basis for
  a topology $\Tt_\SSfrak$ on $\Ll(X,Y)$.
\end{labeling}
The\index{$\LlSSfrak(X,Y)$}\index{topology!$\SSfrak$- --}
topology\index{$\SSfrak$-topology on $\Ll(X,Y)$}
$\Tt_\SSfrak$ on $\Ll(X,Y)$,
called \textbf{\boldmath$\SSfrak$-topology}, is
\begin{itemize}
\item[--]
    Hausdorff whenever the linear span of $\bigcup\SSfrak$ is dense in
    $X$ and if $Y$ is Hausdorff;
\item[--]
    locally convex whenever $Y$ is.
\end{itemize}
\end{theorem}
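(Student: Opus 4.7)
The plan is to verify that $\Bb(0)$ behaves like a local pre-basis at the origin in the sense of Definition~\ref{def:local-pre-basis}, then translate by elements of $\Ll(X,Y)$ to produce pre-bases at every point, and finally invoke Exercise~\ref{exc:jhj45667} to conclude that $\Bb = \Bb_\SSfrak^{\Uu_0}$ is a basis for a translation-invariant topology $\Tt_\SSfrak$. Condition~(1) of the pre-basis definition follows from Lemma~\ref{le:BbSU-family}~(a), namely $0 \in \Bb_{A,U}$; condition~(2) is exactly Lemma~\ref{le:BbSU-family}~(b); condition~(3) at a non-zero base point $T$ reduces, after the translation $T \mapsto 0$, to the same statement at the origin, so the construction is intrinsically translation-invariant and all three conditions at an arbitrary $T$ follow from the corresponding facts at $0$.

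Next I would establish that addition and scalar multiplication are continuous with respect to $\Tt_\SSfrak$. For addition, given a basic neighborhood $T_0 + S_0 + \Bb_{A,V}$ of a sum $T_0+S_0$, use Lemma~\ref{le:inv-trans-dil}~(iii) to pick a symmetric open neighborhood $U$ of $0 \in Y$ with $U + U \subset V$; then Lemma~\ref{le:BbSU-family}~(c) gives $(T_0 + \Bb_{A,U}) + (S_0 + \Bb_{A,U}) \subset T_0 + S_0 + \Bb_{A,V}$, which is the required continuity at $(T_0, S_0)$. For scalar multiplication at $(r_0, T_0)$, I decompose $rT - r_0 T_0 = r(T - T_0) + (r - r_0)T_0$. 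Because $T_0$ is continuous linear it is bounded (Exercise~\ref{exc:cont-at-0}~(ii)), so $T_0(A)$ is a bounded subset of $Y$; therefore for any neighborhood $V$ of $0 \in Y$ there is $\epsilon > 0$ with $(r-r_0)T_0(A) \subset V/2$ whenever $|r-r_0| < \epsilon$, where $V/2$ abbreviates a symmetric $W$ with $W+W \subset V$. For the first summand I choose $W' \in \Uu_0$ with $(|r_0|+\epsilon)W' \subset V/2$ (possible by invariance under dilation, Lemma~\ref{le:inv-trans-dil}~(ii)), so that $T-T_0 \in \Bb_{A,W'}$ together with $|r| < |r_0|+\epsilon$ forces $r(T-T_0) \in \Bb_{A,V/2}$. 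Combining both pieces via Lemma~\ref{le:BbSU-family}~(c) lands $rT$ in $r_0 T_0 + \Bb_{A,V}$.

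Finally I would treat the Hausdorff and local convexity assertions. For Hausdorffness, by translation-invariance it suffices to show that $\bigcap \Bb(0) = \{0\}$; given a non-zero $T \in \Ll(X,Y)$, density of the linear span of $\bigcup \SSfrak$ together with linearity and continuity of $T$ implies that $T$ cannot vanish on every $a \in \bigcup \SSfrak$, so there exist $A \in \SSfrak$ and $a \in A$ with $Ta \neq 0$; Hausdorffness of $Y$ then supplies $U \in \Uu_0$ with $Ta \notin U$, whence $T \notin \Bb_{A,U}$. For local convexity, one chooses the $U \in \Uu_0$ from a local convex base of $Y$ at the origin; convexity of $\Bb_{A,U}$ is immediate from $\lambda T_1(A) + (1-\lambda)T_2(A) \subset \lambda U + (1-\lambda) U \subset U$, so the resulting sub-family is still a local basis of $\Tt_\SSfrak$ at $0$ consisting of convex sets.

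The main obstacle I anticipate is the continuity of scalar multiplication, which is precisely where the boundedness hypothesis on the sets in $\SSfrak$ is essential: without it the set $T_0(A)$ need not be absorbed by an arbitrary $V \in \Uu_0$, and the estimate controlling $(r-r_0)T_0$ collapses. A minor secondary point that deserves careful bookkeeping is verifying condition~(3) of the pre-basis axioms; although morally trivial by translation-invariance, it must be checked that every $S \in T + \Bb_{A,U}$ lies in some $S + \Bb_{A',U'} \subset T + \Bb_{A,U}$, which reduces via the substitution $S = T + T'$ to showing $T' + \Bb_{A,U'} \subset \Bb_{A,U}$ for some $U' \in \Uu_0$, and this follows from Lemma~\ref{le:BbSU-family}~(c) applied to a symmetric $U'$ with $T'(A) + U' \subset U$.
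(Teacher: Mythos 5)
The paper offers no proof of this theorem---it only cites \citet[Thm.\,11.2.2]{Narici:2011a}---so there is no argument of the paper to compare yours against. Your verification of the filter-base properties via Lemma~\ref{le:BbSU-family}, your proof of continuity of the vector operations (where you correctly isolate boundedness of the members of $\SSfrak$ as the ingredient that lets $(r-r_0)T_0(A)$ be absorbed into any $V\in\Uu_0$), and your Hausdorff and local-convexity arguments are the standard ones and are essentially sound.

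There is, however, a genuine gap at the step you yourself flag as ``morally trivial'': condition~(3) of Definition~\ref{def:local-pre-basis}. You claim that for $T'\in\Bb_{A,U}$ one can choose a symmetric $U'\in\Uu_0$ with $T'(A)+U'\subset U$. But $T'(A)$ is only a \emph{bounded} subset of $U$, not a compact one, and a bounded subset of an open set need not admit such a uniform thickening (Lemma~\ref{le:inv-trans-dil}(iv) requires compactness). Concretely, take $X=Y=\R$, $A=U=(-1,1)$ and $T'=\id$: then $T'(A)=(-1,1)$ and no neighborhood $U'$ of $0$ satisfies $(-1,1)+U'\subset(-1,1)$. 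Worse, the conclusion you are after is false here: $\Bb_{A,U}=\{t\in\R:\ |t|\le 1\}=[-1,1]$, and since every $\Bb_{A',U'}$ contains an interval $(-\delta,\delta)$, no translate $1+\Bb_{A',U'}$ is contained in $[-1,1]$; so this basic set is not open in the generated topology (consistently with Proposition~\ref{prop:norm-top=bo-top}, which identifies $\Tt_{\SSfrak_{\rm b}}$ on $\Ll(\R,\R)=\R$ with the standard topology, in which $[-1,1]$ is not open). The sets $\Bb_{A,U}$ form a base of \emph{neighborhoods} of $0$---a filter base generating the neighborhood filter---but not in general a base of \emph{open} sets, which is how the cited source formulates the result. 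To repair the argument, define $\Tt_\SSfrak$ from conditions (1)--(2) alone via Exercise~\ref{exc:jhj45667}(a), or pass to the interiors of the $\Bb_{A,U}$, and read the ``(local basis)''/``(basis)'' assertions in the weaker neighborhood-base sense.
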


\begin{proof}
See e.g.~\citet[Thm.\,11.2.2]{Narici:2011a}.
\end{proof}

By Exercise~\ref{exc:gguy676}\index{TVS!locally convex --}
any normed vector space $Y$ is a\index{locally!convex TVS}
\textbf{locally convex TVS},
i.e. a TVS such that any neighborhood of $0$ contains a
convex\footnote{
  A\index{convex set}
  subset $C$ of a vector space is called a \textbf{convex set} if $C$
  contains every line segment $\{tx+(1-t)y\mid t\in[0,1]\}$ connecting two
  of its points $x,y\in C$.
  }
one.
\\
In contrast to the basis property of $\Bb$ in the \emph{linear} setting,
recall from Remark~\ref{rmk:only-sub-basis}
that in the general case of topological \emph{spaces}
even for the family of \emph{compact} subsets
the basic collections do not form a basis, only a sub-basis.

\begin{corollary}\label{cor:TVS}
Let $X$ and $Y$ be topological vector spaces.
If $\SSfrak$ covers $X$
(e.g. if $\SSfrak=\SSfrak_{\rm p},\SSfrak_{\rm c},\SSfrak_{\rm b}$),
then $\LlSSfrak(X,Y):=\left(\Ll(X,Y),\Tt_\SSfrak\right)$ is a TVS.
\end{corollary}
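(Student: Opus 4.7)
The plan is to observe that Corollary~\ref{cor:TVS} is essentially a bookkeeping consequence of Theorem~\ref{thm:Bb-basis} together with the definition of TVS adopted in the paper. By Theorem~\ref{thm:Bb-basis} the $\SSfrak$-topology $\Tt_\SSfrak$ on $\Ll(X,Y)$ already makes addition and scalar multiplication continuous, and this conclusion uses no covering hypothesis on $\SSfrak$. Consequently the only axiom of Definition~\ref{def:TVS} left to verify is closedness of points.

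To secure closedness of points I would first recall, via Exercise~\ref{exc:hausd-comp-closed}~b), that in a Hausdorff space singletons are closed; so it suffices to show that $\Tt_\SSfrak$ is Hausdorff. For this I would invoke the Hausdorff clause of Theorem~\ref{thm:Bb-basis}, whose two hypotheses are (i) density in $X$ of the linear span of $\bigcup\SSfrak$ and (ii) Hausdorffness of $Y$. Hypothesis~(i) is immediate from the standing assumption of the corollary: if $\SSfrak$ covers $X$ then $\bigcup\SSfrak=X$, so its linear span is $X$ itself, a fortiori dense in $X$. Hypothesis~(ii) follows from Corollary~\ref{cor:TVS-Hausdorff}, since $Y$ is given as a TVS and every TVS is Hausdorff.

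Putting the pieces together, $\LlSSfrak(X,Y)$ is Hausdorff, hence has closed points, and combined with the already established continuity of the vector space operations this yields the TVS property. I do not anticipate any real obstacle: the content of the corollary is purely that the covering hypothesis on $\SSfrak$ triggers the density condition in Theorem~\ref{thm:Bb-basis}. As a sanity check one sees that each of the three sample families $\SSfrak_{\rm p},\SSfrak_{\rm c},\SSfrak_{\rm b}$ covers $X$, because every point $x\in X$ lies in the singleton $\{x\}$, which is finite, compact, and bounded, so the corollary applies in all three cases listed.
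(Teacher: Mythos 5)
Your proposal is correct and is precisely the argument the paper intends: the corollary is stated as an immediate consequence of Theorem~\ref{thm:Bb-basis}, with the covering hypothesis supplying the density of $\mathrm{span}\bigcup\SSfrak=X$ needed for the Hausdorff clause, and Corollary~\ref{cor:TVS-Hausdorff} supplying Hausdorffness of $Y$; closedness of points then follows and the vector operations are already continuous by the theorem. The only point worth making explicit is that the three sample families consist of \emph{bounded} sets (finite and compact sets are bounded in a TVS), so the standing hypothesis of Theorem~\ref{thm:Bb-basis} is indeed met.
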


The following topologies associated to the indicated
families $\SSfrak$ are called
\begin{itemize}
\item[-]
  $\Llpo(X,Y):=\Ll_{\SSfrak_{\rm p}}(X,Y)$
  point-open\index{topology!point-open --}
  or\index{$\Llpo(X,Y)$ point-open topology}\index{point-open topology}
  \textbf{\Index{p-topology}}
\item[-]
  $\Llco(X,Y):=\Ll_{\SSfrak_{\rm c}}(X,Y)$
  compact-open\index{topology!compact-open --}
  or\index{$\Llco(X,Y)$ compact-open topology}\index{compact-open topology}
  \textbf{\Index{c-topology}}
\item[-]
  $\Llbo(X,Y):=\Ll_{\SSfrak_{\rm b}}(X,Y)$
  bounded-open\index{topology!bounded-open --}
  or\index{$\Llbo(X,Y)$ bounded-open topology}\index{bounded-open topology}
  \textbf{\Index{b-topology}}
\end{itemize}

\begin{remark}[Continuous vector operations]\label{rmk:TVS-1}
Suppose $X$ and $Y$ are topological vector spaces.
Continuity of addition and scalar multiplication under a
$\SSfrak$-topology is equivalent to boundedness of every image
$TA\subset Y$ where $T\in\Ll(X,Y)$ and $A\in\SSfrak$; see
e.g.~\citet[III \S 3.1]{Schaefer:1999a} or~\citet[III\,\S 3 Prop.\,1]{Bourbaki:1987a}.
\end{remark}

\begin{exercise}[Families of compact sets]\label{exc:Bb-family}
For each of the three families
$\SSfrak=\SSfrak_{\rm p},\SSfrak_{\rm c},\SSfrak_{\rm b}$
show the particular assertion of Theorem~\ref{thm:Bb-basis}
that $\Tt_\SSfrak$ is a topology on $\Ll(X,Y)$
and $\Bb=\Bb_\SSfrak^{\Uu_0}$ is a basis --
in contrast to Remark~\ref{rmk:only-sub-basis}.

\vspace{.1cm}\noindent
[Hints:
Theorem~\ref{thm:basis-some-topology} or Exercise~\ref{exc:jhj45667}.
Lemma~\ref{le:inv-trans-dil} iv).]
\end{exercise}

%%% SUBSUBSECTION %%%%%%%%%%%%%%%%%%%
%%%%%%%%%%%%%%%%%%%%%%%%%%%%%%%%%%
\subsubsection*{Continuity properties}

\begin{proposition}\label{prop:cont-TVS}
Suppose $M$ is a topological space and $X,Y,Z$ are topological vector
spaces. Then the following is true.
\begin{itemize}
\item[a)]
  If the map $\varphi\colon M\times Y\to Z$ is continuous and, moreover, linear in the
  second variable, then the induced map
  \begin{equation}\label{eq:a1}
     \Phi\colon M\to\Llco(Y,Z),\quad p\mapsto \varphi(p,\cdot)
  \end{equation}
  is 
  continuous, in symbols $\Phi\in C(M,\Llco(Y,Z))$.
\item[b)]
  If $S\colon X\to Y$ is a compact linear operator, then the induced map
  \begin{equation}\label{eq:a2}
     \Llco(Y,Z)\to\Llbo(X,Z),\quad T\mapsto TS:=T\circ S
  \end{equation}
  is continuous.
\item[c)]
  For $\Phi$ and $S$ as in a) and b) the induced map
  \begin{equation}\label{eq:a3}
     \Psi\colon M\to\Llbo(X,Z),\quad p\mapsto \Phi(p) S
  \end{equation}
  is continuous. (Juxtaposition of linear maps
  means composition.)
\end{itemize}
\end{proposition}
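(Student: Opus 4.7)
The plan is to prove the three parts in order, since (c) will follow as a formal composition of (a) and (b). Throughout I will work with the basic-collection description of the $\SSfrak$-topologies from Definition~\ref{def:basic-collections}: a basic open neighborhood of $T_0\in\Ll_\SSfrak(X,Z)$ has the form $T_0+\Bb_{A,U}$ with $A\in\SSfrak$ and $U$ an open neighborhood of $0$ in $Z$. Since all maps in question are linear, I only need continuity at one base point (cf.\ Exercise~\ref{exc:cont-at-0}, translation invariance from Lemma~\ref{le:inv-trans-dil}).

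For part (a), I would fix $p_0\in M$ and a basic neighborhood $\Phi(p_0)+\Bb_{K,U}$ of $\Phi(p_0)$ in $\Llco(Y,Z)$, where $K\subset Y$ is compact and $U\subset Z$ is an open neighborhood of $0$. First choose a symmetric open $U_1\ni 0$ in $Z$ with $U_1+U_1\subset U$ (Lemma~\ref{le:inv-trans-dil}(iii)). For each $y\in K$, continuity of $\varphi$ at $(p_0,y)$ gives open neighborhoods $V_y\ni p_0$ in $M$ and $W_y\ni y$ in $Y$ with $\varphi(V_y\times W_y)\subset \varphi(p_0,y)+U_1$. Compactness of $K$ extracts a finite subcover $W_{y_1},\dots,W_{y_n}$ and I set $V:=V_{y_1}\cap\cdots\cap V_{y_n}$. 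Then for any $p\in V$ and $y\in K$, picking $i$ with $y\in W_{y_i}$ and subtracting the two inclusions $\varphi(p,y),\varphi(p_0,y)\in \varphi(p_0,y_i)+U_1$ yields $\varphi(p,y)-\varphi(p_0,y)\in U_1-U_1\subset U$, so $\Phi(p)\in \Phi(p_0)+\Bb_{K,U}$. The main subtlety here is the standard compactness-plus-uniformity argument, which is the only real work in the proposition.

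Part (b) is essentially a tautology once one uses compactness of $S$. Given a basic neighborhood $T_0S+\Bb_{B,U}$ in $\Llbo(X,Z)$ with $B\subset X$ bounded, set $K:=\overline{S(B)}\subset Y$, which is compact by Definition~\ref{def:TVS-linear-maps}(ii). Then $\Bb_{K,U}$ is a basic neighborhood of $0$ in $\Llco(Y,Z)$, and any $T\in T_0+\Bb_{K,U}$ satisfies $(T-T_0)(S(B))\subset (T-T_0)(K)\subset U$, so $TS\in T_0S+\Bb_{B,U}$. I would also note in one line that $TS$ is indeed continuous (composition of continuous linear maps) and bounded (Remark~\ref{rmk:TVS-1}), so the map $T\mapsto TS$ actually lands in $\Llbo(X,Z)$.

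Finally, part (c) is immediate: $\Psi$ is the composition of $\Phi\colon M\to\Llco(Y,Z)$ from (a) with $T\mapsto TS\colon\Llco(Y,Z)\to\Llbo(X,Z)$ from (b), both continuous. I expect no surprises; the only non-routine step in the whole proposition is the compactness argument in part (a), and the other parts are purely formal manipulations of basic collections.
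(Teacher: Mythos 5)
Your proof is correct, but part a) takes a genuinely different route from the paper. The paper proves a) in the generality of topological \emph{spaces}: for any continuous $\varphi\colon M\times Y\to Z$ (no linearity needed) it shows $\Phi\in C(M,\Cco(Y,Z))$ by checking preimages of the sub-basis elements $\Ff_{K,V}$ of the compact-open topology and invoking the Slice (tube) Lemma to thicken $\{p_0\}\times K$ inside $\varphi^{-1}(V)$. You instead exploit the TVS structure of $Z$: you take a symmetric $U_1$ with $U_1+U_1\subset U$ from Lemma~\ref{le:inv-trans-dil}(iii) and run a finite-subcover uniformity argument to land in the \emph{basic} neighborhood $\Phi(p_0)+\Bb_{K,U}$, whose status as a genuine basis element of $\Llco(Y,Z)$ is guaranteed by Theorem~\ref{thm:Bb-basis}. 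The paper's argument buys generality (linearity in the second variable is never used) and delegates the compactness work to a quotable lemma; yours is self-contained within the linear framework and sidesteps the sub-basis subtlety of Remark~\ref{rmk:only-sub-basis} entirely, at the cost of redoing the compactness argument by hand. Your parts b) and c) coincide with the paper's, up to your working at a general base point $T_0$ where the paper localizes at $0$ via translation invariance. One cosmetic slip: your opening remark that ``all maps in question are linear, so continuity at one base point suffices'' does not apply to $\Phi$ itself ($M$ is merely a topological space and $\Phi$ is not linear); fortunately your actual argument for a) is carried out at an arbitrary $p_0$, so nothing is lost.
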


For normed vector spaces $X$ and $Y$ both topological vector spaces
$\Llbo(X,Y)$ and $\Ll(X,Y)$ with the operator norm topology
coincide by Proposition~\ref{prop:norm-top=bo-top}.
\\
Operators similar to the one in~(\ref{eq:a3})
are well known in non-linear analysis under the name
\textbf{\boldmath\Index{Nemitski operator}s associated to $\varphi$};
see e.g.~\citet[\S 1.2]{ambrosetti:1993a}.

\begin{proof}[Proof of Proposition~\ref{prop:cont-TVS}]
a) is even true for topological \emph{spaces} $M,Y,Z$ and
continuous functions $\varphi\colon M\times Y\to Z$, not necessarily
linear in the second variable; see e.g.~\citet[XII.3.1]{Dugundji:1966a}
or~\citet[Le.\,7.9.5]{Muger:2016a}. Now the conclusion 
is that $\Phi$ is continuous as a map $M\to \Cco(Y,Z)$;
cf.~(\ref{eq:Cco_c}).
\\
To prove this we must show that for all $p_0\in M$ and
sub-basis elements $\Ff_{K,V}\subset\Cco(Y,Z)$
that contain $\Phi(p_0)$ there is an open neighborhood $U_{p_0}$ of
$p_0$ in $M$ whose image under $\Phi$ lies in $\Ff_{K,V}$, too.
Equivalently, we have to show that $\varphi(p_0\times K)\subset V$ implies
$\varphi(U_{p_0}\times K)\subset V$ for some open set $p_0\in U_{p_0}\subset M$.
Continuity of $\varphi$ guarantees an \emph{open} pre-image
$\varphi^{-1}(V)\subset M\times Y$ which contains $p_0\times K$ by assumption.
By\index{Lemma!Slice --}
compactness of $K$ the \Index{Slice Lemma},
see e.g.~\citet[XI.2.6]{Dugundji:1966a} or~\citet[Prop.\,7.5.1]{Muger:2016a},
provides an open neighborhood $U_{p_0}$ of $p_0\in M$
such that the thickening $U_{p_0}\times K$ of $p_0\times K$
is still contained in $\varphi^{-1}(V)$.

b) Let's show that the pre-image $\Phi^{-1}\Bb_{A,U}$
of any (open) basis element of the bounded-open topology $\Ttbo(X,Z)$
is open in $\Llco(Y,Z)$, i.e. contains some basis element
$\Bb^\prime_{K,V}\in\Ttco(Y,Z)$ of the compact-open topology.
Given $A\subset X$ bounded and $0\in U\subset Z$ open, note that
$\Phi^{-1}\Bb_{A,U}=\Bb^\prime_{S(A),U}\supset\Bb^\prime_{K,U}\in\Ttco(Y,Z)$
where by definition the compact set $K$ is the closure
of the pre-compact set $S(A)\subset Y$.

c) The composition of continuous maps is continuous. But composing
the continuous maps (\ref{eq:a1}) and (\ref{eq:a2}) is the map (\ref{eq:a3}).
\end{proof}

%%% SUBSUBSECTION %%%%%%%%%%%%%%%%%%%
%%%%%%%%%%%%%%%%%%%%%%%%%%%%%%%%%%
\subsubsection*{Fr\'{e}chet and G\^{a}teaux  derivative on TVS}

\begin{definition}[Fr\'{e}chet derivative on TVS]\label{def:Frechet-derivative}
Suppose\index{Fr\'{e}chet derivative!on TVS}\index{derivative!on TVS}
$f\colon X\supset U\to Y$ is a map between topological vector
spaces defined on an open subset $U$.

In case $0\in U$ and $f(0)=0$ one says that $f$
\textbf{has derivative zero at \boldmath$0$} if
for each open neighborhood $W_0\subset Y$ of $0$
there is an open neighborhood $V_0\subset X$ of $0$
and a function $o\colon (-1,1)\to\R$ such that
\[
     \lim_{t\to 0} \frac{o(t)}{t}=0,\qquad
     tV_0\subset U,\qquad
     f(tV_0)\subset o(t) W_0
\]
for every $t\in(-1,1)$.

In general, one calls $f$\index{differentiable!map between TVS}
\textbf{differentiable at \boldmath$x\in U$} if there is
a continuous linear operator $D\colon X\to Y$ such that the map
\[
     h\mapsto f(x+h)-f(x)-Dh
\]
has derivative zero at $0$.
In this case $df(x):=D\in\Ll(X,Y)$ is called
the \textbf{\boldmath\Index{derivative} of $f$ at $x$}.
If $f$ is differentiable at every point of $U$ one calls
$f$ (Fr\'{e}chet) \textbf{\boldmath differentiable on $U$}.
In this case the map
\[
     f^\prime:=df\colon U\to\Ll(E,F), \quad x\mapsto df(x)
\]
into the vector space of continuous linear maps $\Ll(X,Y)$ is called
the (Fr\'{e}chet)\index{differential!of $f$}
\textbf{\boldmath differential of $f$}. 
\end{definition}

By Corollary~\ref{cor:TVS}
endowing $\Ll(X,Y)$ with the topology $\Tt_\SSfrak$
associated to any of the families
$\SSfrak=\SSfrak_{\rm p},\SSfrak_{\rm c},\SSfrak_{\rm b}$
results in a TVS denoted by $\LlSSfrak(X,Y)$.
Hence $df\colon U\to \LlSSfrak(X,Y)$ is a map between TVS
and one defines iteratively the higher order differentials
\[
     f^{(\ell)}:=d^\ell f\colon U\to \LlSSfrak (E,\LlSSfrak (E,\dots \LlSSfrak (E,F))).
\]
For normed vector spaces $X$ and $Y$ the bounded-open topology
$\Tt_{\SSfrak_{\rm p}}$ and the operator norm topology on $\Ll(X,Y)$
coincide by Proposition~\ref{prop:norm-top=bo-top} below.

We say that a map $f\colon X\supset U\to Y$
admits directional derivative at $x\in U$ in direction $\xi\in X$,
if there are $\eps>0$ and $\eta\in Y$ such that the map 
\[
     (-\eps,\eps)\to Y,\quad t\mapsto f(x+t\xi)-f(x)-\eta
\]
has derivative zero at $0$.
In this case $\p_\xi f(x):=\eta$ is called
the derivative of $f$ at $x$ in direction $\xi$.
If the map $\p f(x)\colon X\to Y$, $\xi\mapsto \p_\xi f(x)$, is defined for every
$\xi\in X$ and is linear and continuous, then $f$
is said \textbf{\boldmath G\^{a}teaux differentiable at $x$}
with\index{G\^{a}teaux!differentiable}\index{G\^{a}teaux!derivative}
\textbf{G\^{a}teaux derivative} $\p f(x)\in\Ll(X,Y)$.

The (Fr\'{e}chet) derivative on topological vector spaces
enjoys some basic properties such as the chain rule
and the fact that Fr\'{e}chet differentiability
implies continuity and G\^{a}teaux differentiability.
However, other fundamentals are not available, for instance 
the implicit function theorem
and Cartan's last theorem may fail on TVS;
see examples in~\citet[\S 4.2]{Cieliebak:2018a}.

%%%%%%%%%%%%%%%%%%%%%%%%%%%%%%%%%%
%%% SUBSECTION %%%%%%%%%%%%%%%%%%%%%
%%%%%%%%%%%%%%%%%%%%%%%%%%%%%%%%%%
\subsection{Metric spaces}\label{sec:metric-spaces}

\begin{definition}\label{def:metric}
A \textbf{\Index{metric}} on a set $M$ is a function
$d\colon M\times M\to [0,\infty)$ that satisfies the following three axioms
whenever $ x,y,z\in M$.
\begin{enumerate}
\item[(i)]
  $d(x,y)=d(y,x)$
  \hfill (Symmetry)
\item[(ii)]
  $d(x,z)\le d(x,y)+d(y,z)$
  \hfill (Triangle inequality)
\item[(iii)]
  $d(x,y)=0$ $\Leftrightarrow$ $x=y$
  \hfill (Non-degeneracy)
\end{enumerate}
Such pair $(M,d)$ is called a \textbf{metric space}.\index{metric!space}
\end{definition}

The prototype example of a metric $d$ is the distance between two points
in euclidean space. Hence a metric is also called a 
\textbf{\Index{distance function}}.
We often use the notation $M_d$ for a metric space, meaning that
$M$ is a set endowed with the metric $d$.

\begin{definition}\label{def:metric-space}
  A metric space $M_d$ comes naturally with the 
  \textbf{metric topology}\index{metric!topology}
  $\Tt_d$\index{$\Tt_d$ metric topology}
  whose basis $\Bb_d$ consists of the open balls $B^d_x(\eps)$
  of\index{$B_x(\eps)$ open ball of radius $\eps$ about $x$}
  all radii $\eps>0$ about all points $x$ of $M_d$.
A metric space will be automatically endowed with the metric topology,
unless mentioned otherwise. 
\end{definition}

\begin{exercise}[Metric topology]
Check that the collection $\Bb_d$ of all open balls in $M_d$ indeed forms a
basis for a topology, and not just a sub-basis.
\end{exercise}

As mentioned earlier, metric spaces are normal, thus Hausdorff. Moreover,
second countability (countable basis) is equivalent to separability
(dense sequence).

\begin{exercise}[Convergent sequence]
Check that $x_n\to y\in M_d$, in the sense of
Definition~\ref{def:top-conv}, if and only if
any $\eps$-ball about $y$ contains all but finitely
many sequence members $x_n$, in symbols
\[
     \forall\eps>0\;\;\exists N\in\N\colon\quad
     d(x_n,y)<\eps\quad \forall n\ge N.
\]
\end{exercise}

%%% SUBSUBSECTION %%%%%%%%%%%%%%%%%%%
%%%%%%%%%%%%%%%%%%%%%%%%%%%%%%%%%%
\subsubsection*{Sequential convergence properties}

\begin{proposition}\label{prop:co-topology-sup-metric}
Let $Q$ be a compact topological space and $M_d$ a metric space.
Then the compact-open topology $\Ttco$ on $C(Q,M_d)$
coincides with the metric topology $\Tt_{d_\infty}$ associated
to\index{$d_\infty$ supremum metric}
the \textbf{\Index{supremum metric}}
\[
     d_\infty(f,g):=\sup_{q\in Q} d(f(q),g(q)),\quad
     f,g\in C(Q,M_d).
\]
\end{proposition}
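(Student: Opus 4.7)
\medskip

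The plan is to show both inclusions $\Tt_{d_\infty}\subset\Ttco$ and $\Ttco\subset\Tt_{d_\infty}$ by the usual ``open neighborhood of every point'' criterion, with the crucial input that $Q$ is compact and that $M_d$ is metric (in particular, compact subsets of $M_d$ and disjoint closed sets are at positive distance). Before starting, I would check briefly that $d_\infty$ is well-defined and finite: for $f,g\in C(Q,M_d)$ the function $q\mapsto d(f(q),g(q))$ is continuous on the compact space $Q$, hence attains its supremum.

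For the inclusion $\Ttco\subset\Tt_{d_\infty}$, it suffices to place, for each $f\in\Ff_{K,U}$, an open $d_\infty$-ball around $f$ inside $\Ff_{K,U}$, and then take minima over finite intersections. Here $f(K)\subset M_d$ is compact, contained in the open set $U$, so a simple lemma on metric spaces gives an $\eps>0$ with $d$-neighborhood of $f(K)$ of radius $\eps$ still inside $U$. Then $d_\infty(f,g)<\eps$ forces $g(q)\in U$ for every $q\in K$, i.e. $g\in\Ff_{K,U}$.

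For the reverse inclusion $\Tt_{d_\infty}\subset\Ttco$, given $f\in C(Q,M_d)$ and $\eps>0$, I want to exhibit a finite intersection $\bigcap_{i=1}^{n}\Ff_{K_i,U_i}$ that contains $f$ and lies in the open ball $B^{d_\infty}_{f}(\eps)$. The natural idea is to cover $Q$ by sets on which $f$ varies by at most $\eps/4$: for each $q\in Q$ the set $W_q:=f^{-1}(B_{f(q)}(\eps/4))\subset Q$ is open; by compactness of $Q$ extract a finite subcover $W_{q_1},\dots,W_{q_n}$. The main obstacle here is that $W_{q_i}$ need not be compact; the key trick is to pass to the closures $K_i:=\overline{W_{q_i}}$ (closed in the compact space $Q$, hence compact) and to use the slightly larger open balls $U_i:=B_{f(q_i)}(\eps/2)$. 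Continuity of $f$ gives $f(K_i)\subset\overline{f(W_{q_i})}\subset\overline{B_{f(q_i)}(\eps/4)}\subset U_i$, so $f\in\bigcap_i\Ff_{K_i,U_i}$; and since $\bigcup_i K_i=Q$, for any $g$ in this intersection and any $q\in Q$ there is an $i$ with $q\in K_i$, so both $f(q),g(q)\in U_i$, yielding $d(f(q),g(q))<\eps$ pointwise and hence $d_\infty(f,g)\le\eps/2\cdot 2<\eps$.

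Putting the two inclusions together shows $\Ttco=\Tt_{d_\infty}$. I expect the harder of the two directions to be the second one, precisely because of the need to reconcile the two competing requirements ``$f(K_i)\subset U_i$'' and ``the $K_i$ cover $Q$''; the closure trick together with the metric slack between radii $\eps/4$ and $\eps/2$ is what makes both hold simultaneously. No Hausdorff assumption on $Q$ is needed, since $K_i=\overline{W_{q_i}}$ is compact simply as a closed subset of the compact space $Q$.
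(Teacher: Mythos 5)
Your proof is correct and self-contained; the paper itself offers no argument here beyond citing \citet[Prop.\,7.9.2]{Muger:2016a} and remarking that one should check that the members of a (sub-)basis of each topology are open in the other, which is exactly the strategy you carry out (positive distance from the compact set $f(K)$ to $M_d\setminus U$ for $\Ttco\subset\Tt_{d_\infty}$, and the finite-subcover-plus-closure trick with the two radii for the reverse inclusion). The only blemish is the final line ``$d_\infty(f,g)\le\eps/2\cdot 2<\eps$'', which as written reads ``$\le\eps<\eps$''; the sup of quantities each $<\eps$ need not be $<\eps$, but the slack you built in between the radii $\eps/4$ and $\eps/2$ already gives $d(f(q),g(q))<\eps/4+\eps/2=3\eps/4$ for every $q$, hence $d_\infty(f,g)\le 3\eps/4<\eps$, so nothing of substance is missing.
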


\begin{proof}
\citet[Prop.\,7.9.2]{Muger:2016a}.
(To show equality of two topologies one shows that the members of a
basis, or of a sub-basis, of the first topology are
open with respect to the second topology, and vice versa.)
\end{proof}

\begin{exercise}
If $Q$ is compact, then
$d_\infty$ is a metric on $C(Q,M_d)$.

\vspace{.1cm}\noindent %\newline
[Hint: If stuck, consult e.g.~\citet[Prop.\,2.1.25]{Muger:2016a}.]
\end{exercise}

Convergence $f_\nu\to g$ with respect to
$\Tt_{d_\infty}=\Tt_{d_\infty}(Q,M_d)$,  that is
\[
     \forall \eps>0\;\exists\nu_\eps\colon\quad
     d\left(f_\nu(q),g(q)\right)\le\eps\quad
     \text{whenever $\nu\ge\nu_\eps$ and $q\in Q$}
\]
is called\index{uniform!convergence}
\textbf{uniform convergence} on the compact set $Q$.

\begin{exercise}
Let $N$ be a topological space.
If $M_d$ is a metric space, the compact-open topology $\Ttco$
on $C(N,M_d)$ coincides with the topology
\[
     \bigcap_{Q\subset N\;\text{compact}} \Tt_{d_\infty}(Q,M_d)
\]
of\index{uniform!convergence!on compact subsets}
\textbf{uniform convergence on all compact subsets} $K$ of $N$.

\vspace{.1cm}\noindent %\newline
[Hint: If $N\supset Q$ compact, then
$\Ttco(N,M_d)\supset\Ttco(Q,M_d)=\Tt_{d_\infty}(Q,M_d)$.]
\end{exercise}

\begin{definition}[Equicontinuous family]
Let $N$ be a topological space and $M_d$ a metric space.
A family $\Ff\subset\Map(N,M_d)$ of maps, a-priori continuous or not,
is\index{family!equicontinuous --}
called \textbf{\Index{equicontinuous}}
if for every $x\in N$ and every $\eps>0$
there is an open neighborhood $U_x$ of $x$ such that for all
neighborhood elements $x^\prime\in U_x$ and family members $f\in\Ff$
both values $f(x)$ and $f(x^\prime)$ are $\eps$-close, in symbols
\[
     d\left(f(x),f(x^\prime)\right)<\eps,\quad
     \text{$x^\prime\in U_x$, $f\in\Ff$.} 
\]
\end{definition}

\begin{exercise}
The members of an equicontinuous family $\Ff$
are continuous.
\end{exercise}

%%% SUBSUBSECTION %%%%%%%%%%%%%%%%%%%
%%%%%%%%%%%%%%%%%%%%%%%%%%%%%%%%%%
\subsubsection{Complete metric spaces -- 
Theorem of Baire and Arzel\`{a}--Ascoli}

\begin{definition}
A sequence $(x_n)$ in a metric space $M_d$ is called a
\textbf{\Index{Cauchy sequence}}
if for every $\eps>0$ there is a sequence member $x_N$ such that
any two subsequent members are within distance $\eps$ of one another,
in symbols
\[
     \forall\eps>0\;\;\exists N\in\N\colon\quad
     d(x_n,x_m)<\eps\quad \forall n,m\ge N.
\]
\end{definition}

\begin{exercise}
Check that every convergent sequence in a metric space is a Cauchy
sequence, but the converse is not true.
\end{exercise}

\begin{definition}[Complete metric space]
A metric space in which every Cauchy sequence converges
is called \textbf{complete} and so is
the\index{complete!metric space}\index{metric!space!complete --}
metric.
\end{definition}

\begin{exercise}
Let $Q$ be a compact topological space.
Then the metric space $\left(C(Q,M_d),d_\infty\right)$
is complete iff the target metric space $M_d$ is complete.

\vspace{.1cm}\noindent %\newline
[Hint: If stuck, consult
e.g.~\citet[Prop.\,3.1.18 and Rmk.\,5.2.12]{Muger:2016a}.]
\end{exercise}

\begin{theorem}[{\Index{Baire's Theorem}}]\label{thm:Baire}
Let\index{Theorem of!Baire}
$M_d$ be a complete metric space and
$(U_n)$ a sequence of open and dense subsets.
Then the intersection
\[
     \bigcap_{n=1}^\infty U_n
\]
is dense in $M_d$.
\end{theorem}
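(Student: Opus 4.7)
The plan is to establish density by showing that every non-empty open set $V \subset M_d$ meets the intersection $\bigcap_{n=1}^\infty U_n$. The proof is a classical nested ball construction, the completeness hypothesis entering only at the very end to produce a limit point.

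First I would fix an arbitrary non-empty open $V \subset M$ and use density of $U_1$ to find a point $x_1 \in V \cap U_1$; since $V \cap U_1$ is open (open sets are stable under finite intersection), I can choose a radius $0 < r_1 < 1$ so that the closed ball $\overline{B}_{x_1}(r_1)$ is contained in $V \cap U_1$. Inductively, having chosen $x_n$ and $r_n$, density of $U_{n+1}$ gives a point $x_{n+1}$ in the open set $B_{x_n}(r_n) \cap U_{n+1}$, and I shrink the radius to obtain $0 < r_{n+1} < 1/(n+1)$ with
\[
  \overline{B}_{x_{n+1}}(r_{n+1}) \subset B_{x_n}(r_n) \cap U_{n+1}.
\]
This yields nested closed balls with radii tending to zero.

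Next I would argue that $(x_n)$ is Cauchy: for $n,m \geq N$ both points lie in $\overline{B}_{x_N}(r_N)$, so $d(x_n,x_m) \leq 2 r_N < 2/N$ by the triangle inequality. By completeness of $M_d$, the sequence converges to some $x \in M$. Since $x_m \in \overline{B}_{x_n}(r_n)$ for all $m \geq n$, and closed balls are closed (indeed $\overline{B}_{x_n}(r_n) = \{y : d(x_n,y) \leq r_n\}$ is closed by continuity of $d(x_n,\cdot)$), the limit $x$ lies in every $\overline{B}_{x_n}(r_n)$. Therefore $x \in V$ (via $n=1$) and $x \in U_n$ for every $n$, which proves $V \cap \bigcap_n U_n \neq \emptyset$.

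There is no real obstacle here beyond bookkeeping: the nontrivial ingredients are (i) the ability to find closed balls contained in open sets, which follows from the definition of the metric topology, and (ii) completeness, invoked exactly once to extract the limit. The shrinkage condition $r_n < 1/n$ is what forces the Cauchy property; any sequence of radii summable or tending to zero would work equally well.
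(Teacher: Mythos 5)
Your proof is correct and complete. The paper itself gives no argument for this theorem, only a citation to M\"uger, and your nested-closed-ball construction is precisely the standard proof one finds there: density of each $U_n$ supplies the next center, the shrinking radii force the Cauchy property, and completeness is used exactly once to produce the limit point lying in $V$ and in every $U_n$. You also correctly sidestep the one classical pitfall by working with the set $\{y : d(x_n,y)\le r_n\}$ directly (which is closed by continuity of $d(x_n,\cdot)$) rather than with the closure of the open ball, so nothing is missing.
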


\begin{proof}
See e.g.~\citet[Thm.\,3.3.1]{Muger:2016a}.
\end{proof}

Among the many applications of Baire's Theorem
are the open mapping theorem and the Banach--Steinhaus
Theorem~\ref{thm:Banach-Steinhaus}, also called the
principle of uniform boundedness.

\begin{theorem}[Arzel\`{a}--Ascoli Theorem]\label{thm:AA-F-orbit}
Let\index{Theorem of!Arzel\`{a}--Ascoli}
$Q$ be a compact topological space and
$M_d$ a complete metric space.
Then the following is true.
A family
\[
     \Ff\subset\Cco(Q,M_d)
\]
of continuous maps is
pre-compact (with respect to the supremum metric $d_\infty$)
if and only if the family $\Ff$ is equicontinuous
and the\index{$\Ff(q) $ orbit through $q$}\index{orbit!family $\Ff$- --}
\textbf{\boldmath$\Ff$-orbit} through each domain point $q\in Q$,
namely each subset
\[
     \Ff(q):=\{f(q)\mid f\in \Ff\}\subset M_d,\quad q\in Q
\]
is pre-compact.
\end{theorem}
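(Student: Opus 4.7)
The plan is to exploit Proposition~\ref{prop:co-topology-sup-metric}, which identifies $\Ttco$ on $C(Q,M_d)$ with the metric topology $\Tt_{d_\infty}$, together with completeness of $(C(Q,M_d),d_\infty)$ (which follows from completeness of $M_d$ and compactness of $Q$). In a complete metric space, a subset is pre-compact iff its closure is complete and totally bounded, which reduces to total boundedness of $\Ff$ itself. So both directions of the theorem become equivalent to a statement about $d_\infty$-totally-bounded families.

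For the easy direction ``$\Rightarrow$'', suppose $\Ff$ is pre-compact, hence totally bounded. Pointwise pre-compactness is immediate: for each fixed $q\in Q$ the evaluation map $\ev_q\colon (C(Q,M_d),d_\infty)\to M_d$, $f\mapsto f(q)$, is $1$-Lipschitz (since $d(f(q),g(q))\le d_\infty(f,g)$), so $\Ff(q)=\ev_q(\Ff)$ is pre-compact in $M_d$ as a continuous image of a pre-compact set. Equicontinuity follows by a standard $\eps/3$-trick: given $x\in Q$ and $\eps>0$, pick a finite $\eps/3$-net $f_1,\dots,f_n\in\Ff$. Each $f_i$ is continuous at $x$, so choose an open neighborhood $U_x^i$ with $d(f_i(x'),f_i(x))<\eps/3$ for $x'\in U_x^i$, and set $U_x:=\bigcap_i U_x^i$. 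For any $f\in\Ff$, choose $i$ with $d_\infty(f,f_i)<\eps/3$; then for $x'\in U_x$,
\[
   d(f(x'),f(x))\le d(f(x'),f_i(x'))+d(f_i(x'),f_i(x))+d(f_i(x),f(x))<\eps.
\]

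The hard direction ``$\Leftarrow$'' is the substantive one. Assume $\Ff$ is equicontinuous and each orbit $\Ff(q)$ is pre-compact; we show $\Ff$ is $d_\infty$-totally bounded. Fix $\eps>0$. Equicontinuity yields, at each $x\in Q$, an open neighborhood $U_x$ such that $d(f(x'),f(x))<\eps/4$ for all $f\in\Ff$ and $x'\in U_x$. Compactness of $Q$ extracts a finite subcover $U_{x_1},\dots,U_{x_n}$. The finite union $K:=\overline{\Ff(x_1)}\cup\dots\cup\overline{\Ff(x_n)}\subset M_d$ is compact, hence totally bounded, so admits a finite $\eps/4$-net $\{y_1,\dots,y_m\}$. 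For each function $\phi\colon\{1,\dots,n\}\to\{1,\dots,m\}$ set
\[
   \Ff_\phi:=\{f\in\Ff\mid d(f(x_i),y_{\phi(i)})<\eps/4\text{ for }i=1,\dots,n\};
\]
there are only finitely many $\phi$, and the $\Ff_\phi$ cover $\Ff$. Pick one representative $f_\phi$ from each non-empty $\Ff_\phi$. I claim $\{f_\phi\}$ is a $d_\infty$-$\eps$-net. Given $f\in\Ff_\phi$ and any $x\in Q$, choose $i$ with $x\in U_{x_i}$; then
\[
\begin{split}
   d(f(x),f_\phi(x))
   \le{}&d(f(x),f(x_i))+d(f(x_i),y_{\phi(i)})\\
   &+d(y_{\phi(i)},f_\phi(x_i))+d(f_\phi(x_i),f_\phi(x))<\eps,
\end{split}
\]
using equicontinuity on the first and last term and membership in $\Ff_\phi$ on the middle two. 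Taking the supremum over $x\in Q$ gives $d_\infty(f,f_\phi)\le\eps$, proving total boundedness.

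The main obstacle is bookkeeping in ``$\Leftarrow$'': one must combine the finite cover provided by equicontinuity with the finite net provided by pointwise pre-compactness into a \emph{single} finite net for the uniform metric. The index set $\{1,\dots,m\}^{\{1,\dots,n\}}$ of functions $\phi$ is exactly the device that makes this possible, and the four-term triangle-inequality estimate above, with constant $\eps/4$ chosen in anticipation, is what forces the argument to close.
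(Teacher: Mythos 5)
Your proof is correct and complete; the paper itself gives no argument for this theorem, only a citation to M\"uger, and what you have written is precisely the standard total-boundedness proof that such a reference contains (reduce pre-compactness to total boundedness via completeness of $(C(Q,M_d),d_\infty)$, then the $\eps/3$-net argument for the forward direction and the finite-cover-plus-finite-net bookkeeping with the index functions $\phi$ for the converse). The only point worth making explicit is that your claim that $\Ff(q)=\mathrm{ev}_q(\Ff)$ is pre-compact as a continuous image of a pre-compact set uses that compact subsets of the metric (hence Hausdorff) space $M_d$ are closed, so that $\overline{\Ff(q)}\subset \mathrm{ev}_q(\overline{\Ff})$ is compact --- a one-line remark, not a gap.
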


\begin{proof}
See e.g.~\citet[Thm.\,7.7.67]{Muger:2016a}.
\end{proof}

%%%%%%%%%%%%%%%%%%%%%%%%%%%%%%%%%%
%%% SUBSECTION %%%%%%%%%%%%%%%%%%%%%
%%%%%%%%%%%%%%%%%%%%%%%%%%%%%%%%%%
\subsection{Normed vector spaces}\label{sec:NVS}

\begin{definition}
A \textbf{\Index{norm}} on a (real) vector space $X$ is a function
$\norm{\cdot}\colon X\to [0,\infty)$ that satisfies the following three axioms
\begin{enumerate}
\item[(i)]
  $\norm{\alpha x}=\abs{\alpha}\norm{x}$
  \hfill (Homogeneity)
\item[(ii)]
  $\norm{x+y}\le \norm{x}+\norm{y}$
  \hfill (Triangle inequality)
\item[(iii)]
  $\norm{x}=0$ $\Leftrightarrow$ $x=0$
  \hfill (Non-degeneracy)
\end{enumerate}
for all $ x,y\in X$ and $\alpha\in\R$.
Such\index{vector space!normed --}
pair $(X,\norm{\cdot})$ is called a \textbf{\Index{normed vector space}},
often just denoted by $X$.
If one drops the requirement $\norm{x}=0$ $\Rightarrow$ $x=0$ in (iii),
one\index{norm!semi- --}
obtains the definition of a \textbf{\Index{semi-norm}} on $X$.
\end{definition}

The prototype example of a norm $\norm{\cdot}$ is the distance of a
point in euclidean space from the origin.

\begin{exercise}[Normed $\Rightarrow$ metric and TVS with convex basis]
\label{exc:gguy676}
Suppose $(X,\norm{\cdot})$ is a normed vector space.
Show the following.

a) The definition
\[
     d_{\|}(x,y):=\Norm{x-y},\quad x,y\in X
\]
provides a (translation invariant: $d(x+z,y+z)=d(x,y)$)
metric on $X$.
\\
{\small \textit{(So normed vector spaces are endowed with a
natural topology, the metric topology  $\Tt_{d_{\|}}$. Because metric
topologies are Hausdorff, limits are unique.)}}

b) Both vector operations,
addition and scalar multiplication, are continuous.
\\
{\small \textit{(So any normed vector space $X$ is a TVS.)}}

c) Open balls $B_\eps(x)$ of radius $\eps>0$ centered at
$x\in X$ are convex sets. So the natural basis of the topology of a
TVS $X$ given by all open balls consists of convex sets.
\\
{\small \textit{(So by Theorem~\ref{thm:Bb-basis}
the space $\Ll(X,Y)$ of continuous linear operators
between normed vector spaces is a locally convex TVS under
the point-open, compact-open, and bounded-open topologies;
with respect to the latter it is even normed as we will see.)}}

\vspace{.1cm}\noindent %\newline
[Hints: b) Addition: triangle inequality,
scalar multiplication: homogeneity.]
\end{exercise}

\begin{exercise}[The normed vector space $\Ll(X,Y)$]
Let $X$ and $Y$ be normed vector spaces.
Recall Definition~\ref{def:TVS-linear-maps} on boundedness.
Show that

a) A\index{linear operator!bounded}
linear map $T\colon X\to Y$ is continuous iff it is bounded
iff it maps the open unit ball about $0$ into one of finite radius $r$,
in symbols $TB_1\subset B_r$.

b) Now consider the vector space $\Ll(X,Y)$ that consists of all
bounded linear operators $T\colon X\to Y$ with addition
$T+S\colon x\mapsto Tx+Sx$ and scalar multiplication
$\alpha T\colon x\mapsto \alpha Tx$ for $\alpha\in\R$.
Taking the infimum of all radii $r>0$ of balls $B_r\supset TB_1$
still containing the image under $T$ of the unit ball
defines\index{$(norm$@$\norm{T}$ operator norm}
a norm 
\begin{equation*}
\begin{split}
     \norm{\cdot}=\norm{\cdot}_{\Ll(X,Y)}\colon \Ll(X,Y)&\to[0,\infty)
     \\
     T&\mapsto \inf\{r>0\mid TB_1\subset B_r\}
\end{split}
\end{equation*}
called\index{norm!operator --}
the \textbf{\Index{operator norm}}.
Alternatively,
it is given by
\[
     \norm{T}
     =\sup_{\norm{x}\le 1} \norm{Tx}
     =\sup_{\norm{x}=1} \norm{Tx}
     =\sup_{\norm{x}<1} \norm{Tx}.
\]
[Hints: a) Cf.~\citet[1.29]{Rudin:1991b}.]
\end{exercise}

By Exercise~\ref{exc:gguy676} the normed vector space
$\left(\Ll(X,Y),\norm{\cdot}\right)$ carries a natural
metric $d_{\|}$ and is a locally convex TVS.
The metric topology $\Tt_\|=\Tt_\|(X,Y)$ is called the
\textbf{operator norm topology} or
the\Index{operator norm!topology}\index{uniform!topology}
\textbf{uniform topology}, also indicated by $\Ll_\|(X,Y)$.
\\
\textbf{Convention:} Whenever we speak of $\Ll(X,Y)$ as a normed vector space
it is automatically endowed with the operator norm topology.

\begin{proposition}[Operator norm topology is bounded-open topology]
\label{prop:norm-top=bo-top}
For normed vector spaces $X$ and $Y$ the bounded-open and the operator
norm topology on $\Ll(X,Y)$ coincide, in symbols $\Ttbo=\Tt_\|$.
\end{proposition}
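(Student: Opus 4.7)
Both $\Tt_\|$ and $\Ttbo$ are translation-invariant topologies on the vector space $\Ll(X,Y)$ (the operator norm metric is translation invariant, and $\Ttbo$ is translation invariant by Theorem~\ref{thm:Bb-basis} since every basic set at $T$ is $T+\Bb_{A,U}$). Hence it suffices to show that the families
\[
     \Nn_\|:=\{B_\eps^{\Ll}\mid \eps>0\},\qquad
     \Nn_{\rm bo}:=\{\Bb_{A,U}\mid \text{$A\subset X$ bounded, $U\in\Uu_0$}\}
\]
of neighborhoods of $0\in\Ll(X,Y)$ are mutually dominated in the sense of Lemma~\ref{le:basis-characterisation}, that is, every member of one family contains a member of the other.

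For the inclusion $\Tt_\|\subset\Ttbo$, I will show that every norm ball $B_\eps^{\Ll}$ contains a basic bounded-open set. The plan is to take $A:=\overline{B_1^X}$, which is bounded in $X$, and $U:=B_{\eps/2}^Y$, which is an open neighborhood of $0\in Y$. Then $T\in\Bb_{A,U}$ means $\Norm{Tx}<\eps/2$ for every $x$ with $\Norm{x}\le 1$, which yields $\Norm{T}=\sup_{\norm{x}\le 1}\Norm{Tx}\le\eps/2<\eps$, so $\Bb_{A,U}\subset B_\eps^{\Ll}$. (The shift from $\eps$ to $\eps/2$ is a routine fix to avoid the sup-not-strict issue.)

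For the reverse inclusion $\Ttbo\subset\Tt_\|$, I will show that every basic set $\Bb_{A,U}$ contains a norm ball. Since $A$ is bounded in the normed space $X$, there exists $R>0$ with $A\subset B_R^X$; the case $A=\{0\}$ (so $R$ can be taken arbitrarily small positive) is trivial as $\Bb_{A,U}=\Ll(X,Y)$. Since $U$ is an open neighborhood of $0\in Y$ and $Y$ carries the metric topology from its norm, there exists $\eps>0$ with $B_\eps^Y\subset U$. Take $\delta:=\eps/R$. For $T\in B_\delta^{\Ll}$ and $x\in A$ one estimates $\Norm{Tx}\le\Norm{T}\,\Norm{x}<\delta R=\eps$, hence $T(A)\subset B_\eps^Y\subset U$, i.e. $T\in\Bb_{A,U}$. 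Thus $B_\delta^{\Ll}\subset\Bb_{A,U}$, completing the comparison and proving $\Tt_\|=\Ttbo$.

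The argument is entirely routine; the only mild subtlety is the off-by-$\eps$ in the first direction, which is the reason for passing from $\eps$ to $\eps/2$. No deep input is needed beyond the defining homogeneity of the norm and the basic fact that boundedness of $A\subset X$ in the TVS sense coincides, for normed $X$, with boundedness with respect to $\Norm{\cdot}_X$ (which follows directly from Definition~\ref{def:TVS-linear-maps}(i) applied to $U=B_1^X$).
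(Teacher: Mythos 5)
Your proof is correct and follows essentially the same route as the paper: both directions compare the norm balls with the basic collections $\Bb_{A,U}$ using $A=$ closed unit ball and a rescaling $\delta=\eps/R$. The only cosmetic difference is that you localize the first inclusion to $0$ via translation invariance, whereas the paper argues at an arbitrary $S$ with radius $\frac{r-s}{2}$ --- and its own footnote notes that localizing to $0$, as you do, works just as well.
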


\begin{proof}
Let $\norm{\cdot}$ be the operator norm on $\Ll(X,Y)$.
Balls are centered at $0$.

$\Tt_\|\subset\Ttbo\colon$
It suffices to show that norm open balls $B_r:=\{\norm{\cdot}<r\}$ are
open with respect to $\Ttbo$. This means that $B_r$ must contain together
with any element $S$ a whole $\Ttbo$-open neighborhood $S+\Bb_{A,U}$
where $\Bb_{A,U}=\{T\in\Ll(X,Y)\mid T(A)\subset U\}$ with $A\subset X$
bounded and $0\ni U\subset Y$ open; cf.~(\ref{eq:basic-collection}).
\\
To see this abbreviate $s:=\norm{S}\in[0,r)$ and let $A$ be the closed unit
ball in $X$ and $U$ the open ball in $Y$ of radius $\frac{r-s}{2}$.
For $T\in\Bb_{A,U}$ we get
\[
     \Norm{S+T}
     \le\Norm{S}+\Norm{T}
     =s+\sup_{x\in A}\Norm{Tx}_Y
     \le s+\frac{r-s}{2}=\frac{r+s}{2}<r.
\]
Hence the $\Ttbo$-open neighborhood $S+\Bb_{A,U}$
of $T$ is contained in $B_r$.

$\Ttbo\subset\Tt_\|\colon$
By translation invariance of both topologies it suffices to
show\footnote{
  We could have localized to $0$ already in order to prove
  $\Tt_\|\subset\Ttbo$ above.
  }
that each element $\Bb_{A,U}\in\Bb(0)\subset\Ll(X,Y)$ of the local basis of
$\Ttbo$ at $0$ contains an open ball $B_r\in\Tt_\|$ about $0$.
Given $A\subset X$ bounded and $0\in U\subset Y$ open,
pick open balls $A\subset B_r\subset X$ and $B_\eps\subset U\subset Y$.
Then $\norm{T}_{\Ll(X,Y)}<r=\eps/R$ implies that $T\in \Bb_{A,U}$.
Indeed $TA\subset TB_R=RTB_1\subset R B_{\eps/R}=B_\eps\subset U$.
\end{proof}

%%% SUBSUBSECTION %%%%%%%%%%%%%%%%%%%
%%%%%%%%%%%%%%%%%%%%%%%%%%%%%%%%%%
\subsubsection*{Sequential convergence properties}

\begin{lemma}[Convergence in compact-open topology means
convergence of the orbit through each point]\label{le:co-top-lin}
Let $X$ and $Y$ be normed vector spaces. Consider operators
$(T_\nu)_{\nu\in\N}\subset\Ll(X,Y)\ni T$. Then $T_\nu\to T$ in
$\Llco(X,Y)$ iff for each domain element $\xi$ the image sequence
$T_\nu\xi$ converges to $T\xi$ in
$Y$.
\end{lemma}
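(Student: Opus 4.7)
\textit{Plan of proof.} The statement unpacks the definition of the compact-open topology on $\Ll(X,Y)$ given in Theorem~\ref{thm:Bb-basis} via the family $\SSfrak_{\rm c}$ of compact subsets of $X$. My first step is to translate $T_\nu\to T$ in $\Llco(X,Y)$ into concrete terms: by translation-invariance it suffices to consider the local basis $\Bb(0)$ at the zero operator, and with $Y$ normed I may replace an arbitrary open neighborhood $U\ni 0$ of $Y$ by an open $\eps$-ball. Thus
\[
     T_\nu\to T\;\text{in}\;\Llco(X,Y)
     \qquad\Longleftrightarrow\qquad
     \sup_{\xi\in K}\Norm{T_\nu\xi-T\xi}_Y\to 0
     \;\;\text{for every compact $K\subset X$.}
\]
That is, compact-open convergence is precisely uniform convergence on all compact subsets.

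\textit{The easy direction} ($\Rightarrow$) is then immediate: singletons $\{\xi\}\subset X$ are compact, so uniform convergence on compact sets specializes to pointwise convergence $T_\nu\xi\to T\xi$ at each point $\xi\in X$.

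\textit{The harder direction} ($\Leftarrow$) is where the work lies. Set $S_\nu:=T_\nu-T\in\Ll(X,Y)$, so by hypothesis $S_\nu\xi\to 0$ in $Y$ for each $\xi\in X$. I want to promote this pointwise convergence to uniform convergence on an arbitrary fixed compact $K\subset X$. The plan has two ingredients. First, apply the Banach--Steinhaus Theorem~\ref{thm:Banach-Steinhaus} to the pointwise bounded family $\{S_\nu\}\subset\Ll(X,Y)$ to obtain a uniform operator norm bound $M:=\sup_\nu\Norm{S_\nu}_{\Ll(X,Y)}<\infty$. (This is where completeness of $X$ enters; for normed but non-complete $X$ the conclusion can genuinely fail, as e.g.\ $T_n:c_{00}\to\R$, $T_n\xi:=n\xi_n$ shows, so implicitly $X$ must be Banach — which is the only case used in the paper via Lemma~\ref{le:sc-deriv-reg-preserving}~(c).) Second, use total boundedness of the compact set $K$: given $\eps>0$, choose a finite $\frac{\eps}{3(M+1)}$-net $\{\xi_1,\dots,\xi_n\}\subset K$ so that every $\xi\in K$ lies within distance $\frac{\eps}{3(M+1)}$ of some $\xi_i$.

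\textit{Conclusion of the hard direction.} By pointwise convergence at the finitely many points $\xi_i$, choose $N$ so large that $\Norm{S_\nu\xi_i}_Y<\eps/3$ for all $\nu\ge N$ and all $i=1,\dots,n$. Then for any $\xi\in K$ and $\nu\ge N$, selecting an $i$ with $\Norm{\xi-\xi_i}_X<\frac{\eps}{3(M+1)}$, the triangle inequality gives
\[
     \Norm{S_\nu\xi}_Y
     \le\Norm{S_\nu(\xi-\xi_i)}_Y+\Norm{S_\nu\xi_i}_Y
     \le M\cdot\tfrac{\eps}{3(M+1)}+\tfrac{\eps}{3}
     <\eps,
\]
uniformly in $\xi\in K$. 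Hence $\sup_{\xi\in K}\Norm{T_\nu\xi-T\xi}_Y\to 0$, proving convergence in $\Llco(X,Y)$. I expect the Banach--Steinhaus step to be the only delicate point; once uniform boundedness is in hand, the finite-net argument on the compact set is routine.
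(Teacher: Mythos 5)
The paper states this lemma without proof, so there is no argument of its own to compare against; judged on its own terms your proof is correct and is the standard one. Your translation of convergence in $\Llco(X,Y)$ into uniform convergence on every compact subset is right (translation invariance of the $\SSfrak_{\rm c}$-topology plus the fact that the $\eps$-balls form a local basis at $0\in Y$), the forward direction via singletons is immediate, and the reverse direction via Banach--Steinhaus followed by a finite $\tfrac{\eps}{3(M+1)}$-net in $K$ is exactly how one shows that strong operator convergence implies uniform convergence on compacta.

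Your parenthetical observation deserves to be promoted from an aside to the main point: as printed, with $X$ merely normed, the ``if'' direction is false, and your counterexample is essentially complete -- you only need to exhibit the compact set. For $T_n\colon c_{00}\to\R$, $T_n\xi:=n\xi_n$, one has $T_n\xi\to 0$ for every $\xi$ (finite support), yet on the compact set $K:=\{0\}\CUP\{e_n/\sqrt{n}\mid n\in\N\}$ (a null sequence together with its limit) one gets $\sup_{\xi\in K}\abs{T_n\xi}\ge\sqrt{n}\to\infty$, so $T_n\not\to 0$ in $\Llco(c_{00},\R)$. The lemma should therefore either assume $X$ complete -- which covers every use made of it in the text, since the domains there are levels of Banach scales -- or add the hypothesis $\sup_\nu\Norm{T_\nu}<\infty$, under which your net argument goes through verbatim without invoking Banach--Steinhaus.
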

%\begin{proof}
%\end{proof}

%\newpage
%%%%%%%%%%%%%%%%%%%%%%%%%%%%%%%%%%
%%%%%%%%%%%%%%%%%%%%%%%%%%%%%%%%%%
%%% SECTION %%%%%%%%%%%%%%%%%%%%%%%%
%%%%%%%%%%%%%%%%%%%%%%%%%%%%%%%%%%
%%%%%%%%%%%%%%%%%%%%%%%%%%%%%%%%%%
\section{Analysis on Banach spaces}\label{sec:Ana-B-sp}

All vector spaces are over the real numbers.
Throughout any linear structure is with respect to the real
numbers and, as a rule of thumb, by $X$ and $Y$ we denote normed
linear spaces and by $E$ and $F$ Banach spaces.
In the context of linear spaces \textbf{\Index{subspace}}
means \emph{linear subspace}.

%%%%%%%%%%%%%%%%%%%%%%%%%%%%%%%%%%
%%% SUBSECTION %%%%%%%%%%%%%%%%%%%%%
%%%%%%%%%%%%%%%%%%%%%%%%%%%%%%%%%%
\subsection{Banach spaces}\label{sec:B-sp}

\begin{definition}\label{def:Banach-space}
A \textbf{\Index{Cauchy sequence}} is a sequence $x_\nu$
in a normed linear space $X$ such that $\norm{x_n-x_m}\to 0$
whenever $n,m\to\infty$.
The norm is called \textbf{complete}\index{complete!norm}
if every Cauchy sequence converges (admits a limit).
A linear space $E$ endowed with a complete norm\footnote{
  also called a complete normed linear space
  }
is called a \textbf{Banach space}.\index{Banach!space}
Any closed linear subspace $F\subset E$
endowed with the norm of $E$ is a Banach space itself,\index{Banach!subspace}
called a \textbf{Banach subspace}.
\end{definition}

Relevant examples of Banach spaces are enlisted in
Theorem~\ref{thm:Lp-spaces}.

%%% SUBSUBSECTION %%%%%%%%%%%%%%%%%%%
%%%%%%%%%%%%%%%%%%%%%%%%%%%%%%%%%%
\subsubsection*{Direct sum and topological complements}
%\addcontentsline{toc}{subsection}{Dual spaces and Reflexivity}

\begin{definition}[Direct sum]
The\index{direct sum!of Banach spaces}
\textbf{direct sum of Banach spaces} $X\oplus Y$ is the set of
pairs $\{(x,y)\mid x\in X,\, y\in Y\}$ which is equipped with and
complete under the norm
$\norm{(x,y)}:=\norm{x}+\norm{y}$.\footnote{
  Alternatively, use any of the equivalent norms
  $\norm{(x,y)}_p^p:=\norm{x}^p+\norm{y}^p$ for $1\le p<\infty$
  or $\norm{(x,y)}_\infty:=\max\{\norm{x},\norm{y}\}$.
  }
\end{definition}

\begin{definition}[Banach space complement]
A closed subspace $X$ of a Banach space $Z$
is said to be \textbf{complemented} if there
is a \underline{closed} subspace $Y$ of $Z$ such that
$X\cap Y=\{0\}$ and $X+Y=Z$.
In this case we write $X\oplus Y=Z$
and call $Y$ a \textbf{Banach space complement}
or\index{complement!of Banach subspace}
a \textbf{topological complement} of $X$,
one also says that the Banach space\index{Banach!space!splits}
$X$\index{splitting!of Banach space}
\textbf{splits}.
\end{definition}

\begin{example}[Not every closed subspace is complemented]
\label{ex:not-complemented}
Consider\index{counter-examples:!not complemented}
the Banach space
$\ell^\infty:=\{\text{$x\colon \N\to\R$, $\nu\mapsto x_\nu$, bounded}\}$
of bounded real sequences equipped with the sup norm. The subspace
$c_0$ of sequences that converge to zero is closed, but does not admit
a\index{topological!complement}
\textbf{topological!complement}: There is no closed
subspace\index{complement!topological --}
$d$ such that $c_0\oplus d=\ell^\infty$; see~\citet{Whitley:1966a}.
\end{example}

%%% SUBSUBSECTION %%%%%%%%%%%%%%%%%%%
%%%%%%%%%%%%%%%%%%%%%%%%%%%%%%%%%%
\subsubsection*{Quotient spaces}
%\addcontentsline{toc}{subsection}{Dual spaces and Reflexivity}

\begin{definition}[Quotient space]\label{def:Banach-quotient}
Suppose $X$ is a normed linear space
and $A\subset X$ is a closed linear subspace.
The \textbf{quotient space} of $X$ by $A$\index{quotient!space}
is the set of cosets\footnote{
  equivalently, the set of equivalence classes $\{[x]\mid x\in X\}$
  where $x\sim y$ if $x-y\in A$
  }
denoted and defined by
\[
     X/A:=\{ x+A\mid x\in X\}\subset 2^X.
\]
The function $X/A\to[0,\infty)$ given by the distance of any
point representing the coset $x+A$
to the closed subspace $A$, namely
\[
     \Norm{x+A}_{X/A}:=d(x,A):=\inf_{a\in A} \Norm{x-a}
     =\inf_{y\in x+A}\Norm{y} ,
\]
is called the \textbf{quotient norm}.\index{quotient!norm}
Often we use the shorter
notation $\Norm{x+A}$.
\end{definition}

\begin{exercise}
a) Check that the operations $\alpha(x+A):=\alpha x+A$ for $\alpha\in\R$
and $(a+A)+(b+A):=(a+b)+A$ are well defined on $X/A$ and endow
the set of cosets with the structure of a linear space.
Here closedness of $A$ is actually not needed.
b) Check that $\norm{x+A}_{X/A}=\norm{y+A}_{X/A}$ whenever
$x+A=y+A$ or, equivalently, whenever $x-y\in A$.
c) Show that the function $x+A\mapsto \Norm{x+A}$ is a norm on
the linear space $X/A$.
\vspace{.05cm}\newline
[Hint: c) Non-degeneracy ($\Norm{x+A}=0$ $\Rightarrow$ $x\in A$)
relies on closedness of $A$.]
\end{exercise}

\begin{proposition}[Quotient Banach spaces]
\label{prop:Banach-quotient}
Suppose $E$ is a Banach space and $A$ is a closed subspace. Then the
following is true.
\begin{itemize}
\item[\rm (i)]
  The quotient norm on $E/A$ is complete.
\item[\rm (ii)]
  The map between Banach spaces defined by
  \begin{equation}\label{eq:quot-proj}
     \pi\colon E\to E/A,\quad
     x\mapsto x+A
  \end{equation}
  is linear,\index{quotient!space!projection onto --}
  surjective, continuous, and of norm $\norm{\pi}\le 1$ at most one.
  It is called the\index{projection!onto quotient space}
  \textbf{projection onto the quotient space} $E/A$.
\item[\rm (iii)]
  Suppose, in addition, that $E$ is reflexive, then $E/A$ is reflexive.
\end{itemize}
\end{proposition}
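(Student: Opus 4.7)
The plan is to treat the three parts in order, since each builds on the previous. Throughout, I will denote the quotient projection by $\pi(x) = x+A$.

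For part (ii), I would verify linearity directly from the definition of coset operations, note that surjectivity is tautological, and for the norm bound simply observe
$$\|\pi(x)\|_{E/A} = \inf_{a \in A}\|x-a\| \le \|x-0\| = \|x\|,$$
which gives continuity and $\|\pi\|\le 1$. This is routine.

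For part (i), the plan is the standard ``summable subsequence'' trick. Given a Cauchy sequence $([x_n])$ in $E/A$, I would extract a subsequence $(x_{n_k})$ with $\|[x_{n_{k+1}}]-[x_{n_k}]\|_{E/A} < 2^{-k}$. Using the definition of the quotient norm as an infimum, choose $a_k \in A$ with $\|x_{n_{k+1}}-x_{n_k}-a_k\| < 2^{-k}$ and set $y_k := x_{n_{k+1}}-x_{n_k}-a_k$. Since $\sum_k \|y_k\| < \infty$ and $E$ is complete, the partial sums $s_K := \sum_{k=1}^K y_k$ converge in $E$ to some $s \in E$. A telescoping computation gives
$$s_K = x_{n_{K+1}} - x_{n_1} - \sum_{k=1}^K a_k,$$
so, projecting to $E/A$ (and using that $\sum a_k \in A$), one has $[x_{n_{K+1}}] = [x_{n_1}]+[s_K]$, which converges to $[x_{n_1}]+[s]$ by continuity of $\pi$ established in (ii). A Cauchy sequence with a convergent subsequence converges, so $([x_n])$ converges and $E/A$ is complete.

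For part (iii), the cleanest route is via duality. First, since $A$ is a closed subspace of the reflexive space $E$, one has the standard isometric isomorphism $(E/A)^\ast \cong A^\perp \subset E^\ast$, where $A^\perp := \{\lambda \in E^\ast \mid \lambda|_A = 0\}$; this is immediate from the universal property of the quotient map. Being reflexive, $E$ has a reflexive dual $E^\ast$, and $A^\perp$ is a closed subspace of $E^\ast$, hence itself reflexive. Therefore $(E/A)^\ast$ is reflexive, and since a Banach space is reflexive iff its dual is, $E/A$ is reflexive. The main obstacle (if any) is purely bookkeeping: verifying that the natural map $(E/A)^\ast \to A^\perp$ sending $\mu \mapsto \mu \circ \pi$ is an isometric bijection, which follows from the Hahn--Banach theorem (for surjectivity: any $\lambda \in A^\perp$ factors through $\pi$) and from $\|\pi\| \le 1$ together with the infimum defining the quotient norm (for the isometry). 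An alternative approach via Kakutani's theorem---pushing the weakly compact unit ball of $E$ forward under $\pi$---is possible but more delicate, since one must argue that the closed unit ball of $E/A$ equals $\pi(\overline{B_E})$ rather than just sitting inside $\pi(\overline{B_E(0,1+\varepsilon)})$ for all $\varepsilon>0$; the duality argument avoids this subtlety entirely.
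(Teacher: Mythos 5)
Your proof of parts (i) and (ii) is correct and follows essentially the same route as the paper: the quotient norm bound $\|\pi(x)\|\le\|x\|$ via the choice $a=0$, and completeness via extraction of a subsequence with $\|[x_{n_{k+1}}]-[x_{n_k}]\|<2^{-k}$, choice of representatives $a_k\in A$, and convergence in $E$ of the corrected partial sums (the paper phrases this as $x_\nu-z_\nu$ being Cauchy rather than via absolute summability, but the telescoping is identical). For part (iii) the paper simply cites \citet[Prop.\,11.11]{brezis:2011a}; your duality argument via the isometric identification $(E/A)^*\cong A^\perp$, reflexivity of closed subspaces of the reflexive space $E^*$, and the fact that a Banach space is reflexive iff its dual is, is correct and is in fact the standard proof behind that citation, so you have supplied a genuine proof where the paper outsources one.
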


\begin{proof}
(i) Given a Cauchy sequence $x_\nu+A$ in the coset space $E/A$,
by the Cauchy property it suffices to extract a subsequence that
converges to a limit element $e+A$ in $E/A$.
Forgetting sequence members, if necessary, there is a subsequence,
still denoted by $x_\nu+A$, that satisfies
\[
     \frac{1}{2^\nu}>\norm{(x_{\nu+1}+A)-(x_\nu+A)}
     =\norm{(x_{\nu+1}-x_\nu)+A}
     :=d(x_{\nu+1}-x_\nu,A).
\]
Thus there is a sequence of points $a_\nu\in A$
whose distance to $x_{\nu+1}-x_\nu$ satisfies
$\norm{x_{\nu+1}-x_\nu-a_\nu}<1/2^\nu$.
Consider the partial sum sequence $z_{\nu+1}:=a_\nu+\dots+a_1\in A$.
As the sequence $x_\nu-z_\nu$ is Cauchy in $E$,
indeed
\[
     \Norm{(x_{\nu+1}-z_{\nu+1})-(x_\nu-z_\nu)}
     =\Norm{x_{\nu+1}-a_\nu-x_\nu}
     <1/2^\nu
\]
it admits a limit $e$ in the Banach space $E$.
It follows that the sequence $x_\nu+A$
converges to $e+A$ in $E/A$ and we are done. Indeed
\begin{equation*}
\begin{split}
     \Norm{(x_\nu+A)-(e+A)}
   &=\Norm{(x_\nu-e)+A}\\
   :&=\inf_{a\in A}\Norm{x_\nu-e-a}\\
   &\le \Norm{x_\nu-e-z_\nu}
     <1/2^\nu.
\end{split}
\end{equation*}
(ii) The map $\pi$ is linear by definition
of addition in the coset space $E/A$. Surjectivity is obvious.
To see continuity and $\norm{\pi}\le 1$,
given $x\in E$, pick $a=0\in A$ to get that
$
     \norm{\pi(x)}:=\inf_{a\in A}\norm{x-a}\le \norm{x}
$.
(iii)~\citet[Prop.\,11.11]{brezis:2011a}.
\end{proof}

For more details about quotients see e.g.~\citet[\S 11.2]{brezis:2011a}
or~\citet[\S 18.14]{Rudin:1987a}.

%%%%%%%%%%%%%%%%%%%%%%%%%%%%%%%%%%
%%% SUBSECTION %%%%%%%%%%%%%%%%%%%%%
%%%%%%%%%%%%%%%%%%%%%%%%%%%%%%%%%%
\subsection{Linear operators}\label{sec:lin-op}

Given normed linear spaces $X$ and $Y$, 
recall that a linear map $T\colon X\to Y$ is continuous
iff it is continuous at one point iff it is \textbf{bounded};
see e.g.~\citet[Thm.\,I.6]{reed:1980a}.
To be bounded\index{bounded!linear operator}
means that the\index{norm!operator --}
\textbf{\Index{operator norm}} of $T$, defined by
\begin{equation*}
\begin{split}
    \Norm{T}=\Norm{T}_{\Ll(X,Y)}
  :&=\sup_{\Norm{x}=1} \Norm{T x}\\
   &=\inf\left\{c\ge 0 \colon  
     \text{$\Norm{Tx}\le c\Norm{x}$ for every $x\in X$}\right\}
\end{split}
\end{equation*}
is finite. By $\Ll(X,Y)$ we denote the linear space
of\index{$\Ll(E,F)$ bounded linear operators with operator norm topology}
continuous linear operators $T\colon X\to Y$. Juxtaposition $ST\colon X\to Y\to Z$
denotes composition. The \textbf{invertible elements} $T$ of $\Ll(X,Y)$,
that is $TS=\1$ and $ST=\1$ for some (unique) $S\in\Ll(Y,X)$,
are called \textbf{isomorphisms} or\index{isomorphism!toplinear}
\textbf{\Index{toplinear isomorphism}s}\footnote{
  A toplinear isomorphism is a continuous linear bijection 
  whose inverse is continuous, too. The notion
  makes sense in the general context of topological vector spaces.
  }
to emphasize context. In case of Banach spaces $E$ and $F$
invertible elements of $\Ll(E,F)$, aka toplinear isomorphisms,
aka isomorphisms, are precisely the continuous linear bijections;
cf. e.g.~\citet[I \S 2]{lang:2001a} and~\citet[IV \S 1]{Lang:1993a}.
(The inverse is continuous by the closed graph theorem.)
\\
Abbreviate $\Ll(X):=\Ll(X,X)$.
By\index{$\Llm{k}(X,Y)$ $k$-fold multilinear maps}
$\Llm{k}(X,Y)$ we denote the linear space
of $k$-fold multilinear maps $T\colon X\oplus\dots\oplus X\to Y$.
If the norm of $Y$ is complete, then the operator norm is complete, so
$\Ll(X,Y)$ is a Banach space.
Thus the dual space $X^*=\Ll(X,\R)$ of a
normed linear space is a Banach space.

%%% SUBSUBSECTION %%%%%%%%%%%%%%%%%%%
%%%%%%%%%%%%%%%%%%%%%%%%%%%%%%%%%%
\subsubsection*{Unique extension}

\begin{theorem}[B.L.T. theorem]\label{thm:BLT}
Suppose $T$ is a bounded linear map
from a normed linear space $X$
to a complete normed linear space $F$.
Then $T$ extends uniquely
to a bounded linear map $\tilde T$
from the completion of~$X$ to~$F$.
\end{theorem}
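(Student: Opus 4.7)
The plan is to exploit the standard construction of the completion $\bar X$ (as equivalence classes of Cauchy sequences in $X$, into which $X$ embeds isometrically as a dense subspace) together with completeness of $F$ and uniform continuity of $T$. First I would take an arbitrary point $\bar x \in \bar X$ and pick a sequence $(x_n) \subset X$ with $x_n \to \bar x$. Since $(x_n)$ is Cauchy in $X$, boundedness gives
\[
     \Norm{Tx_n - Tx_m}_F \le \Norm{T}\,\Norm{x_n - x_m}_X \to 0,
\]
so $(Tx_n)$ is Cauchy in $F$ and, by completeness of $F$, converges to some element, which I would take as the definition of $\tilde T(\bar x)$.

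Next I would verify that this definition is independent of the chosen approximating sequence: if $(x_n^\prime)$ is another Cauchy sequence converging to $\bar x$, then $\norm{x_n - x_n^\prime} \to 0$, hence $\norm{Tx_n - Tx_n^\prime}_F \le \norm{T}\,\norm{x_n - x_n^\prime} \to 0$, so both image sequences share the same limit. Linearity of $\tilde T$ follows by picking approximating sequences for $\bar x, \bar y$ and using linearity of $T$ together with continuity of the vector operations on $F$. For boundedness, passing to the limit in $\norm{Tx_n}_F \le \norm{T}\,\norm{x_n}_X$ and using continuity of the norm yields $\norm{\tilde T(\bar x)}_F \le \norm{T}\,\norm{\bar x}_{\bar X}$, so $\tilde T \in \Ll(\bar X, F)$ with $\norm{\tilde T} \le \norm{T}$. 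Since $\tilde T$ restricts to $T$ on $X$, the reverse inequality is immediate, giving $\norm{\tilde T} = \norm{T}$.

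Finally, uniqueness is a soft consequence of density: if $S \in \Ll(\bar X, F)$ is any bounded linear extension of $T$, then $S$ and $\tilde T$ are continuous maps into the Hausdorff space $F$ that agree on the dense subset $X \subset \bar X$, hence agree everywhere. The main obstacle, if any, is purely bookkeeping rather than conceptual: namely keeping track of the isometric embedding $X \hookrightarrow \bar X$ and making sure that the ``$\bar x$'' appearing as a limit in $\bar X$ is consistently identified with the equivalence class of the Cauchy sequence chosen to define $\tilde T(\bar x)$. No scale-theoretic input is needed; the argument is entirely the classical one and uses only completeness of $F$, density of $X$ in $\bar X$, and the Lipschitz estimate provided by $\norm{T}$.
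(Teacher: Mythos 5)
Your proof is correct and complete: it is the classical density-plus-completeness argument (extend via limits of Cauchy images, check well-definedness, linearity, the norm estimate, and uniqueness from density), which is precisely the proof of the result the paper does not reproduce but delegates to the cited reference (Reed--Simon, Thm.\,I.7). Nothing is missing; the only point worth keeping in mind, which you already flag, is the consistent identification of $X$ with its isometric image in the completion.
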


\begin{proof}
See e.g.~\citet[Thm.\,I.7]{reed:1980a}.
\end{proof}

%%% SUBSUBSECTION %%%%%%%%%%%%%%%%%%%
%%%%%%%%%%%%%%%%%%%%%%%%%%%%%%%%%%
\subsubsection*{Compact operators and projections}
%\addcontentsline{toc}{subsection}{Dual spaces and Reflexivity}

\begin{definition}[Compact operator]
    A\index{compact!linear operator}
    linear operator $S\colon X\to Y$ between normed linear spaces is called
    \textbf{compact} if for every bounded sequence
    in the domain, the image sequence has a convergent
    subsequence or, equivalently, if it maps bounded sets
    to \textbf{pre-compact sets}\index{pre-compact!sets}
    (sets whose closure is compact).    
    Compact linear operators are automatically continuous.
\end{definition}

\begin{definition}[Projection]
A continuous linear operator $P\colon X\to X$ is called a
\textbf{\Index{projection}} if it is idempotent, in symbols $P\circ P=P$.
\end{definition}

\begin{exercise}[Continuous projections split]
Let $E$ be a Banach space and $P\in\Ll(E)$ a projection.
Then
% the following is true.
%\begin{itemize}
%\item[\rm (i)]
  the image $F:=\im P$ is closed and complemented by the closed
  image $G:=\im Q$ of the continuous projection $Q:=\1-P$, that is

  \[
     E=F\oplus G=\im P\oplus \im(\1-P).
  \]
%\vspace{.05cm}\newline
[Hint: Kernels of continuous maps are closed and $\im P=\ker Q$ and
vice versa.]
\end{exercise}

%%% SUBSUBSECTION %%%%%%%%%%%%%%%%%%%
%%%%%%%%%%%%%%%%%%%%%%%%%%%%%%%%%%
\subsubsection*{Principle of uniform boundedness}

The Hahn--Banach theorem and the Banach--Steinhaus theorem
are two pillars of functional analysis. The latter is also known
as the principle of uniform boundedness. Its proof
is based on the Baire category theorem which requires
a non-empty complete metric space, for instance a Banach space $E$.

\begin{theorem}[Banach--Steinhaus]\label{thm:Banach-Steinhaus}
Suppose\index{Theorem of!Banach--Steinhaus}
$E$ is a Banach space. Let $\Ff$ be a family of bounded linear
operators $T\colon E\to Y$ to some normed linear space.
Suppose that the \textbf{\boldmath$\Ff$-orbit} through each point
$x\in E$, namely each set
\[
     \Ff x:=\{Tx\,\colon\, T\in \Ff\}\subset Y 
\]
is a bounded subset of $Y$. Then the operator norm is uniformly bounded
along the family $\Ff$: There is a constant $c_\Ff\ge 0$ such that
\[
       \Norm{T}=\Norm{T}_{\Ll(E,Y)}\le c_\Ff\qquad\forall\, T\in\Ff.
\]
\end{theorem}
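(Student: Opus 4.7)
The plan is to deduce this from Baire's Theorem~\ref{thm:Baire}, which applies because $E$ is a complete metric space (with the metric induced by the norm).

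First I would set up the right closed sets. For each $n\in\N$, define
\[
     E_n:=\bigcap_{T\in\Ff}\{x\in E\mid \Norm{Tx}_Y\le n\}.
\]
Each individual set $\{x\in E\mid \Norm{Tx}_Y\le n\}$ is the pre-image under the continuous map $T\colon E\to Y$ of the closed ball of radius $n$ in $Y$, hence closed in $E$; the intersection over $T\in\Ff$ is therefore also closed. By the pointwise boundedness hypothesis, for every $x\in E$ the orbit $\Ff x\subset Y$ is bounded, so there exists $n=n(x)$ with $\Norm{Tx}_Y\le n$ for all $T\in\Ff$, i.e.\ $x\in E_n$. Thus $E=\bigcup_{n\in\N} E_n$.

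Next I would invoke Baire. If every $E_n$ had empty interior, then each complement $U_n:=E\setminus E_n$ would be open and dense in $E$, so $\bigcap_n U_n$ would be dense (in particular non-empty) by Theorem~\ref{thm:Baire}. But $\bigcap_n U_n=E\setminus\bigcup_n E_n=\emptyset$, a contradiction. Hence some $E_{n_0}$ has non-empty interior, so there exist $x_0\in E$ and $r>0$ with the closed ball $\widebar{B_r(x_0)}\subset E_{n_0}$.

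Finally I would translate this into a uniform operator norm bound. For any $x\in E$ with $\Norm{x}\le r$, both $x_0$ and $x_0+x$ lie in $E_{n_0}$, so for every $T\in\Ff$,
\[
     \Norm{Tx}_Y=\Norm{T(x_0+x)-Tx_0}_Y\le\Norm{T(x_0+x)}_Y+\Norm{Tx_0}_Y\le 2n_0.
\]
Taking the supremum over $x$ in the ball of radius $r$ and rescaling yields $\Norm{T}_{\Ll(E,Y)}\le 2n_0/r$ for every $T\in\Ff$, so the constant $c_\Ff:=2n_0/r$ does the job.

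The only real obstacle is the Baire step; everything else is bookkeeping. The subtlety worth double-checking is that one truly needs completeness of $E$ (used through Baire), whereas completeness of $Y$ plays no role in the argument.
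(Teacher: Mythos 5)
Your proof is correct and is precisely the standard Baire-category argument that the paper points to: the text gives no proof of its own beyond citing Reed--Simon and remarking just before the statement that the argument rests on Baire's theorem applied to the complete metric space $E$, which is exactly the route you take. The only cosmetic point is that non-empty interior directly yields an open ball $B_r(x_0)\subset E_{n_0}$, and passing to the closed ball is harmless since $E_{n_0}$ is closed (alternatively work with $\Norm{x}<r$ and take suprema); your closing observation that completeness of $Y$ is never used matches the statement, which assumes $Y$ only normed.
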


\begin{proof}
See e.g.~\citet[Thm.\,III.9]{reed:1980a}.
\end{proof}

Recall that $\Ll(E,F)$ carries the operator norm topology.
How to utilize the principle of uniform boundedness 
is illustrated in the proof of

\begin{proposition}\label{prop:cont-BS}
Suppose $E_1,E_0,F_0$ are Banach spaces and $U_1\subset E_1$ is an
open subset. Then the following is true.
\begin{itemize}
\item[a)]
  Let the map $\Phi\colon U_1\oplus E_0\to F_0$,
  $(x,\eta)\mapsto\Phi(x,\xi)=:\Phi(x)\eta$,
  be continuous and, moreover, linear in the second variable.
  Then the induced map
  \begin{equation}\label{eq:a11}
     U_1\to\Llco(E_0,F_0),\quad x\mapsto \Phi(x)\cdot
  \end{equation}
  is 
  continuous. 
  (The target carries the
  compact-open topology.)\footnote{
    The target carrying the\index{topology!compact-open --}
    \textbf{\Index{compact-open topology}} means that a sequence
    $T_\nu\in\Ll(E_0,F_0)$ converges to an element $T\in\Ll(E_0,F_0)$
    iff for each domain element $\xi$ the sequence $T_\nu\xi$ converges
    to $T\xi$ in $F_0$; see Lemma~\ref{le:co-top-lin}.
    }
\item[b)]
  If $S\colon E_1\to E_0$ is a compact linear operator, then the induced map
  \begin{equation}\label{eq:b2}
     \Llco(E_0,F_0)\to\Ll(E_1,F_0),\quad T\mapsto T\circ S
  \end{equation}
  is continuous. (The target carries the norm topology.)
\item[c)]
  For $\Phi\colon U_1\oplus E_0\to F_0$ and $S\colon E_1\to E_0$ as in a) and b)
  the induced map
  \begin{equation}\label{eq:b3}
     U_1\to\Ll(E_1,F_0),\quad x\mapsto \Phi(x) S\,\cdot
  \end{equation}
  is continuous.
\end{itemize}
\end{proposition}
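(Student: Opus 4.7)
The plan is to prove the three parts in order; part~c) will follow immediately from~a) and~b) by the fact that a composition of continuous maps between topological spaces is continuous. Throughout I will work directly with the sub-basis $\Bb_{K,V}$ of the compact-open topology (cf.~Definition~\ref{def:basic-collections}) so that the essential use of compactness is transparent. The conceptual point is that the compact operator $S$ in~b) is what allows one to trade compact-open convergence for operator-norm convergence, and Banach--Steinhaus enters only indirectly via the identification of these two notions on $\Ll(E_0,F_0)$ provided by Lemma~\ref{le:co-top-lin}.

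For part~a) I would argue by a Tube/Slice Lemma. Fix $x_0\in U_1$ and a sub-basic neighborhood $\Phi(x_0)+\Bb_{K,V}$ of $\Phi(x_0)$ with $K\subset E_0$ compact and $V\subset F_0$ an open neighborhood of $0$. It suffices to produce an open $W\ni x_0$ with $(\Phi(x)-\Phi(x_0))(K)\subset V$ for all $x\in W$. The auxiliary map
\[
     \Psi\colon U_1\oplus E_0\to F_0,\qquad
     (x,\eta)\mapsto \Phi(x)\eta-\Phi(x_0)\eta
\]
is continuous by joint continuity of $\Phi$ together with continuity of subtraction in $F_0$. Since $\Psi(\{x_0\}\times K)=\{0\}\subset V$, the open pre-image $\Psi^{-1}(V)\subset U_1\oplus E_0$ contains $\{x_0\}\times K$. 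The Slice Lemma (cf.\ the proof of Proposition~\ref{prop:cont-TVS}~a)) then delivers an open neighborhood $W$ of $x_0$ with $W\times K\subset\Psi^{-1}(V)$, which is what we need.

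For part~b) I would use the compactness of $S$ directly. Given $T\in\Ll(E_0,F_0)$ and $\varepsilon>0$, let $K:=\overline{S(B_1^{E_1})}\subset E_0$, which is compact because $S$ is a compact linear operator, and put $V:=\{\eta\in F_0:\norm{\eta}_{F_0}<\varepsilon/2\}$, an open neighborhood of $0$. Then $T+\Bb_{K,V}$ is an open neighborhood of $T$ in $\Llco(E_0,F_0)$, and for every $T'$ in it one has $(T'-T)(K)\subset V$, hence
\[
     \Norm{T'S-TS}_{\Ll(E_1,F_0)}
     =\sup_{\norm{\xi}_{E_1}\le 1}\Norm{(T'-T)S\xi}_{F_0}
     \le\sup_{\eta\in K}\Norm{(T'-T)\eta}_{F_0}
     \le\tfrac{\varepsilon}{2}<\varepsilon.
\]
Thus the pre-image under $T\mapsto TS$ of the operator-norm ball of radius $\varepsilon$ about $TS$ contains an open neighborhood of $T$, proving continuity.

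Part~c) is then the composition of the continuous maps from~a) and~b). I do not anticipate any serious obstacle: the only slightly delicate step is making sure in~a) that one really does pick the compact $K$ before invoking the Slice Lemma -- this is where joint (as opposed to merely separate) continuity of $\Phi$ is essential. The role of Banach--Steinhaus, implicit in the background via Lemma~\ref{le:co-top-lin}, is what justifies calling the target topology in~a) ``compact-open'' without loss of content: for linear operators between normed spaces, pointwise convergence upgrades to uniform convergence on compact sets precisely because pointwise boundedness upgrades to norm boundedness.
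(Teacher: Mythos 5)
Your proposal is correct. Part a) is essentially the paper's argument: the paper simply cites Proposition~\ref{prop:cont-TVS}~a), whose proof is exactly your Slice Lemma argument; your only variation is to exploit linearity and work with the translated basic collections $\Phi(x_0)+\Bb_{K,V}$ (a genuine basis by Theorem~\ref{thm:Bb-basis}) applied to the difference map $\Psi$, rather than with the sub-basic sets $\Ff_{K,V}$ of the general compact-open topology. Part b), however, is where you genuinely diverge. The paper argues by contradiction with sequences: it assumes $T_\nu\zeta\to T\zeta$ pointwise, supposes $\norm{T_\nu S\xi_\nu - TS\xi_\nu}\ge\eps$ along a unit-norm sequence $\xi_\nu$, extracts a convergent subsequence $S\xi_\nu\to\eta$ using compactness of $S$, and invokes the Banach--Steinhaus Theorem~\ref{thm:Banach-Steinhaus} to bound $\norm{T_\nu}$ uniformly and reach a contradiction. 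Your direct estimate -- taking $K:=\overline{S(B_1^{E_1})}$ compact and showing the whole neighborhood $T+\Bb_{K,V}$ lands in the operator-norm $\eps$-ball about $TS$ -- is shorter, avoids Banach--Steinhaus entirely, and proves honest topological continuity rather than only sequential continuity; the latter point matters in principle because $\Llco(E_0,F_0)$ need not be first countable, so the paper's sequence-based verification is, strictly speaking, weaker than what the statement asserts (though sufficient for all the applications in the text, where the domain is metrizable). Part c) is the same composition argument in both. Your closing remark that Banach--Steinhaus still lurks behind Lemma~\ref{le:co-top-lin} is accurate, but note that your own proof of b) does not need that identification at all.
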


\begin{proof}
a) Proposition~\ref{prop:cont-TVS} a).

b) Given $T\in\Ll(E_0,F_0)$ and a sequence $T_\nu\in\Ll(E_0,F_0)$ with
$T_\nu\zeta\to T\zeta$ in $F_0$ for each $\zeta\in E_0$, assume by
contradiction that there is a constant $\eps>0$ and a sequence in
$E_1$ of bounded norm, say $\norm{\xi_\nu}_{E_1}=1$, such that
$\norm{T_\nu S\xi_\nu-TS\xi_\nu}_{F_0}\ge\eps$.
Because the linear operator $S\colon E_1\to E_0$ is compact, there is
$\eta\in E_0$ and subsequences, still denoted by $T_\nu$ and $\xi_\nu$,
such that $S\xi_\nu\to \eta$ in $E_0$. Hence
\begin{equation*}
\begin{split}
     \lim_{\nu\to\infty}\Norm{T_\nu S\xi_\nu-TS\xi_\nu}_{F_0}
   &\le
     \lim_{\nu\to\infty}\Norm{T_\nu S\xi_\nu-T_\nu\eta}_{F_0}\\
   &\quad
     +\lim_{\nu\to\infty}\Norm{T_\nu\eta-T\eta}_{F_0}\\
   &\quad
     +\lim_{\nu\to\infty}\Norm{T\eta-TS\xi_\nu}_{F_0}\\
   &\le
     \lim_{\nu\to\infty}\Norm{T_\nu}_{\Ll(E_0,F_0)}\Norm{S\xi_\nu-\eta}_{E_0}\\
   &\quad
     +\lim_{\nu\to\infty}\Norm{T_\nu\eta-T\eta}_{F_0}\\
   &\quad
     +\lim_{\nu\to\infty}\Norm{T}_{\Ll(E_0,F_0)}\Norm{\eta-S\xi_\nu}_{E_0}\\
   &=0.
\end{split}
\end{equation*}
Contradiction. Here the two inequalities are obtained by first
adding twice zero and applying the triangle inequality,
then using the definition of the operator norm.
It remains to understand the vanishing of the three limits.
For limit three this is obvious and limit two vanishes by hypothesis.
Concerning limit one consider the family
$\Ff:=\{T_\nu\}_{\nu\in\N}\CUP\{T\}\subset\Ll(E_0,F_0)$.
Each $\Ff$ orbit
\[
     \Ff \zeta:=\{T_\nu\zeta\}_{\nu\in\N}\CUP\{T\zeta\}\subset F_0
     ,\quad \zeta\in E_0
\]
is bounded in $F_0$, even compact, as $T_\nu\zeta\to T\zeta$ by hypothesis.
By the Banach--Steinhaus Theorem~\ref{thm:Banach-Steinhaus}
the family $\Ff$ is bounded in the operator norm.

c) The composition of continuous maps is continuous. But composing
the continuous maps (\ref{eq:a11}) and (\ref{eq:b2}) is the map (\ref{eq:b3}).
\end{proof}

%%% SUBSUBSECTION %%%%%%%%%%%%%%%%%%%
%%%%%%%%%%%%%%%%%%%%%%%%%%%%%%%%%%
\subsubsection*{Dual spaces and Reflexivity}
%\addcontentsline{toc}{subsection}{Dual spaces and Reflexivity}

\begin{definition}[Dual space]
Given a normed linear space $X$, 
its \textbf{\Index{dual space}} is the Banach space
$X^*:=\Ll(X,\R)$ of continuous\index{$X^*:=\Ll(X,\R)$ dual space}
linear functionals $\lambda\colon X\to\R$.
\end{definition}

\begin{theorem}[Hahn--Banach]\label{thm:HB}
Suppose $V$ is a linear subspace, closed or not,
of a Banach space $X$ and $\lambda\in V^*$
is a continuous linear functional on $V$.
Then there is a linear functional $\Lambda\in X^*$
that extends $\lambda$ and such that
\[
     \Norm{\Lambda}_{X^*}
     =\sup_{v\in V\atop\Norm{v}=1}\Abs{\lambda(v)}
     =:\Norm{\lambda}_{V^*}.
\]
\end{theorem}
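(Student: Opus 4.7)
The plan is to deduce the stated norm-preserving extension result from the classical analytic Hahn--Banach extension theorem applied to a carefully chosen sublinear majorant, and then to derive the analytic Hahn--Banach statement from a Zorn's lemma argument.

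First I would set $c := \Norm{\lambda}_{V^*}$ and define $p\colon X \to [0,\infty)$ by $p(x) := c\Norm{x}$. This $p$ is a seminorm, hence sublinear: $p(x+y)\le p(x)+p(y)$ and $p(tx)=tp(x)$ for $t\ge 0$. By definition of $c$ we have $\abs{\lambda(v)}\le p(v)$ for every $v\in V$. Thus the problem reduces to the following key lemma: if $p$ is a sublinear functional on a real vector space $X$ and $\lambda$ is a linear functional on a subspace $V$ satisfying $\lambda(v)\le p(v)$ for $v\in V$, then $\lambda$ extends to a linear functional $\Lambda$ on $X$ with $\Lambda(x)\le p(x)$ for all $x\in X$. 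Granting this lemma, I obtain $\Lambda$ with $\Lambda(x)\le c\Norm{x}$; applying the same bound to $-x$ yields $\abs{\Lambda(x)}\le c\Norm{x}$, so $\Norm{\Lambda}_{X^*}\le c=\Norm{\lambda}_{V^*}$. The reverse inequality is immediate since $\Lambda|_V=\lambda$, giving equality.

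Next I would prove the key lemma in two moves. The first move is the one-dimensional extension: given an extension $\mu$ of $\lambda$ on a subspace $W$ dominated by $p$, and given $x_0\in X\setminus W$, I look for $\alpha\in\R$ so that $\mu_\alpha(w+tx_0):=\mu(w)+t\alpha$ still satisfies $\mu_\alpha \le p$. Separating the cases $t>0$ and $t<0$ and using homogeneity, this reduces to finding $\alpha$ with
\[
\sup_{w\in W}\bigl(-p(-w-x_0)-\mu(w)\bigr)\;\le\;\alpha\;\le\;\inf_{w\in W}\bigl(p(w+x_0)-\mu(w)\bigr).
\]
Such $\alpha$ exists because for any $w_1,w_2\in W$ subadditivity of $p$ together with $\mu\le p$ on $W$ gives
\[
\mu(w_1)+\mu(w_2)=\mu(w_1+w_2)\le p(w_1+w_2)\le p(w_1+x_0)+p(-w_2-x_0),
\]
so the supremum on the left is bounded by the infimum on the right.

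The second move is a Zorn's lemma argument. Consider the set $\mathcal{P}$ of pairs $(W,\mu)$ where $V\subset W\subset X$ is a linear subspace, $\mu$ is linear on $W$ extending $\lambda$, and $\mu\le p$ on $W$, ordered by $(W_1,\mu_1)\preceq(W_2,\mu_2)$ iff $W_1\subset W_2$ and $\mu_2|_{W_1}=\mu_1$. Any chain has an upper bound given by the union of domains with the common extension. By Zorn there exists a maximal element $(W_*,\Lambda)$. If $W_*\neq X$, the one-dimensional extension step contradicts maximality; hence $W_*=X$, which produces the sought-after dominated linear extension $\Lambda$ on all of $X$.

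The main obstacle is not any single analytic estimate but the reliance on Zorn's lemma (equivalently the axiom of choice); once one accepts it, the two moves above are quite mechanical. A minor technical point to watch is the reduction from the symmetric estimate $\abs{\Lambda(x)}\le p(x)$ to the one-sided estimate $\Lambda(x)\le p(x)$ used in the lemma, which works here because $p(x)=p(-x)$; this is what lets us recover the operator norm identity rather than merely a one-sided bound.
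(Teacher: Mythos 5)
The paper does not actually prove this theorem: its ``proof'' is a citation to \citet[Cor.\,1.2]{brezis:2011a}. What you have written is, in substance, exactly the argument contained in that reference (and in essentially every functional analysis text): reduce the norm-preserving extension to the analytic Hahn--Banach lemma by taking the sublinear majorant $p(x)=\Norm{\lambda}_{V^*}\Norm{x}$, prove the lemma by a one-dimensional extension step plus Zorn's lemma, and recover the two-sided bound $\Abs{\Lambda(x)}\le p(x)$ from the one-sided one using $p(-x)=p(x)$. All of the essential points are present and correct, including the reverse inequality $\Norm{\Lambda}_{X^*}\ge\Norm{\lambda}_{V^*}$ from restriction; so your proposal supplies a complete proof where the paper only outsources one.

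One small sign slip in the consistency check of the one-dimensional step: with your parametrization of the lower bound as $\sup_{w}\bigl(-p(-w-x_0)-\mu(w)\bigr)$, the inequality you need is
\[
\mu(w_1)-\mu(w_2)=\mu(w_1-w_2)\le p(w_1-w_2)=p\bigl((w_1+x_0)+(-w_2-x_0)\bigr)\le p(w_1+x_0)+p(-w_2-x_0),
\]
whereas you wrote $\mu(w_1)+\mu(w_2)=\mu(w_1+w_2)\le p(w_1+w_2)\le p(w_1+x_0)+p(-w_2-x_0)$; the middle step there fails because $(w_1+x_0)+(-w_2-x_0)=w_1-w_2$, not $w_1+w_2$. (You appear to have mixed the parametrization $\sup_u\bigl(\mu(u)-p(u-x_0)\bigr)$, for which the $w_1+w_2$ chain is the right one, with the $-w$ substitution.) This is a cosmetic fix, not a gap in the idea.
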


\begin{proof}
See e.g.~\citet[Cor.\,1.2]{brezis:2011a}.
\end{proof}

\begin{definition}[Reflexive]\label{def:reflexive}
A normed linear space $X$ is called \textbf{\Index{reflexive}}
if the canonical isometric linear map $J\colon X\to (X^*)^*$
given by evaluation
\[
     J(x)(x^*):=\INNER{x^*}{x}
\]
is surjective; see e.g.~\citet[\S 2.4]{Buhler:2018b}.
(Note that any linear isometry is injective.)
\end{definition}

\begin{remark}\label{rem:reflexive}
We highly recommend~\citet[\S 3.5]{brezis:2011a}.
\begin{itemize}
\item[a)]
  \textbf{Kakutani's Theorem}: Reflexivity of a Banach space $E$ is
  equivalent\index{Theorem of!Kakutani}
  to compactness of the closed unit ball of $E$ in the weak topology.
\item[b)]
  Closed linear subspaces of reflexive Banach spaces are reflexive.
\item[c)]
  A uniformly convex Banach space, so any Hilbert space, is reflexive.
\end{itemize}
\end{remark}

\begin{example}[Non-reflexive Banach spaces]
\label{ex:not-reflexive}\mbox{ }
\begin{itemize}
\item[(i)]
  The closed linear subspace $c_0$ of the Banach space $\ell^\infty$ in
  Example~\ref{ex:not-complemented} is not reflexive; see
  e.g.~\citet[Exc.\,4.37]{Salamon:2016a}.
  Hence $\ell^\infty$ is not reflexive either by
  Remark~\ref{rem:reflexive} b).
\item[(I)] More generally, let $C^0_{\rm bd}(X)$ be the space of
  bounded continuous functions on a locally compact topological
  space $X$ endowed with the sup norm (e.g. $\ell^\infty$). Then the
  Banach space $C^0_{\rm bd}(X)$ is reflexive iff $X$ is a finite set.
  See e.g.~\citet{Conway:1985a} (III $\S$11 Exc.\,2 and  V $\S$4 Exc.\,3).
\item[(ii)]
  Consequently $C^0(Q)$ is not reflexive for compact manifolds $Q$
  of $\dim Q\ge 1$. Neither is $C^k(Q)$ for $k\in\N$; this
  follows by reduction to the case $k=0$ using the graphs maps of
  differentials, see e.g.~\citet[App.\ A]{Weber:2017b}.
\end{itemize}
\end{example}

The following theorem can be viewed as a substitute
in the Banach space universe of the orthogonal projections
available in the Hilbert space world.

\begin{theorem}[Projection theorem for reflexive Banach spaces]
\label{thm:Banach-proj-thm}
Let\index{projection!theorem for Banach spaces}
$E$\index{theorem!projection -- for Banach spaces}
be a reflexive Banach 
space and $C\subset E$ a closed convex subset.
For every $x\in E$ there is an element $y\in C$ 
which minimizes the distance to $x$, that is
\[
     \Norm{x-y}=d(x,C):=\inf_{z\in C} \Norm{x-z}.
\]
\end{theorem}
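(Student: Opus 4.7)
The plan is a standard minimizing-sequence argument, exploiting the two pillars that reflexivity gives us: weak compactness of bounded sets and the fact that closed convex sets are weakly closed.

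First I would set $d := d(x,C) = \inf_{z\in C}\norm{x-z}$, assuming tacitly that $C$ is non-empty (otherwise the statement is vacuous). By definition of the infimum, pick a minimizing sequence $y_n \in C$ with $\norm{x-y_n} \to d$. This sequence is norm-bounded, since $\norm{y_n} \le \norm{x} + \norm{x-y_n}$ and the right-hand side is bounded. Here the work is purely mechanical; the geometric content has not yet entered.

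Next, invoke Kakutani's theorem (Remark~\ref{rem:reflexive}~a): reflexivity of $E$ means the closed unit ball is weakly compact, so any bounded sequence admits a weakly convergent subsequence (Eberlein--\v{S}mulian). Extract $y_{n_k} \rightharpoonup y_\infty$ in $E$. Now comes the key step: I must show $y_\infty \in C$ and $\norm{x-y_\infty} \le d$. The first follows from Mazur's theorem, a consequence of Hahn--Banach (Theorem~\ref{thm:HB}), which says that a convex subset of a normed space is norm-closed iff it is weakly closed; since $C$ is convex and norm-closed, it is weakly closed, so the weak limit $y_\infty$ of the sequence $(y_{n_k}) \subset C$ lies in $C$. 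The second follows from weak lower semicontinuity of the norm (again a Hahn--Banach consequence, via representation of the norm as a supremum of continuous linear functionals): $\norm{x-y_\infty} \le \liminf_{k\to\infty} \norm{x-y_{n_k}} = d$. Combined with the reverse inequality $\norm{x-y_\infty} \ge d$ coming from $y_\infty \in C$, we conclude $\norm{x-y_\infty} = d$, and $y := y_\infty$ is the required minimizer.

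The main obstacle, or rather the main input one must have available, is Mazur's theorem that norm-closed convex sets are weakly closed; without it the weak limit $y_\infty$ has no reason to lie in $C$. Everything else (boundedness of the minimizing sequence, extraction of a weakly convergent subsequence via Kakutani, weak lower semicontinuity of the norm) is either bookkeeping or a direct application of a result already cited in the text. Note that uniqueness of the minimizer is \emph{not} claimed and indeed fails without further hypotheses such as strict or uniform convexity of the norm, so I would not attempt to prove it.
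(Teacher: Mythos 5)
Your argument is correct and is precisely the standard proof the paper points to: the text gives no proof of its own beyond citing Kakutani's theorem and \citet[Cor.\,3.23]{brezis:2011a}, whose argument is exactly your minimizing-sequence scheme (boundedness, weak compactness via reflexivity, Mazur's theorem to keep the weak limit in $C$, and weak lower semicontinuity of the norm). No gaps; your remark that uniqueness requires extra convexity hypotheses is also accurate.
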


\begin{proof}
The proof uses Kakutani's theorem,
see 
e.g.~\citet[Cor.\,3.23]{brezis:2011a}.
\end{proof}

%%% SUBSUBSECTION %%%%%%%%%%%%%%%%%%%
%%%%%%%%%%%%%%%%%%%%%%%%%%%%%%%%%%
%\subsubsection*{Annihilators}

%%% SUBSUBSECTION %%%%%%%%%%%%%%%%%%%
%%%%%%%%%%%%%%%%%%%%%%%%%%%%%%%%%%
\subsubsection{Arzel\`{a}--Ascoli -- convergent subsequences}

\begin{theorem}[Arzel\`{a}--Ascoli Theorem]\label{thm:AA}
Suppose $(Q,d)$ is a compact metric space
and $C(Q)$ is the Banach space of
continuous functions on $Q$
equipped with the sup norm. Then the following is true.
A\index{Theorem of!Arzel\`{a}--Ascoli}
subset\index{family!equicontinuous}
$\Ff$\index{family!pointwise bounded}
of $C(Q)$ is pre-compact if and only if
the family $\Ff$ is \textbf{\Index{equicontinuous}}\,\footnote{
  $\forall\eps>0$ $\exists\delta>0$ such that
  $\abs{f(x)-f(y)}<\eps$
  whenever $d(x,y)<\delta$ and $f\in\Ff$.
  }
and \textbf{\Index{pointwise bounded}}\,\footnote{
  $\sup_{f\in\Ff}\abs{f(x)}<\infty$ for every $x\in Q$.
  }.
\end{theorem}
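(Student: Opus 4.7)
The plan is to deduce this theorem as a direct corollary of the more general Arzel\`{a}--Ascoli Theorem~\ref{thm:AA-F-orbit}, which was stated for a compact topological space $Q$ and a \emph{complete} metric space $M_d$ as target. Specializing $M_d:=\R$ (which is complete) and noting that $C(Q)=\Cco(Q,\R)$ as a topological space once $Q$ is a compact metric space (Proposition~\ref{prop:co-topology-sup-metric} identifies the compact-open topology with the sup-norm topology), the two hypotheses of Theorem~\ref{thm:AA-F-orbit} -- equicontinuity of $\Ff$ and pre-compactness of each orbit $\Ff(q)\subset\R$ -- will match the present ones once we observe that a subset of $\R$ is pre-compact iff it is bounded. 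This latter equivalence is Heine--Borel and it turns \emph{pointwise pre-compactness} into \emph{pointwise boundedness}.

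First I would spell out the identification of topologies: since $(Q,d)$ is a compact metric space, Proposition~\ref{prop:co-topology-sup-metric} gives $\Tt_{\rm co}=\Tt_{d_\infty}$ on $C(Q,\R)$, so pre-compactness with respect to the sup norm coincides with pre-compactness in $\Cco(Q,\R)$. Next I would note that for every fixed $q\in Q$ the orbit $\Ff(q)\subset\R$ satisfies
\[
     \Ff(q)\text{ is pre-compact}\quad\Longleftrightarrow\quad
     \sup_{f\in\Ff}\abs{f(q)}<\infty,
\]
by Heine--Borel. Hence ``$\Ff(q)$ pre-compact for every $q\in Q$'' is exactly ``$\Ff$ pointwise bounded''. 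Plugging this equivalence into Theorem~\ref{thm:AA-F-orbit} yields both directions of the present statement with no further work.

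Should a self-contained proof be preferred, I would proceed in the classical way. For the easy direction ($\Rightarrow$), pointwise boundedness follows because evaluation $\ev_q\colon C(Q)\to\R$, $f\mapsto f(q)$, is continuous (even of operator norm $\le 1$), so it sends the pre-compact set $\Ff$ to a bounded subset of $\R$; equicontinuity follows from total boundedness of $\overline\Ff$ by an $\eps/3$-argument: cover $\overline\Ff$ by finitely many sup-norm balls of radius $\eps/3$ about $f_1,\dots,f_N$, choose a common uniform-continuity modulus $\delta$ for the finite family $\{f_i\}$ (using that $Q$ is compact metric), and combine via the triangle inequality. For the hard direction ($\Leftarrow$), I would extract a countable dense set $\{q_n\}\subset Q$ (a compact metric space is separable), use pointwise boundedness together with Bolzano--Weierstrass in $\R$ and a Cantor diagonal procedure to produce a subsequence converging at every $q_n$, and finally upgrade this pointwise convergence on a dense set to uniform convergence on $Q$ by equicontinuity combined with compactness of $Q$ (cover $Q$ by finitely many $\delta$-balls).

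The main obstacle, should one avoid the shortcut via Theorem~\ref{thm:AA-F-orbit}, is the diagonal-subsequence/uniform-convergence upgrade in the $(\Leftarrow)$ direction; this is the only step that is not a routine unpacking of definitions. Taking the shortcut route, however, all work has been absorbed into Theorem~\ref{thm:AA-F-orbit} and the only non-trivial input left is the Heine--Borel characterization of pre-compact subsets of $\R$, which makes the present statement immediate.
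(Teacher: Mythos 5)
Your argument is correct, but it takes a different route from the text: the paper does not prove Theorem~\ref{thm:AA} at all, it simply points to \citet{Rudin:1991b} and \citet{Salamon:2017a}, whereas you reduce it internally to the already-stated general version, Theorem~\ref{thm:AA-F-orbit}. The reduction is sound: with $M_d:=\R$ complete, Proposition~\ref{prop:co-topology-sup-metric} identifies $\Ttco$ with the sup-norm topology on $C(Q,\R)$, and Heine--Borel converts ``every orbit $\Ff(q)\subset\R$ is pre-compact'' into pointwise boundedness, so both directions drop out. What your route buys is economy and coherence (the scalar-valued statement becomes a corollary of the metric-space version already in the appendix, rather than a second imported black box); what it costs is that all the analytic content is hidden in the citation for Theorem~\ref{thm:AA-F-orbit}, so nothing is gained in terms of self-containedness unless you also carry out the classical diagonal argument you sketch, which is correct as outlined. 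One detail you should make explicit: the two theorems use \emph{different} notions of equicontinuity. Theorem~\ref{thm:AA-F-orbit} uses the pointwise (neighborhood-at-each-$x$) version, while the footnote to Theorem~\ref{thm:AA} demands the uniform version with a single $\delta$ valid for all $x,y\in Q$. On a compact metric space these coincide, but the implication ``pointwise equicontinuous $\Rightarrow$ uniformly equicontinuous'' requires a short finite-subcover argument (for given $\eps$ cover $Q$ by finitely many of the neighborhoods $U_x$ furnished by pointwise equicontinuity, shrink to balls $B_{\delta_x/2}(x)$, and take $\delta:=\min\delta_x/2$); without this sentence the deduction of the stated theorem from Theorem~\ref{thm:AA-F-orbit} is not literally complete.
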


%\begin{proof}
%\citet[Thm.~A.5]{Rudin:1991b}.
%\end{proof}

For a proof see e.g.~\citet[Thm.~A.5]{Rudin:1991b}
or~\citet[App.\,C]{Salamon:2017a}.
By Theorem~\ref{thm:AA} this generalizes to maps
taking values in a metric space.

%%%%%%%%%%%%%%%%%%%%%%%%%%%%%%%%%%
%%% SUBSECTION %%%%%%%%%%%%%%%%%%%%%
%%%%%%%%%%%%%%%%%%%%%%%%%%%%%%%%%%
\subsection{Calculus}\label{sec:calculus}

An efficient presentation of the Fr\'{e}chet derivative
in Banach spaces $E,F$ is given in \S 1.1 of~\citet{ambrosetti:1993a}
where \S 2.2 deals with
the\index{implicit function theorem (IFT)}
implicit function theorem (IFT).
We follow~\citet[PART FOUR]{Lang:1993a}.

%%%%%%%%%%%%%%%%%%%%%%%%%%%%%%%%%%
%%% SUBSUBSECTION %%%%%%%%%%%%%%%%%%%
\subsubsection{Fr\'{e}chet or total derivative  \boldmath$df(x)$}

Consider Banach spaces $E$ and $F$ and let $U$ be open in $E$.
One says that \textbf{\boldmath a map $f\colon U\to F$ is
differentiable at a point $x$} of $U$ if there is a
continuous\index{differentiable!at a point}
linear map $D\colon E\to F$ and a map $\psi$ defined for
all sufficiently small elements $h$ in $E$ and with values in $F$
such that
\[
     \lim_{h\to 0}\psi(h)=0
\]
and such that $f$ near $x$ is given by the sum
\begin{equation*}
     f(x+h)=f(x)+D h+\Norm{h}\psi(h).
\end{equation*}
Set $h=0$ to see that it makes sense to set $\psi(0):=0$.
Equivalently, denoting $o(h):=\Norm{h}\psi(h)$
the condition becomes
\begin{equation}\label{eq:Frechet-differential}
     0
     =\lim_{h\to 0}\frac{\Norm{o(h)}}{\Norm{h}} 
     =\lim_{h\to 0}\frac{\Norm{f(x+h)-f(x)-D h}}{\Norm{h}} \; .
\end{equation}

\begin{exercise}
a) Differentiability at $x$ implies continuity at $x$.
b) If $D\in\Ll(E,F)$ satisfies~(\ref{eq:Frechet-differential}),
then it is uniquely determined by $f$ and $x$.
\end{exercise}

\begin{definition}[Derivative and differential]
\label{def:Frechet-differential}
Let\index{Fr\'{e}chet derivative!on Banach space}\index{derivative!on Banach space}
$f\colon E\supset U\to F$ be differentiable at a point $x\in U$. Then
the unique continuous linear operator $D$
satisfying~(\ref{eq:Frechet-differential}) is called the (Fr\'{e}chet)
\textbf{\boldmath derivative of $f$ at $x$}\index{derivative!of $f$ at $x$}
and\index{$df(x)\in\Ll(E,F)$ derivative of $f$ at $x$}
denoted by $df(x):=D\in\Ll(E,F)$. 
If $f$ is differentiable at every point of $U$ one says that
$f$ is \textbf{\boldmath differentiable on $U$}.
In this case the map
\[
     f^\prime:=df\colon U\to\Ll(E,F), \quad x\mapsto df(x)
\]
into the Banach space of continuous linear maps $\Ll(E,F)$ endowed
with the operator norm is called the (Fr\'{e}chet)\index{differential!of $f$}
\textbf{\boldmath differential of $f$}. If $df$ is continuous one
says that\index{$C^1(U,F)$} \textbf{\boldmath$f$ is of class $C^1$},
in symbols $f\in C^1(U,F)$.
Higher derivatives 
\[
     f^{(\ell)}:=d^\ell f\colon U\to \Ll(E,\Ll(E,\dots \Ll(E,F)))\simeq \Ll^\ell(E,F)
\]
are defined iteratively.
If they exist and are continuous for $\ell=0,\dots k$,
one says that $f$ is of class $C^k$.
Here $\Ll^\ell(E,F)$ denotes the Banach space of $k$-fold multilinear
maps $E\oplus\dots\oplus E\to F$.
One says that $f$ is a\index{smooth!map}
\textbf{smooth map}, or of class $C^\infty$, if
$f$ is of class $C^k$ for every $k\in\N_0$.
\end{definition}

%%%%%%%%%%%%%%%%%%%%%%%%%%%%%%%%%%
%%% SUBSUBSECTION %%%%%%%%%%%%%%%%%%%
\subsubsection{G\^{a}teaux or all-directional derivative \boldmath$\p f(x)$}

A map $f\colon E\subset U\to F$ between Banach spaces with $U$
open\index{derivative!G\^{a}teaux --}\index{derivative!directional --}
is\index{$(deriva$@$\p_\xi f$ directional derivative}
said\index{G\^{a}teaux!differentiable!at $x\in U$}
\textbf{\boldmath G\^{a}teaux differentiable at $x\in U$}
if for each $\xi\in E$ the \textbf{\Index{directional derivative}}
\[
     \p_\xi f(x):=\lim_{t\to 0} \frac{f(x+t\xi)-f(x)}{t}
\]
exists and defines a continuous \emph{linear} map
$\p f(x)\colon E\to F$, $\xi\mapsto \p_\xi f(x)$.

\begin{exercise}
Show that a) (Fr\'{e}chet) differentiable
implies G\^{a}teaux differentiable,
but b) not vice versa.
\vspace{.05cm}\newline
[Hint: b) Define $f\colon\R^2\to\R$ by $f(0,0):=0$ and by
$f(u,v):=u^4v/(u^6+v^3)$ off the origin.
Show that $\p_\zeta f(0,0)=0$ for every $\zeta\in\R^2$.
So each directional derivative not only exists,
but also the map $\zeta\mapsto \p_\zeta f(0,0)$ is linear.
Is $f$ continuous at 
$(0,0)$?]
\end{exercise}

%%%%%%%%%%%%%%%%%%%%%%%%%%%%%%%%%%
%%% SUBSECTION %%%%%%%%%%%%%%%%%%%%%
%%%%%%%%%%%%%%%%%%%%%%%%%%%%%%%%%%
\subsection{Banach manifolds}\label{sec:B-mfs}

Roughly speaking, a Banach manifold is a 
topological space (Hausdorff and paracompact)
which is locally modeled on some Banach space
such that all transition maps between the local
models are differentiable.
Differentiability of maps between Banach manifolds is
defined in terms of differentiability of the
corresponding maps between the local model Banach spaces.
We recommend the book by~\citet{lang:2001a}
concerning differential geometry on Banach manifolds.

Suppose $X$ is a topological space and $k\in\N_0$ or $k=\infty$.
A \textbf{Banach chart} $(V,\phi,E)$ for $X$\index{chart!Banach --}
consists of a Banach space $E$ and a homeomorphism 
$$
     \phi\colon X\supset V\to U\subset E
$$
between open subsets.
Two charts are called \textbf{\boldmath$C^k$ compatible}
if the \textbf{\Index{transition map}}
\[
     \phi_j\circ\phi_i^{-1}\colon \phi_i(V_i\CAP V_j)\to \phi_j(V_i\CAP V_j)
\]
is a \textbf{\boldmath$C^k$ \Index{diffeomorphism}}
(an invertible $C^k$ map with $C^k$ inverse).
A \textbf{\boldmath$C^k$ Banach atlas for $X$} is a collection
$\Aa$ of pairwise $C^k$ compatible Banach charts for $X$
such that the chart domains form a cover $\{V_i\}_i$ of $X$.
Two atlases are called \textbf{equivalent} if their union forms an atlas.

Given such pair $(X,\Aa)$, then $X$ is connected iff it is path
connected. Furthermore, for $k\ge 1$ connectedness of $X$ implies
that all model Banach spaces $E_i$ in the charts of $\Aa$ are
isomorphic to one and the same Banach space $E$.
In this case we say that $(X,\Aa)$ is
\textbf{modeled on \boldmath$E$}.

\begin{remark}[Starting from just a set $X$]\label{rem:B-mf-set}
Alternatively starting with just \emph{a set} $X$
one can construct a $C^k$ Banach atlas as follows.
Choose a collection of bijections (the future coordinate charts)
\[
     \phi\colon X\supset V\to U\subset E
\]
from a subset $V$ of $X$ onto an \emph{open} subset $U$ of a Banach space $E$.
There are two requirements: Firstly, the sets $V$ of all the charts
must cover $X$ and, secondly, for each pair of charts the set
$\phi(V\CAP\widetilde{V})$ must be open in $E$. The notions
$C^k$ compatibility and $C^k$ Banach atlas are unchanged.
Given a $C^k$ Banach atlas~$\Aa$, consider the collection $\Bb\subset 2^X$ of all
subsets $\phi^{-1}(U^\prime)$ of $X$ where $(\phi,V)$ runs through all charts
of $\Aa$ and $U^\prime\subset E$ runs through all open subsets of $\phi(V)$.
One checks that $\Bb$ is a basis of a topology
and endows $X$ with that topology.
Then $\Aa$ is an atlas on the topological space $X$ in the earlier sense.
For an application see Exercise~\ref{exc:tangent-bdl-to-sc-mf}.
\end{remark}

\begin{definition}
A \textbf{\boldmath$C^k$ Banach manifold}
is a paracompact\index{Banach!manifold}
Hausdorff space $X$ endowed with an equivalence class of $C^k$ Banach
atlases. If $k=0$ one speaks of a \textbf{topological} and if
$k=\infty$ of a \textbf{smooth} Banach manifold. We often abbreviate
smooth Banach manifold by \textbf{Banach manifold}.
In case all model spaces are Hilbert spaces one speaks of a
\textbf{\Index{Hilbert manifold}}.
\end{definition}

\begin{definition}[Maps between Banach manifolds]
A continuous map $$f\colon X\to Y$$ between Banach manifolds
is said to be of \textbf{class \boldmath$C^k$} if for all charts
$\phi\colon X\supset V\to E$ and $\psi\colon Y\supset W\to F$ the chart
representative $\psi\circ f\circ \phi^{-1}$ is of class $C^k$ as a map
between open subsets of the Banach spaces $E$ and $F$.
\end{definition}

%%%%%%%%%%%%%%%%%%%%%%%%%%%%%%%%%%
%%%%%%%%%%%%%%%%%%%%%%%%%%%%%%%%%%
%%% SECTION %%%%%%%%%%%%%%%%%%%%%%%%
%%%%%%%%%%%%%%%%%%%%%%%%%%%%%%%%%%
%%%%%%%%%%%%%%%%%%%%%%%%%%%%%%%%%%
\section{Function spaces}\label{sec:function-spaces}

\begin{theorem}[Properties of $L^p$ and Sobolev spaces]
\label{thm:Lp-spaces}      \mbox{ }
\begin{labeling}{\rm (separable)}
\item[\rm (complete)]
  \textbf{Fischer--Riesz Theorem}:\index{Theorem of!Fischer-Riesz}
  The spaces $L^p(\R,\R^n)$ with norm
  $\norm{\cdot}_p:=\norm{\cdot}_{L^p}$ are Banach spaces
  whenever $1\le p\le \infty$.
\item[\rm (separable)]
  The spaces $L^p(\R,\R^n)$ are separable\footnote{
    A topological space is called \textbf{\Index{separable}}
    if it admits a dense sequence.
    }
  for $1\le p<\infty$,
  but not separable for $p=\infty$.
\item[\rm (reflexive)]
  The spaces $L^p(\R,\R^n)$ are reflexive
  for $1<p<\infty$,
  but not reflexive for $p=1,\infty$.
\item[\rm (Sobolev)]
  The Sobolev spaces $W^{k,p}(\R,\R^n)$ have analogous properties in $p$.
\end{labeling}
\end{theorem}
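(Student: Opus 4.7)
The plan is to treat the $L^p$ statements first and then bootstrap them to $W^{k,p}$ via the standard embedding into a finite product of $L^p$ spaces. Throughout, let $p'$ denote the dual exponent, $1/p+1/p'=1$.

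First I would establish completeness by the classical Fischer--Riesz argument. For $1\le p<\infty$, given a Cauchy sequence $(f_n)\subset L^p$ I would extract a subsequence with $\Norm{f_{n_{k+1}}-f_{n_k}}_p\le 2^{-k}$ and use the monotone convergence theorem applied to the partial sums $\sum_k\abs{f_{n_{k+1}}-f_{n_k}}$ to obtain an a.e.\ finite dominating function; this gives pointwise a.e.\ convergence of $(f_{n_k})$, and dominated convergence turns pointwise convergence into $L^p$ convergence. The case $p=\infty$ is handled directly: on the complement of a countable union of null sets (where each $\abs{f_n}\le\Norm{f_n}_\infty$ and each $\abs{f_n-f_m}\le\Norm{f_n-f_m}_\infty$) the sequence converges uniformly to a bounded measurable limit. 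I expect no obstacles here.

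For separability with $1\le p<\infty$, I would exhibit a countable dense subset: finite $\Q^n$-linear combinations of indicator functions $\chi_I$ of bounded intervals $I$ with rational endpoints. The approximation proceeds in three steps: (i) simple functions are dense in $L^p$ by the usual truncation/layer-cake approximation, (ii) each simple function is approximated in $L^p$ by finite sums $\sum q_j\chi_{I_j}$ using regularity of Lebesgue measure, and (iii) the scalars are approximated by rationals. For non-separability of $L^\infty(\R,\R^n)$, I would consider the uncountable family $\{\chi_{(-\infty,a]}\otimes v\colon a\in\R\}$ for a fixed unit vector $v\in\R^n$: any two distinct members are at $L^\infty$-distance $1$, so the open balls of radius $1/2$ around them are pairwise disjoint, forbidding a countable dense subset.

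Reflexivity for $1<p<\infty$ is the main analytic input. I would invoke the Riesz representation theorem identifying $(L^p)^*$ with $L^{p'}$ via $g\mapsto(f\mapsto\int fg)$, and observe that the canonical embedding $J\colon L^p\to(L^p)^{**}$ becomes, under this identification, the double-duality map $L^p\to(L^{p'})^*\simeq L^p$, which is the identity; hence $J$ is surjective. Alternatively, and more robustly, Clarkson's inequalities give uniform convexity of $L^p$ for $1<p<\infty$, and the Milman--Pettis theorem then yields reflexivity. The hard part of the whole theorem is really packaged inside one of these routes -- I would choose the Riesz representation route as shorter. Non-reflexivity of $L^1$ follows because $(L^1)^*\simeq L^\infty$ while $(L^\infty)^*$ is strictly larger than $L^1$ (contains singular measures produced via Hahn--Banach on the closed subspace $C_0\subset L^\infty$ where ``evaluation at $0$'' is well defined); hence $J\colon L^1\to(L^1)^{**}$ cannot be surjective. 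Non-reflexivity of $L^\infty$ then comes for free: $L^1$ embeds isometrically as a closed subspace of $(L^\infty)^*\simeq(L^\infty)^{**\!\prime}$, and by Remark~\ref{rem:reflexive}~b) reflexivity of $L^\infty$ would force reflexivity of its (reflexive) dual and hence of $L^1$, a contradiction.

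Finally, for the Sobolev assertions I would use the isometric linear embedding
\[
     \iota\colon W^{k,p}(\R,\R^n)\INTO \prod_{\abs\alpha\le k} L^p(\R,\R^n),
     \qquad
     f\mapsto\left(\partial^\alpha f\right)_{\abs\alpha\le k},
\]
where the product carries the natural $\ell^p$-norm of the factor norms. The image is closed, because $L^p$-convergence of $\partial^\alpha f_n$ to some $g_\alpha$ implies convergence in $\Dd'(\R)$, and distributional derivatives are continuous on $\Dd'$, so $g_\alpha=\partial^\alpha g_0$. A closed subspace of a complete (resp.\ separable, resp.\ reflexive) space inherits completeness (trivially), separability (subspaces of separable metric spaces are separable), and reflexivity (Remark~\ref{rem:reflexive}~b)); finite products preserve all three properties. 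The failures at $p=1,\infty$ transfer by restriction to the subspace of functions with vanishing derivatives, which is isometric to $L^p$ itself.
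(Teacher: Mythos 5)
The paper itself offers no proof of this theorem beyond pointing to \citet[Thms.\,4.8, 4.13, 4.10 and Prop.\,8.1]{brezis:2011a}, and your sketch reproduces exactly the standard arguments given there: the rapidly-convergent-subsequence proof of Fischer--Riesz, rational step functions for separability, the $\chi_{(-\infty,a]}$ family for non-separability of $L^\infty$, duality (or uniform convexity) for reflexivity, and the closed embedding $f\mapsto(\partial^\alpha f)_{\abs{\alpha}\le k}$ into a finite product of $L^p$ spaces for the Sobolev case. All of that is correct, including your care in checking that the composite isomorphism $L^p\to(L^{p'})^*{}^*\to L^p$ really is the canonical map $J$ (the point where the na\"ive ``$(L^p)^*=L^{p'}$, hence reflexive'' argument would otherwise be incomplete).

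There is, however, one genuine flaw, in your final sentence. On $\R$ a function in $W^{k,p}$ with vanishing distributional derivative is a.e.\ constant, so for $p<\infty$ the ``subspace of functions with vanishing derivatives'' is $\{0\}$, and for $p=\infty$ it is the one-dimensional space of constants; in neither case is it isometric to $L^p(\R,\R^n)$, so it cannot carry the failure of separability or reflexivity back to $W^{k,p}$. The standard repair is to use disjointly supported translates $\phi_j:=\phi(\cdot-2j)$ of a fixed nonzero bump $\phi\in C^\infty_c((-1,1))$: since the supports are disjoint, $\Norm{\sum_j c_j\phi_j}_{W^{k,1}}=\Norm{\phi}_{W^{k,1}}\sum_j\Abs{c_j}$, so the closed span in $W^{k,1}$ is isometric to $\ell^1$, which is not reflexive, and non-reflexivity of $W^{k,1}$ follows from Remark~\ref{rem:reflexive}~b). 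The same family gives, for $p=\infty$, an isometric copy of $\ell^\infty$ inside $W^{k,\infty}$ (take bounded coefficient sequences), which yields both non-reflexivity and, via the uncountable family $\{\sum_{j\in A}\phi_j\mid A\subset\N\}$ of elements at pairwise distance $\ge\Norm{\phi}_{W^{k,\infty}}$, non-separability. With that substitution your argument is complete.
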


For proofs of the three properties of $L^p$ see e.g. Theorems 4.8, 4.13, and
4.10, respectively, in~\citet{brezis:2011a}, for $W^{k,p}$
see~\citet[Prop.\,8.1]{brezis:2011a}.
Concerning Sobolev spaces see also~\citet{Adams:2003a}.

%%%%%%%%%%%%%%%%%%%%%%%%%%%%%%%%%%
%%% SUBSECTION %%%%%%%%%%%%%%%%%%%%%
%%%%%%%%%%%%%%%%%%%%%%%%%%%%%%%%%%
%\subsection*{Departing from Poincar\'{e}'s last geometric theorem}

%%%%%%%%%%%%%%%%%%%%%%%%%
%%%%%%%%% REFERENCES %%%%%%
%%%%%%%%%%%%%%%%%%%%%%%%
%%%\bibliographysty le{plain}
         %   erzeugt:     [1] Joa Weber
%%%\bibliographystyle{abbrv}
         %  erzeugt:      [1] J. Weber and 
%\bibliographystyleintro{alpha}
         %  article:    [Web05]  J. Weber
         %  book:      [Web05]  Joa Weber
         % more authors: [HZ87]
%%%%%%%%%%%%%%%%%%%%%%%%%
% Using hyperref, one should say:
%\cleardoublepage
%\phantomsection
%\addcontentsline{toc}{section}{References}
%\bibliographyintro{$HOME/Dropbox/0-Libraries+app-data/Bibdesk-BibFiles/library_math}{}
%$

\cleardoublepage
\phantomsection

%%%%%%%%%%%%%%%%%%%%%%%%%%%%%%%%%%%
%%%%%%%%%% BACKMATTER %%%%%%%%%%%%%%%
%%%%%%%%%%%%%%%%%%%%%%%%%%%%%%%%%%%
%\backmatter
%\include{glossary}
%\include{solutions}
%\printindex
%
%\end{document}

\backmatter %commands for bibliography, index

%%%%%%%%%%%%%%%%%%%%%%%%%
%%%%%%%%% REFERENCES %%%%%%
%%%%%%%%%%%%%%%%%%%%%%%%
%\renewcommand{\bibname}{References}
%\bibliographysty le{plain}
         %   erzeugt:     [1] Joa Weber
%\bibliographystyle{abbrv}
         %  erzeugt:      [1] J. Weber and 
%\bibliographystyle{alpha}
         %  article:    [Web05]  J. Weber
         %  book:      [Web05]  Joa Weber
         % more authors: [HZ87]
%%%%%%%%%%%%%%%%%%%%%%%%%
%% include Bibliography in TOC %%
% en.wikibooks.org/wiki/LaTeX/Bibliography_Management#Using_tocbibind
%%%%%%%%%%%%%%%%%%%%%%%%%
% Using hyperref, one should say:
%\cleardoublepage
%\phantomsection
\addcontentsline{toc}{chapter}{Bibliography}
\bibliography{library_math}{}

\begin{thebibliography}{51}
\providecommand{\natexlab}[1]{#1}
\providecommand{\url}[1]{\texttt{#1}}
\expandafter\ifx\csname urlstyle\endcsname\relax
  \providecommand{\doi}[1]{doi: #1}\else
  \providecommand{\doi}{doi: \begingroup \urlstyle{rm}\Url}\fi

\bibitem[Abbondandolo and Schlenk(2018)]{Abbondandolo:2018f}
Alberto Abbondandolo and Felix Schlenk.
\newblock Floer homologies, with applications.
\newblock \emph{{\rm \href{https://arxiv.org/abs/1709.00297}{ArXiv e-prints.}}
  Jahresbericht der Deutschen Mathematiker-Vereinigung}, Nov 2018.
\newblock ISSN 1869-7135.
\newblock \doi{10.1365/s13291-018-0193-x}.
\newblock URL \url{https://doi.org/10.1365/s13291-018-0193-x}.

\bibitem[Adams and Fournier(2003)]{Adams:2003a}
Robert~A. Adams and John J.~F. Fournier.
\newblock \emph{Sobolev spaces}, volume 140 of \emph{Pure and Applied
  Mathematics (Amsterdam)}.
\newblock Elsevier/Academic Press, Amsterdam, second edition, 2003.
\newblock ISBN 0-12-044143-8.

\bibitem[Ambrosetti and Prodi(1993)]{ambrosetti:1993a}
Antonio Ambrosetti and Giovanni Prodi.
\newblock \emph{A primer of nonlinear analysis}, volume~34 of \emph{Cambridge
  Studies in Advanced Mathematics}.
\newblock Cambridge University Press, Cambridge, 1993.
\newblock ISBN 0-521-37390-5.

\bibitem[Bourbaki(1987)]{Bourbaki:1987a}
N.~Bourbaki.
\newblock \emph{Topological vector spaces. {C}hapters 1--5}.
\newblock Elements of Mathematics (Berlin). Springer-Verlag, Berlin, 1987.
\newblock ISBN 3-540-13627-4.
\newblock \doi{10.1007/978-3-642-61715-7}.
\newblock URL \url{https://doi.org/10.1007/978-3-642-61715-7}.
\newblock Translated from the French by H. G. Eggleston and S. Madan.

\bibitem[Brezis(2011)]{brezis:2011a}
Ha{\"{\i}}m Brezis.
\newblock \emph{Functional analysis, {S}obolev spaces and partial differential
  equations}.
\newblock Universitext. Springer, New York, 2011.
\newblock ISBN 978-0-387-70913-0.

\bibitem[B\"{u}hler and Salamon(2018)]{Buhler:2018b}
Theo B\"{u}hler and Dietmar~A. Salamon.
\newblock \emph{Functional analysis}, volume 191 of \emph{Graduate Studies in
  Mathematics}.
\newblock American Mathematical Society, Providence, RI, 2018.
\newblock ISBN 978-1-4704-4190-6.
\newblock
  \href{https://people.math.ethz.ch/~salamon/PREPRINTS/funcana-ams.pdf}{Manuscript}.

\bibitem[Cartan(1986)]{Cartan:1986a}
Henri Cartan.
\newblock Sur les r\'etractions d'une vari\'et\'e.
\newblock \emph{C. R. Acad. Sci. Paris S\'er. I Math.}, 303\penalty0
  (14):\penalty0 715, 1986.
\newblock ISSN 0249-6291.

\bibitem[Cieliebak(2018)]{Cieliebak:2018a}
Kai Cieliebak.
\newblock {Nonlinear Functional Analysis}.
\newblock Lecture Notes, manuscript in progress, February 2018.

\bibitem[Conway(1985)]{Conway:1985a}
John~B. Conway.
\newblock \emph{{A course in functional analysis}}, volume~96 of \emph{Graduate
  Texts in Mathematics}.
\newblock Springer-Verlag, New York, 1985.
\newblock ISBN 0-387-96042-2.
\newblock \doi{10.1007/978-1-4757-3828-5}.
\newblock URL \url{http://dx.doi.org/10.1007/978-1-4757-3828-5}.

\bibitem[Dugundji(1966)]{Dugundji:1966a}
James Dugundji.
\newblock \emph{Topology}.
\newblock Allyn and Bacon, Inc., Boston, Mass., 1966.

\bibitem[Eliashberg et~al.(2000)Eliashberg, Givental, and
  Hofer]{Eliashberg:2000a}
Y.~Eliashberg, A.~Givental, and H.~Hofer.
\newblock Introduction to symplectic field theory.
\newblock \emph{Geom. Funct. Anal.}, Special Volume, Part II:\penalty0
  560--673, 2000.
\newblock ISSN 1016-443X.
\newblock \doi{10.1007/978-3-0346-0425-3_4}.
\newblock URL \url{http://dx.doi.org/10.1007/978-3-0346-0425-3_4}.
\newblock GAFA 2000 (Tel Aviv, 1999).

\bibitem[Fabert et~al.(2016)Fabert, Fish, Golovko, and Wehrheim]{Fabert:2016a}
Oliver Fabert, Joel~W. Fish, Roman Golovko, and Katrin Wehrheim.
\newblock Polyfolds: a first and second look.
\newblock \emph{EMS Surv. Math. Sci.}, 3\penalty0 (2):\penalty0 131--208, 2016.
\newblock ISSN 2308-2151.
\newblock \doi{10.4171/EMSS/16}.
\newblock URL \url{http://dx.doi.org/10.4171/EMSS/16}.

\bibitem[{Filippenko} et~al.(2018){Filippenko}, {Zhou}, and
  {Wehrheim}]{Filippenko:2018b}
B.~{Filippenko}, Z.~{Zhou}, and K.~{Wehrheim}.
\newblock {Counterexamples in Scale Calculus}.
\newblock \emph{ArXiv e-prints}, July 2018.
\newblock URL \url{https://arxiv.org/abs/1807.02591}.

\bibitem[Floer(1986)]{Floer:1986a}
Andreas Floer.
\newblock Proof of the {A}rnol$'$d conjecture for surfaces and generalizations
  to certain {K}\"ahler manifolds.
\newblock \emph{Duke Math. J.}, 53\penalty0 (1):\penalty0 1--32, 1986.
\newblock ISSN 0012-7094.
\newblock \doi{10.1215/S0012-7094-86-05301-9}.
\newblock URL \url{http://dx.doi.org/10.1215/S0012-7094-86-05301-9}.

\bibitem[Floer(1988{\natexlab{a}})]{floer:1988a}
Andreas Floer.
\newblock Morse theory for {L}agrangian intersections.
\newblock \emph{J. Differential Geom.}, 28\penalty0 (3):\penalty0 513--547,
  1988{\natexlab{a}}.
\newblock ISSN 0022-040X.
\newblock URL \url{http://projecteuclid.org/euclid.jdg/1214442477}.

\bibitem[Floer(1988{\natexlab{b}})]{floer:1988c}
Andreas Floer.
\newblock The unregularized gradient flow of the symplectic action.
\newblock \emph{Comm. Pure Appl. Math.}, 41\penalty0 (6):\penalty0 775--813,
  1988{\natexlab{b}}.
\newblock ISSN 0010-3640.
\newblock \doi{10.1002/cpa.3160410603}.
\newblock URL \url{http://dx.doi.org/10.1002/cpa.3160410603}.

\bibitem[Floer(1989)]{floer:1989a}
Andreas Floer.
\newblock Symplectic fixed points and holomorphic spheres.
\newblock \emph{Comm. Math. Phys.}, 120\penalty0 (4):\penalty0 575--611, 1989.
\newblock ISSN 0010-3616.
\newblock URL \url{http://projecteuclid.org/euclid.cmp/1104177909}.

\bibitem[{Frauenfelder} and Weber(2018)]{Frauenfelder:2018a}
Urs {Frauenfelder} and Joa Weber.
\newblock {The shift map on Floer trajectory spaces}.
\newblock \emph{ArXiv e-prints {\rm
  \href{https://arxiv.org/abs/1803.03826}{1803.03826}}}, March 2018.

\bibitem[Fukaya(1993)]{Fukaya:1993a}
Kenji Fukaya.
\newblock Morse homotopy, {$A^\infty$}-category, and {F}loer homologies.
\newblock In \emph{Proceedings of {GARC} {W}orkshop on {G}eometry and
  {T}opology '93 ({S}eoul, 1993)}, volume~18 of \emph{Lecture Notes Ser.},
  pages 1--102. Seoul Nat. Univ., Seoul, 1993.

\bibitem[Fukaya and Ono(1999)]{Fukaya:1999a}
Kenji Fukaya and Kaoru Ono.
\newblock Arnold conjecture and {G}romov--{W}itten invariant.
\newblock \emph{Topology}, 38\penalty0 (5):\penalty0 933--1048, 1999.
\newblock ISSN 0040-9383.
\newblock \doi{10.1016/S0040-9383(98)00042-1}.
\newblock URL \url{http://dx.doi.org/10.1016/S0040-9383(98)00042-1}.

\bibitem[Fukaya et~al.(2009)Fukaya, Oh, Ohta, and Ono]{Fukaya:2009a}
Kenji Fukaya, Yong-Geun Oh, Hiroshi Ohta, and Kaoru Ono.
\newblock \emph{Lagrangian intersection {F}loer theory: anomaly and
  obstruction. {P}art {I}}, volume~46 of \emph{AMS/IP Studies in Advanced
  Mathematics}.
\newblock American Mathematical Society, Providence, RI; International Press,
  Somerville, MA, 2009.
\newblock ISBN 978-0-8218-4836-4.

\bibitem[Gromov(1985)]{gromov:1985a}
M.~Gromov.
\newblock Pseudo holomorphic curves in symplectic manifolds.
\newblock \emph{Invent. Math.}, 82:\penalty0 307--347, 1985.
\newblock \doi{10.1007/BF01388806}.

\bibitem[{Hofer}(2006)]{Hofer:2006a}
Helmut {Hofer}.
\newblock A general {F}redholm theory and application.
\newblock In \emph{Current developments in mathematics, 2004}, pages 1--71.
  (See also \href{https://arxiv.org/abs/math/0509366}{arXiv:0509366}). Int.
  Press, Somerville, MA, 2006.

\bibitem[{Hofer} et~al.(2005){Hofer}, Wysocki, and Zehnder]{Hofer:2005a}
Helmut {Hofer}, Krzysztof Wysocki, and Eduard Zehnder.
\newblock {Polyfolds and Fredholm Theory Part 1}.
\newblock Workshop SFT I,
  \href{http://www.mathematik.uni-leipzig.de/~schwarz/sft/manuscriptHWZ-leipzig.pdf}{manuscript},
  2005.
\newblock URL \url{http://www.mathematik.uni-leipzig.de/~schwarz/sft/}.

\bibitem[Hofer et~al.(2007)Hofer, Wysocki, and Zehnder]{Hofer:2007a}
Helmut Hofer, Krzysztof Wysocki, and Eduard Zehnder.
\newblock A general {F}redholm theory. {I}. {A} splicing-based differential
  geometry.
\newblock \emph{J. Eur. Math. Soc. (JEMS)}, 9\penalty0 (4):\penalty0 841--876,
  2007.
\newblock ISSN 1435-9855.
\newblock \doi{10.4171/JEMS/99}.
\newblock URL \url{http://dx.doi.org/10.4171/JEMS/99}.

\bibitem[Hofer et~al.(2009{\natexlab{a}})Hofer, Wysocki, and
  Zehnder]{Hofer:2009a}
Helmut Hofer, Krzysztof Wysocki, and Eduard Zehnder.
\newblock A general {F}redholm theory. {II}. {I}mplicit function theorems.
\newblock \emph{Geom. Funct. Anal.}, 19\penalty0 (1):\penalty0 206--293,
  2009{\natexlab{a}}.
\newblock ISSN 1016-443X.
\newblock \doi{10.1007/s00039-009-0715-x}.
\newblock URL \url{http://dx.doi.org/10.1007/s00039-009-0715-x}.

\bibitem[Hofer et~al.(2009{\natexlab{b}})Hofer, Wysocki, and
  Zehnder]{Hofer:2009b}
Helmut Hofer, Krzysztof Wysocki, and Eduard Zehnder.
\newblock A general {F}redholm theory. {III}. {F}redholm functors and
  polyfolds.
\newblock \emph{Geom. Topol.}, 13\penalty0 (4):\penalty0 2279--2387,
  2009{\natexlab{b}}.
\newblock ISSN 1465-3060.
\newblock \doi{10.2140/gt.2009.13.2279}.
\newblock URL \url{http://dx.doi.org/10.2140/gt.2009.13.2279}.

\bibitem[Hofer et~al.(2010)Hofer, Wysocki, and Zehnder]{Hofer:2010b}
Helmut Hofer, Krzysztof Wysocki, and Eduard Zehnder.
\newblock {Sc-smoothness, Retractions and New Nodels for Smooth Spaces}.
\newblock \emph{Discrete Contin. Dyn. Syst.}, 28\penalty0 (2):\penalty0
  665--788, 2010.
\newblock ISSN 1078-0947.
\newblock \doi{10.3934/dcds.2010.28.665}.
\newblock URL \url{http://dx.doi.org/10.3934/dcds.2010.28.665}.

\bibitem[Hofer et~al.(2017)Hofer, Wysocki, and Zehnder]{Hofer:2017a}
Helmut Hofer, Krzysztof Wysocki, and Eduard Zehnder.
\newblock {Polyfold and Fredholm Theory}. 714 pages, %.
\newblock \emph{ArXiv e-prints}, page
  \href{https://arxiv.org/abs/1707.08941}{arXiv:1707.08941}, July 2017.

\bibitem[Hohloch et~al.(2009)Hohloch, Noetzel, and Salamon]{hohloch:2009a}
Sonja Hohloch, Gregor Noetzel, and Dietmar~A. Salamon.
\newblock Hypercontact structures and {F}loer homology.
\newblock \emph{Geom. Topol.}, 13\penalty0 (5):\penalty0 2543--2617, 2009.
\newblock ISSN 1465-3060.
\newblock \doi{10.2140/gt.2009.13.2543}.
\newblock URL \url{http://dx.doi.org/10.2140/gt.2009.13.2543}.

\bibitem[Khaleelulla(1982)]{Khaleelulla:1982a}
S.~M. Khaleelulla.
\newblock \emph{Counterexamples in topological vector spaces}, volume 936 of
  \emph{Lecture Notes in Mathematics}.
\newblock Springer-Verlag, Berlin-New York, 1982.
\newblock ISBN 3-540-11565-X.

\bibitem[Lang(1993)]{Lang:1993a}
Serge Lang.
\newblock \emph{Real and functional analysis}, volume 142 of \emph{Graduate
  Texts in Mathematics}.
\newblock Springer-Verlag, New York, third edition, 1993.
\newblock ISBN 0-387-94001-4.
\newblock \doi{10.1007/978-1-4612-0897-6}.
\newblock URL \url{http://dx.doi.org/10.1007/978-1-4612-0897-6}.

\bibitem[{Lang}(2001)]{lang:2001a}
Serge {Lang}.
\newblock \emph{{Fundamentals of differential geometry}}.
\newblock Springer-Verlag, New York, corr. printing 2nd edition, 2001.
\newblock ISBN 0-387-98593-X/hbk.

\bibitem[M{\"u}ger(2016)]{Muger:2016a}
Michael M{\"u}ger.
\newblock {Topology for the working mathematician (Working title), last
  accessed 13/10/2017 on}
  \href{http://www.math.ru.nl/~mueger/topology.pdf}{Webpage}, {Manuscript
  v19/10/2016}, 2016.
\newblock URL \url{http://www.math.ru.nl/~mueger/}.

\bibitem[Munkres(2000)]{Munkres:2000a}
James~R. Munkres.
\newblock \emph{Topology}.
\newblock Prentice Hall, Inc., Upper Saddle River, NJ, 2000.
\newblock ISBN 0-13-181629-2.
\newblock Second edition of [ MR0464128].

\bibitem[Narici and Beckenstein(2011)]{Narici:2011a}
Lawrence Narici and Edward Beckenstein.
\newblock \emph{Topological vector spaces}, volume 296 of \emph{Pure and
  Applied Mathematics (Boca Raton)}.
\newblock CRC Press, Boca Raton, FL, second edition, 2011.
\newblock ISBN 978-1-58488-866-6.

\bibitem[Reed and Simon(1980)]{reed:1980a}
Michael Reed and Barry Simon.
\newblock \emph{Methods of modern mathematical physics. {I}}.
\newblock Academic Press, Inc. [Harcourt Brace Jovanovich, Publishers], New
  York, second edition, 1980.
\newblock ISBN 0-12-585050-6.
\newblock Functional analysis.

\bibitem[Rudin(1987)]{Rudin:1987a}
Walter Rudin.
\newblock \emph{Real and complex analysis}.
\newblock McGraw-Hill Book Co., New York, third edition, 1987.
\newblock ISBN 0-07-054234-1.

\bibitem[Rudin(1991)]{Rudin:1991b}
Walter Rudin.
\newblock \emph{Functional analysis}.
\newblock International Series in Pure and Applied Mathematics. McGraw-Hill,
  Inc., New York, second edition, 1991.
\newblock ISBN 0-07-054236-8.

\bibitem[Salamon(2017)]{Salamon:2017a}
Dietmar Salamon.
\newblock {Analysis II}.
\newblock
  \href{https://people.math.ethz.ch/~salamon/PREPRINTS/analysis2.pdf}{Vorlesungsmanuskript},
  ETH Z\"urich, 09 2017.
\newblock URL
  \url{https://people.math.ethz.ch/~salamon/PREPRINTS/analysis2.pdf}.

\bibitem[Salamon and Weber(2006)]{salamon:2006a}
Dietmar Salamon and Joa Weber.
\newblock Floer homology and the heat flow.
\newblock \emph{Geom. Funct. Anal.}, 16\penalty0 (5):\penalty0 1050--1138,
  2006.
\newblock ISSN 1016-443X.
\newblock \doi{10.1007/s00039-006-0577-4}.
\newblock URL \url{http://dx.doi.org/10.1007/s00039-006-0577-4}.

\bibitem[Salamon(2016)]{Salamon:2016a}
Dietmar~A. Salamon.
\newblock \emph{Measure and integration}.
\newblock EMS Textbooks in Mathematics. European Mathematical Society (EMS),
  Z\"urich, 2016.
\newblock ISBN 978-3-03719-159-0.
\newblock \doi{10.4171/159}.
\newblock URL \url{http://dx.doi.org/10.4171/159}.

\bibitem[Schaefer and Wolff(1999)]{Schaefer:1999a}
H.~H. Schaefer and M.~P. Wolff.
\newblock \emph{Topological vector spaces}, volume~3 of \emph{Graduate Texts in
  Mathematics}.
\newblock Springer-Verlag, New York, second edition, 1999.
\newblock ISBN 0-387-98726-6.
\newblock \doi{10.1007/978-1-4612-1468-7}.
\newblock URL \url{https://doi.org/10.1007/978-1-4612-1468-7}.

\bibitem[Schochetman et~al.(2001)Schochetman, Smith, and
  Tsui]{Schochetman:2001a}
Irwin~E. Schochetman, Robert~L. Smith, and Sze-Kai Tsui.
\newblock On the closure of the sum of closed subspaces.
\newblock \emph{Int. J. Math. Math. Sci.}, 26\penalty0 (5):\penalty0 257--267,
  2001.
\newblock ISSN 0161-1712.
\newblock \doi{10.1155/S0161171201005324}.
\newblock URL \url{https://doi.org/10.1155/S0161171201005324}.

\bibitem[Treves(1967)]{Treves:1967a}
Fran\c{c}ois Treves.
\newblock \emph{Topological vector spaces, distributions and kernels}.
\newblock Academic Press, New York-London, 1967.

\bibitem[Triebel(1978)]{Triebel:1978a}
H.~Triebel.
\newblock \emph{Interpolation theory, function spaces, differential operators}.
\newblock VEB Deutscher Verlag der Wissenschaften, Berlin, 1978.

\bibitem[Weber(2013{\natexlab{a}})]{weber:2013a}
Joa Weber.
\newblock {M}orse homology for the heat flow -- {L}inear theory.
\newblock \emph{Math. Nachr.}, 286\penalty0 (1):\penalty0 88--104,
  2013{\natexlab{a}}.
\newblock ISSN 0025-584X.
\newblock \doi{10.1002/mana.201100319}.
\newblock URL \url{http://dx.doi.org/10.1002/mana.201100319}.

\bibitem[Weber(2013{\natexlab{b}})]{weber:2013b}
Joa Weber.
\newblock {M}orse homology for the heat flow.
\newblock \emph{Math. Z.}, 275\penalty0 (1-2):\penalty0 1--54,
  2013{\natexlab{b}}.
\newblock ISSN 0025-5874.
\newblock \doi{10.1007/s00209-012-1121-x}.
\newblock URL \url{http://dx.doi.org/10.1007/s00209-012-1121-x}.

\bibitem[Weber(2017{\natexlab{a}})]{Weber:2017b}
Joa Weber.
\newblock \emph{Topological methods in the quest for periodic orbits}.
\newblock Publica\c{c}\~oes Matem\'aticas do IMPA. [IMPA Mathematical
  Publications]. Instituto Nacional de Matem\'atica Pura e Aplicada (IMPA), Rio
  de Janeiro, 2017{\natexlab{a}}.
\newblock ISBN 978-85-244-0439-9.
\newblock 31${^{\rm{o}}}$ Col\'oquio Brasileiro de Matem\'atica. vii+248 pp.
  ISBN: 978-85-244-0439-9.
  \href{http://www.math.stonybrook.edu/~joa/PUBLICATIONS/CBM31-TOPMETDYN.pdf}{Access
  book}. Version on \href{https://arxiv.org/abs/1802.06435}{arXiv:1802.06435}.
  An extended version to appear in the EMS Lecture Series of Mathematics.

\bibitem[Weber(2017{\natexlab{b}})]{weber:2014c}
Joa Weber.
\newblock {S}table foliations and semi-flow {M}orse homology.
\newblock \emph{Ann. Sc. Norm. Super. Pisa Cl. Sci. (5)}, Vol. XVII\penalty0
  (3):\penalty0 853--909, 2017{\natexlab{b}}.
\newblock \doi{10.2422/2036-2145.201510_017}.

\bibitem[Whitley(1966)]{Whitley:1966a}
Robert Whitley.
\newblock Mathematical {N}otes: {P}rojecting {$m$} onto {$c_0$}.
\newblock \emph{Amer. Math. Monthly}, 73\penalty0 (3):\penalty0 285--286, 1966.
\newblock ISSN 0002-9890.
\newblock \doi{10.2307/2315346}.
\newblock URL \url{https://doi.org/10.2307/2315346}.

\end{thebibliography}

%%%%%%%%%%%%%%%%%%%%%%%%%
%%%%%%%%% INDEX %%%%%%%%%%
%%%%%%%%%%%%%%%%%%%%%%%%%
% Using hyperref, one should say:
%\cleardoublepage
%\phantomsection
\addcontentsline{toc}{chapter}{Index}
\printindex

\end{document}